\numberwithin{equation}{section}
\newtheorem{theorem}{Theorem}[section]
\newtheorem{proposition}[theorem]{Proposition}
\newtheorem{corollary}[theorem]{Corollary}
\newtheorem{lemma}[theorem]{Lemma}
\newtheorem{remark}{Remark}[section]
\newtheorem{example}{Example}[section]
\newcommand{\todaye}{\the\year/\the\month/\the\day}
\newcommand{\finbox}{\hspace*{\fill}$\rule{0.2cm}{0.2cm}$}
\newcommand{\RR}{{\mathbb{R}}}
\newcommand{\ZZ}{{\mathbb{Z}}}
\newcommand{\vecone}{{\bm{1}}}
\newcommand{\veczero}{{\bm{0}}}
\newcommand{\dom}{{\rm dom\,}}
\newcommand{\suppp}{{\rm supp}^{+}}
\newcommand{\suppm}{{\rm supp}^{-}}
\newcommand{\argmin}{\arg \min}
\newcommand{\OMIT}[1]{{\bf [OMIT:} #1 \ {\bf --- end OMIT] }}  
   \renewcommand{\OMIT}[1]{}            
\title{
Discrete Midpoint Convexity
}
\author{
Satoko Moriguchi%
\thanks{%
Department of Economics and Business Administration,
Tokyo Metropolitan University, 
satoko5@tmu.ac.jp},
Kazuo Murota%
\thanks{
Department of Economics and Business Administration,
Tokyo Metropolitan University, 
murota@tmu.ac.jp}, 
Akihisa Tamura%
\thanks{Department of Mathematics, Keio University, 
aki-tamura@math.keio.ac.jp},
Fabio Tardella%
\thanks{Department of Methods and Models for Economics, 
Territory and Finance, 
Sapienza University of Rome, 
fabio.tardella@uniroma1.it}
}
\date{August 2017; November 2018}
\begin{document}

\maketitle


\begin{abstract}
For a function defined on the integer lattice, we consider 
discrete versions of midpoint convexity,
which offer a unifying framework for discrete convexity of functions,
including integral convexity,  ${\rm L}^{\natural}$-convexity, and  submodularity.
By considering discrete midpoint convexity for all pairs at $\ell_\infty$-distance
equal to two or not smaller than two,
we identify new classes of discrete convex functions,
called locally and globally discrete midpoint convex functions.
These functions enjoy nice structural properties.
They are stable under scaling and addition,
and satisfy a family of inequalities named parallelogram inequalities.
Furthermore, they admit a proximity theorem with the same small proximity bound as
that for ${\rm L}^{\natural}$-convex functions.
These structural properties allow us to develop an algorithm 
for the minimization of 
locally and globally discrete midpoint convex functions based on the proximity-scaling
approach and on a novel 2-neighborhood steepest descent algorithm.
\end{abstract}
{\bf Keywords}:
Midpoint convexity, Discrete convex function, Integral convexity, ${\rm L}^{\natural}$-convexity, Proximity theorem, Scaling algorithm.


\section{Introduction}
\label{SCintro}

For a function $f$ defined on a convex set $S \subseteq \RR\sp{n}$, \emph{midpoint convexity} is
the property requiring that
\begin{equation} \label{Realmidconv}
 f(x) + f(y) \geq
    2f \left(\frac{x+y}{2} \right)
\end{equation}
for all $x,y \in S$.
It may be worthwhile 
to recall that the first definition and systematic study of convex functions appeared
more than a century ago in a milestone paper by Jensen \cite{Jen0506}, where midpoint
convexity was used to define convex functions. The now classical defining inequality
for convex functions
\begin{equation} \label{Realconv}
 \alpha f(x) + (1 - \alpha) f(y) \geq
    f(\alpha x + (1 - \alpha) y)
\qquad (0 \leq \alpha \leq 1)
\end{equation}
was then proved by Jensen to be implied by midpoint convexity 
in the case of continuous functions and of functions bounded above. 
In the same paper, the celebrated Jensen inequality was also derived from
midpoint convexity.  The equivalence between midpoint convexity (\ref{Realmidconv}) and classical convexity
(\ref{Realconv}) was later shown to hold under very mild assumptions such as measurability or local boundedness
above (see, e.g., \cite{Bec48}, \cite{Tho94} and references therein).
It is apparent that midpoint convexity implies nonnegativity
of the second order directional differences
$D\sp{2}_h f(x) = f(x+h)-2f(x) + f(x-h)$
for all $x,h \in \RR^{n}$ such that $x+h, x, x-h \in S$.
In the case of twice continuously differentiable functions this clearly implies
that the second order derivatives of $f$ are nonnegative in all directions $h$, and hence that
the Hessian of $f$ is positive semidefinite.

For a function defined on the integer lattice $\ZZ^{n}$, we may consider analogous notions
of \emph{discrete} midpoint convexity where the value of the function at the 
(possibly nonintegral) midpoint
 is replaced by suitable convex combinations of the function values at some integer neighboring points.

For $x,y \in \ZZ^{n}$, one very simple substitute for
$f((x+y)/2)$
in the case of noninteger
$(x+y)/2$ 
is obtained by taking the average of the values of $f$
at the integer round-up and round-down of the point $(x+y)/2$.
In this case we obtain the 
inequality of \emph{discrete midpoint convexity}:
\begin{equation} \label{discmptconvfn}
 f(x) + f(y) \geq
    f \left(\left\lceil \frac{x+y}{2} \right\rceil\right)
  + f \left(\left\lfloor \frac{x+y}{2} \right\rfloor\right),
\end{equation}
where $\lceil \cdot \rceil$ and $\lfloor \cdot \rfloor$ denote
the integer vectors obtained by
rounding up and rounding down each component to the nearest integers,
respectively.

A weaker version of discrete midpoint convexity is obtained 
by considering the smallest possible value of a linear extension 
of $f$ at the point $(x+y)/2$
with respect to the values 
at the integer points neighboring $(x+y)/2$.
More precisely, for a point $x \in \RR^n$ we consider its integer neighborhood
$N(x) = \{ z \in \mathbb{Z}^{n} \mid | x_{i} - z_{i} | < 1 \ (i=1,\ldots,n)  \}$
and the value at $x$ of the convex envelope
$\tilde f$ of $f$ on $N(x)$, which can be represented as
\begin{equation} \label{fnconvclosureloc0}
 \tilde f(x) =
  \min\{ \sum_{z \in N(x)} \lambda_{z} f(z) \mid
      \sum_{z \in N(x)} \lambda_{z} z = x, \
  (\lambda_{z})  \in \Lambda(x) \}
\quad (x \in \RR^{n}) ,
\end{equation}
where
$\Lambda(x)$
denotes the set of coefficients
$(\lambda_{z} \mid z \in N(x) ) \in \RR\sp{N(x)}$
for convex combinations indexed by $N(x)$.
We then consider the following 
inequality of \emph{weak discrete midpoint convexity}:
\begin{equation} \label{weakmidconv}
 f(x) + f(y) \geq  2 \tilde f \left(\frac{x+y}{2} \right) ,
\end{equation}
which is indeed a weaker condition than (\ref{discmptconvfn}) since
$\frac{x+y}{2} = \frac{1}{2} \left\lceil \frac{x+y}{2} \right\rceil +
\frac{1}{2} \left\lfloor \frac{x+y}{2} \right\rfloor  $
implies
$2  \tilde f \left(\frac{x+y}{2} \right)
\leq f \left(\left\lceil \frac{x+y}{2} \right\rceil \right)
  + f \left(\left\lfloor \frac{x+y}{2} \right\rfloor \right)$
by (\ref{fnconvclosureloc0}).

The objective of this paper is twofold:
(i) to highlight discrete midpoint convexity as 
a unifying framework for convexity concepts of functions on the integer lattice $\ZZ^{n}$,
and (ii) to investigate structural and algorithmic properties
of functions defined by versions of discrete midpoint convexity.

New and known classes of functions can be obtained by
requiring discrete midpoint convexity (\ref{discmptconvfn}) or 
weak discrete midpoint convexity (\ref{weakmidconv}) for all points $x,y$
at a prescribed $\ell_\infty$-distance.
It is known that
weak discrete midpoint convexity (\ref{weakmidconv})
coincides with integral convexity \cite{FT90}, 
and that discrete midpoint convexity (\ref{discmptconvfn}) for all $x,y$
characterizes ${\rm L}^{\natural}$-convexity \cite{FM00,Mdcasiam},
whereas (\ref{discmptconvfn}) 
for all $x,y$ at $\ell_\infty$-distance one
 characterizes submodularity \cite{Fuj05book} (see Section \ref{SC-DMC} for details).
By requiring discrete midpoint convexity (\ref{discmptconvfn})
 for all pairs $(x,y)$ at $\ell_\infty$-distance
equal to two or not smaller than two,
we can define new classes of discrete convex functions,
called locally and globally discrete midpoint convex functions,
which are strictly between the classes of ${\rm L}^{\natural}$-convex
and integrally convex functions.

Locally and globally discrete midpoint convex functions
enjoy nice structural properties
possessed by ${\rm L}^{\natural}$-convex functions and not by general integrally convex functions.
Discrete midpoint convex functions
are stable under addition and scaling (Proposition \ref{PRdirintcnvinvarS}, Theorem \ref{THdirintsc}),
and satisfy a family of inequalities 
which are named {\em parallelogram inequalities} (Theorem \ref{THparallelineqgen}).
Furthermore, they admit a proximity theorem
with the same small proximity bound as that for ${\rm L}^{\natural}$-convex functions (Theorem \ref{THdirintprox}).
These structural properties allow us to develop a proximity-scaling based algorithm 
for the minimization of locally and globally discrete midpoint convex functions.

Algorithms based on scaling and proximity   
have been successful for discrete optimization problems such as
resource allocation problems \cite{Hoc07,HS90,IK88,KSI13}
and convex network flow problems
(under the name of ``capacity scaling'') 
\cite{AMO93,IMM05submflow,IS03}.
For separable convex functions defined on discrete rectangles (boxes),
the proximity-scaling approach is straightforward.
${\rm L}^{\natural}$-convex functions are also amenable to this approach:
the scaling operation preserves ${\rm L}^{\natural}$-convexity,
and the proximity theorem holds.
Efficient algorithms for minimizing ${\rm L}^{\natural}$-convex functions 
have been successfully designed with the proximity-scaling approach
\cite{MT09Lrelax,Mdcasiam} and also for {\rm L}-convex functions on graphs 
\cite{Hir15Lextprox,Hir16Lgraph}.
Recently a proximity-scaling algorithm has been 
developed for integrally convex functions in \cite{MMTT17proxIC}. 
The algorithm proposed in this paper employs 
the standard proximity-scaling framework
while using a novel variant of descent algorithm suited for discrete midpoint convex functions.
Our descent algorithm,
named the {\em 2-neighborhood steepest descent algorithm},
repeats finding a local minimizer in the neighborhood of $\ell_{\infty}$-distance 2,
and the number of iterations is shown to be exactly equal to 
the minimum $\ell_{\infty}$-distance to a minimizer,
which extends the results of \cite{KS09lnatmin,MS14exbndLmin}
for ${\rm L}^{\natural}$-convex functions. 
Our scaling algorithm finds
a minimizer of a discrete midpoint convex function
with ${\rm O}(5^{n} n \log_{2} K_{\infty})$
function evaluations, 
where $K_{\infty}$ denotes the $\ell_{\infty}$-size of the effective domain of the function.
This means that, if the dimension $n$ is fixed,
the algorithm is polynomial in the problem size.
Besides the proximity approach,
there are many other approaches to nonlinear integer optimization
\cite{LL12book}.
Among others, methods based on sophisticated algebraic tools
have been developed in the last two decades, as described in 
\cite{DLHK13book,HKLW10,Onn10book} as well as \cite{LL12book}.

In the next section we describe in detail the relations of discrete
midpoint convexity with integral convexity, ${\rm L}^{\natural}$-convexity,
and submodularity. Furthermore, we introduce the new classes of locally
and globally discrete midpoint convex sets and functions, and we analyze their basic
properties. For locally and globally discrete midpoint convex functions
we establish a useful ``parallelogram inequality'' in Section~\ref{SCdirintparaineq},
while in Sections \ref{SCscalingDIC} and \ref{SCdirintcnvprox} we 
present scaling and proximity results.
Such results are then used to develop a 
scaling algorithm for minimization
in Section~\ref{SCminalg}.
Quadratic functions with discrete midpoint convexity are considered in 
Section~\ref{SCquadfnDIC}, while Section~\ref{SCproofgen}
is devoted to technical proofs of basic facts about discrete midpoint convex sets.

\section{Discrete Midpoint Convexity}
\label{SC-DMC}

In this section we first provide an overview of the relations between functions
satisfying (weak) discrete midpoint convexity for all pairs of points 
at a prescribed $\ell_\infty$-distance and the classes of submodular,
integrally convex, and ${\rm L}^{\natural}$-convex functions in the simpler case where
the functions are defined
on a finite rectangular domain (box) of $\ZZ^{n}$. 
Then we show that the same relations also hold in the case 
of more general domains. Furthermore, we show that 
there exist only two classes of discrete midpoint 
convex functions that do not coincide with previously known 
classes of functions. We call these new classes locally and globally 
discrete midpoint convex functions and we describe here their basic properties.

\subsection{New and known classes of discrete midpoint convex functions}
\label{SCnewandoldDMC}

We introduce some notations to better illustrate various classes of discrete midpoint convex
functions and their relations with other known classes of functions.
We denote by DMC($k$)
and by DMC({$\geq$}{$k$})  the classes of functions
$f: \ZZ^{n} \to \RR \cup \{ +\infty \}$
that satisfy discrete midpoint convexity (\ref{discmptconvfn}) for all
$x,y \in \ZZ^n$ with $\|x-y \, \|_\infty = k$
and for all
$x,y \in \ZZ^n$ with $\|x-y \, \|_\infty \geq k$,
respectively.
Similarly, we denote by WDMC($k$) and by WDMC({$\geq$}{$k$})  the classes
of functions that satisfy weak discrete midpoint convexity (\ref{weakmidconv}) for all
$x,y \in \ZZ^n$ with $\|x-y \, \|_\infty = k$ and for all
$x,y \in \ZZ^n$ with $\|x-y \, \|_\infty \geq k$, respectively.

We recall that a function
$f: \ZZ^{n} \to \RR \cup \{ +\infty \}$
is \emph{submodular} if
\begin{equation} \label{submfndef}
 f(x) + f(y) \geq f(x \vee y) + f(x \wedge y)
\end{equation}
for all $x,y \in \ZZ^n$,
where $x \vee y$  and $x \wedge y$ denote the componentwise maximum and minimum
of the vectors $x$ and $y$, respectively.
As is well known, a function $f$ defined on a rectangular domain
is submodular if and only if it satisfies
the inequality (\ref{submfndef})
for every pair $(x, y)$
with $\| x - y \|_{\infty} = 1$.
Furthermore, since
$x \vee y = \left\lceil (x+y)/2  \right\rceil$ and
$x \wedge y = \left\lfloor (x+y)/2 \right\rfloor$
when $\| x - y \|_{\infty} = 1$,
the inequality (\ref{submfndef})
is equivalent to discrete midpoint convexity
(\ref{discmptconvfn}) for points at $\ell_\infty$-distance 1.
Thus, on a rectangular domain, the class of submodular functions
coincides with DMC($1$).

In discrete convex analysis \cite{Mdca98,Mdcasiam,Mbonn09,Mdcaeco16},
a variety of discrete convex functions are considered.
A function
$f: \ZZ^{n} \to \RR \cup \{ +\infty \}$
is called
{\em ${\rm L}^{\natural}$-convex}
if it satisfies
translation-submodularity:
\begin{equation} \label{lnatftrsubm0}
  f(x) + f(y) \geq f((x - \mu {\bf 1}) \vee y)
                 + f(x \wedge (y + \mu {\bf 1}))
\end{equation}
for all $x, y \in \ZZ^{n}$ and nonnegative integers $\mu$,
where $\bm{1}=(1,1,\ldots, 1)$.
It is known that $f$ is ${\rm L}^{\natural}$-convex
if and only if it satisfies
discrete midpoint convexity (\ref{discmptconvfn}) for all $x,y \in \ZZ^{n}$,
or, equivalently, for all points $x,y$ at $\ell_\infty$-distance 1 or 2
 (see Theorem \ref{THlnatcond}). 
 Thus, the class of ${\rm L}^{\natural}$-convex functions coincides with the two 
 equivalent classes 
DMC($\geq$1) = DMC(1) $\cap$ DMC(2).
${\rm L}^{\natural}$-convex functions
form a major class of discrete convex functions
and have applications in several fields including
image processing \cite{KS09lnatmin},
auction theory \cite{LLM06,MSY16auction},
inventory theory \cite{SCB14,Zip08}, and scheduling \cite{BQ11}.

Separable convex functions are a special case of
${\rm L}^{\natural}$-convex functions.
A function $f: \ZZ^{n} \to \RR \cup \{ +\infty \}$ in $x=(x_{1}, \ldots,x_{n})$
is called {\em separable convex} if it can be represented as
$f(x) = \varphi_{1}(x_{1}) + \cdots + \varphi_{n}(x_{n})$
with univariate discrete convex functions
$\varphi_{i}: \ZZ \to \RR \cup \{ +\infty \}$
satisfying discrete midpoint convexity at distance two:
$\varphi_{i}(t-1) + \varphi_{i}(t+1) \geq 2 \varphi_{i}(t)$ for all $t \in \ZZ$.

A function $f: \ZZ^{n} \to \RR \cup \{ +\infty \}$
is called {\em integrally convex}
\cite{FT90} if its local convex envelope
$\tilde{f}: \RR^{n} \to \RR \cup \{ +\infty \}$
defined by (\ref{fnconvclosureloc0})
is (globally) convex in the ordinary sense. Note that
$\tilde{f}$
can be viewed as the collection of convex envelopes of $f$ in each
unit hypercube
$\{ x \in \RR\sp{n} \mid a_{i} \leq x_{i} \leq a_{i} + 1 \ (i=1,\ldots, n) \}$
with $a \in \ZZ^{n}$.
Integrally convex functions
constitute a common framework for discrete convex functions,
including separable convex and
${\rm L}^{\natural}$-convex functions
as well as
${\rm M}^{\natural}$-convex,
${\rm L}^{\natural}_{2}$-convex and
${\rm M}^{\natural}_{2}$-convex functions \cite{Mdcasiam},
and BS-convex and UJ-convex functions \cite{Fuj14bisubdc}.
Submodular integrally convex functions are exactly
${\rm L}^{\natural}$-convex functions \cite{FM00}.
The concept of integral convexity is
used in formulating discrete fixed point theorems
\cite{Iim10,IMT05,Yan09fixpt}.
In game theory the integral concavity of payoff functions
guarantees the existence of a pure strategy equilibrium
in finite symmetric games \cite{IW14}.

In the case of a rectangular domain, integrally convex functions
were shown to coincide with the equivalent classes 
WDMC($2$) = WDMC($\geq$2) already in the 
seminal paper \cite{FT90} (see Theorem \ref{THfavtarProp33}).
In the following section we clarify that the same relation
holds in the case of integrally convex domains (to be defined below).
For more general domains we might have 
WDMC($2$) $\neq$ WDMC($\geq$2). However, as we show in Appendix A,
WDMC($\geq$2) always coincides with the class of integrally convex functions.
To complete the picture, note that weak discrete midpoint convexity 
(\ref{weakmidconv}) trivially holds for any function and for all points 
$x,y$ at $\ell_\infty$-distance 1. Thus WDMC($1$) contains all functions.

After describing the classes 
DMC($1$),    
DMC($\geq$1) = DMC($1$) $\cap$ DMC($2$), 
WDMC($1$),
and 
WDMC($\geq$1) = WDMC($1$) $\cap$ WDMC($2$), 
one might think of the classes 
DMC($k$) or DMC({$\geq$}{$k$}) or WDMC($k$)  or WDMC({$\geq$}{$k$}) for $k \geq 3$. 
However, it does not seem
interesting to consider 
such classes of functions that satisfy (weak) discrete midpoint convexity only for all
$x,y \in \ZZ^n$ with $\|x-y \, \|_\infty \geq 3$,
since this 
condition  is not even sufficient  
for the standard one-dimensional discrete convexity on $\ZZ$. 
For example, the function $g: \ZZ \to \RR$ with $g(0) = 2$ and  $g(z) = z\sp{2}$ for $z \neq 0$
satisfies discrete midpoint convexity for all
$x,y \in \ZZ$ with $\|x-y \, \|_\infty \geq 3$,
but is not discrete convex on $\ZZ$.

Thus, the only potentially interesting classes of discrete midpoint convex functions 
that have not yet been investigated are the classes DMC($2$) and DMC($\geq$2),
which we call \emph{locally discrete midpoint convex functions}%
\footnote{
Locally discrete midpoint convex functions have been previously called
directed integrally convex functions
in the preliminary paper \cite{MMTT16proxICissac}
which anticipates part of the results of this paper.
} 
and  \emph{globally discrete midpoint convex functions}, respectively.
We analyze their properties in Sections \ref{SCdirintcnvfn} and 
\ref{SCdirintparaineq}--\ref{SCdirintcnvprox},
where we also prove that DMC($\geq$2) = DMC($2$) $\cap$ DMC($3$)
(Proposition~\ref{PRmdptdistant}),
thus showing that no intermediate class of discrete midpoint convex functions exists
between locally and globally midpoint convex functions.

\begin{figure}
\begin{center}
\begin{tabular}{|ll|}
\hline
 WDMC(1)
& {: all functions}
\\
 DMC(1)
& {: submodular}
\\
 WDMC(2) = WDMC($\geq$1)
& {: integrally convex}
\\
 DMC(1) $\cap$ WDMC(2)
& {: submodular integrally convex}
\\
= DMC(1) $\cap$ DMC(2) &
\\
= DMC($\geq$1)
& {: ${\rm L}^{\natural}$-convex}
\\
 $\subsetneqq$  DMC(2) $\cap$ DMC(3) = DMC($\geq$2)
&  {: globally discrete midpoint convex}
\\
 $\subsetneqq$  DMC(2)
&  {: locally discrete midpoint convex}
\\
 $\subsetneqq$  WDMC(2)
& {: integrally convex}
\\
\hline
\end{tabular}
\end{center}

\caption{Function classes defined by discrete midpoint convexity
($\dom f$: discrete rectangle)}
\label{FGmidptfnclass}
\end{figure}

\subsection{Weak discrete midpoint convexity and integral convexity}
\label{SCintcnvfn}

Recall that a function
$f: \mathbb{Z}^{n} \to \mathbb{R} \cup \{ +\infty  \}$
is said to be {\em integrally convex}
\cite{FT90}
if its local convex envelope
$\tilde{f}: \RR^{n} \to \RR \cup \{ +\infty \}$
defined by (\ref{fnconvclosureloc0})
with respect to the integral neighborhood
$N(x) = \{ z \in \mathbb{Z}^{n} \mid | x_{i} - z_{i} | < 1 \ (i=1,\ldots,n)  \}$
$(x \in \RR^{n})$ is convex on $\RR^{n}$.

A set $S \subseteq \ZZ^{n}$ is said to be
integrally convex if
the convex hull $\overline{S}$ of $S$ coincides with the union of the
convex hulls of $S \cap N(x)$ over $x \in \RR^{n}$,
i.e., if, for any $x \in \RR^{n}$,
$x \in \overline{S} $ implies $x \in  \overline{S \cap N(x)}$.
A set $S \subseteq \ZZ^{n}$ is integrally convex if and only if
its indicator function $\delta_{S}$ is an integrally convex function,
where the indicator function $\delta_{S}$ is defined by
$\delta_{S}(x) = 0$ for $x \in S$
and $\delta_{S}(x) = + \infty$ for $x \not\in S$.
An integrally convex set is ``hole-free'' in the sense of
$S = \overline{S} \cap \ZZ\sp{n}$.
The effective domain
$\dom f = \{ x \in \ZZ^{n} \mid f(x) < +\infty \}$
and the set of minimizers of an integrally convex function
$f$ are both integrally convex \cite[Proposition 3.28]{Mdcasiam}.

Integral convexity can be characterized
by a local condition
under the assumption that the effective domain
is an integrally convex set.
The following theorem is proved in \cite[Proposition 3.3]{FT90}
when the effective domain
is an integer interval (discrete rectangle)
and in the general case in \cite[Theorem 2.3]{MMTT17proxIC}.

\begin{theorem}[{\cite{FT90,MMTT17proxIC}}] \label{THfavtarProp33}
Let $f: \mathbb{Z}^{n} \to \mathbb{R} \cup \{ +\infty  \}$
be a function with an integrally convex effective domain.
Then the following properties are equivalent:

{\rm (a)}
$f$ is integrally convex.

{\rm (b)}
For every $x, y \in \ZZ^{n}$ with $\| x - y \|_{\infty} =2$
we have \
\begin{equation}  \label{intcnvconddist2}
 f(x) + f(y) \geq 2 \tilde{f}\, \bigg(\frac{x + y}{2} \bigg).
\end{equation}
\vspace{-1.7\baselineskip}
\\
\finbox
\end{theorem}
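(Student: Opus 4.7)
The direction $(a) \Rightarrow (b)$ is routine. If $f$ is integrally convex then $\tilde f$ is convex on $\RR^{n}$; at integer points $\tilde f$ and $f$ agree, so ordinary midpoint convexity of $\tilde f$ applied to any pair $x,y \in \ZZ^{n}$ gives (\ref{intcnvconddist2}) directly. The substantive direction is $(b) \Rightarrow (a)$.

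For $(b) \Rightarrow (a)$, the plan is to exploit the fact that $\tilde f$ is, by construction, piecewise linear on each unit hypercube $[a, a+\vecone]$ with $a \in \ZZ^{n}$, and continuous on $\overline{\dom f}$ by virtue of the integral convexity of $\dom f$. Convexity of $\tilde f$ on $\RR^{n}$ then reduces to a collection of local consistency conditions across adjacent cubes, and these can be repackaged as the integer midpoint inequality
\[
 f(x) + f(y) \;\geq\; 2\,\tilde f\!\left(\frac{x+y}{2}\right)
 \qquad (x, y \in \dom f \cap \ZZ^{n}).
\]
I would prove this by induction on $k := \|x-y\|_{\infty}$. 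The cases $k = 0, 1$ are trivial since then $x$ and $y$ lie in a common unit cube where $\tilde f$ is the local convex envelope of $f$ at the cube vertices; the case $k = 2$ is exactly hypothesis (b).

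The inductive step for $k \geq 3$ is where the main difficulty lies. A naive replacement of $(x, y)$ by $(x + e_{i}, y - e_{i})$ along a coordinate of maximum discrepancy preserves the midpoint $(x+y)/2$ and strictly reduces the $\ell_{\infty}$-distance, but it also alters the left-hand side, and no monotonicity of $f(x)+f(y)$ under such moves is available. The way around is to interpret the target inequality as constructing a feasible convex combination on $N((x+y)/2)$ for the linear program defining $2\,\tilde f((x+y)/2)$: the inductive hypothesis applied to the shrunken pair, combined with (\ref{intcnvconddist2}) applied repeatedly along a carefully chosen lattice path joining $x$ and $y$, should yield such a combination whose cost is dominated by $f(x)+f(y)$. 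Integral convexity of $\dom f$ is indispensable for this telescoping, since it guarantees that every intermediate lattice point used belongs to $\dom f$ so that (\ref{intcnvconddist2}) is applicable at each step. The bookkeeping needed to close the telescoping, together with the final lifting of integer midpoint convexity to midpoint convexity of $\tilde f$ on all of $\overline{\dom f}$ via its piecewise-linear structure, are the steps I would expect to require the most care.
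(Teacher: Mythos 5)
Your direction (a) $\Rightarrow$ (b) is fine. For (b) $\Rightarrow$ (a), note first that the paper does not prove this theorem itself: it is quoted from \cite{FT90} (rectangular domains) and \cite{MMTT17proxIC} (general integrally convex domains); the closest in-paper argument is the proof of Theorem \ref{THicGlobWeakMPC} in Appendix \ref{SCwdmcIC}, which treats the variant where (\ref{intcnvconddist2}) is assumed for all pairs at $\ell_{\infty}$-distance $\geq 2$. That proof does \emph{not} proceed by induction on $\| x - y \|_{\infty}$. It argues by contradiction on a representation $x = \sum_{i} \lambda_{i} y^{i}$ witnessing non-convexity of $\tilde{f}$, selects two generators extremal in a fixed coordinate, and replaces them (partially, weight by weight) via (\ref{intcnvconddist2}) by a weighted family of points from the integer neighborhood of their midpoint, decreasing a lexicographic measure of coordinate spread until all generators lie in $N(x)$.

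The gap in your plan is the inductive step itself, and it is not merely bookkeeping. To telescope you would need to pass from $(x,y)$ to a shrunken pair $(x+d,\, y-d)$ with the same midpoint while controlling $f(x+d)+f(y-d)$ by $f(x)+f(y)$. But hypothesis (b) never compares two \emph{pairs} of function values: applied to a distance-2 pair it bounds $f(x)+f(y)$ from below only by $2\tilde{f}((x+y)/2)$, i.e., by a convex combination of up to $2^{n}$ values of $f$ on $N((x+y)/2)$ with weights you do not control. The two-point replacement you need is in substance the parallelogram inequality (\ref{paraineqxdyd}), which the paper establishes only for (locally or globally) discrete midpoint convex functions, a strictly smaller class than the integrally convex functions characterized here; it is not a consequence of (b). Once you are forced to carry general weighted collections of generators instead of pairs, the induction on $\| x - y \|_{\infty}$ loses its anchor and you are back to the generator-rewriting scheme of the references. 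Separately, your final step---lifting the integer midpoint inequality to convexity of $\tilde{f}$ on $\RR^{n}$---is also left unjustified: midpoint convexity of $\tilde{f}$ at integer arguments says nothing directly about arbitrary real arguments, and closing that step again requires manipulating convex combinations of points of $\dom f$, i.e., the very machinery your sketch omits.
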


Since integral convexity clearly implies weak discrete midpoint convexity
for any pair of points $x, y \in \ZZ^n$, and the condition
in part (b) of Theorem \ref{THfavtarProp33}
is the local version of weak discrete midpoint convexity that defines WDMC(2),
we obtain the following characterization of integral convexity in terms of
``local'' or ``global'' weak discrete midpoint convexity.

\begin{corollary} \label{Cor:WDMC-IntConv}
Let $f: \mathbb{Z}^{n} \to \mathbb{R} \cup \{ +\infty  \}$
be a function with an integrally convex effective domain.
Then the following properties are equivalent%
\footnote{
(a) and (c) are equivalent without the assumption of $\dom f$ being integrally convex;
see Appendix \ref{SCwdmcIC}.
}:  

{\rm (a)}
$f$ is integrally convex.

{\rm (b)}
$f \in$ {\rm WDMC(2)}.

{\rm (c)}
$f \in$ {\rm WDMC($\geq$1)}.
\finbox
\end{corollary}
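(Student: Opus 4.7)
The plan is to close the three-way equivalence by a short cycle (a) $\Rightarrow$ (c) $\Rightarrow$ (b) $\Rightarrow$ (a). Two of these three implications are essentially free: (c) $\Rightarrow$ (b) is immediate because WDMC($\geq 1$) requires (\ref{weakmidconv}) for every pair of distinct integer points, hence in particular for those at $\ell_\infty$-distance $2$; and (b) $\Rightarrow$ (a) is exactly the content of Theorem \ref{THfavtarProp33}, whose hypothesis (integral convexity of $\dom f$) coincides with the standing assumption of the corollary. So the only implication that actually needs an argument is (a) $\Rightarrow$ (c).

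For (a) $\Rightarrow$ (c), I would proceed directly from the definition of integral convexity: $f$ is integrally convex iff the local convex envelope $\tilde{f}$ defined by (\ref{fnconvclosureloc0}) is convex on all of $\RR^{n}$. Since $\tilde{f}$ agrees with $f$ on $\ZZ^{n}$, ordinary midpoint convexity of $\tilde{f}$ applied to an arbitrary pair $x,y \in \ZZ^{n}$ yields
\begin{equation*}
f(x) + f(y) \;=\; \tilde{f}(x) + \tilde{f}(y) \;\geq\; 2\,\tilde{f}\!\left(\frac{x+y}{2}\right),
\end{equation*}
which is (\ref{weakmidconv}). No restriction on $\|x-y\|_\infty$ is imposed in this argument, so in fact $f \in$ WDMC($\geq 1$).

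I do not expect any genuine obstacle: Theorem \ref{THfavtarProp33} does the substantive work, and the other two implications amount to unpacking definitions. The one subtlety worth pointing out in the writeup is that the hypothesis that $\dom f$ be integrally convex is used only through the appeal to Theorem \ref{THfavtarProp33} (i.e.\ only in the step (b) $\Rightarrow$ (a)); the implications (a) $\Rightarrow$ (c) and (c) $\Rightarrow$ (b) hold with no assumption on the domain. This is consistent with the footnote, which records that (a) $\Leftrightarrow$ (c) in general, and defers the harder direction (c) $\Rightarrow$ (a) in that unrestricted setting to Appendix \ref{SCwdmcIC}.
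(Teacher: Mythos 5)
Your proposal is correct and follows essentially the same route as the paper: the text preceding the corollary derives it by observing that integral convexity of $f$ (i.e.\ convexity of $\tilde f$, which agrees with $f$ on $\ZZ\sp{n}$) gives weak discrete midpoint convexity for every pair, that WDMC($\geq 1$) trivially contains WDMC($2$), and that (b) $\Rightarrow$ (a) is exactly Theorem~\ref{THfavtarProp33}. Your remark that the integral convexity of $\dom f$ is needed only for the step (b) $\Rightarrow$ (a) also matches the paper's footnote.
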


One of the most useful properties of convex functions is the
local characterization of global minima. This property holds
for integrally convex functions in the following form.

\begin{theorem}[\protect{\cite[Proposition 3.1]{FT90}};
  see also \protect{\cite[Theorem 3.21]{Mdcasiam}}]
  \label{THintcnvlocopt}
Let $f: \mathbb{Z}^{n} \to \mathbb{R} \cup \{ +\infty  \}$
be an integrally convex function and $x^{*} \in \dom f$.
Then $x^{*}$ is a minimizer of $f$
if and only if
$f(x^{*}) \leq f(x^{*} +  d)$ for all
$d \in  \{ -1, 0, +1 \}^{n}$.
\finbox
\end{theorem}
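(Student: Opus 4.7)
The ``only if'' direction is immediate from the definition of a global minimizer. The nontrivial direction is ``if'', and my plan is a standard descent-path argument that leverages the global convexity of the local convex envelope $\tilde f$, which is guaranteed by integral convexity.

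Assume the local condition $f(x^{*}) \leq f(x^{*}+d)$ for every $d \in \{-1,0,+1\}^{n}$, and suppose for contradiction that there exists $y \in \dom f$ with $f(y) < f(x^{*})$. Consider the segment $z(t) = x^{*} + t(y-x^{*})$ for $t \in (0,1]$. Since $\tilde f$ is convex on $\RR^{n}$ and agrees with $f$ on $\ZZ^{n}$, we obtain
\begin{equation*}
\tilde f(z(t)) \;\leq\; (1-t)\,f(x^{*}) + t\,f(y) \;<\; f(x^{*})
\qquad (t > 0).
\end{equation*}
The plan is to combine this strict upper bound with a matching lower bound coming from the hypothesis.

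For the lower bound, I would choose $t$ small enough that every $w \in N(z(t))$ is of the form $x^{*} + d$ with $d \in \{-1,0,+1\}^{n}$. This is possible: for $w \in \ZZ^{n}$ with $|w_{i} - z(t)_{i}| < 1$, one has $|w_{i} - x_{i}^{*}| < 1 + t\|y - x^{*}\|_{\infty}$, so taking any $t \in (0, 1/\|y-x^{*}\|_{\infty})$ forces $|w_{i}-x_{i}^{*}| \leq 1$. Then by the definition (\ref{fnconvclosureloc0}) of $\tilde f$, write $\tilde f(z(t)) = \sum_{w \in N(z(t))} \lambda_{w} f(w)$ for some convex coefficients $(\lambda_{w})$; since each such $w = x^{*}+d$ satisfies $f(w) \geq f(x^{*})$ by hypothesis, we get
\begin{equation*}
\tilde f(z(t)) \;\geq\; \sum_{w \in N(z(t))} \lambda_{w}\, f(x^{*}) \;=\; f(x^{*}),
\end{equation*}
contradicting the strict upper bound above.

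The only subtle point is the compatibility of the two bounds at a single scalar $t$: the upper bound holds for every $t > 0$, while the inclusion $N(z(t)) \subseteq \{x^{*}+d : d \in \{-1,0,+1\}^{n}\}$ requires $t$ small, so picking any $t \in (0,1/\|y-x^{*}\|_{\infty})$ suffices. Everything else is bookkeeping, and no structural property beyond the global convexity of $\tilde f$ (i.e., integral convexity of $f$) is used.
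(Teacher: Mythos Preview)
The paper does not supply its own proof of this theorem; it is quoted from \cite{FT90} and \cite{Mdcasiam} and closed with a \finbox. Your argument is correct and is essentially the standard proof found in those sources: global convexity of $\tilde f$ along the segment from $x^{*}$ to a putative better point $y$ gives the strict upper bound, while for any $t \in (0,\,1/\|y-x^{*}\|_{\infty})$ the integer neighborhood $N(z(t))$ is contained in $x^{*}+\{-1,0,+1\}^{n}$, so the local hypothesis gives the matching lower bound and the contradiction. As a side note, the paper records (Remark~\ref{RMparaineqlocopt}) that for the narrower class of discrete midpoint convex functions the same conclusion also follows from the parallelogram inequality of Theorem~\ref{THparallelineqgen}, and it states the box-barrier property (Theorem~\ref{THintcnvbox}) as a strict generalization, but neither replaces a proof of the general integrally convex case, which your argument handles directly.
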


Theorem~\ref{THintcnvlocopt} above can be generalized to the following
``box-barrier property'' (see Fig.~\ref{FGboxbarrier}),
of which Theorem~\ref{THintcnvlocopt} is a special case
with $p=\hat x - \bm{1}$ and $q=\hat x + \bm{1}$.

\begin{figure}\begin{center}
\includegraphics[height=40mm]{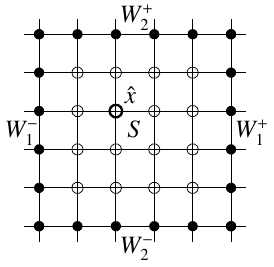}
\caption{Box-barrier property ($\circ \in S$, $\bullet \in W$)}
\label{FGboxbarrier}
\end{center}\end{figure}

\begin{theorem}[Box-barrier property \cite{MMTT16proxICissac,MMTT17proxIC}] \label{THintcnvbox}
Let $f: \mathbb{Z}^{n} \to \mathbb{R} \cup \{ +\infty  \}$
be an integrally convex function,
and let
$p \in (\mathbb{Z} \cup \{ -\infty  \})^{n}$
and
$q \in (\mathbb{Z} \cup \{ +\infty  \})^{n}$,
where $p \leq q$.
Define
\begin{align*}
S &= \{ x \in \mathbb{Z}^{n} \mid p_{i} < x_{i} < q_{i} \ (i=1,\ldots,n) \},
\\
W_{i}^{+} &= \{ x \in \mathbb{Z}^{n} \mid
 x_{i} = q_{i}, \  p_{j} \leq x_{j} \leq q_{j} \ (j \not= i) \}
\quad (i=1,\ldots,n),
\\
W_{i}^{-} &= \{ x \in \mathbb{Z}^{n} \mid
 x_{i} = p_{i}, \  p_{j} \leq x_{j} \leq q_{j} \ (j \not= i) \}
\quad (i=1,\ldots,n),
\end{align*}
and $W = \bigcup_{i=1}^{n} (W_{i}^{+} \cup W_{i}^{-})$.
Let $\hat x \in S \cap \dom f$.
If
$f(\hat x) \leq f(y)$ for all $y \in W$,
then
$f(\hat x) \leq f(z)$ for all $z \in \ZZ^{n} \setminus S$.
\finbox
\end{theorem}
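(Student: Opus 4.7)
The plan is to argue by contradiction and use the equivalence between integral convexity of $f$ and the convexity of its local envelope $\tilde f$ on $\RR^{n}$. Suppose, toward a contradiction, that some $z \in \ZZ^{n} \setminus S$ satisfies $f(z) < f(\hat x)$. If $z$ already lies in the closed box $[p,q]$ (meaning $p_{j} \leq z_{j} \leq q_{j}$ for all $j$), then at least one coordinate must saturate $p_{i}$ or $q_{i}$, placing $z$ in $W$; the hypothesis then gives $f(z) \geq f(\hat x)$, a contradiction. I may therefore assume $z$ lies strictly outside $[p,q]$, i.e., $z_{i} > q_{i}$ or $z_{i} < p_{i}$ for some $i$.

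Because $\hat x$ lies in the open interior of $[p,q]$ and $z$ lies strictly outside, the real segment $y(t) := t\hat x + (1-t)z$ must cross the boundary of $[p,q]$; I would pick $\lambda \in (0,1)$ so that $y := y(\lambda)$ is a boundary point, so $p_{j} \leq y_{j} \leq q_{j}$ for every $j$, and $y_{i^{*}} \in \{p_{i^{*}}, q_{i^{*}}\}$ for at least one coordinate $i^{*}$ (for which the corresponding bound is necessarily a finite integer). Since $\hat x, z \in \ZZ^{n}$, we have $N(\hat x)=\{\hat x\}$ and $N(z)=\{z\}$, hence $\tilde f(\hat x) = f(\hat x)$ and $\tilde f(z) = f(z)$; the convexity of $\tilde f$ along the segment then yields
\[
 \tilde f(y) \;\leq\; \lambda\, f(\hat x) + (1-\lambda)\, f(z) \;<\; f(\hat x).
\]

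The heart of the proof is then to derive the opposing inequality $\tilde f(y) \geq f(\hat x)$ by showing the inclusion $N(y) \subseteq W$. For any $z' \in N(y)$ and any coordinate $j$, the strict bound $|z'_{j} - y_{j}| < 1$ together with $p_{j} \leq y_{j} \leq q_{j}$ and the fact that $p_{j}, q_{j}$ are integer-valued (or $\pm\infty$) forces $p_{j} \leq z'_{j} \leq q_{j}$; moreover, the integrality of $y_{i^{*}}$ makes $|z'_{i^{*}} - y_{i^{*}}| < 1$ collapse to $z'_{i^{*}} = y_{i^{*}}$, so $z' \in W_{i^{*}}^{+} \cup W_{i^{*}}^{-} \subseteq W$. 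The hypothesis then gives $f(z') \geq f(\hat x)$ for every $z' \in N(y)$, and since $\tilde f(y)$ is by definition (\ref{fnconvclosureloc0}) a convex combination of these values, we conclude $\tilde f(y) \geq f(\hat x)$, contradicting the strict inequality above. The main obstacle is precisely this combinatorial verification that $N(y) \subseteq W$, which depends crucially on $p$ and $q$ having only integer or infinite components; the remainder is a routine application of convexity of the local extension.
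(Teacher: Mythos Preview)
The paper does not give its own proof of this theorem; it is stated with a citation to \cite{MMTT16proxICissac,MMTT17proxIC} and closed immediately with a \finbox.  Your argument is correct and is in fact the standard one: intersect the real segment from $\hat x$ to $z$ with the boundary of the closed box $[p,q]$, use the integrality of the saturated bound $p_{i^{*}}$ or $q_{i^{*}}$ to force every point of $N(y)$ into the face $W_{i^{*}}^{\pm}\subseteq W$, and then play the hypothesis $f\geq f(\hat x)$ on $W$ (hence $\tilde f(y)\geq f(\hat x)$) against the convexity of $\tilde f$ along the segment (hence $\tilde f(y)<f(\hat x)$) to obtain a contradiction.  The only point worth making explicit is that the crossing parameter $\lambda$ is taken so that $y=y(\lambda)$ is the \emph{first} boundary point encountered when moving from $\hat x$ toward $z$ (equivalently, $\lambda=\min\{t\in[0,1]:y(t)\in[p,q]\}$ in your parametrization); this is what guarantees simultaneously that all coordinates of $y$ lie in $[p_{j},q_{j}]$ and that $\lambda\in(0,1)$.
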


\subsection{Discrete midpoint convex sets}
\label{SCdirintcnvset}

We call a set $S \subseteq  \ZZ\sp{n}$ {\em discrete midpoint convex} if
\begin{equation} \label{dirintcnvsetdef}
 x, y \in S, \ \| x - y \|_{\infty} \geq 2
\ \Longrightarrow \
\left\lceil \frac{x+y}{2} \right\rceil ,
\left\lfloor \frac{x+y}{2} \right\rfloor  \in S.
\end{equation}
For comparison we recall \cite{Mdcasiam} that a set $S \subseteq  \ZZ\sp{n}$ is
called {\em ${\rm L}^{\natural}$-convex}
if
$\left\lceil (x+y)/2 \right\rceil, \left\lfloor (x+y)/2  \right\rfloor  \in S$
for all $x, y \in S$.

\begin{proposition}  \label{PRdirintcnvSetIC}
\label{PRdirintSetL}
\quad \
\\
{\rm (1)}
An ${\rm L}\sp{\natural}$-convex set
is discrete midpoint convex.
\\
{\rm (2)}
A discrete midpoint convex set is integrally convex.
\end{proposition}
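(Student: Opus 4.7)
Part (1) is immediate: the $L^{\natural}$-convexity condition demands $\lceil(x+y)/2\rceil, \lfloor(x+y)/2\rfloor \in S$ for every pair $x,y \in S$, so restricting the same requirement to pairs at $\ell_\infty$-distance $\geq 2$ is automatic.

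For part (2), I fix $x \in \overline{S}$ and want to show $x \in \overline{S \cap N(x)}$. My plan is to pick a convex representation $x = \sum_k \lambda_k z_k$ with $z_k \in S$, $\lambda_k > 0$, that minimizes the quadratic potential $\Phi = \sum_k \lambda_k \|z_k - x\|_2^2$, and then to argue that its support $T$ must lie in $N(x)$. Existence of such a minimizer I would secure via Carath\'eodory (at most $n+1$ terms) together with the elementary bound $\max(\|p-x\|_\infty, \|q-x\|_\infty) \leq \max(\|z_i-x\|_\infty, \|z_j-x\|_\infty)$ on the midpoints used in the improvement step below; this keeps every integer point in play inside a fixed $\ell_\infty$-ball, reducing the optimization to finitely many supports each paired with a compact simplex of weights.

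The engine of the argument is the improvement step: whenever two support points satisfy $\|z_i - z_j\|_\infty \geq 2$, discrete midpoint convexity puts $p := \lceil (z_i+z_j)/2 \rceil$ and $q := \lfloor (z_i+z_j)/2 \rfloor$ in $S$, and the identity $p+q = z_i+z_j$ with $\mu := \min(\lambda_i, \lambda_j)$ allows the substitution
\[
\lambda_i z_i + \lambda_j z_j \;=\; \mu p + \mu q + (\lambda_i - \mu) z_i + (\lambda_j - \mu) z_j .
\]
A parallelogram-identity calculation gives $\Phi_{\text{new}} - \Phi = \tfrac{\mu}{2}(\|p-q\|_2^2 - \|z_i - z_j\|_2^2) < 0$, since $|p^c - q^c| \leq 1$ coordinatewise while $|z_i^c - z_j^c| \geq 2$ in at least one coordinate $c$. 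Consequently the minimizer admits no pair of support points at $\ell_\infty$-distance $\geq 2$, i.e., $\operatorname{diam}_\infty(T) \leq 1$, so $T$ sits in a single closed unit cube that also contains $x$.

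The step I expect to be the most delicate is the final implication $\operatorname{diam}_\infty(T) \leq 1 \Rightarrow T \subseteq N(x)$, because the strict inequality $|z^c - x^c| < 1$ in the definition of $N(x)$ can fail on boundary coordinates where $x^c \in \ZZ$. I would handle this by noting that for such a coordinate $c$ the unit cube forces $z_k^c \in \{\lfloor x^c \rfloor, \lceil x^c \rceil\}$, and then the equality $\sum_k \lambda_k z_k^c = x^c$ with $\lambda_k > 0$ pins $z_k^c = x^c$ for every $k$; in the remaining coordinates both admissible cube values automatically satisfy $|z_k^c - x^c| < 1$. Concluding $T \subseteq N(x)$ then yields $x \in \overline{T} \subseteq \overline{S \cap N(x)}$, as required.
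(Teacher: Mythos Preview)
Your argument is correct, and your route for part~(2) is genuinely different from the paper's. The paper works one coordinate at a time: starting from an arbitrary convex representation $z=\sum_i \lambda_i y^i$, it focuses on the $n$-th coordinate, repeatedly picks a generator $y^1$ with maximal $|y^1_n - z_n|$, pairs it against generators on the opposite side of $z_n$ carrying enough total weight, and replaces the matched portion by discrete midpoints; a lexicographic count shows this terminates with $|y^i_n - z_n|<1$ for all $i$, and then one must verify that this condition on the $n$-th coordinate survives while the same process is rerun for coordinates $n-1,\ldots,1$. Your variational approach replaces all of that coordinatewise bookkeeping by the single potential $\Phi=\sum_k\lambda_k\|z_k-x\|_2^2$: the parallelogram identity gives a strict drop under any discrete-midpoint substitution of a pair at $\ell_\infty$-distance $\ge 2$, so a $\Phi$-minimizer (whose existence you secure by confining the search to a fixed $\ell_\infty$-ball around $x$) automatically has support of $\ell_\infty$-diameter at most~$1$. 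The paper's argument is more explicitly algorithmic and lands directly on the strict inequalities $|y^i_j - z_j|<1$ defining $N(z)$; yours is cleaner and avoids the invariance check across coordinates, at the price of the small extra step you flagged---forcing $z_k^c=x^c$ on integer coordinates $x^c$ via the averaging identity---to pass from $\operatorname{diam}_\infty(T)\le 1$ to $T\subseteq N(x)$.
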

\begin{proof}
(1) is obvious, whereas the proof of (2) is given in Section~\ref{SCdicproofPRdirintcnvSetIC}.
\end{proof}

The following proposition states some straightforward
properties of discrete midpoint convex sets.

\begin{proposition} \label{PRdirintcnvSeteasy}
\quad \
\\
{\rm (1)}
Every subset of $\{ 0, 1\}\sp{n}$ is discrete midpoint convex.
\\
{\rm (2)}
The intersection of two (or more) discrete midpoint convex sets is
discrete midpoint convex.
\finbox
\end{proposition}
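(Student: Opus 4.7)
The plan is to verify both claims directly from the definition (\ref{dirintcnvsetdef}) of discrete midpoint convexity. Neither part involves a substantive obstacle; the content is an immediate unpacking of the definition, and the main task is to state the argument cleanly.

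For part (1), I would observe that if $S \subseteq \{0,1\}^{n}$, then for any $x, y \in S$ every coordinate difference $x_i - y_i$ lies in $\{-1, 0, 1\}$, so $\|x - y\|_{\infty} \leq 1$. Hence the hypothesis $\|x - y\|_{\infty} \geq 2$ in (\ref{dirintcnvsetdef}) is never satisfied, and the defining implication holds vacuously. Thus every subset of $\{0,1\}^{n}$ is discrete midpoint convex.

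For part (2), I would argue by directly checking the defining implication. Let $S_{1}, S_{2} \subseteq \ZZ^{n}$ be discrete midpoint convex and set $S = S_{1} \cap S_{2}$. Given $x, y \in S$ with $\|x - y\|_{\infty} \geq 2$, we have $x, y \in S_{k}$ for $k = 1, 2$. Applying (\ref{dirintcnvsetdef}) to each $S_{k}$ yields $\lceil (x+y)/2 \rceil \in S_{k}$ and $\lfloor (x+y)/2 \rfloor \in S_{k}$ for both $k$, so both rounded midpoints belong to the intersection $S$. Hence $S$ is discrete midpoint convex. The extension to arbitrarily many sets is identical, since the membership of the rounded midpoints in every factor implies membership in the overall intersection.

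The hardest part, if any, is simply deciding how much detail to include: since both statements are immediate from the definition, I would keep the write-up short, essentially just noting the distance bound for (1) and the componentwise verification for (2). No auxiliary results from earlier in the paper (such as Proposition~\ref{PRdirintcnvSetIC}) are needed.
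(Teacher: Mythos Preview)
Your proposal is correct and matches the paper's treatment: the paper states this proposition without proof (the \finbox\ marks it as immediate), and your direct verification from the definition—vacuous for (1) since $\|x-y\|_\infty \le 1$ on the hypercube, componentwise for (2)—is exactly the intended argument.
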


\begin{remark} \rm  \label{RMdicinclusion}
The inclusion relations among the classes of
${\rm L}\sp{\natural}$-convex sets,
discrete midpoint convex sets, and
integrally convex sets
are all proper.  That is,
$\{ \mbox{${\rm L}\sp{\natural}$-convex sets} \}$
 $\subsetneqq$
$\{ \mbox{discrete midpoint convex sets} \}$
 $\subsetneqq$
$\{ \mbox{integrally convex sets} \}$.
For example, the set $\{ (1,0), (0,1)\}$
is a discrete midpoint convex set that is not ${\rm L}\sp{\natural}$-convex.
The set $\{ (x_{1},x_{2}) \in \ZZ\sp{2} \mid x_{1}+x_{2}=0 \}$
is an integrally convex set that is not discrete midpoint convex.
\finbox
\end{remark}

\begin{remark} \rm  \label{RMmpcL2}
A set $S \subseteq \ZZ\sp{n}$ is called
{\em ${L}^{\natural}_{2}$-convex}
if it can be represented as the Minkowski sum of two
${\rm L}^{\natural}$-convex sets.
There is no inclusion relation between
${\rm L}^{\natural}_{2}$-convex sets and discrete midpoint convex sets.
For example, the set $\{ (1,0), (0,1)\}$
is a discrete midpoint convex set that is not ${\rm L}^{\natural}_{2}$-convex.
The set $\{ (0,0,0,0), (0,1,1,0), (1,1,0,0), (1,2,1,0) \}$
is an ${\rm L}^{\natural}_{2}$-convex set
that is not discrete midpoint convex.
Indeed, this set is the Minkowski sum of two
${\rm L}^{\natural}$-convex sets
$\{ (0,0,0,0), (1,1,0,0) \}$ and $\{ (0,0,0,0), (0,1,1,0) \}$,
and (\ref{dirintcnvsetdef}) fails for $x=(0,0,0,0)$ and $y=(1,2,1,0)$.
\finbox
\end{remark}

Besides midpoint convexity,  a convex set $S$ in $\RR\sp{n}$
has the property that for two points $x$ and $y$ in $S$
and a direction vector $d = t(y-x)$ from $x$ to $y$
with $0 \leq t \leq 1$,
both $x+d$ and $y-d$ belong to $S$.
Theorem \ref{THdirintcnvSetdec} below
shows a discrete analogue of this property,
where the direction vector $d$ is discretized in a specific manner.

For any pair of distinct vectors $x, y \in \ZZ\sp{n}$,
we can consider a decomposition of $y-x$ into vectors of $\{ -1,0,+1 \}\sp{n}$ as
\begin{equation} \label{DICdecAkBksum}
y - x = \sum_{k=1}^{m} (\bm{1}_{A_{k}} - \bm{1}_{B_{k}}),
\end{equation}
where $m = \| y - x \|_{\infty}$,
$\bm{1}_{A}$ is the characteristic vector of $A \subseteq \{ 1,\ldots, n\}$,
\begin{equation} \label{DICdecAkBkdef}
 A_{k} = \{ i \mid y_{i}- x_{i} \geq m+1-k \},
\quad
 B_{k} = \{ i \mid y_{i}- x_{i} \leq -k \}
\qquad (k=1,\ldots,m) .
\end{equation}
Note that
$A_{1} \subseteq A_{2}  \subseteq \cdots \subseteq A_{m}$, \
$B_{1} \supseteq B_{2} \supseteq \cdots \supseteq B_{m}$, \
$A_{m} \cap B_{1} = \emptyset$, and
$A_{1} \cup B_{m} \not= \emptyset$.

\begin{example} \rm \label{EXvecdecAB}
Figure \ref{FGvecdecAB} shows the subsets $A_{k}$ and $B_{k}$
in (\ref{DICdecAkBkdef})
for $x=(0,0,0,0,0,0,0)$ and $y=(4,2,1,0,-1,-2,-2)$.
We have $m= 4$,
$(A_{1},A_{2},A_{3},A_{4})=( \{ 1 \}, \{ 1 \},  \{ 1,2 \}, \{ 1,2,3 \})$
and
$(B_{1},B_{2},B_{3},B_{4})=( \{ 5,6,7 \}, \{ 6,7 \},  \emptyset, \emptyset)$.
Accordingly, the decomposition (\ref{DICdecAkBksum}) is given by
$y-x = (4,2,1,0,-1,-2,-2) =
(1,0,0,0,-1,-1,-1) +
(1,0,0,0,0,-1,-1) +
(1,1,0,0,0,0,0) +
(1,1,1,0,0,0,0)$.
\finbox
\end{example}

\begin{figure}\begin{center}
\includegraphics[height=30mm]{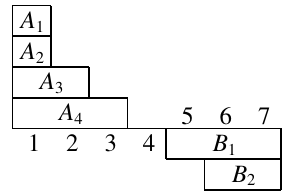}
\caption{Subsets $A_{k}$ and $B_{k}$ in (\ref{DICdecAkBkdef})
for $y-x = (4,2,1,0,-1,-2,-2)$ in Example \ref{EXvecdecAB}}
\label{FGvecdecAB}
\end{center}\end{figure}

\begin{theorem}  \label{THdirintcnvSetdec}
Let $S  \subseteq \mathbb{Z}\sp{n}$
be a discrete midpoint convex set, $x, y \in S$,
and  $d = \sum_{k \in J} (\bm{1}_{A_{k}} - \bm{1}_{B_{k}})$
for some
$J \subseteq \{ 1,2,\ldots, m \}$.
Then  $x+d \in S$ and $y-d \in S$.
\end{theorem}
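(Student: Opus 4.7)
I plan to prove the theorem by induction on $m = \|y - x\|_{\infty}$. First, observe that $d(\bar J) = (y - x) - d(J)$, where $\bar J = \{1, \ldots, m\} \setminus J$, so $y - d(J) = x + d(\bar J)$. Hence the ``$y - d \in S$'' claim follows immediately from the ``$x + d \in S$'' claim (applied to $\bar J$), and it suffices to prove $x + d(J) \in S$ for every $J \subseteq \{1, \ldots, m\}$.

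The base cases $m = 0$ and $m = 1$ are immediate, since $d(J) \in \{\mathbf{0}, y - x\}$ forces $x + d(J) \in \{x, y\} \subseteq S$.

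For the inductive step $m \geq 2$, discrete midpoint convexity of $S$ ensures that $p := \lceil (x+y)/2 \rceil$ and $q := \lfloor (x+y)/2 \rfloor$ both lie in $S$, and each of $\|p-x\|_{\infty}, \|y-p\|_{\infty}, \|q-x\|_{\infty}, \|y-q\|_{\infty}$ is bounded by $\lceil m/2 \rceil < m$. The induction hypothesis then applies to each of the four pairs $(x,p), (p,y), (x,q), (q,y)$, showing that the associated ``orbits'' $T^{x'y'} := \{x' + d^{x'y'}(J^*) : J^*\}$ are all contained in $S$. A direct computation relates the decomposition $(A_k^{xp}, B_k^{xp})$ of $(x, p)$ to $(A_k^{xy}, B_k^{xy})$ via a shifted indexing whose form depends on the parity of $m$ and on the sign of each coordinate of $y - x$ (for instance, when $m$ is odd one has $A_k^{xp} = A_{2k-1}^{xy}$ while $B_k^{xp} = B_{2k}^{xy}$, with analogous relations for the other three half-pairs). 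For many choices of $J$, this correspondence identifies $x + d^{xy}(J)$ outright with a point in one of the four orbits.

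The remaining difficulty, which I expect to be the main technical obstacle, is to handle those $J$'s whose target point lies in none of the four orbits. A minimal example is $y - x = (6, 2)$ with $J = \{1,2,3,4\}$, whose target $x + (4, 0)$ does not coincide with any orbit point (since on coordinate $1$ one has $(d^{xy}(J))_1 = 4$, exceeding $(p-x)_1 = 3$, while on coordinate $2$ one has $(d^{xy}(J))_2 = 0$, falling short of $(p-x)_2 = 1$). For such $J$'s one must apply the midpoint operation of $S$ one or more additional times to orbit points already known to lie in $S$; in the example, $(4, 0) = \lfloor ((3,0) + (5,1))/2 \rfloor$ where $(3, 0) = \lfloor ((2,0) + (4,1))/2 \rfloor$ is itself obtained from $(2,0) \in T^{xp}$ and $(4,1) \in T^{py}$. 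The existence of such an iterated midpoint chain for every exceptional $J$ should be established by a secondary induction on a combinatorial complexity measure (for instance, the maximum coordinate-wise excess of $(d^{xy}(J))_i$ beyond the orbit bounds), each step exploiting the nested structure $A_1 \subseteq \cdots \subseteq A_m$ and $B_1 \supseteq \cdots \supseteq B_m$ to align the midpoint construction with a valid pair of points in $S$.
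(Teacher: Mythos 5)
Your reduction to proving $x+d(J)\in S$ (via $y-d(J)=x+d(\bar J)$) and the outer induction on $m$ are fine, and you have correctly located the crux: for a general $J\subseteq\{1,\dots,m\}$ the target $x+d(J)$ (e.g.\ $x+(4,0)$ for $y-x=(6,2)$, $J=\{1,2,3,4\}$) lies in none of the four orbits generated by the midpoints $p,q$, because a mixed subset of the chain decomposition of $y-x$ is neither a partial sum of the decomposition of $p-x$ nor $p$ plus a partial sum of that of $y-p$. But resolving exactly this is the entire content of the theorem, and your proposal leaves it as a hope (``should be established by a secondary induction on a combinatorial complexity measure''), checked only on one two-dimensional instance. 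Nothing in the sketch explains how, for arbitrary $n$ and arbitrary chains $A_1\subseteq\cdots\subseteq A_m$, $B_1\supseteq\cdots\supseteq B_m$, one systematically produces two already-certified points of $S$ at $\ell_\infty$-distance at least $2$ whose rounded midpoint is the missing target, nor why the proposed ``excess'' measure strictly decreases. There is also a concrete obstruction to the clean index correspondence you assert: when $m$ is odd and $\|y-x\|_{\infty}\sp{+}=\|y-x\|_{\infty}\sp{-}=m$ (the \emph{critical} case of Lemma~\ref{LMtamProp3}), one has $\|p-x\|_{\infty}+\|y-p\|_{\infty}=m+1$, so the two half-decompositions together contain one more piece than the full decomposition and cannot be matched index-by-index with $\{1,\dots,m\}$. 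This is a genuine gap, not a detail.

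For comparison, the proof in Section~\ref{SCdicproofTHdirintcnvSetdec} does not induct on the chain decomposition directly. It first establishes the partial-sum property for a recursively defined decomposition $D_{0}(y-x)$ built from repeated halving; there the induction closes because an arbitrary subset $E$ splits along the two halves as $E_{1}\cup E_{2}$, and a \emph{third} application of the induction hypothesis --- to the translated pair $\bigl(x+\sum E_{1},\ x''+\sum E_{1}\bigr)$ with $x''=\lfloor (x+y)/2\rfloor$ --- absorbs $E_{2}$ (Lemma~\ref{LMtamProp6}); your four orbits lack this translated-pair step, which is what handles the ``mixed'' subsets. The property is then transferred to the chain decomposition $\{\bm{1}_{A_{k}}-\bm{1}_{B_{k}}\}$ through two intermediate modifications: $D_{1}$, which merges the extra piece created by critical vectors, and $D_{2}$, which repeatedly replaces incomparable pairs by their rounded midpoints (Lemmas~\ref{LMdecom-twist2} and \ref{LMdecom-twist2Muro}), each replacement using discrete midpoint convexity once more. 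Your ``iterated midpoint chain'' for exceptional $J$ is in spirit what the $D_{2}$-step accomplishes, but making it precise amounts to Steps 2--5 of the paper's argument, which your proposal does not supply.
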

\begin{proof}
The proof is given in Section~\ref{SCdicproofTHdirintcnvSetdec}.
\end{proof}

The property shown in Theorem \ref{THdirintcnvSetdec}
will be referred to as the ``parallelogram property''
of discrete midpoint convex sets,
as a parallelogram is formed by the four points $x$, $x+d$, $y$, and $y-d$
(cf.~Fig.~\ref{FGparaset}).
This property
will be used to establish the
``parallelogram inequality,''
a key property of discrete midpoint convex functions,
in Theorem \ref{THparallelineqgen}.

\begin{figure}\begin{center}
\includegraphics[height=25mm]{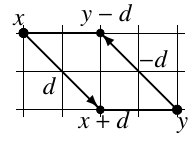}
\caption{Parallelogram property}
\label{FGparaset}
\end{center}\end{figure}

\begin{example} \rm \label{EXparaset}
For $x=(0,0)$ and $y=(4,-2)$,
the decomposition (\ref{DICdecAkBksum})
is given as
$y-x = (4,-2) = (1,-1) + (1,-1) + (1,0) + (1,0)$
with $m= 4$,
$(A_{1},B_{1})=(A_{2},B_{2})=( \{ 1 \}, \{ 2 \})$, and
$(A_{3},B_{3})=(A_{4},B_{4})=( \{ 1 \}, \emptyset)$.
Suppose that $x$ and $y$ are contained in a discrete midpoint convex set $S$.
Different choices of
$J \subseteq \{ 1,2,3,4 \}$ in Theorem \ref{THdirintcnvSetdec}
generate different points $x+d, y-d$ contained in $S$,
as illustrated in Fig.~\ref{FGparasetEX}.
\finbox
\end{example}

\begin{figure}\begin{center}
\includegraphics[width=\textwidth]{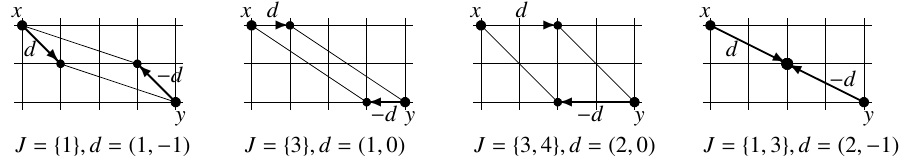}
\caption{Parallelogram property for $x= (0,0)$, $y=(4,-2)$
in Example \ref{EXparaset}}
\label{FGparasetEX}
\end{center}\end{figure}

\subsection{Discrete midpoint convex functions}
\label{SCdirintcnvfn}

In this section we consider discrete midpoint convexity
\begin{equation} \label{discmptconvfn2}
 f(x) + f(y) \geq
    f \left(\left\lceil \frac{x+y}{2} \right\rceil\right)
  + f \left(\left\lfloor \frac{x+y}{2} \right\rfloor\right)
\end{equation}
for functions $f: \ZZ\sp{n} \to \RR \cup \{ +\infty \}$.

In Section \ref{SCnewandoldDMC}
we have defined
${\rm L}^{\natural}$-convex functions
in terms of translation-submodularity (\ref{lnatftrsubm0}).
However, ${\rm L}^{\natural}$-convexity can be viewed as a type 
of discrete midpoint convexity by the following known result.

\begin{theorem}
[\protect{\cite{FT90, FM00,Mdcasiam}}]
 \label{THlnatcond}
For $f: \ZZ^{n} \to \RR \cup \{ +\infty \}$
the following conditions, {\rm (a)} to {\rm (d)}, are equivalent:%
\footnote{
$\ZZ$-valued functions are treated in \cite[Theorem~3]{FM00},
but the proof is valid for $\RR$-valued functions.
} 

{\rm (a)}
$f$ satisfies translation-submodularity
{\rm (\ref{lnatftrsubm0})}.

{\rm (b)}
$f$ satisfies discrete midpoint convexity
{\rm (\ref{discmptconvfn2})} for all $ x, y \in \ZZ^{n}$,
i.e., $f \in \mbox{\rm DMC($\geq$1)}$.

{\rm (c)}
$f$ satisfies
discrete midpoint convexity
{\rm (\ref{discmptconvfn2})} for all $ x, y \in \ZZ^{n}$
with $\| x-y \|_{\infty} \leq 2$,
i.e., $f \in \mbox{\rm DMC}(1) \cap \mbox{\rm DMC}(2)$,
 and
the effective domain has the property:
$x, y \in \dom f   \Rightarrow
   \left\lceil  (x+y)/2 \right\rceil ,
   \left\lfloor (x+y)/2 \right\rfloor \in \dom f $.

{\rm (d)}
$f$ is integrally convex and submodular%
\footnote{
If $\dom f$ is a discrete rectangular domain (box),  
condition (d) can be written ``$f \in \mbox{\rm DMC}(1) \cap \mbox{\rm WDMC}(2)$''
as in Fig.~\ref{FGmidptfnclass}.
}. 
\finbox
\end{theorem}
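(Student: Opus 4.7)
The plan is to establish the cycle (a) $\Rightarrow$ (b) $\Rightarrow$ (c) $\Rightarrow$ (d) $\Rightarrow$ (a), relying on Theorem \ref{THfavtarProp33} for the integral-convexity step and on Proposition \ref{PRdirintcnvSetIC} for the structure of $\dom f$.

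The main work lies in (a) $\Rightarrow$ (b), which I would prove by induction on $m = \|x - y\|_\infty$. The base case $m \leq 1$ follows from translation-submodularity with $\mu = 0$ together with the identities $\lceil (x+y)/2 \rceil = x \vee y$ and $\lfloor (x+y)/2 \rfloor = x \wedge y$ that hold whenever $\|x - y\|_\infty \leq 1$. For $m \geq 2$, I would apply translation-submodularity with $\mu = 1$ to obtain $f(x) + f(y) \geq f(x') + f(y')$ where $x' = (x - \bm{1}) \vee y$ and $y' = x \wedge (y + \bm{1})$. A coordinatewise check shows that $x' + y' = x + y$ (so the target midpoints $\lceil (x+y)/2 \rceil$ and $\lfloor (x+y)/2 \rfloor$ are preserved), that each coordinate with $x_i - y_i \geq 2$ has its signed gap strictly reduced by $2$, and that coordinates with $x_i < y_i$ have their $x$- and $y$-values swapped (magnitudes unchanged, signs flipped). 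Iterating, the $\ell_1$-potential $\|x - y\|_1$ decreases by at least $2$ whenever some coordinate has positive gap $\geq 2$; coordinates with negative gap $\geq 2$ become positive on the next iteration and shrink on the iteration after. Hence after finitely many steps we reach a pair at $\ell_\infty$-distance at most $1$ with the same vector sum as the original $(x, y)$, at which point the base case completes the proof.

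The step (b) $\Rightarrow$ (c) is essentially definitional: DMC$(\geq 1)$ contains DMC$(1) \cap $ DMC$(2)$, and the finiteness of the right-hand side of (\ref{discmptconvfn2}) whenever the left-hand side is finite forces $\dom f$ to be closed under the discrete midpoint operations. For (c) $\Rightarrow$ (d), the midpoint-closure hypothesis makes $\dom f$ an ${\rm L}^{\natural}$-convex set, hence integrally convex by Proposition \ref{PRdirintcnvSetIC}; combined with the elementary inequality $2\tilde f((x+y)/2) \leq f(\lceil (x+y)/2 \rceil) + f(\lfloor (x+y)/2 \rfloor)$, the DMC$(2)$ hypothesis supplies condition (\ref{intcnvconddist2}) of Theorem \ref{THfavtarProp33}, giving integral convexity of $f$. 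Submodularity then follows from DMC$(1)$ on the ${\rm L}^{\natural}$-convex sublattice $\dom f$ by a standard chain argument, using that ${\rm L}^{\natural}$-convex sets contain the full integer interval $[x \wedge y, x \vee y] \cap \ZZ^n$ for any two of their elements. Finally, (d) $\Rightarrow$ (a) is the classical Favati--Tardella/Fujishige--Murota characterization, which deduces translation-submodularity for arbitrary $\mu \geq 0$ from submodularity plus integral convexity by induction on $\mu$ using the local envelope. The main obstacle will be the coordinatewise bookkeeping in (a) $\Rightarrow$ (b): one must verify monotonicity of the $\ell_1$-potential under the iterated contraction, confirm termination at $\ell_\infty$-distance $\leq 1$, and ensure the iterates remain in $\dom f$ (which is automatic, since each application of translation-submodularity forces its right-hand side arguments to lie in $\dom f$ whenever its left-hand side does).
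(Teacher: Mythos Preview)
The paper does not supply its own proof of this theorem---it is cited from \cite{FT90,FM00,Mdcasiam} and closed with \finbox---so there is no in-paper argument to compare against.  Your overall cycle and the steps (a) $\Rightarrow$ (b), (b) $\Rightarrow$ (c), the integral-convexity half of (c) $\Rightarrow$ (d), and the deferral of (d) $\Rightarrow$ (a) to the literature are all sound.  The iterated-contraction argument for (a) $\Rightarrow$ (b) is correct; termination follows as you say, since over any two consecutive iterations $\|x-y\|_1$ drops strictly whenever $\|x-y\|_\infty \geq 2$.

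There is, however, a genuine gap in the submodularity half of (c) $\Rightarrow$ (d).  Your justification rests on the assertion that ${\rm L}^{\natural}$-convex sets contain the full integer box $[x \wedge y,\, x \vee y] \cap \ZZ^n$ between any two of their elements, and this is false.  For instance, $S = \{(t,t,s) \mid t,s \in \ZZ\}$ is ${\rm L}^{\natural}$-convex (it is cut out by $x_1 - x_2 \leq 0$ and $x_2 - x_1 \leq 0$), and for $x = (0,0,2)$, $y = (2,2,0)$ the box $[(0,0,0),(2,2,2)]$ contains $(1,0,1) \notin S$.  Without those intermediate lattice points the standard telescoping of second-order differences that lifts DMC(1) to full submodularity is unavailable, and your argument does not establish $f(x)+f(y) \geq f(x\vee y)+f(x\wedge y)$ for incomparable pairs at $\ell_\infty$-distance $\geq 2$.

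A clean repair, using only tools developed later in this paper and not depending on Theorem~\ref{THlnatcond} itself, is to prove (c) $\Rightarrow$ (b) instead.  Under (c), $\dom f$ is ${\rm L}^{\natural}$-convex and hence discrete midpoint convex by Proposition~\ref{PRdirintcnvSetIC}(1), so $f$ is locally discrete midpoint convex and the parallelogram inequality (Theorem~\ref{THparallelineqgen-var}) applies.  For $\|x-y\|_\infty = 3$ take $d = \bm{1}_{A_1} - \bm{1}_{B_1}$ from the decomposition (\ref{DICdecAkBksum}); then $f(x)+f(y) \geq f(x+d)+f(y-d)$, the pair $(x+d,\,y-d)$ has the same coordinate sum as $(x,y)$, and a direct check gives $\|(x+d)-(y-d)\|_\infty \leq 2$.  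Applying DMC(1) or DMC(2) to this pair yields DMC(3), after which Proposition~\ref{PRmdptdistant}(3) gives DMC($\geq 2$) and hence (b).
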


We call a function $f: \ZZ\sp{n} \to \RR \cup \{ +\infty \}$
{\em  locally discrete midpoint convex}
if $\dom f$ is a discrete midpoint convex set
and the discrete midpoint convexity (\ref{discmptconvfn2})
is satisfied by every pair $(x, y) \in \ZZ\sp{n} \times \ZZ\sp{n}$
with $\| x - y \|_{\infty} = 2$ (exactly equal to $2$).
We call a function $f: \ZZ\sp{n} \to \RR \cup \{ +\infty \}$
{\em  globally discrete midpoint convex} if
the discrete midpoint convexity (\ref{discmptconvfn2})
is satisfied by every pair $(x, y) \in \ZZ\sp{n} \times \ZZ\sp{n}$
with $\| x - y \|_{\infty} \geq 2$.
The effective domain of a
globally discrete midpoint convex function
is necessarily a discrete midpoint convex set.

\begin{proposition} \label{PRstrdom}
For a globally discrete midpoint convex function $f$,
the effective domain
$\dom f$ is a discrete midpoint convex set.
\end{proposition}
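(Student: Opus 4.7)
The plan is to read this directly off the definitions: global discrete midpoint convexity of $f$ is precisely the functional version of the set-level condition (\ref{dirintcnvsetdef}) that defines a discrete midpoint convex set, once one is careful about the ``$+\infty$'' bookkeeping.

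Concretely, I would start by picking an arbitrary pair $x, y \in \dom f$ with $\| x - y \|_\infty \geq 2$; these are exactly the pairs for which the defining condition of a discrete midpoint convex set must be verified. Since $x, y \in \dom f$, both $f(x)$ and $f(y)$ are finite real numbers, so $f(x) + f(y) \in \RR$. Applying the globally discrete midpoint convex inequality (\ref{discmptconvfn2}) (which holds by hypothesis because $\|x - y\|_\infty \geq 2$) then yields
\[
 f\!\left(\left\lceil \frac{x+y}{2} \right\rceil\right)
  + f\!\left(\left\lfloor \frac{x+y}{2} \right\rfloor\right)
\;\leq\; f(x) + f(y) \;<\; +\infty.
\]

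Next I would exploit the fact that $f$ takes values in $\RR \cup \{+\infty\}$ (there is no $-\infty$): a sum of two elements of $\RR \cup \{+\infty\}$ can be finite only if both summands are finite. Therefore $f(\lceil (x+y)/2 \rceil) < +\infty$ and $f(\lfloor (x+y)/2 \rfloor) < +\infty$, i.e.\ both rounded midpoints lie in $\dom f$. Since the pair $(x,y)$ was arbitrary, this is exactly condition (\ref{dirintcnvsetdef}) for $S = \dom f$, so $\dom f$ is discrete midpoint convex.

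There is no real obstacle: the only thing to watch is the convention that $f$ is $\RR \cup \{+\infty\}$-valued (never $-\infty$), which is what turns the finite-sum upper bound on the right-hand side into the finiteness of each individual term. The argument does not use anything about pairs with $\|x-y\|_\infty \leq 1$, which is consistent with the fact that the defining condition (\ref{dirintcnvsetdef}) itself only constrains pairs at $\ell_\infty$-distance at least two.
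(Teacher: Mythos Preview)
Your proof is correct and is exactly the same argument as the paper's, just spelled out in more detail: the paper simply observes that (\ref{discmptconvfn2}) for $\|x-y\|_\infty \geq 2$ immediately forces $S = \dom f$ to satisfy (\ref{dirintcnvsetdef}). Your explicit handling of the $+\infty$ bookkeeping is a faithful unpacking of that one-line justification.
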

\begin{proof}
It follows from (\ref{discmptconvfn2})
for $\| x - y \|_{\infty} \geq 2$
that
$S = \dom f$ satisfies (\ref{dirintcnvsetdef}).
\end{proof}

We have the following inclusion relations among the function classes:
\begin{align}
& \{ \mbox{ ${\rm L}\sp{\natural}$-convex } \}
= \{ \mbox{ submodular integrally convex } \}
\notag \\
& \subsetneqq \
\{ \mbox{ globally discrete midpoint convex  } \}
\notag \\
& \subsetneqq \
\{ \mbox{ locally discrete midpoint convex } \}
\notag \\
&
\subsetneqq \
\{ \mbox{ integrally convex } \},
  \label{fnclasses3}
\end{align}
as proved below.
Using the notation of the Introduction, we can express (\ref{fnclasses3}) as
$\mbox{\rm DMC($\geq$1)}
\subsetneqq \mbox{\rm DMC($\geq$2)}
\subsetneqq \textrm{DMC}(2)
\subsetneqq \mbox{\rm WDMC($\geq$1)}$.

\begin{theorem}  \label{THdirintclass}
\quad \
\\
{\rm (1)}
An ${\rm L}\sp{\natural}$-convex function
is globally discrete midpoint convex.
\\
{\rm (2)}
A globally discrete midpoint convex function
is locally discrete midpoint convex.
\\
{\rm (3)}
A locally discrete midpoint convex function is integrally convex.
\end{theorem}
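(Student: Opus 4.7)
The plan is to handle the three inclusions in turn, with parts (1) and (2) being immediate consequences of earlier results and part (3) being the only one requiring a short argument.

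For (1), I would invoke Theorem \ref{THlnatcond}, which identifies the class of ${\rm L}^{\natural}$-convex functions with $\mbox{DMC}(\geq 1)$. Since the condition ``$\|x-y\|_\infty \geq 1$'' is strictly weaker than ``$\|x-y\|_\infty \geq 2$,'' any $f$ satisfying (\ref{discmptconvfn2}) for all pairs with $\|x-y\|_\infty \geq 1$ automatically satisfies it for all pairs with $\|x-y\|_\infty \geq 2$, i.e., $f$ is globally discrete midpoint convex. For (2), I would simply observe that if $f$ is globally discrete midpoint convex, then (\ref{discmptconvfn2}) holds in particular for every pair with $\|x-y\|_\infty = 2$, and the effective domain is a discrete midpoint convex set by Proposition \ref{PRstrdom}; these are exactly the two defining conditions of local discrete midpoint convexity.

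For (3), the strategy is to reduce to Corollary \ref{Cor:WDMC-IntConv} (equivalently Theorem \ref{THfavtarProp33}), which characterizes integral convexity via the weak discrete midpoint convex inequality (\ref{intcnvconddist2}) under the hypothesis that $\dom f$ is integrally convex. First I would note that by the very definition of local discrete midpoint convexity $\dom f$ is a discrete midpoint convex set, hence integrally convex by Proposition \ref{PRdirintcnvSetIC}(2). Then, given any $x, y \in \dom f$ with $\|x-y\|_\infty = 2$, I would chain two inequalities. The DMC$(2)$ hypothesis gives
\[
f(x) + f(y) \;\geq\; f\!\left(\left\lceil \tfrac{x+y}{2}\right\rceil\right) + f\!\left(\left\lfloor \tfrac{x+y}{2}\right\rfloor\right).
\]
On the other hand, both $\lceil(x+y)/2\rceil$ and $\lfloor(x+y)/2\rfloor$ lie in the integer neighborhood $N((x+y)/2)$, and their arithmetic mean is exactly $(x+y)/2$; using the uniform weights $\lambda = 1/2$ in the definition (\ref{fnconvclosureloc0}) of the local convex envelope yields
\[
2\,\tilde f\!\left(\tfrac{x+y}{2}\right) \;\leq\; f\!\left(\left\lceil \tfrac{x+y}{2}\right\rceil\right) + f\!\left(\left\lfloor \tfrac{x+y}{2}\right\rfloor\right).
\]
Combining the two inequalities gives (\ref{intcnvconddist2}), and Theorem \ref{THfavtarProp33} then delivers integral convexity.

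No serious obstacle is expected. The only small point of care in (3) is the degenerate case where $(x+y)/2$ is itself integral, in which $\lceil(x+y)/2\rceil = \lfloor(x+y)/2\rfloor = (x+y)/2$ and both sides of the envelope inequality collapse to $2f((x+y)/2)$; the chain of inequalities still goes through verbatim. The essential content is really the bridge from the pointwise rounded-midpoint inequality to the envelope inequality, which is a direct consequence of the definition of $\tilde f$.
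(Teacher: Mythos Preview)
Your proposal is correct and follows essentially the same route as the paper: parts (1) and (2) are handled via Theorem~\ref{THlnatcond} and Proposition~\ref{PRstrdom} exactly as in the original, and for (3) the paper also verifies the hypothesis of Theorem~\ref{THfavtarProp33} by first using Proposition~\ref{PRdirintcnvSetIC}(2) on the domain and then chaining the discrete midpoint inequality with the trivial bound $2\tilde f((x+y)/2) \leq f(\lceil (x+y)/2\rceil) + f(\lfloor (x+y)/2\rfloor)$.
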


\begin{proof}
(1) This is immediate from the characterizations of
${\rm L}\sp{\natural}$-convex functions by discrete midpoint convexity
in Theorem~\ref{THlnatcond}.

(2)
Let $f$ be a globally discrete midpoint convex function.
By Proposition~\ref{PRstrdom}, $\dom f$ is a discrete midpoint convex set.
The discrete midpoint convexity (\ref{discmptconvfn2})
when
$\| x - y \|_{\infty} =2$
is obviously true, since it is true when
$\| x - y \|_{\infty}  \geq 2$.

(3)
Let $f$ be a locally discrete midpoint convex function.
Then $\dom f$ is a discrete midpoint convex set, which
is an integrally convex set by Proposition~\ref{PRdirintcnvSetIC}.
We use the characterization of integrally convex functions
stated in Theorem~\ref{THfavtarProp33}.
Recall that,
for $u \in \RR\sp{n}$, $\tilde{f}(u)$ denotes the local convex extension of
$f$ with respect to the integer neighborhood $N(u)$.
Take integer points $x, y \in \dom f$ with $\| x - y \|_{\infty} =2$.
For $u = (x+y)/2$ ,
both
$\left\lceil u \right\rceil$
and
$\left\lfloor u \right\rfloor$
belong to $N(u)$, and therefore
$2 \tilde{f}(u) \leq f( \left\lceil u \right\rceil ) + f( \left\lfloor u \right\rfloor )
\leq f(x) + f(y)$,
where the second inequality is due to (\ref{discmptconvfn2}).
Thus, $f$ satisfies  condition (b) in Theorem~\ref{THfavtarProp33},
and therefore it is integrally convex.
\end{proof}

The inclusions in (\ref{fnclasses3}) are all proper.
The examples in Remark~\ref{RMdicinclusion}
in Section~\ref{SCdirintcnvset}
show this for the first and third
``$\subsetneqq$'' since
a set $S \subseteq  \ZZ\sp{n}$
is discrete midpoint convex if and only if
its indicator function $\delta_{S}$ is
a (globally or locally) discrete midpoint convex function.
The difference between global and local versions
is demonstrated in the following examples.

\begin{example} \rm \label{EXmdpt1}
$f(x_{1}, x_{2})= | x_{1} + x_{2} |$
is a locally discrete midpoint convex function on
$\dom f = \ZZ\sp{2}$,
satisfying the discrete midpoint convexity (\ref{discmptconvfn2})
when $\| x - y \|_{\infty} = 2$.
For $x=(0,0)$ and $y=(3,-3)$
with $\| x - y \|_{\infty} = 3$
we have
$\left\lceil (x+y)/2 \right\rceil = (2,-1)$
and
$\left\lfloor (x+y)/2 \right\rfloor = (1,-2)$, and therefore,
$ f(x) + f(y) = 0 + 0$,
$   f \left(\left\lceil \frac{x+y}{2} \right\rceil\right)
  + f \left(\left\lfloor \frac{x+y}{2} \right\rfloor\right)
= 1 + 1$.
This shows a failure of (\ref{discmptconvfn2}).
Hence $f$ is not globally discrete midpoint convex.
\finbox
\end{example}

\begin{example} \rm \label{EXdicdim2}
Let $f(x) = x\sp{\top} Q x$ with
$Q = {\small
\left[ \begin{array}{cc}
1 & c  \\
c & 1 \\
\end{array}\right]}$
for $x  \in \ZZ\sp{2}$.
It can be verified from the definitions
(or by Propositions \ref{PRdiagdomLnat}, \ref{LMdiagDomQuad2dimG}, and \ref{LMdiagDomQuad2dimL}
in Section~\ref{SCquadfnDIC}) 
that
\begin{itemize}
\item
$f$ is ${\rm L}\sp{\natural}$-convex
if and only if $-1 \leq c \leq 0$,

\item
$f$ is globally discrete midpoint convex
if and only if $-1 \leq c \leq 4/5$,

\item
$f$ is locally discrete midpoint convex
if and only if $-1 \leq c \leq 1$.
\finbox
\end{itemize}
\end{example}

Further relations among the function classes in some special cases are shown next.

\begin{proposition} \label{PRdirintclass2}
\quad \
\\
{\rm (1)}
Every function on the unit hypercube $\{ 0, 1\}\sp{n}$ is
globally (and hence locally) discrete midpoint convex.
\\
{\rm (2)}
For $n=2$, every integrally convex function on a discrete midpoint convex set
is locally discrete midpoint convex%
\footnote{
Example \ref{EXmdpt1} shows that
we cannot replace ``locally'' with ``globally'' in this statement.
}. 
\\
{\rm (3)}
Let $f$ be a submodular function on $\ZZ\sp{n}$.
Then $f$ is integrally convex
if and only if it is (globally or locally) discrete midpoint convex.
\end{proposition}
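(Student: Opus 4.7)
The plan is to dispatch the three parts in order, each leveraging earlier results.

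For part~(1), I would observe that any two points $x,y\in\{0,1\}^{n}$ satisfy $\|x-y\|_{\infty}\le 1$, so the hypothesis $\|x-y\|_{\infty}\ge 2$ in the definition of globally discrete midpoint convex function is vacuously satisfied by any $f$ with $\dom f\subseteq\{0,1\}^{n}$. Combined with Proposition~\ref{PRdirintcnvSeteasy}(1), which tells us that $\dom f$ is automatically a discrete midpoint convex set, this immediately yields global (and hence local) discrete midpoint convexity.

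For part~(2), take $x,y\in\dom f$ with $\|x-y\|_{\infty}=2$ and set $m=(x+y)/2$. The key two-dimensional observation is that $N(m)=\{\lceil m\rceil,\lfloor m\rfloor\}$ as a (possibly degenerate) set. Indeed, some coordinate $i$ has $|y_{i}-x_{i}|=2$, so $m_{i}\in\ZZ$ and any $z\in N(m)$ is forced to have $z_{i}=m_{i}$; the remaining coordinate $j$ has $y_{j}-x_{j}\in\{-2,-1,0,1,2\}$, so either $m_{j}\in\ZZ$ (giving the singleton $z_{j}=m_{j}$) or $m_{j}$ is half-integral (giving $z_{j}\in\{\lfloor m_{j}\rfloor,\lceil m_{j}\rceil\}$). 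In either case $N(m)\subseteq\{\lceil m\rceil,\lfloor m\rfloor\}$, and since representing $m$ as a convex combination of these two integer points determines the coefficients uniquely, we obtain
\[
\tilde f(m)=\tfrac{1}{2}\bigl(f(\lceil m\rceil)+f(\lfloor m\rfloor)\bigr).
\]
Because $\dom f$ is discrete midpoint convex, Proposition~\ref{PRdirintcnvSetIC}(2) guarantees that $\dom f$ is integrally convex, so Theorem~\ref{THfavtarProp33} applies and yields $f(x)+f(y)\ge 2\tilde f(m)$. Substituting the formula for $\tilde f(m)$ gives the desired inequality (\ref{discmptconvfn2}), while the discrete midpoint convexity of $\dom f$ ensures $\lceil m\rceil,\lfloor m\rfloor\in\dom f$.

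For part~(3), both directions follow directly from previous results. If $f$ is submodular and integrally convex, then the equivalence of (d) and (b) in Theorem~\ref{THlnatcond} makes $f$ an ${\rm L}^{\natural}$-convex function, hence $f\in\mbox{\rm DMC}(\geq 1)\subseteq\mbox{\rm DMC}(\geq 2)\subseteq\mbox{\rm DMC}(2)$, i.e.\ $f$ is both globally and locally discrete midpoint convex. Conversely, if $f$ is either globally or locally discrete midpoint convex, Theorem~\ref{THdirintclass}(3) gives integral convexity.

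The only genuinely nontrivial step is the two-dimensional neighborhood collapse in part~(2); parts~(1) and~(3) are essentially bookkeeping on top of earlier theorems. The reason this is the main obstacle—and indeed the only place the hypothesis $n=2$ is used—is that for $n\ge 3$ the midpoint $m=(x+y)/2$ of a distance-2 pair can admit integer neighbors in $N(m)$ other than $\lceil m\rceil$ and $\lfloor m\rfloor$, which can push $\tilde f(m)$ strictly below $\tfrac12(f(\lceil m\rceil)+f(\lfloor m\rfloor))$ and break the deduction. Checking that this pathology does not occur when $n=2$ is precisely the combinatorial content of the argument.
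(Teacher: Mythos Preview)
Your proof is correct and follows essentially the same approach as the paper: part~(1) by vacuity, part~(2) by observing that in dimension two the integer neighborhood $N((x+y)/2)$ collapses to $\{\lceil (x+y)/2\rceil,\lfloor (x+y)/2\rfloor\}$ so that weak discrete midpoint convexity becomes discrete midpoint convexity, and part~(3) via Theorem~\ref{THlnatcond} and Theorem~\ref{THdirintclass}. Your treatment of part~(2) is in fact more carefully spelled out than the paper's, which asserts the neighborhood collapse without explicitly noting that one coordinate of the midpoint must be integral.
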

\begin{proof}
(1)
There exist no $x, y \in \{ 0, 1\}\sp{n}$ with $\| x - y \|_{\infty} =2$.
Therefore, the condition for global discrete midpoint convexity is
satisfied vacuously.

(2)
Suppose that $f$ is integrally convex, and take
any $x, y \in \ZZ\sp{2}$ with  $\| x - y \|_{\infty} =2$, and
let $u  = (x+y)/2$.
Since $n=2$,
$N \left( u \right) = \{ \left\lceil u \right\rceil, \left\lfloor u \right\rfloor \}$
if $u \not\in \ZZ\sp{2}$
and $N \left( u \right) = \{ u \}$ if $u \in \ZZ\sp{2}$.
Hence, the integral convexity (\ref{intcnvconddist2}) in Theorem \ref{THfavtarProp33}
implies the discrete midpoint convexity (\ref{discmptconvfn2}).

(3)
By Theorem \ref{THdirintclass} or (\ref{fnclasses3}),
a (globally or locally) discrete midpoint convex function
is integrally convex, irrespective of submodularity.
If $f$ is a submodular function that is integrally convex,
then it is ${\rm L}\sp{\natural}$-convex by Theorem~\ref{THlnatcond},
and hence
globally discrete midpoint convex by Theorem~\ref{THdirintclass} (1).
This completes the proof of (3), since global discrete midpoint convexity
implies local discrete midpoint convexity by
Theorem \ref{THdirintclass} (2).
\end{proof}

In the case of $n=2$ the inclusion relations in (\ref{fnclasses3}) are modified to 
\begin{align}
& \{ \mbox{ ${\rm L}\sp{\natural}$-convex } \}
\subsetneqq \
\{ \mbox{ globally discrete midpoint convex } \}
\notag \\
& \subsetneqq \
\{ \mbox{ locally discrete midpoint convex } \}
=
\{ \mbox{ integrally convex } \}
  \label{fnclasses2dim}
\end{align}
by Example \ref{EXdicdim2} and Proposition \ref{PRdirintclass2}(2).

For discrete midpoint convexity of
the minimizers we have the following statements.

\begin{proposition} \label{PRdirintcnvSetFn}
For a globally discrete midpoint convex function $f$,
the set of the minimizers
$\argmin f$ is a discrete midpoint convex set.
\end{proposition}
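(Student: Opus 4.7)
The statement is the natural counterpart, for globally discrete midpoint convex functions, of the analogous fact for $\mathrm{L}^\natural$-convex functions, and the proof I expect should be a one-line application of the defining inequality. The plan is to pick two arbitrary minimizers $x, y$, apply discrete midpoint convexity to them, and squeeze.

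Concretely: set $S = \argmin f$, and let $m = \min_{z \in \ZZ^n} f(z)$, which is finite (we may assume $\argmin f$ is nonempty, else the claim is vacuous). Take any $x, y \in S$ with $\|x - y\|_\infty \geq 2$; we must show that $p := \lceil (x+y)/2 \rceil$ and $q := \lfloor (x+y)/2 \rfloor$ both lie in $S$. Since $f$ is globally discrete midpoint convex, inequality (\ref{discmptconvfn2}) gives
\[
2m \;=\; f(x) + f(y) \;\geq\; f(p) + f(q).
\]
On the other hand, by definition of $m$ we have $f(p) \geq m$ and $f(q) \geq m$, so $f(p) + f(q) \geq 2m$. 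Hence both inequalities are equalities, which forces $f(p) = f(q) = m$, i.e.\ $p, q \in \argmin f$. The case $\|x-y\|_\infty \leq 1$ imposes no condition in the definition (\ref{dirintcnvsetdef}) of a discrete midpoint convex set, so there is nothing more to check.

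There is no real obstacle here: the squeeze argument is the same one used to show the minimizer sets of $\mathrm{L}^\natural$-convex functions are $\mathrm{L}^\natural$-convex. One could even observe that the midpoints automatically lie in $\dom f$ for free, since $f(p) + f(q) \leq 2m < +\infty$, so we do not need to invoke Proposition~\ref{PRstrdom} separately to guarantee that the midpoints are in the effective domain.
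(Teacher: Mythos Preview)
Your proof is correct and is essentially the same as the paper's own argument, which simply notes that the defining inequality (\ref{discmptconvfn2}) for $\|x-y\|_\infty \geq 2$ forces $\argmin f$ to satisfy (\ref{dirintcnvsetdef}). You have just spelled out the squeeze step that the paper leaves implicit.
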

\begin{proof}
It follows from (\ref{discmptconvfn2}) that
$S = \argmin f$ satisfies (\ref{dirintcnvsetdef}).
\end{proof}

\begin{remark} \rm \label{RMwdiminimizer}
The set of the minimizers
of a locally discrete midpoint convex function $f$
is not necessarily a discrete midpoint convex set.
For example,
$f(x_{1}, x_{2})= | x_{1} + x_{2} |$
is locally discrete midpoint convex on $\ZZ\sp{2}$,
but $\argmin f = \{ (t,-t) \mid  t \in \ZZ \}$
is not a discrete midpoint convex set.
It should be clear that no local version
of discrete midpoint convexity is defined for sets.
\finbox
\end{remark}

Simple operations valid for globally or locally discrete midpoint convex functions
are listed below.
The scaling operation
will be treated separately in Section~\ref{SCscalingDIC}.

\begin{proposition}  \label{PRdirintcnvinvarS}
Let $f: \mathbb{Z}\sp{n} \to \mathbb{R} \cup \{ +\infty  \}$
be globally discrete midpoint convex functions.

\noindent
{\rm (1)}
For any $z \in \ZZ\sp{n}$,
 $f(z + x)$ is globally discrete midpoint convex in $x$.

\noindent
{\rm (2)}
For any permutation  $\sigma$ of $(1,2,\ldots,n)$,
 $f(x_{\sigma(1)}, x_{\sigma(2)}, \ldots, x_{\sigma(n)})$
is globally discrete midpoint convex in $x$.

\noindent
{\rm (3)}
 $f(- x_{1},- x_{2}, \ldots, - x_{n})$
is globally discrete midpoint convex in $x$.

\noindent
{\rm (4)}
For any $a_{1}, a_{2} \geq 0$ and
globally discrete midpoint convex functions
$f_{1}, f_{2}: \mathbb{Z}\sp{n} \to \mathbb{R} \cup \{ +\infty  \}$,
function $g= a_{1} f_{1}+ a_{2} f_{2}$ is globally discrete midpoint convex.
That is, the class of globally discrete midpoint convex functions
forms a convex cone.
\end{proposition}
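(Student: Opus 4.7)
The plan is to verify each of the four claims by reducing the discrete midpoint convexity inequality for the transformed function $g$ to the same inequality for $f$, applied at suitably transformed arguments. In every case the key is that the transformation in question (translation, permutation, coordinate reversal, or nonnegative linear combination) is compatible with both the $\ell_\infty$-norm and the componentwise ceiling/floor operations appearing in (\ref{discmptconvfn2}). Values $+\infty$ cause no trouble since the inequality is satisfied trivially whenever either of $f(x)$ or $f(y)$ is $+\infty$.

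For part (1) with $g(x)=f(z+x)$, note that for any $x,y\in\ZZ^n$ one has $\|(z+x)-(z+y)\|_\infty=\|x-y\|_\infty$, and
$\lceil((z+x)+(z+y))/2\rceil = z+\lceil(x+y)/2\rceil$, with the analogous identity for $\lfloor\cdot\rfloor$. Hence the discrete midpoint convexity inequality for $g$ at $(x,y)$ is literally the inequality for $f$ at $(z+x,z+y)$, and the latter holds by hypothesis whenever $\|x-y\|_\infty\geq 2$. For part (2), both permutation and the operations $\lceil\cdot\rceil$, $\lfloor\cdot\rfloor$, as well as the $\ell_\infty$-norm, act componentwise, so permuting coordinates commutes with all the operations appearing in (\ref{discmptconvfn2}) and the reduction is immediate.

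Part (3) is the only place where a small twist occurs. Writing $g(x)=f(-x)$ and setting $x'=-x$, $y'=-y$, we have $\|x-y\|_\infty=\|x'-y'\|_\infty$, but now
\[
\left\lceil \frac{x+y}{2} \right\rceil = -\left\lfloor \frac{x'+y'}{2} \right\rfloor,
\qquad
\left\lfloor \frac{x+y}{2} \right\rfloor = -\left\lceil \frac{x'+y'}{2} \right\rceil,
\]
so ceiling and floor get swapped. The two terms on the right-hand side of (\ref{discmptconvfn2}) for $g$ are thus the same two terms as for $f$ at $(x',y')$, merely listed in the opposite order, and the inequality for $f$ transfers. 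This swap of $\lceil\cdot\rceil$ and $\lfloor\cdot\rfloor$ under negation is the only subtle point in the proposition; everything else is book-keeping.

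Finally, part (4) follows by writing the defining inequality for each $f_j$ at a common pair $(x,y)$ with $\|x-y\|_\infty\geq 2$, multiplying by $a_j\geq 0$, and summing over $j=1,2$. Because the ceiling and floor points on the right-hand side of (\ref{discmptconvfn2}) are the same for $f_1$ and $f_2$, the sum yields precisely the discrete midpoint convexity inequality for $g=a_1f_1+a_2f_2$, proving that the class is closed under nonnegative linear combinations, i.e., forms a convex cone. The only part of the argument that is not entirely mechanical is keeping track of the ceiling/floor swap in (3); the rest reduces to the observation that the relevant operations commute with translation, permutation, and linear combination.
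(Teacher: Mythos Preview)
Your proof is correct and follows the same approach as the paper's, which simply declares (1)--(3) ``Obvious'' and handles (4) by adding the inequalities for $a_1f_1$ and $a_2f_2$. Your writeup spells out the underlying componentwise compatibility (including the ceiling/floor swap under negation in (3)) that the paper leaves implicit.
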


\begin{proof}
(1)--(3) Obvious.
(4) The inequality (\ref{discmptconvfn2}) for $g$ follows immediately from
adding (\ref{discmptconvfn2}) for $a_{1} f_{1}$ and $a_{2} f_{2}$.
\end{proof}

\begin{proposition}  \label{PRdirintcnvinvarW}
The statements {\rm (1)}--{\rm (4)} in Proposition {\rm \ref{PRdirintcnvinvarS}}
 are true for locally discrete midpoint convex functions.
\end{proposition}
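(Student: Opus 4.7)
The plan is to check, one transformation at a time, that (i) the effective domain of the transformed function remains a discrete midpoint convex set, and (ii) the defining inequality (\ref{discmptconvfn2}) still holds for every pair at $\ell_{\infty}$-distance exactly $2$. Both parts should be essentially mechanical, so the proof reduces to a careful bookkeeping of how translation, permutation, coordinate-wise negation, and addition interact with the ceiling/floor operations and with the $\ell_{\infty}$-distance.

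For (1)--(3) the common point is that each operation is an $\ell_{\infty}$-isometry on $\ZZ^{n}$ and commutes (in the appropriate sense) with coordinate-wise rounding. First I would verify, directly from (\ref{dirintcnvsetdef}), that the corresponding transformations of $\dom f$ (translation by $-z$, coordinate permutation, and negation) send a discrete midpoint convex set to a discrete midpoint convex set; this is immediate because $\lceil \cdot \rceil$ and $\lfloor \cdot \rfloor$ either commute with the transformation or swap under negation. Then, for any pair $(x,y)$ with $\|x-y\|_{\infty}=2$, the image pair under each transformation again has $\ell_{\infty}$-distance $2$, so applying the local DMC inequality at the image pair and translating back yields the required inequality for the transformed function. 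The only spot that needs a small observation is (3): writing $g(x)=f(-x)$ and using $-\lceil (x+y)/2\rceil=\lfloor (-x-y)/2\rfloor$ together with $-\lfloor (x+y)/2\rfloor=\lceil (-x-y)/2\rceil$ shows that local DMC for $f$ at the pair $(-x,-y)$ gives local DMC for $g$ at $(x,y)$.

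For (4), I would first note that $\dom(a_1 f_1 + a_2 f_2)=\dom f_1 \cap \dom f_2$ (for $a_1,a_2>0$; the cases $a_1=0$ or $a_2=0$ are trivial), which is discrete midpoint convex by Proposition \ref{PRdirintcnvSeteasy}(2). For any $x,y$ in this intersection with $\|x-y\|_{\infty}=2$, the rounded midpoints $\lceil (x+y)/2\rceil$ and $\lfloor (x+y)/2\rfloor$ lie in both $\dom f_1$ and $\dom f_2$ (since each is a DMC set), so the local DMC inequality holds for $f_1$ and $f_2$ individually at $(x,y)$; multiplying by $a_1$ and $a_2$ and adding yields the inequality for $g=a_1 f_1+a_2 f_2$.

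I do not anticipate any real obstacle; the argument is entirely parallel to the global case treated in Proposition \ref{PRdirintcnvinvarS}, the only extra point being to make sure that the distance-$2$ restriction is preserved under each operation, which it is because all four operations either preserve $\ell_{\infty}$-distance exactly (in (1)--(3)) or leave the pair $(x,y)$ unchanged (in (4)). Consequently, a very short proof along the lines of ``same as Proposition \ref{PRdirintcnvinvarS}, noting that $\|x-y\|_{\infty}=2$ is preserved and using Proposition \ref{PRdirintcnvSeteasy}(2) for the domain in (4)'' should suffice.
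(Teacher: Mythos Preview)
Your proposal is correct and follows essentially the same approach as the paper: the paper dismisses (1)--(3) as ``Obvious'' and, for (4), invokes Proposition~\ref{PRdirintcnvSeteasy}(2) to show $\dom g=\dom f_1\cap\dom f_2$ is discrete midpoint convex, then adds the two inequalities exactly as you describe. Your write-up is simply more explicit about why (1)--(3) work (isometry plus commutation with rounding) and about the midpoints lying in both domains in (4), but there is no substantive difference in method.
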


\begin{proof}
(1)--(3) Obvious.
(4) The effective domain $\dom g$ is discrete midpoint convex,
since $\dom g = \dom f_{1} \cap \dom f_{2}$
and the intersection of two discrete midpoint convex sets
is  discrete midpoint convex (Proposition~\ref{PRdirintcnvSeteasy} (2)).
The rest of the proof is the same as that of
Proposition~\ref{PRdirintcnvinvarS} (4).
\end{proof}

\begin{remark} \rm \label{RMdmcsigninver}
For a (globally or locally) discrete midpoint convex function $f$,
the function with individual sign inversion of variables,
$f(s_{1} x_{1}, s_{2} x_{2}, \ldots, s_{n} x_{n})$
with $s_{i} \in \{ +1, -1 \}$ $(i=1,2,\ldots,n)$,
is not necessarily
(globally or locally) discrete midpoint convex in $x$,
though it remains integrally convex in $x$.
For example,
$f(x_{1},x_{2},x_{3}) = \max( x_{1},x_{2},x_{3} )$
is globally discrete midpoint convex,
but
$g(x_{1}, x_{2},x_{3}) = f(x_{1}, x_{2},-x_{3}) = \max( x_{1},x_{2}, -x_{3} )$
is not even locally discrete midpoint convex,
since it fails to satisfy the inequality (\ref{discmptconvfn2}) for $x=(0,0,0)$ and $y=(-2,-1,1)$.
Indeed,  we have
$\left\lceil (x+y)/2 \right\rceil = (-1,0,1)$ and,
$\left\lfloor (x+y)/2 \right\rfloor = (-1,-1,0)$, and therefore,
$ g(x) + g(y) = 0 + (-1)$,
$    g \left(\left\lceil \frac{x+y}{2} \right\rceil\right)
  + g \left(\left\lfloor \frac{x+y}{2} \right\rfloor\right)
= 0 + 0 $.
\finbox
\end{remark}

\begin{remark} \rm 
The concept of {\rm L}-extendable functions \cite{Hir15Lextprox}	
is defined for functions on the product of trees
in terms of some variant of discrete midpoint convexity.
When specialized to functions on $\ZZ\sp{n}$, a function
$f: \ZZ^{n} \to \RR \cup \{ +\infty \}$ is {\rm L}-extendable if and only if
$f(x) = g(2x)$ for some UJ-convex \cite{Fuj14bisubdc} function
$g: \ZZ^{n} \to \RR \cup \{ +\infty \}$.
The class of {\rm L}-extendable functions
does not contain, nor is contained by, the class of
(globally or locally) discrete midpoint convex functions,
as demonstrated by Examples \ref{EXdicButNOTlext} and  \ref{EXlextButNOTdic} below.
In the case of $n=2$, however, 
the class of {\rm L}-extendable functions
coincides with that of locally discrete midpoint convex functions (see also (\ref{fnclasses2dim})).
\finbox
\end{remark}

\begin{example} \rm \label{EXdicButNOTlext}
The function
$f: \ZZ^{3} \to \RR \cup \{ +\infty \}$
defined by
$f(1,0,0) = f(0,1,0) = f(0,0,1)=0$
with
$\dom f = \{ (1,0,0), (0,1,0), (0,0,1) \}$
is not {\rm L}-extendable by \cite[Theorem 1]{HI16ksubm},
whereas it is globally discrete midpoint convex by
Proposition~\ref{PRdirintclass2}(1).
\finbox
\end{example}

\begin{example} \rm \label{EXlextButNOTdic}
The function $f: \ZZ^{3} \to \RR$ defined by
$f(x_{1},x_{2},x_{3}) = \max( x_{1},x_{2},-x_{3} )$
is {\rm L}-extendable, whereas it is not locally discrete midpoint convex.
The {\rm L}-extendability follows from the facts that
(i) $g(x_{1},x_{2},x_{3}) = \max( x_{1},x_{2},x_{3} )$
is ${\rm L}\sp{\natural}$-convex,
(ii) every ${\rm L}\sp{\natural}$-convex function is {\rm L}-extendable, and
(iii) if a function is {\rm L}-extendable, so is
the function with individual sign inversion of variables.
However, the discrete midpoint convexity (\ref{discmptconvfn2}) fails
for $f$ with $x=(0,0,0)$ and $y=(-2,-1,1)$;
see  Remark \ref{RMdmcsigninver}.
\finbox
\end{example}

\begin{remark} \rm \label{RMtwosepconv}
For a univariate discrete convex function%
\footnote{
A univariate discrete convex function means a function  
$\varphi: \mathbb{Z} \to \mathbb{R} \cup \{ +\infty \}$
such that $\varphi(t-1) + \varphi(t+1) \geq 2 \varphi(t)$
for all $t \in \mathbb{Z}$.
} 
$\varphi: \mathbb{Z} \to \mathbb{R} \cup \{ +\infty \}$,
we call {\em diff-convex} and {\em sum-convex} 
the functions of two variables
of the form $\varphi(s-t)$ and $\varphi(s+t)$, respectively,
where $(s,t) \in \mathbb{Z}\sp{2} $. 
It is straightforward to verify that diff-convex and sum-convex functions 
are locally discrete midpoint convex.
A function 
$f: \ZZ^{n} \to \RR \cup \{ +\infty \}$
is called
{\em 2-separable convex}
\cite{Hir15Lextprox}
if it can be expressed as the sum of univariate convex,
diff-convex and sum-convex functions, i.e., if
\begin{equation}\label{twosepSDconv}
f(x_1, \ldots, x_n) = 
\sum_{i=1}\sp{n} \varphi_{i}(x_{i}) 
+ \sum_{i \neq j}  \varphi_{ij}(x_i - x_j) + \sum_{i \neq j} \psi_{ij}(x_i+x_j)  ,
\end{equation}
where
$\varphi_{i}, \varphi_{ij}, \psi_{ij}: \mathbb{Z} \to \mathbb{R} \cup \{ +\infty \}$
$(i, j =1,\ldots,n; \  i \not = j)$
are univariate convex functions.
A function $f$ is called
{\em 2-separable diff-convex}
if the sum-convex terms are not involved in (\ref{twosepSDconv}), i.e., if
\begin{equation}\label{twoDiffConv}
f(x_1, \ldots, x_n) = 
\sum_{i=1}\sp{n} \varphi_{i}(x_{i}) 
+ \sum_{i \neq j}  \varphi_{ij}(x_i - x_j) .
\end{equation}
It is known that a 2-separable diff-convex function is ${\rm L}\sp{\natural}$-convex,
\cite{Mdcasiam,Mbonn09} and a
2-separable convex function is {\rm L}-extendable \cite{Hir15Lextprox}.
However, a 2-separable convex function is not necessarily locally discrete midpoint convex.
For example, consider
$f(x_1, x_2, x_3) = (x_1 + x_2)^2$ for $n=3$.
This is a 2-separable convex function that is not locally discrete midpoint convex.
Indeed, for $x = (1,0,0)$ and $y = (0,1,2)$, we have
$z=\left\lceil \frac{x+y}{2} \right\rceil = (1,1,1)$,
$w = \left\lfloor \frac{x+y}{2} \right\rfloor = (0,0,1)$,
and $f(x) + f(y) = 1 + 1  < f(z) + f(w) = 4 + 0$.
\finbox
\end{remark}

\section{Parallelogram Inequality}
\label{SCdirintparaineq}

An interesting inequality,
to be named 
``parallelogram inequality,''
can be established for (globally or locally)
discrete midpoint convexity functions.

\begin{theorem}[Parallelogram inequality]  \label{THparallelineqgen}
Let $f : \ZZ\sp{n} \to \RR \cup \{+\infty\}$ be 
a (globally or locally) discrete midpoint convex function, 
$x \in \dom f$, 
and 
$\{ A_{1}, A_{2}, \ldots, A_{m} ; B_{1},  B_{2}, \ldots, B_{m} \}$
be a multiset of subsets
of $\{ 1,2,\ldots, n \}$
such that
$A_{1} \subseteq A_{2}  \subseteq \cdots \subseteq A_{m}$, \
$B_{1} \supseteq B_{2} \supseteq \cdots \supseteq B_{m}$, \ 
$A_{m} \cap B_{1} = \emptyset$, \  $A_{1} \cup B_{m} \not= \emptyset$.
For any bipartition $(I,J)$ of $\{ 1,2,\ldots, m \}$
$(I \cap J = \emptyset$ and
$I \cup J = \{ 1,2,\ldots, m \})$
we have
\begin{align}
 f(x) + f(x + d_{1} + d_{2} )  \geq  f(x + d_{1}) +  f(x + d_{2}) ,
\label{paraineqd1d2gen}
\end{align}
where $\displaystyle d_{1} = \sum_{i \in I} (\bm{1}_{A_{i}} - \bm{1}_{B_{i}})$
and  $\displaystyle d_{2} = \sum_{j \in J} (\bm{1}_{A_{j}} - \bm{1}_{B_{j}})$.
\end{theorem}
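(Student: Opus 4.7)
The plan is to embed the four vertices $x, x+d_1, x+d_2, y$ of the parallelogram as the corners of a two-dimensional grid of intermediate points, establish a submodular-type inequality on each unit cell by a single application of discrete midpoint convexity, and then telescope. Write $I = \{i_1 < i_2 < \cdots < i_p\}$, $J = \{j_1 < j_2 < \cdots < j_q\}$, and $s_k = \bm{1}_{A_k} - \bm{1}_{B_k}$, and set
$$z_{k,l} = x + \sum_{k' = 1}^{k} s_{i_{k'}} + \sum_{l' = 1}^{l} s_{j_{l'}} \qquad (0 \leq k \leq p, \ 0 \leq l \leq q),$$
so that $z_{0,0} = x$, $z_{p,0} = x + d_1$, $z_{0,q} = x + d_2$, $z_{p,q} = y$. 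We may assume $y \in \dom f$, as otherwise the inequality is vacuous. Every $z_{k,l}$ has the form $x + \sum_{r \in K} s_r$ for some $K \subseteq \{1, \ldots, m\}$, so the parallelogram property of discrete midpoint convex sets (Theorem \ref{THdirintcnvSetdec}), applied to $\dom f$, yields $z_{k,l} \in \dom f$ for every $k, l$.

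The key step is the \emph{cell inequality}
$$f(z_{k,l}) + f(z_{k+1, l+1}) \geq f(z_{k+1, l}) + f(z_{k, l+1}) \qquad (0 \leq k < p, \ 0 \leq l < q).$$
Let $a = i_{k+1}$, $b = j_{l+1}$, $\alpha = s_a$, $\beta = s_b$; assume $a < b$ (the case $a > b$ is symmetric, and $\alpha = 0$ or $\beta = 0$ is trivial). The nesting gives $A_a \subseteq A_b$ and $B_b \subseteq B_a$, while $A_b \cap B_a \subseteq A_m \cap B_1 = \emptyset$. A coordinatewise check then shows $\lfloor (\alpha + \beta)/2 \rfloor = \bm{1}_{A_a} - \bm{1}_{B_a} = \alpha$ and $\lceil (\alpha + \beta)/2 \rceil = \bm{1}_{A_b} - \bm{1}_{B_b} = \beta$, so the midpoint roundings of $(z_{k,l}, z_{k+1, l+1})$ are exactly $z_{k+1, l}$ and $z_{k, l+1}$. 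Crucially, $\|\alpha + \beta\|_\infty = 2$: otherwise one would have $A_a = \emptyset$ and $B_b = \emptyset$, which forces $A_1 \subseteq A_a = \emptyset$ and $B_m \subseteq B_b = \emptyset$, contradicting $A_1 \cup B_m \neq \emptyset$. Thus discrete midpoint convexity at $\ell_\infty$-distance exactly two, which is available even in the locally discrete midpoint convex case, delivers the cell inequality.

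Finally, fixing $k$ and applying the cell inequality for $l = 0, 1, \ldots, q-1$ shows that $l \mapsto f(z_{k+1, l}) - f(z_{k,l})$ is non-decreasing, so $f(z_{k+1, q}) - f(z_{k,q}) \geq f(z_{k+1, 0}) - f(z_{k,0})$; summing over $k = 0, 1, \ldots, p-1$ telescopes on both sides to give $f(z_{p, q}) - f(z_{0, q}) \geq f(z_{p, 0}) - f(z_{0, 0})$, which is exactly (\ref{paraineqd1d2gen}). The main obstacle, and the reason the hypothesis $A_1 \cup B_m \neq \emptyset$ is indispensable, lies precisely in verifying that every unit-cell diagonal has $\ell_\infty$-length exactly two, so that a single DMC step resolves each cell; once this is secured, the supermodular grid structure makes the rest of the argument a standard telescoping.
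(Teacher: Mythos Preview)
Your proof is correct and follows essentially the same route as the paper's: both build the same two-dimensional grid of intermediate points $z_{k,l}$ (the paper calls them $x(k,l)$), invoke Theorem~\ref{THdirintcnvSetdec} to place every grid point in $\dom f$, verify that each unit-cell diagonal has $\ell_\infty$-length exactly $2$ via the nesting $A_a\subseteq A_b$, $B_a\supseteq B_b$ together with $A_1\cup B_m\neq\emptyset$, apply discrete midpoint convexity once per cell, and telescope. The only differences are cosmetic (you order $I$ and $J$ increasingly and phrase the telescoping as monotonicity of $l\mapsto f(z_{k+1,l})-f(z_{k,l})$, while the paper adds all $|I|\cdot|J|$ cell inequalities at once).
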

\begin{proof}
For
$y = x + d_{1} + d_{2} 
= x+ \sum_{k=1}\sp{m} (\bm{1}_{A_{k}} - \bm{1}_{B_{k}})$
we may assume $y \in \dom f$, since otherwise 
(\ref{paraineqd1d2gen}) is trivially true
with $f(x + d_{1} + d_{2} ) = +\infty$.
Let
$d_{1}\sp{1},\ldots,d_{1}\sp{|I|}$
denote the vectors 
$\bm{1}_{A_{i}} - \bm{1}_{B_{i}}$ 
($i \in I$),
and similarly,
$d_{2}\sp{1},\ldots,d_{2}\sp{|J|}$
 the vectors 
$ \bm{1}_{A_{j}} - \bm{1}_{B_{j}}$ 
$(j \in J)$.
For $k=0,1,\ldots,|I|$ and $l=0,1,\ldots,|J|$ \ define
\[
 x(k,l) =  x + \sum_{i=1}\sp{k}  d_{1}\sp{i} 
             + \sum_{j=1}\sp{l} d_{2}\sp{j} .
\]
By Theorem~\ref{THdirintcnvSetdec},
we have $x(k,l) \in \dom f$ for
all $(k,l)$ with $0 \leq k \leq |I|$ and $0 \leq l \leq |J|$.
Note that (\ref{paraineqd1d2gen}) is rewritten as
\begin{equation}  \label{paraineqd1d2genxKL}
  f(x(0,0)) + f(x(|I|,|J|))   \geq  f(x(|I|,0)) +  f(x(0,|J|)) .
\end{equation}

Let $1 \leq k \leq |I|$ and $1 \leq l \leq |J|$.
We make use of the following properties of the vectors
$d_{1}\sp{k}$ and $d_{2}\sp{l}$:
\[
\| d_{1}\sp{k} + d_{2}\sp{l} \|_{\infty} = 2,
\qquad
\left\lceil (d_{1}\sp{k} + d_{2}\sp{l})/{2} \right\rceil
= \max(d_{1}\sp{k},d_{2}\sp{l}),
\qquad
\left\lfloor (d_{1}\sp{k} + d_{2}\sp{l})/{2} \right\rfloor
= \min(d_{1}\sp{k},d_{2}\sp{l}),
\]
where $\max(\cdot, \cdot)$ and $\min(\cdot, \cdot)$
denote the vectors of componentwise maximum and minimum, respectively.
This follows from the fact that
$d_{1}\sp{k} = \bm{1}_{A_{i}} - \bm{1}_{B_{i}}$ for some $i =i_{k} \in I$
and
$d_{2}\sp{l} = \bm{1}_{A_{j}} - \bm{1}_{B_{j}}$ for some $j =j_{l} \in J$,
where
$(A_{i} \cap A_{j}) \cup (B_{i} \cap B_{j})$
is nonempty and either
[$A_{i} \subseteq A_{j}$,  $B_{i} \supseteq B_{j}$, $A_{j} \cap B_{i} = \emptyset$]
or 
[$A_{j} \subseteq A_{i}$, $B_{j} \supseteq B_{i}$,  $A_{i} \cap B_{j} = \emptyset$]
according as $i < j$ or $i > j$.

Since 
$\| x(k,l) - x(k-1,l-1) \|_{\infty} = \| d_{1}\sp{k} + d_{2}\sp{l} \|_{\infty} = 2$,
we can use the discrete midpoint convexity (\ref{discmptconvfn2}) to obtain
\begin{align}
 & f(x(k-1,l-1)) +  f(x(k,l)) 
\notag \\  &\geq 
  f(\left\lceil (x(k-1,l-1)+x(k,l))/2 \right\rceil) 
+ f(\left\lfloor (x(k-1,l-1)+x(k,l))/2 \right\rfloor) .
\label{paraeqproof2}
\end{align}
By $x(k,l) =x(k-1,l-1) + d_{1}\sp{k} + d_{2}\sp{l}$ 
we have
\begin{align*}
&\lceil (x(k-1,l-1)+x(k,l))/2 \rceil
\\
& = x(k-1,l-1) + 
\left\lceil (d_{1}\sp{k} + d_{2}\sp{l})/{2} \right\rceil
\\
&= 
x(k-1,l-1) +  \max(d_{1}\sp{k},d_{2}\sp{l})
\\
& = 
 \left\{\begin{array}{rl}
  x(k-1,l-1) + d_{1}\sp{k} = x(k,l-1)&   (i_{k} > j_{l}) , \\
  x(k-1,l-1) + d_{2}\sp{l} = x(k-1,l) & (i_{k} < j_{l}), \\
    \end{array}\right.
\\
& \lfloor (x(k-1,l-1)+x(k,l))/2 \rfloor
\\
& = x(k-1,l-1) + 
\left\lfloor (d_{1}\sp{k} + d_{2}\sp{l})/{2} \right\rfloor
\\
&= 
x(k-1,l-1) +  \min(d_{1}\sp{k},d_{2}\sp{l})
\\
& = 
 \left\{\begin{array}{rl}
  x(k-1,l-1) + d_{2}\sp{l} = x(k-1,l)&   (i_{k} > j_{l}) , \\
  x(k-1,l-1) + d_{1}\sp{k} = x(k,l-1) & (i_{k} < j_{l}). \\
    \end{array}\right.
\end{align*}
This shows that the right-hand side of (\ref{paraeqproof2}) is equal to
$f(x(k,l-1)) +  f(x(k-1,l))$.  
Therefore, 
\[
 f(x(k-1,l-1)) +  f(x(k,l)) \geq  f(x(k,l-1)) +  f(x(k-1,l)).
\]
\par
By adding these inequalities for $(k,l)$ with
$1 \leq k \leq |I|$ and $1 \leq l \leq |J|$,
we obtain (\ref{paraineqd1d2genxKL}).
It is emphasized that all the terms that are cancelled
in this addition of inequalities are finite-valued,
since $x(k,l) \in \dom f$
for all $(k,l)$ with $0 \leq k \leq |I|$ and $0 \leq l \leq |J|$, 
by Theorem~\ref{THdirintcnvSetdec}.
\end{proof}

Theorem \ref{THparallelineqgen}
can be recast into the following form,
which is more convenient in some applications.
See also Figs.~\ref{FGparaset} and \ref{FGparasetEX}.

\begin{theorem}  \label{THparallelineqgen-var}
Let $f : \ZZ\sp{n} \to \RR \cup \{+\infty\}$ be 
a (globally or locally) discrete midpoint convex function, 
$x, y \in \dom f$, and 
 $d = \sum_{k \in J} (\bm{1}_{A_{k}} - \bm{1}_{B_{k}})$
for some $J \subseteq \{ 1,2,\ldots, m \}$ in {\rm (\ref{DICdecAkBksum})}.
Then
\begin{align}
 f(x) + f(y)  \geq  f(x + d) +  f(y - d) .
\label{paraineqxdyd}
\end{align}
\vspace{-1.5\baselineskip}\\
\finbox
\end{theorem}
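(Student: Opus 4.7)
The plan is to obtain Theorem \ref{THparallelineqgen-var} as a straightforward specialization of Theorem \ref{THparallelineqgen}. The key observation is that the bipartition $(I,J)$ appearing in Theorem \ref{THparallelineqgen} gives two complementary vectors $d_{1}, d_{2}$ whose sum equals $y - x$ whenever $\{A_{k}, B_{k}\}$ is precisely the canonical decomposition (\ref{DICdecAkBksum}) of $y - x$. The hypothesis of the earlier theorem matches the hypothesis of the present one, since the chain and disjointness conditions on $(A_{k},B_{k})$ were verified in the paragraph immediately following (\ref{DICdecAkBkdef}).

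Concretely, given $x, y \in \dom f$, set $m := \| y - x \|_{\infty}$ and take the associated sets $A_{k}, B_{k}$ from (\ref{DICdecAkBkdef}). For the prescribed $J \subseteq \{1,\ldots,m\}$ defining $d$, I would apply Theorem \ref{THparallelineqgen} to the bipartition $I := J$ and $J' := \{1,\ldots,m\} \setminus J$, which yields
\[
 d_{1} = \sum_{k \in I} (\bm{1}_{A_{k}} - \bm{1}_{B_{k}}) = d,
 \qquad
 d_{2} = \sum_{k \in J'} (\bm{1}_{A_{k}} - \bm{1}_{B_{k}}) = (y - x) - d
\]
by (\ref{DICdecAkBksum}). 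Hence $x + d_{1} + d_{2} = y$ and $x + d_{2} = y - d$, so the inequality (\ref{paraineqd1d2gen}) becomes precisely $f(x) + f(y) \geq f(x + d) + f(y - d)$, which is (\ref{paraineqxdyd}).

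The only edge case is $x = y$, for which $m = 0$ and the multiset of $(A_{k}, B_{k})$ is empty; then $d = \bm{0}$ and (\ref{paraineqxdyd}) reduces to $2 f(x) \geq 2 f(x)$, trivially true. I do not anticipate any real obstacle: this theorem is essentially a repackaging of Theorem \ref{THparallelineqgen} in the symmetric ``two-point'' form $f(x) + f(y) \geq f(x+d) + f(y-d)$, which mirrors the shape of (\ref{discmptconvfn}) and (\ref{submfndef}) and is convenient for later applications such as the proximity and scaling arguments. The substantive work, namely repeated application of discrete midpoint convexity along the grid $x(k,l)$ and telescoping, has already been done in the proof of Theorem \ref{THparallelineqgen}; the finiteness of all cancelled terms is guaranteed by Theorem \ref{THdirintcnvSetdec}, which also ensures $x + d, y - d \in \dom f$.
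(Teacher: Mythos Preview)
Your proposal is correct and matches the paper's approach: the paper presents Theorem~\ref{THparallelineqgen-var} explicitly as a recasting of Theorem~\ref{THparallelineqgen} (with no separate proof, just a \finbox), and your derivation via the bipartition $I=J$, $J'=\{1,\ldots,m\}\setminus J$ is exactly the intended reduction.
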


As an application of the parallelogram inequality
we show some properties of 
locally discrete midpoint convex functions.
By definition, they
satisfy inequality (\ref{discmptconvfn2})
for $x$ and $y$ with $\| x - y \|_{\infty} = 2$.
Here we are interested in this inequality
for distant pairs $(x,y)$ with $\| x - y \|_{\infty} \geq 3$.
Example~\ref{EXmdpt1} shows that 
(\ref{discmptconvfn2}) is not always satisfied
if $\| x - y \|_{\infty} \geq 3$.

\begin{proposition}   \label{PRmdptdistant}
Let $f: \mathbb{Z}\sp{n} \to \mathbb{R} \cup \{ +\infty  \}$
 be a locally discrete midpoint convex function.

\noindent
{\rm (1)}
Discrete midpoint convexity {\rm (\ref{discmptconvfn2})} holds 
for all $x, y \in \ZZ\sp{n}$ with  $x \leq y$.

\noindent
{\rm (2)}
Discrete midpoint convexity {\rm (\ref{discmptconvfn2})} holds
for all $x, y \in \ZZ\sp{n}$ with  $\| x - y \|_{\infty}$ even.

\noindent
{\rm (3)}
If, in addition,  {\rm (\ref{discmptconvfn2})} holds
for all $x, y \in \ZZ\sp{n}$ with  $\| x - y \|_{\infty} = 3$, then 
{\rm (\ref{discmptconvfn2})} holds
for all $x, y \in \ZZ\sp{n}$ with  $\| x - y \|_{\infty} \geq 2$,
that is,
$f$ is globally discrete midpoint convex.
\end{proposition}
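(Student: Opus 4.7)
The plan is to derive all three parts from the parallelogram inequality (Theorem~\ref{THparallelineqgen}), applied to the canonical decomposition $y - x = \sum_{k=1}^{m}(\bm{1}_{A_{k}} - \bm{1}_{B_{k}})$ with $m = \|y - x\|_{\infty}$, choosing the bipartition $(I,J)$ of $\{1,\ldots,m\}$ suitably in each case.

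For parts (1) and (2) I would take $I$ to be the odd indices and $J$ the even indices in $\{1,\ldots,m\}$. The key counting fact is that the number of odd integers in the suffix $[m+1-c,m]$ equals $\lfloor c/2 \rfloor$ when $m$ is even and $\lceil c/2 \rceil$ when $m$ is odd, while the number of odd integers in the prefix $[1,c]$ equals $\lceil c/2 \rceil$ for either parity of $m$. Plugging this into the formulas $(d_1)_i = |I \cap [m+1-c,m]|$ for $(y-x)_i = c > 0$ and $(d_1)_i = -|I\cap [1,c]|$ for $(y-x)_i = -c < 0$, one sees that in case (1) --- where $y-x \geq 0$ forces all $B_k = \emptyset$ so only the suffix contributions appear --- one gets $(d_1)_i \in \{\lceil (y-x)_i/2 \rceil,\lfloor (y-x)_i/2 \rfloor\}$ uniformly (namely $\lceil \cdot \rceil$ if $m$ is odd, $\lfloor \cdot \rfloor$ if $m$ is even), while in case (2), when $m$ is even, the suffix and prefix counts both yield $(d_1)_i = \lfloor (y-x)_i/2 \rfloor$ regardless of sign. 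In either case, $\{x+d_1, x+d_2\} = \{\lceil (x+y)/2 \rceil, \lfloor (x+y)/2 \rfloor\}$, so the parallelogram inequality $f(x) + f(y) \geq f(x+d_1) + f(x+d_2)$ is exactly (\ref{discmptconvfn2}).

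For part (3) I would induct on odd $m \geq 3$, with the base case $m=3$ supplied by the additional hypothesis. For $m \geq 5$ odd, I would fix any $j \in [2,m-1]$ and set $J = \{j\}$, $I = \{1,\ldots,m\} \setminus \{j\}$. A coordinatewise case split on the sign and magnitude of $(y-x)_i$ shows that $(d_1-d_2)_i \in \{(y-x)_i - 2,\,(y-x)_i,\,(y-x)_i + 2\}$, and the restriction $j \in [2,m-1]$ is exactly what prevents any coordinate with $|(y-x)_i| = m-1$ from contributing a shift of magnitude $m-1$, yielding $\|d_1-d_2\|_\infty \leq m-2$; conversely, some coordinate carries $|(y-x)_i| = m$ and there $|(d_1-d_2)_i| = m-2$, so equality holds. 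Since $(x+d_1)+(x+d_2) = x+y$ and $\|(x+d_1)-(x+d_2)\|_\infty = m-2$ is an odd integer with $3 \leq m-2$, the induction hypothesis applies to the pair $(x+d_1, x+d_2)$ and gives $f(x+d_1) + f(x+d_2) \geq f(\lceil (x+y)/2 \rceil) + f(\lfloor (x+y)/2 \rfloor)$, which chained with the parallelogram inequality closes the step.

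The main obstacle in (3) is to avoid reducing to a pair at $\ell_\infty$-distance $1$: at distance $1$ the discrete midpoint convexity inequality becomes ordinary submodularity, which a locally discrete midpoint convex function need not satisfy (cf.~the paragraph around Fig.~\ref{FGmidptfnclass}). Taking $j \in \{1,m\}$ would leave a coordinate with $|(y-x)_i| = m-1$ unshifted and waste the reduction; the interior choice $j \in [2,m-1]$ is precisely what produces a clean drop of $2$ in the $\ell_\infty$-distance, so that the induction hypothesis (at odd distance $m-2 \geq 3$) can be invoked.
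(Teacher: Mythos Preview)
Your proof is correct. Parts (1) and (2) are essentially identical to the paper's argument: the paper also bipartitions $\{1,\ldots,m\}$ by parity and verifies directly that $d_{1}=\lceil (y-x)/2\rceil$ and $d_{2}=\lfloor (y-x)/2\rfloor$ (or vice versa), so the only difference is cosmetic.

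Part (3) is where the two arguments diverge slightly. The paper inducts on \emph{all} $m\geq 2$: assuming $A_{1}\neq\emptyset$ (after swapping $x,y$ if necessary) it takes $J=\{1\}$, so $d=\bm{1}_{A_{1}}-\bm{1}_{B_{1}}$, and observes $2\leq\|x'-y'\|_\infty\leq m-1$ for $x'=x+d$, $y'=y-d$; the induction hypothesis at this smaller distance then closes the step. You instead induct only on odd $m$, covering even $m$ via part (2), and pick $J=\{j\}$ with $j$ \emph{interior} so that the distance drops by exactly $2$ and stays odd. Your choice avoids the WLOG step and makes the reduction deterministic, at the price of the short coordinatewise verification that no component of $d_{1}-d_{2}$ can reach $m-1$ in absolute value. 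The paper's choice is marginally simpler to state but lands at an unspecified distance in $[2,m-1]$, so it needs the full induction over all $m$. Both routes rest on the same mechanism (Theorem~\ref{THparallelineqgen} plus $x'+y'=x+y$).
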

\begin{proof}
Consider the representation
$y - x = \sum_{k=1}\sp{m} (\bm{1}_{A_{k}} - \bm{1}_{B_{k}})$
as in (\ref{DICdecAkBksum}),
where $m = \| x - y \|_{\infty}$, 
$A_{1} \subseteq A_{2}  \subseteq \cdots \subseteq A_{m}$, \
$B_{1} \supseteq B_{2} \supseteq \cdots \supseteq B_{m}$, \
$A_{m} \cap B_{1} = \emptyset$, and
$A_{1} \cup B_{m} \not= \emptyset$.

(1)
We use the parallelogram inequality (\ref{paraineqd1d2gen}) in Theorem \ref{THparallelineqgen}.
For 
\[
I = 
 \left\{\begin{array}{ll}
     \{ m, m-2, \ldots, 2 \}  & (\mbox{$m$: even}), \\
     \{ m, m-2, \ldots, 1 \}  & (\mbox{$m$: odd}) ,\\
    \end{array}\right.
\quad
J = 
 \left\{\begin{array}{ll}
     \{ m-1, m-3, \ldots, 1 \}  & (\mbox{$m$: even}), \\
     \{ m-1, m-3, \ldots, 2 \}  & (\mbox{$m$: odd}) ,\\
    \end{array}\right.
\]
we can verify
\begin{align*}
d_{1} 
&= \sum_{i \in I} (\bm{1}_{A_{i}} - \bm{1}_{B_{i}})
= \left\lceil (y-x)/2 \right\rceil
= \left\lceil (x+y)/2 \right\rceil - x,
\\
d_{2} 
&= \sum_{j \in J} (\bm{1}_{A_{j}} - \bm{1}_{B_{j}})
= \left\lfloor (y-x)/2 \right\rfloor
= \left\lfloor (x+y)/2 \right\rfloor -x,
\end{align*}
where
$B_{k} = \emptyset$ for all $k$ by $x \leq y$.
Then the parallelogram inequality (\ref{paraineqd1d2gen}) 
yields the inequality (\ref{discmptconvfn2}).

(2)
For
$I = \{ 2, 4, \ldots, m \}$
and
 $J = \{ 1, 3, \ldots, m -1 \}$,
we have
$d_{1} = \sum_{i \in I} (\bm{1}_{A_{i}} - \bm{1}_{B_{i}})
= \left\lceil (y-x)/2 \right\rceil
= \left\lceil (x+y)/2 \right\rceil -x$
and  
$ d_{2} = \sum_{j \in J} (\bm{1}_{A_{j}} - \bm{1}_{B_{j}})
  = \left\lfloor (y-x)/2 \right\rfloor
  = \left\lfloor (x+y)/2 \right\rfloor -x$.
Then the parallelogram inequality (\ref{paraineqd1d2gen}) 
yields (\ref{discmptconvfn2}).

(3)
By induction on $m =2,3,4,\ldots$, we prove that 
(\ref{discmptconvfn2}) holds 
for all $x, y \in \ZZ\sp{n}$ with $\| x - y \|_{\infty} = m$.
By assumption this is true for $m=2,3$.
Take $x, y \in \ZZ\sp{n}$ with  $\| x - y \|_{\infty} =m$, where $m \geq 4$.
In the representation
$y - x = \sum_{k=1}\sp{m} (\bm{1}_{A_{k}} - \bm{1}_{B_{k}})$
we may assume  $A_{1} \not= \emptyset$; otherwise interchange $x$ and $y$. 
By the parallelogram inequality in Theorem \ref{THparallelineqgen-var}
with $d = \bm{1}_{A_{1}} - \bm{1}_{B_{1}}$
we have $f(x) + f(y)  \geq  f(x + d) +  f(y - d) = f(x') + f(y')$,
where $x'=x+d$ and $y'=y-d$.
Since
$2 \leq \| x' - y' \|_{\infty} \leq m-1$, the induction hypothesis gives
\begin{align*}
 f(x') + f(y')  
 \geq
    f \left(\left\lceil \frac{x'+y'}{2} \right\rceil\right) 
  + f \left(\left\lfloor \frac{x'+y'}{2} \right\rfloor\right) 
=
    f \left(\left\lceil \frac{x+y}{2} \right\rceil\right) 
  + f \left(\left\lfloor \frac{x+y}{2} \right\rfloor\right) .
\end{align*}
Hence (\ref{discmptconvfn2}) holds for $m$.
\end{proof}

In the notation of the Introduction,
Proposition \ref{PRmdptdistant} (2) shows
$\textrm{DMC}(2) = \bigcap_{k=1}\sp{\infty}\textrm{DMC}(2k)$,
and Proposition \ref{PRmdptdistant} (3) shows
$\textrm{DMC}(2) \cap \textrm{DMC}(3) = \mbox{\rm DMC($\geq$2)}$.

\begin{remark} \rm  \label{RMparaineqlocopt}
The local characterization of global minima
for discrete midpoint convex functions,
which is given in Theorem \ref{THintcnvlocopt}
for integrally convex functions,
can be derived easily from the parallelogram inequality 
in Theorem \ref{THparallelineqgen}.
\finbox 
\end{remark}

\section{Scaling Operations}
\label{SCscalingDIC}

In designing efficient algorithms 
for discrete or combinatorial optimization,
the proximity-scaling approach is a fundamental technique.
For a function
$f: \mathbb{Z}^{n} \to \mathbb{R} \cup \{ +\infty  \}$
in integer variables and a positive integer
$\alpha$, called a scaling unit,
the $\alpha$-scaling of $f$ means the function $f^{\alpha}$
defined by $f^{\alpha}(x) = f(\alpha x) $ $(x \in \mathbb{Z}^{n})$.
The scaled function  
$f^{\alpha}$ is simpler and hence easier to minimize.
The quality of the obtained minimizer of $f^{\alpha}$ 
as an approximation to the minimizer of $f$
is guaranteed by a proximity theorem,
which states that a (local) minimum 
of the scaled function $f^{\alpha}$ is close to a minimizer 
of the original function $f$.

Discrete midpoint convexity is stable under the scaling operation,
which is the case for sets and functions.
We denote the set of positive integers by $\ZZ_{++}$
and the zero vector by $\bm{0}$.

\begin{proposition} \label{PRdirintcnvSetSC}
Let $S  \subseteq \mathbb{Z}\sp{n}$
 be a discrete midpoint convex set and $\alpha \in \ZZ_{++}$.
Then the scaled set
$S\sp{\alpha} = \{ x \in \ZZ\sp{n} \mid \alpha x \in S  \}$
is discrete midpoint convex.
\end{proposition}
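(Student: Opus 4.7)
The plan is to use the discrete midpoint convexity of $S$ applied to the scaled pair $u := \alpha x$ and $v := \alpha y \in S$, where $x, y \in S^{\alpha}$ with $\|x-y\|_{\infty} \geq 2$. Setting $t := \|x-y\|_{\infty}$, one has $\|u - v\|_{\infty} = \alpha t \geq 2$, and the claim reduces to showing that $\alpha \lceil (x+y)/2 \rceil$ and $\alpha \lfloor (x+y)/2 \rfloor$ both lie in $S$; membership of the corresponding unscaled points in $S^{\alpha}$ then follows.

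The first step is to exploit a block structure of the canonical decomposition of $v - u = \alpha(y-x)$ from (\ref{DICdecAkBksum})--(\ref{DICdecAkBkdef}): a direct check on the defining inequalities shows that, writing $k = \alpha(j-1)+r$ with $1 \leq j \leq t$ and $1 \leq r \leq \alpha$, one has $A_k = C_j$ and $B_k = D_j$, where $\{C_j, D_j\}_{j=1}^{t}$ is the corresponding decomposition of $y - x$. Combined with the parallelogram property (Theorem~\ref{THdirintcnvSetdec}) applied to $u, v \in S$, this yields, for any $I \subseteq \{1,\ldots,t\}$ and the block subset $J = \bigcup_{j \in I}\{\alpha(j-1)+1,\ldots,\alpha j\}$, the membership $u + \alpha \sum_{j \in I}(\bm{1}_{C_j}-\bm{1}_{D_j}) \in S$. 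When $t$ is even, taking $I = \{2, 4, \ldots, t\}$ as in the proof of Proposition~\ref{PRmdptdistant}(2) makes the inner sum equal $\lceil (y-x)/2 \rceil$, hence $\alpha \lceil (x+y)/2 \rceil \in S$; the complementary choice $I^{c}$ produces the floor version.

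The main obstacle is the case $t$ odd, for which no block-aligned $J$ can realize $\alpha \lceil (y-x)/2 \rceil$ in general: the instance $y - x = (t, -t, 0, \ldots, 0)$ already forces incompatible cardinality constraints on $|J|$. To handle this I would fall back on iterative application of the discrete midpoint convexity of $S$: first derive $\lceil (u+v)/2 \rceil$ and $\lfloor (u+v)/2 \rfloor$ in $S$; then apply discrete midpoint convexity to well-chosen pairs formed from these newly produced points together with $u$ and $v$; and continue until the target $\alpha \lceil (x+y)/2 \rceil$ is reached. For $\alpha = 2$ a clean three-step chain (midpoint of $u,v$; then midpoints with $u$ and with $v$ separately; then a midpoint of two of the resulting points) suffices, and the multiplicative identity $S^{\alpha\beta} = (S^{\alpha})^{\beta}$ together with induction on the prime factorization of $\alpha$ extends the argument to general $\alpha$.
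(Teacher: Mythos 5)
Your reduction to showing $\alpha\lceil (x+y)/2\rceil,\ \alpha\lfloor(x+y)/2\rfloor \in S$ is correct, the block structure $A_{\alpha(j-1)+r}=C_j$, $B_{\alpha(j-1)+r}=D_j$ of the decomposition of $\alpha(y-x)$ checks out, and the even-distance case via Theorem~\ref{THdirintcnvSetdec} with $I=\{2,4,\ldots,t\}$ does work, as does your three-step chain for $\alpha=2$. The genuine gap is the odd-distance case for general $\alpha$. The identity $S^{\alpha\beta}=(S^{\alpha})^{\beta}$ reduces the problem to \emph{prime} $\alpha$, but your only worked base case is $\alpha=2$, so ``induction on the prime factorization'' covers exactly the powers of $2$ and says nothing about $\alpha=3,5,7,\ldots$. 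For an odd prime you offer only ``apply discrete midpoint convexity to well-chosen pairs and continue until the target is reached,'' without specifying the pairs or arguing that such a chain exists and terminates at $\alpha\lceil(x+y)/2\rceil$; and the computation is not routine: already for $\alpha=3$ the first midpoint is $x+y+\lceil(x+y)/2\rceil$, whose difference from the target $3\lceil(x+y)/2\rceil$ is the parity vector $\bm{1}_{J}$ with $J$ the set of odd components of $x+y$, and it is not at all clear which subsequent midpoints produce that correction. As written, the proposal proves the proposition only for $\alpha$ a power of two, plus all $\alpha$ when $\|x-y\|_{\infty}$ is even.

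For comparison, the paper sidesteps the parity obstruction by a different mechanism. After translating so that the pair becomes $(\bm{0},\alpha z)$ with $z=y-x$, it invokes Proposition~\ref{PRdirintcnvSetIC}\,(2) (a discrete midpoint convex set is integrally convex, hence hole-free) to conclude that all intermediate multiples $\bm{0},z,2z,\ldots,\alpha z$ lie in $S$, and then repeatedly applies discrete midpoint convexity to the shifted consecutive pairs $\bigl(kz+\beta\lfloor z/2\rfloor,\ (k+1)z+\beta\lfloor z/2\rfloor\bigr)$, which are always at $\ell_{\infty}$-distance $\|z\|_{\infty}\geq 2$; an induction on $\beta$ then accumulates $\alpha\lfloor z/2\rfloor$, and symmetrically $\alpha\lceil z/2\rceil$, uniformly in the parity of $\|z\|_{\infty}$. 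Importing that one idea --- the segment $\{kz\}_{k=0}^{\alpha}\subseteq S$ obtained from integral convexity --- would let you replace your ad hoc chains by this uniform induction and close the gap.
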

\begin{proof}
Let $S  \subseteq \mathbb{Z}\sp{n}$
be a discrete midpoint convex set.
It suffices to show that, if  $\bm{0}, \alpha x \in S$ and 
$\| x  \|_{\infty} \geq 2$, then
$\alpha \left\lfloor x/2  \right\rfloor,  
\alpha \left\lceil x/2  \right\rceil \in S$.
Since $\bm{0}, \alpha x \in S$ and
$S$ is integrally convex 
by Proposition \ref{PRdirintcnvSetIC}~(2),
we have
$\{ \bm{0}, x, 2x, \ldots, (\alpha -1)x, \alpha x \} \subseteq 
\overline{S} \cap \ZZ\sp{n}= S$. 
The discrete midpoint convexity (\ref{dirintcnvsetdef})
for the pair $(k x, (k+1)x)$ of consecutive points implies
\begin{equation} \label{dicSCprf1a1S}
k x + \left\lfloor  x/2  \right\rfloor,  \  
k x + \left\lceil x/2  \right\rceil  \   \in S
\qquad
(k=0,1,\ldots, \alpha -1).
\end{equation}
Note that $k x$ and $(k+1)x$ 
are at distance $\| k x - (k+1) x \|_{\infty} = \|  x \|_{\infty} \geq 2$,
which allows us to use (\ref{dirintcnvsetdef}).
We will show
\begin{align} 
k x + (\beta +1) \left\lfloor  x/2  \right\rfloor  \   \in S  
\qquad
(k=0,1,\ldots, \alpha -1 - \beta) ,
\label{dicSCprf4SD}
\\
k x + (\beta +1) \left\lceil x/2  \right\rceil  \   \in S
\qquad
(k=0,1,\ldots, \alpha -1 - \beta),
\label{dicSCprf4SU}
\end{align}
respectively,
by induction on $\beta = 0,1, \ldots, \alpha -1$.
For $\beta = \alpha -1$,
we have
$k=0$ from $0 \leq k \leq \alpha -1 - \beta$, and
(\ref{dicSCprf4SD}) and (\ref{dicSCprf4SU})
coincide, respectively, with
$\alpha \left\lfloor x/2  \right\rfloor \in S$ and  
$\alpha \left\lceil x/2  \right\rceil \in S$.
For $\beta = 0$,
(\ref{dicSCprf4SD}) and (\ref{dicSCprf4SU})
are already shown in (\ref{dicSCprf1a1S}).

Assume now that (\ref{dicSCprf4SD}) holds for some
$\beta \geq 0$,
and let 
$k \in \{ 0,1,\ldots, \alpha - 2 - \beta \}$.
Consider consecutive points
$k x + (\beta +1) \left\lfloor  x/2  \right\rfloor$
and $(k+1) x + (\beta +1) \left\lfloor  x/2  \right\rfloor$,
and note that
their $\ell_{\infty}$-distance 
is equal to  $\| x  \|_{\infty}$.
By applying the discrete  midpoint convexity 
(\ref{dirintcnvsetdef})
to this pair of consecutive points
we obtain
\begin{equation*}  
 k x + (\beta +1) \left\lfloor  x/2  \right\rfloor 
 + \left\lfloor  x/2  \right\rfloor 
=
  k x + (\beta + 2) \left\lfloor  x/2  \right\rfloor 
  \in S .
\end{equation*}
This shows (\ref{dicSCprf4SD}) for $\beta+1$.
By induction we can conclude that 
(\ref{dicSCprf4SD}) is true for all $\beta$.

The other claim (\ref{dicSCprf4SU}) can be proved in a similar manner.
For a pair of consecutive points
$k x + (\beta +1) \left\lceil  x/2  \right\rceil$
and $(k+1) x + (\beta +1) \left\lceil  x/2  \right\rceil$,
the discrete  midpoint convexity 
(\ref{dirintcnvsetdef}) yields
\begin{equation*}  
  k x + (\beta +1) \left\lceil  x/2  \right\rceil
  + \left\lceil x/2  \right\rceil   \ 
=
  k x + (\beta + 2) \left\lceil  x/2  \right\rceil 
  \in S .
\end{equation*}
Then (\ref{dicSCprf4SU}) follows by induction on $\beta$.
\end{proof}

\begin{theorem} \label{THdirintsc}
\quad 

\noindent
{\rm (1)}
Let $f: \mathbb{Z}\sp{n} \to \mathbb{R} \cup \{ +\infty  \}$
be a globally discrete midpoint convex function 
and $\alpha \in \ZZ_{++}$.
Then the scaled function $f\sp{\alpha}$ is globally discrete midpoint convex.

\noindent
{\rm (2)}
Let $f: \mathbb{Z}\sp{n} \to \mathbb{R} \cup \{ +\infty  \}$
be a locally discrete midpoint convex function 
and $\alpha \in \ZZ_{++}$.
Then the scaled function $f\sp{\alpha}$ is locally discrete midpoint convex.
\end{theorem}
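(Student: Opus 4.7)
The plan is to handle both parts uniformly by reducing discrete midpoint convexity of $f^{\alpha}$ to a single application of the parallelogram inequality (Theorem~\ref{THparallelineqgen-var}) for $f$, which is available for both globally and locally discrete midpoint convex functions. First, since $\dom f$ is discrete midpoint convex in each case, Proposition~\ref{PRdirintcnvSetSC} yields that $\dom f^{\alpha} = \{ x \in \ZZ^{n} \mid \alpha x \in \dom f \}$ is discrete midpoint convex, which settles the domain requirement for local discrete midpoint convexity and, by Proposition~\ref{PRstrdom}, is also forced in the global case.

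The core of the argument is to verify the inequality (\ref{discmptconvfn2}) for $f^{\alpha}$ at every relevant pair $x, y \in \dom f^{\alpha}$ (those with $\|x-y\|_{\infty} \geq 2$ in the global case, and those with $\|x-y\|_{\infty} = 2$ in the local case). Set $p = \lceil (x+y)/2 \rceil$, $q = \lfloor (x+y)/2 \rfloor$, so the target reduces to $f(\alpha x) + f(\alpha y) \geq f(\alpha p) + f(\alpha q)$. Define $d := \alpha(p-x) = \alpha \lceil (y-x)/2 \rceil$; since $p+q = x+y$, one has $\alpha x + d = \alpha p$ and $\alpha y - d = \alpha q$. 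I will apply Theorem~\ref{THparallelineqgen-var} to $f$ at $(\alpha x, \alpha y) \in \dom f \times \dom f$ with this particular $d$, which then yields exactly the required inequality.

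To invoke Theorem~\ref{THparallelineqgen-var}, one must express $d$ as a partial sum $\sum_{k' \in \tilde I}(\bm{1}_{\tilde A_{k'}} - \bm{1}_{\tilde B_{k'}})$ from the standard decomposition (\ref{DICdecAkBksum})--(\ref{DICdecAkBkdef}) of $\alpha y - \alpha x$. Let $(A_{k}, B_{k})_{k=1}^{m}$ be the standard decomposition of $y-x$ (with $m = \|y-x\|_{\infty}$) and $(\tilde A_{k'}, \tilde B_{k'})_{k'=1}^{\alpha m}$ that of $\alpha(y-x)$. A direct inspection of (\ref{DICdecAkBkdef}), separating $k'$ according to its residue modulo $\alpha$, shows that each original pair $(A_{k}, B_{k})$ is repeated exactly $\alpha$ consecutive times, namely $(\tilde A_{(k-1)\alpha + j}, \tilde B_{(k-1)\alpha + j}) = (A_{k}, B_{k})$ for $j = 1, \ldots, \alpha$. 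By the construction of the decomposition used in the proof of Proposition~\ref{PRmdptdistant}, we also have $p - x = \lceil (y-x)/2 \rceil = \sum_{k \in I}(\bm{1}_{A_{k}} - \bm{1}_{B_{k}})$ for a suitable $I \subseteq \{1, \ldots, m\}$. Setting $\tilde I = \bigcup_{k \in I}\{(k-1)\alpha + 1, \ldots, k\alpha\}$ then gives
\[
 d \;=\; \alpha(p-x) \;=\; \alpha \sum_{k \in I}(\bm{1}_{A_{k}} - \bm{1}_{B_{k}}) \;=\; \sum_{k' \in \tilde I}(\bm{1}_{\tilde A_{k'}} - \bm{1}_{\tilde B_{k'}}),
\]
which is precisely the admissible form.

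The main (and essentially only) technical obstacle is the index bookkeeping used to identify $(\tilde A_{k'}, \tilde B_{k'})$ with the $\alpha$-fold repetition of $(A_{k}, B_{k})$; once this is in place, Theorem~\ref{THparallelineqgen-var} applied to $f$ at $(\alpha x, \alpha y)$ with the above $d$ delivers
\[
 f(\alpha x) + f(\alpha y) \;\geq\; f(\alpha x + d) + f(\alpha y - d) \;=\; f(\alpha p) + f(\alpha q),
\]
which is the desired discrete midpoint convexity of $f^{\alpha}$, establishing both (1) and (2).
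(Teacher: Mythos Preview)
Your argument for part~(2) is correct and in fact cleaner than the paper's: once you observe that for $m=\|y-x\|_\infty=2$ one has $\lceil (y-x)/2\rceil = \bm 1_{A_2}-\bm 1_{B_2}$ and hence $d=\alpha\lceil (y-x)/2\rceil=\sum_{k'=\alpha+1}^{2\alpha}(\bm 1_{\tilde A_{k'}}-\bm 1_{\tilde B_{k'}})$, a single call to Theorem~\ref{THparallelineqgen-var} at $(\alpha x,\alpha y)$ finishes the job. The paper instead proves Proposition~\ref{PRdirintSC} by an iterative telescoping argument (the quantities $D(\beta),U(\beta)$ in Lemma~\ref{LMdicbeta}), repeatedly applying DMC at the fixed distance~$\|x\|_\infty$; your route avoids that bookkeeping entirely.

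For part~(1), however, there is a genuine gap. You assert, citing the proof of Proposition~\ref{PRmdptdistant}, that $\lceil (y-x)/2\rceil=\sum_{k\in I}(\bm 1_{A_k}-\bm 1_{B_k})$ for some $I\subseteq\{1,\dots,m\}$. But Proposition~\ref{PRmdptdistant} establishes this only under extra hypotheses (either $x\le y$ in (1), or $m$ even in (2)); part~(3) proceeds by induction and does \emph{not} produce such an $I$. In general no such $I$ exists when $m$ is odd: take $y-x=(3,-3)$, so $m=3$ and every $\bm 1_{A_k}-\bm 1_{B_k}=(1,-1)$; any partial sum is $(|I|,-|I|)$, whereas $\lceil (y-x)/2\rceil=(2,-1)$. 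The same obstruction persists after scaling, since for $\alpha(y-x)=(3\alpha,-3\alpha)$ every $\bm 1_{\tilde A_{k'}}-\bm 1_{\tilde B_{k'}}$ is again $(1,-1)$ and no partial sum equals $d=(2\alpha,-\alpha)$. Thus your parallelogram step cannot be invoked for odd $\|x-y\|_\infty\ge 3$, and part~(1) is not established. (A possible repair is to keep your proof of (2), then separately verify DMC of $f^\alpha$ at distance~$3$ and appeal to Proposition~\ref{PRmdptdistant}(3); but that extra step is missing and is essentially where the paper's iterative argument does its work.)
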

\begin{proof}
In either case, $\dom f$ is a discrete midpoint convex set;
this is true by definition in the local case, 
and by Proposition~\ref{PRstrdom} in the global case. 
By Proposition~\ref{PRdirintcnvSetSC},
$\dom f\sp{\alpha}$ is a discrete midpoint convex set.
Then Proposition \ref{PRdirintSC} below establishes (1) and (2). 
\end{proof}

\begin{proposition}  \label{PRdirintSC}
\quad

\noindent
{\rm (1)} 
Let $f$ be a globally discrete midpoint convex function.
If \,  $\bm{0}, \alpha x \in \dom f$ and 
$\| x  \|_{\infty} \geq 2$, then
\begin{equation}\label{dirintcnvfnSC0x}
 f(\bm{0}) + f(\alpha x) \geq
   f \left(\alpha \left\lfloor {x}/{2} \right\rfloor\right) 
  +  f \left(\alpha \left\lceil {x}/{2} \right\rceil\right) .
\end{equation}

\noindent
{\rm (2)} 
Let $f$ be a locally discrete midpoint convex function.
If \, $\bm{0}, \alpha x \in \dom f$ and
$\| x  \|_{\infty} = 2$, then
{\rm (\ref{dirintcnvfnSC0x})} holds.
\end{proposition}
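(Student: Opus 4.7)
The plan is to reduce the desired inequality to the parallelogram form of supermodularity on a two-dimensional rectangle. Set $p = \lfloor x/2 \rfloor$ and $q = \lceil x/2 \rceil$, so that $p + q = x$, and define an auxiliary function
\[
F(i,j) = f(ip + jq) \qquad ((i,j) \in \{0,1,\ldots,\alpha\}\sp{2}).
\]
The target inequality (\ref{dirintcnvfnSC0x}) then reads exactly $F(0,0) + F(\alpha,\alpha) \geq F(\alpha,0) + F(0,\alpha)$, since $F(0,0)=f(\bm{0})$, $F(\alpha,\alpha)=f(\alpha x)$, $F(\alpha,0)=f(\alpha p)$, and $F(0,\alpha)=f(\alpha q)$.

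First, I would check that $F$ is finite-valued throughout the grid, i.e.\ that $ip + jq \in \dom f$ for every $0 \leq i, j \leq \alpha$. In both parts $\dom f$ is a discrete midpoint convex set (by Proposition~\ref{PRstrdom} in~(1); as part of the setup in~(2)) with $\bm{0}, \alpha x \in \dom f$ and $\|x\|_{\infty} \geq 2$, so the inductive construction already carried out in the proof of Proposition~\ref{PRdirintcnvSetSC} applies and yields $kx + \gamma p, \, kx + \gamma q \in \dom f$ for all integers $k, \gamma \geq 0$ with $k + \gamma \leq \alpha$. Via the identities $kx + \gamma p = (k+\gamma)\,p + k\,q$ and $kx + \gamma q = k\,p + (k+\gamma)\,q$, this covers every lattice point $ip + jq$ with $0 \leq i, j \leq \alpha$.

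Next, I would establish the local parallelogram inequality
\[
F(i,j) + F(i+1, j+1) \geq F(i+1, j) + F(i, j+1) \qquad (0 \leq i, j \leq \alpha - 1).
\]
For $w = ip + jq$ the pair $(w, w + x)$ lies in $\dom f$ by the previous step, and its $\ell_{\infty}$-distance equals $\|x\|_{\infty}$, which is exactly $2$ under the hypothesis of part (2) and at least $2$ under that of part (1); in either case discrete midpoint convexity (\ref{discmptconvfn2}) is applicable to this pair. Since $\lceil (2w + x)/2 \rceil = w + q$ and $\lfloor (2w + x)/2 \rfloor = w + p$, the inequality (\ref{discmptconvfn2}) reads $f(w) + f(w + x) \geq f(w + p) + f(w + q)$, which is precisely the claimed local inequality for $F$.

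Finally, a routine telescoping argument promotes the local inequality to the global one: summing over $j = 0, \ldots, \alpha - 1$ for fixed $i$ collapses to $F(i,0) + F(i+1, \alpha) \geq F(i+1, 0) + F(i, \alpha)$, and a second summation over $i = 0, \ldots, \alpha - 1$ collapses this to $F(0,0) + F(\alpha,\alpha) \geq F(\alpha,0) + F(0,\alpha)$, which is (\ref{dirintcnvfnSC0x}). The main obstacle is the first step: without finiteness of $F$ at every grid point one cannot safely invoke discrete midpoint convexity at each intermediate pair $(w, w + x)$ in the second step; this is exactly what is handled by reusing the inductive construction in the proof of Proposition~\ref{PRdirintcnvSetSC} rather than reproving it.
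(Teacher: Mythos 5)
Your proof is correct, and it rests on exactly the same elementary inequalities as the paper's proof --- namely, discrete midpoint convexity applied to the pairs $(w,\,w+x)$ with $w$ ranging over the lattice points $ip+jq$, $p=\lfloor x/2\rfloor$, $q=\lceil x/2\rceil$ --- but you organize the summation quite differently. The paper first sums the inequalities along the diagonal $(kx,(k+1)x)$ and then runs an induction on an off-diagonal parameter $\beta$, tracking two auxiliary quantities $D(\beta)$ and $U(\beta)$ (one for the $\lfloor x/2\rfloor$-shifted terms, one for the $\lceil x/2\rceil$-shifted terms) and proving their monotonicity in a separate lemma. You instead recast everything as supermodularity of $F(i,j)=f(ip+jq)$ on the grid $\{0,\dots,\alpha\}^{2}$ and finish with a standard double telescoping, which collapses the whole bookkeeping into two lines. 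Your treatment of finiteness is also cleaner: by front-loading the domain membership of every grid point (legitimately reusing the inductive construction inside the proof of Proposition~\ref{PRdirintcnvSetSC}, whose conclusions $kx+\gamma\lfloor x/2\rfloor,\ kx+\gamma\lceil x/2\rceil\in S$ for $k+\gamma\le\alpha$ do cover the full grid via the identities you state), all cancellations in the telescoping are between finite values, whereas the paper has to carry the finiteness claim through its induction. Both arguments apply verbatim in the local case since every pair used is at $\ell_{\infty}$-distance exactly $\|x\|_{\infty}$.
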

\begin{proof}
The  following proof works for both (1) and (2).
Since $\dom f$ is an integrally convex set
by Proposition \ref{PRdirintcnvSetIC}~(2) with Proposition \ref{PRstrdom},
 we have
\begin{equation} \label{dicSCprf01}
\{ \bm{0}, x, 2x, \ldots, (\alpha -1)x, \alpha x \} \subseteq \dom f .
\end{equation}
The discrete midpoint convexity (\ref{discmptconvfn2})
for the pair $(k x, (k+1)x)$ of consecutive points shows
\begin{equation} \label{dicSCprf1}
 f(k x) + f( (k+1) x) \geq
   f \left(k x + \left\lfloor  x/2  \right\rfloor\right) 
   +   f \left(k x + \left\lceil x/2  \right\rceil\right)
\qquad
(k=0,1,\ldots, \alpha -1). 
\end{equation}
Note that $k x$ and $(k+1)x$ 
are at distance $\| k x - (k+1) x \|_{\infty} = \|  x \|_{\infty}$
and therefore, the inequality (\ref{discmptconvfn2})
can be used in both global and local cases.
The inequality (\ref{dicSCprf1}) implies, in particular, that
\begin{equation} \label{dicSCprf1a1}
k x + \left\lfloor  x/2  \right\rfloor,  \  
k x + \left\lceil x/2  \right\rceil  \   \in \dom f
\qquad
(k=0,1,\ldots, \alpha -1) .
\end{equation}
By adding the inequalities (\ref{dicSCprf1}) for $k=0,1,\ldots, \alpha -1$
we obtain
\begin{equation} \label{dicSCprf3}
 f(\bm{0}) + f(\alpha x) 
 \geq
\sum_{k=0}\sp{\alpha -1}
 \left[
   f \left(k x + \left\lfloor  x/2  \right\rfloor\right) 
   +   f \left(k x + \left\lceil x/2  \right\rceil\right) 
 \right]
 -  2 \sum_{k=1}\sp{\alpha -1} f(k x).
\end{equation}

With a parameter $\beta = 0,1, \ldots, \alpha -1$
we consider a generalized form of this inequality:
\begin{align} 
 f(\bm{0}) + f(\alpha x) 
 \geq &
\sum_{k=0}\sp{\alpha -1 - \beta}
 \left[
   f \left(k x + (\beta +1) \left\lfloor  x/2  \right\rfloor\right) 
   +   f \left(k x + (\beta +1) \left\lceil x/2  \right\rceil\right) 
 \right]
\notag \\ &
 -  \sum_{k=1}\sp{\alpha -1 - \beta} 
 \left[
   f \left(k x + \beta \left\lfloor  x/2  \right\rfloor\right) 
   +   f \left(k x + \beta \left\lceil x/2  \right\rceil\right) 
 \right] .
\label{dicSCprf4}
\end{align}
For $\beta = 0$, 
the inequality (\ref{dicSCprf4}) coincides with
(\ref{dicSCprf3}), 
with each term being finite by (\ref{dicSCprf01}) and (\ref{dicSCprf1a1}).
For $\beta = \alpha -1$,
(\ref{dicSCprf4}) coincides with the desired inequality
(\ref{dirintcnvfnSC0x}).
We will show the inequality (\ref{dicSCprf4})
for $\beta = 0,1,\ldots, \alpha -1$,
together with the finiteness of all the terms.

On the right-hand side of (\ref{dicSCprf4}),
we classify terms into two parts,
$D(\beta)$ and $U(\beta)$,
according to whether they contain
$\left\lfloor  x/2  \right\rfloor$
or $\left\lceil x/2  \right\rceil$, i.e.,
\begin{align} 
 D(\beta) & =
\sum_{k=0}\sp{\alpha -1 - \beta}
   f \left(k x + (\beta +1) \left\lfloor  x/2  \right\rfloor \right) 
-  \sum_{k=1}\sp{\alpha -1 - \beta} 
   f \left(k x + \beta \left\lfloor  x/2  \right\rfloor \right) ,
\label{dicSCprf6}
 \\ 
 U(\beta) &=
\sum_{k=0}\sp{\alpha -1 - \beta}
  f \left(k x + (\beta +1) \left\lceil x/2  \right\rceil \right) 
-  \sum_{k=1}\sp{\alpha -1 - \beta} 
  f \left(k x + \beta \left\lceil x/2  \right\rceil \right) . 
\label{dicSCprf7}
\end{align}
Then (\ref{dicSCprf4}) is expressed as
\begin{equation} \label{dicSCprf5}
 f(\bm{0}) + f(\alpha x) 
 \geq  D(\beta) + U(\beta)
\qquad (\beta = 0,1, \ldots, \alpha -1) .
\end{equation}
This inequality for $\beta =0$ is shown in (\ref{dicSCprf3}).
The general case with $\beta \geq 1$ 
follows from Lemma \ref{LMdicbeta} below.
\end{proof}

\begin{lemma}  \label{LMdicbeta}
\quad

\noindent
{\rm (1)}
$D(\beta)  \geq  D(\beta +1)$
\quad
$(\beta = 0,1, \ldots, \alpha -2)$.

\noindent
{\rm (2)}
$U(\beta) \geq  U(\beta +1)$
\quad
$(\beta = 0,1, \ldots, \alpha -2)$.
\end{lemma}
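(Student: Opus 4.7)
The plan is to prove both (1) and (2) by summing carefully chosen instances of the discrete midpoint convexity inequality (\ref{discmptconvfn2}) applied to pairs of points lying on a common ``middle layer,'' so that the summation telescopes into exactly $D(\beta)-D(\beta+1)\ge 0$, respectively $U(\beta)-U(\beta+1)\ge 0$.

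For part (1), let $p=\lfloor x/2 \rfloor$ and $M=\alpha-1-\beta$. For each $k=0,1,\ldots,M-1$ I would apply (\ref{discmptconvfn2}) to the pair $u_k = kx+(\beta+1)p$ and $v_k = (k+1)x+(\beta+1)p$. These two points have $\ell_\infty$-distance $\|v_k-u_k\|_\infty=\|x\|_\infty$, which equals $2$ in the locally discrete midpoint convex case and is at least $2$ in the globally discrete midpoint convex case, so (\ref{discmptconvfn2}) does apply in either setting. The key calculation is that, using the componentwise identities $\lceil(k+\tfrac12)x\rceil=(k+1)x-p$ and $\lfloor(k+\tfrac12)x\rfloor=kx+p$ (valid regardless of the parities of the coordinates of $x$), together with the integrality of $(\beta+1)p$, the rounded midpoints of $(u_k+v_k)/2=(k+\tfrac12)x+(\beta+1)p$ are exactly $(k+1)x+\beta p$ and $kx+(\beta+2)p$. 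Thus (\ref{discmptconvfn2}) yields $f(kx+(\beta+1)p)+f((k+1)x+(\beta+1)p)\ge f((k+1)x+\beta p)+f(kx+(\beta+2)p)$.

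Summing these $M$ inequalities over $k=0,1,\ldots,M-1$ produces a telescoping identity: on the left, each $f(kx+(\beta+1)p)$ appears twice for $k=1,\ldots,M-1$ and once for $k=0$ and $k=M$, giving $\sum_{k=0}^{M}f(kx+(\beta+1)p)+\sum_{k=1}^{M-1}f(kx+(\beta+1)p)$, while the right-hand side becomes $\sum_{k=1}^{M}f(kx+\beta p)+\sum_{k=0}^{M-1}f(kx+(\beta+2)p)$. Rearranging these four sums is precisely the claim $D(\beta)\ge D(\beta+1)$. For part (2), I would repeat the argument verbatim with $q=\lceil x/2\rceil$ in place of $p$; since $q=x-p$, the identities $\lceil(k+\tfrac12)x\rceil=kx+q$ and $\lfloor(k+\tfrac12)x\rfloor=(k+1)x-q$ cause ceiling and floor of the midpoint to swap roles, and (\ref{discmptconvfn2}) now yields $f(kx+(\beta+1)q)+f((k+1)x+(\beta+1)q)\ge f(kx+(\beta+2)q)+f((k+1)x+\beta q)$, after which the same telescoping sum gives $U(\beta)\ge U(\beta+1)$.

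The main delicate point is really the choice of pair rather than the algebra: a naive ``vertical'' pair such as $(kx+\beta p,\,kx+(\beta+2)p)$ at distance $\|2p\|_\infty$ does satisfy the midpoint-convexity hypothesis but, when summed, gives the wrong direction for the desired inequality; the correct ``horizontal'' pair on the $(\beta+1)p$-layer is the unique one at distance exactly $\|x\|_\infty$ whose rounded midpoints split across the adjacent layers as an $a$-term and a $c$-term, so that the sum telescopes correctly. Finiteness of every term entering the calculation is provided by the induction hypothesis carried through the proof of Proposition \ref{PRdirintSC} (cf.\ the finiteness claim accompanying (\ref{dicSCprf5})), so no convergence issues arise when adding the inequalities.
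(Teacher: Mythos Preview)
Your proposal is correct and follows essentially the same route as the paper: apply the discrete midpoint convexity inequality (\ref{discmptconvfn2}) to the ``horizontal'' pair $\big(kx+(\beta+1)\lfloor x/2\rfloor,\ (k+1)x+(\beta+1)\lfloor x/2\rfloor\big)$ (respectively with $\lceil x/2\rceil$), use $\lfloor x/2\rfloor+\lceil x/2\rceil=x$ to identify the rounded midpoints as $kx+(\beta+2)\lfloor x/2\rfloor$ and $(k+1)x+\beta\lfloor x/2\rfloor$, and sum over $k=0,\ldots,\alpha-2-\beta$ so that the result telescopes into $D(\beta)\ge D(\beta+1)$. Your remarks on why the horizontal pair is the right one and on finiteness of the terms (inherited from the induction in Proposition~\ref{PRdirintSC}) are accurate and match the paper's treatment.
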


\begin{proof}
(1)
Consider consecutive terms in the first summation 
on the right-hand side of (\ref{dicSCprf6})
and note that
the $\ell_{\infty}$-distance between
$(k x + (\beta +1) \left\lfloor  x/2  \right\rfloor)$
and $((k+1) x + (\beta +1) \left\lfloor  x/2  \right\rfloor )$
is equal to  $\| x  \|_{\infty}$.
By applying the discrete  midpoint convexity 
(\ref{discmptconvfn2})
to this pair of consecutive terms and using the identity
$\left\lfloor  x/2  \right\rfloor + \left\lceil x/2  \right\rceil = x$,
we obtain
\begin{align} 
 &  f \left(k x + (\beta +1) \left\lfloor  x/2  \right\rfloor \right) 
  +   f \left((k+1) x + (\beta +1) \left\lfloor  x/2  \right\rfloor \right) 
\notag \\ & \geq
  f \left(k x + (\beta +1) \left\lfloor  x/2  \right\rfloor 
 + \left\lfloor  x/2  \right\rfloor \right) 
  +   f \left(k x + (\beta +1) \left\lfloor  x/2  \right\rfloor
  + \left\lceil x/2  \right\rceil \right) 
\notag \\ & =
  f \left(k x + (\beta + 2) \left\lfloor  x/2  \right\rfloor  \right) 
  +   f \left((k+1) x + \beta \left\lfloor  x/2  \right\rfloor \right) 
\qquad (k=0,1,\ldots, \alpha - 2 - \beta).
\label{dicSCprf8}
\end{align}
We add (\ref{dicSCprf8}) for 
$k=0,1,\ldots, \alpha - 2 - \beta$.
The sum of the left-hand sides is given by
\begin{align} 
 & 
\sum_{k=0}\sp{\alpha -2 - \beta}
 f \left(k x + (\beta +1) \left\lfloor  x/2  \right\rfloor \right) 
  + 
\sum_{k=0}\sp{\alpha - 2 - \beta}
  f \left((k+1) x + (\beta +1) \left\lfloor  x/2  \right\rfloor \right) 
\notag \\ & =
\sum_{k=0}\sp{\alpha -1 - \beta}
 f \left(k x + (\beta +1) \left\lfloor  x/2  \right\rfloor \right) 
  + 
\sum_{k=1}\sp{\alpha - 2 - \beta}
  f \left(k x + (\beta +1) \left\lfloor  x/2  \right\rfloor \right) 
\label{dicSCprf9}
\end{align}
and the sum of the right-hand sides is given by
\begin{align} 
& 
\sum_{k=0}\sp{\alpha -2 - \beta}
  f \left(k x + (\beta + 2) \left\lfloor  x/2  \right\rfloor  \right) 
  +
\sum_{k=0}\sp{\alpha -2 - \beta}
   f \left((k+1) x + \beta \left\lfloor  x/2  \right\rfloor \right) 
\notag \\ & =
\sum_{k=0}\sp{\alpha -2 - \beta}
  f \left(k x + (\beta + 2) \left\lfloor  x/2  \right\rfloor  \right) 
  +
\sum_{k=1}\sp{\alpha -1 - \beta}
   f \left( k x + \beta \left\lfloor  x/2  \right\rfloor \right) .
\label{dicSCprf10}
\end{align}
Then it follows from $\mbox{(\ref{dicSCprf9})} \geq \mbox{(\ref{dicSCprf10})}$
that
\begin{align*} 
& \sum_{k=0}\sp{\alpha -1 - \beta}
 f \left(k x + (\beta +1) \left\lfloor  x/2  \right\rfloor \right) 
  + 
\sum_{k=1}\sp{\alpha - 2 - \beta}
  f \left(k x + (\beta +1) \left\lfloor  x/2  \right\rfloor \right) 
\notag \\ & \geq 
\sum_{k=0}\sp{\alpha -2 - \beta}
  f \left(k x + (\beta + 2) \left\lfloor  x/2  \right\rfloor  \right) 
  +
\sum_{k=1}\sp{\alpha -1 - \beta}
   f \left( k x + \beta \left\lfloor  x/2  \right\rfloor \right) ,
\end{align*}
which is equivalent to 
\begin{align*} 
& \sum_{k=0}\sp{\alpha -1 - \beta}
 f \left(k x + (\beta +1) \left\lfloor  x/2  \right\rfloor \right) 
- \sum_{k=1}\sp{\alpha -1 - \beta}
   f \left( k x + \beta \left\lfloor  x/2  \right\rfloor \right) 
\notag \\ & \geq 
\sum_{k=0}\sp{\alpha -2 - \beta}
  f \left(k x + (\beta + 2) \left\lfloor  x/2  \right\rfloor  \right) 
- \sum_{k=1}\sp{\alpha - 2 - \beta}
  f \left(k x + (\beta +1) \left\lfloor  x/2  \right\rfloor \right) .
\end{align*}
The left-hand side above is equal to $D(\beta)$
and the right-hand side is $D(\beta + 1)$.
We have thus shown
$D(\beta)  \geq  D(\beta +1)$.

(2) The proof is similar to that of case (1).
In place of (\ref{dicSCprf8}) we now have
\begin{align*} 
 &  f \left(k x + (\beta +1) \left\lceil  x/2  \right\rceil \right) 
  +   f \left((k+1) x + (\beta +1) \left\lceil  x/2  \right\rceil \right) 
\notag \\ & \geq
  f \left(k x + (\beta +1) \left\lceil  x/2  \right\rceil 
  + \left\lceil x/2  \right\rceil \right) 
  +   f \left(k x + (\beta +1) \left\lceil  x/2  \right\rceil
 + \left\lfloor  x/2  \right\rfloor \right) 
\notag \\ & =
  f \left(k x + (\beta + 2) \left\lceil  x/2  \right\rceil  \right) 
  +   f \left((k+1) x + \beta \left\lceil  x/2  \right\rceil \right) 
\qquad (k=0,1,\ldots, \alpha - 2 - \beta).
\end{align*}
In the remaining part, we replace
$\left\lfloor  x/2  \right\rfloor$ by
$\left\lceil x/2  \right\rceil$.
\end{proof}

Theorem \ref{THdirintsc} is analogous to 
the well-known fact \cite{Mdcasiam} that
${\rm L}^{\natural}$-convexity is stable under scaling.
In contrast, integral convexity admits the scaling operation only when $n \leq 2$,
while for $n \geq 3$, the scaled function $f^{\alpha}$  is 
not always integrally convex \cite{MMTT16proxICissac,MMTT17proxIC}.

\section{Proximity Theorem}
\label{SCdirintcnvprox}

Let $f: \mathbb{Z}^{n} \to \mathbb{R} \cup \{ +\infty  \}$ 
and $\alpha \in \ZZ_{++}$.
We say that $x^{\alpha} \in \dom f$ is an {\em $\alpha$-local minimizer} of $f$
(or {\em $\alpha$-local minimal} for $f$)
if $f(x^{\alpha}) \leq f(x^{\alpha}+ \alpha d)$
for all $d \in  \{ -1,0, +1 \}^{n}$.
In general terms a proximity theorem states that
for $\alpha \in \ZZ_{++}$ there exists an integer
$B(n,\alpha) \in \ZZ_{+}$ such that
if $x^{\alpha}$ is an $\alpha$-local minimizer of $f$,
then there exists a minimizer $x^{*}$ of $f$ satisfying 
$ \| x^{\alpha} - x^{*}\|_{\infty} \leq B(n,\alpha)$,
where $B(n,\alpha)$ is called the {\em proximity distance}.

The following proximity theorem holds for discrete midpoint convex functions.
It states that a local minimizer $x\sp{\alpha}$ 
of the scaled function $f^{\alpha}(x) = f(\alpha x)$ 
is close to a minimizer of the original 
discrete midpoint convex function $f(x)$. 
Note that $f^{\alpha}$ is discrete midpoint convex
by Theorem \ref{THdirintsc},
and $x\sp{\alpha}$
is in fact a global minimizer of $f^{\alpha}$ 
by Theorems \ref{THintcnvlocopt} and \ref{THdirintclass}.

\begin{theorem}  \label{THdirintprox}
Let $f: \mathbb{Z}\sp{n} \to \mathbb{R} \cup \{ +\infty  \}$
be a (globally or locally) discrete midpoint convex function,
$\alpha \in \mathbb{Z}_{++}$, 
and $x\sp{\alpha} \in \dom f$.
If $f(x\sp{\alpha}) \leq f(x\sp{\alpha}+ \alpha d)$ for all 
$d \in  \{ -1,0, +1 \}\sp{n}$,
then there exists a minimizer 
$x\sp{*} \in \ZZ\sp{n}$ of $f$ with
$  \| x\sp{\alpha} - x\sp{*}  \|_{\infty} \leq n (\alpha - 1)$.
\finbox
\end{theorem}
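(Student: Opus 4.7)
The plan is to reduce to the case $x^{\alpha}=\bm{0}$ via translation invariance (Proposition~\ref{PRdirintcnvinvarS}(1) or \ref{PRdirintcnvinvarW}(1)), and then combine scaling closure with the box-barrier property of integrally convex functions. By Theorem~\ref{THdirintsc} the scaled function $f^{\alpha}(y)=f(\alpha y)$ is (globally or locally) discrete midpoint convex, hence integrally convex by Theorem~\ref{THdirintclass}. The hypothesis $f(\bm{0})\le f(\alpha d)$ for $d\in\{-1,0,+1\}^{n}$ is precisely the $\{-1,0,+1\}^{n}$-local minimality of $\bm{0}$ for $f^{\alpha}$, which by Theorem~\ref{THintcnvlocopt} promotes to global minimality. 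Thus $f(\bm{0})\le f(\alpha y)$ for every $y\in\ZZ^{n}$, a fact I will use repeatedly.

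Next, since $f$ is itself integrally convex (Theorem~\ref{THdirintclass}), I would apply the box-barrier property (Theorem~\ref{THintcnvbox}) with $p=-(n(\alpha-1)+1)\bm{1}$ and $q=(n(\alpha-1)+1)\bm{1}$, so that $S=\{z\in\ZZ^{n}:\|z\|_{\infty}\le n(\alpha-1)\}$ and the walls $W$ consist of integer points with some coordinate equal to $\pm(n(\alpha-1)+1)$ and all others in $[-(n(\alpha-1)+1),n(\alpha-1)+1]$. It suffices to establish the \emph{wall inequality} $f(\bm{0})\le f(y)$ for every $y\in W\cap\dom f$: the box-barrier property then yields $f(\bm{0})\le f(z)$ for every $z\notin S$, so any minimizer $x^{*}$ of $f$ lying outside $S$ must satisfy $f(x^{*})=f(\bm{0})$, making $\bm{0}$ itself a minimizer in $S$; alternatively, some minimizer already lies in $S$. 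Either way one obtains $\|x^{\alpha}-x^{*}\|_{\infty}\le n(\alpha-1)$.

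To prove the wall inequality, for $y$ with $\|y\|_{\infty}=n(\alpha-1)+1$ decompose $y=\alpha q+r$ componentwise with $q\in\ZZ^{n}$ and $r_{i}\in\{0,1,\ldots,\alpha-1\}$, so $\|r\|_{1}\le n(\alpha-1)<\|y\|_{\infty}$ and consequently $q\ne\bm{0}$; note $f(\alpha q)\ge f(\bm{0})$ by the first paragraph. The idea is then to invoke the parallelogram inequality (Theorem~\ref{THparallelineqgen-var}) with $x=\bm{0}$, taking $d=d_{J}=\sum_{k\in J}(\bm{1}_{A_{k}}-\bm{1}_{B_{k}})$ from the canonical decomposition (\ref{DICdecAkBksum}) of $y$, with $J\subseteq\{1,\ldots,m\}$ chosen so that $d_{J}=\alpha q$. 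This would yield $f(\bm{0})+f(y)\ge f(\alpha q)+f(r)\ge f(\bm{0})+f(r)$, reducing the wall inequality to the smaller claim $f(\bm{0})\le f(r)$ for $\|r\|_{\infty}<\alpha$, which should close via the $\alpha$-local minimality hypothesis combined with integral convexity on a smaller subbox.

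The main obstacle is the combinatorial construction of $J$ realizing $d_{J}=\alpha q$. Because the decomposition of $y$ is rigidly determined by $y$, simultaneously matching $|J\cap\{k:i\in A_{k}\}|-|J\cap\{k:i\in B_{k}\}|=\alpha q_{i}$ in every coordinate $i$ may fail for any single $J$, especially when some coordinates satisfy $0<|y_{i}|<\alpha$. The most promising workaround is an inductive peeling argument: apply the parallelogram inequality with $J=\{m-\alpha+1,\ldots,m\}$ to strip off a top slice and lower $\|y\|_{\infty}$ by $\alpha$ along the large coordinates, then iterate until $\|y\|_{\infty}<\alpha$. At each step one must verify that the stripped vector $d_{J}$ actually lies on the $\alpha$-grid --- so that $f(d_{J})\ge f(\bm{0})$ holds and the induction progresses --- which is the delicate technical point and may require a preliminary reduction ensuring all nonzero coordinates of the iterate have absolute value at least $\alpha$, or an auxiliary dimensional descent when short coordinates intervene.
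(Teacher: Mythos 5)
Your skeleton (translate so that $x^{\alpha}=\bm{0}$, observe that $\bm{0}$ is a global minimizer of $f^{\alpha}$, set up the box $S=\{z:\|z\|_{\infty}\le n(\alpha-1)\}$ with wall $W$ at radius $n(\alpha-1)+1$, invoke the box-barrier property, and use the parallelogram inequality to peel an $\alpha$-grid direction off a wall point) is exactly the paper's, but two of your steps fail. The first is the endgame: you reduce the wall inequality to ``$f(\bm{0})\le f(r)$ for $\|r\|_{\infty}<\alpha$, which should close via the $\alpha$-local minimality hypothesis,'' and this is false --- $\alpha$-local minimality gives no control at sub-$\alpha$ scales. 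Already for $n=1$, $\alpha=3$ the convex function $f(t)=|t-1|$ satisfies $f(0)=1\le f(\pm 3)$ yet $f(1)=0<f(0)$. The repair, which is what the paper does, is to run the box-barrier theorem with $\hat{x}\in\argmin\{f(x)\mid x\in S\}$ rather than with $\bm{0}$: one then only needs $f(y)\ge\mu:=\min_{S}f$ on the wall, so the leftover point produced by the parallelogram inequality merely has to \emph{lie in} $S$ (a norm count), and no lower bound of the form $f(r)\ge f(\bm{0})$ is needed or available. The conclusion ``there is a minimizer in $S$'' is unaffected.

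The second gap is the combinatorial construction of $J$, which you correctly flag as the delicate point but do not resolve: neither $\alpha q$ nor the top slice $\sum_{k=m-\alpha+1}^{m}(\bm{1}_{A_{k}}-\bm{1}_{B_{k}})$ need be $\alpha$ times a single $\{-1,0,+1\}$-vector, so your peeling scheme cannot certify that the stripped piece is an $\alpha$-grid point. The missing idea is a pigeonhole argument (the paper's Lemma~\ref{LMyrepmultalpha}): since the pairs $(A_{k},B_{k})$ form a monotone chain, the sequence $(|A_{k}|,\,n-|B_{k}|)$, $k=1,\dots,m$, is nondecreasing in $\ZZ^{2}$ and (assuming $A_{1}\neq\emptyset$ after a possible sign reversal) its two coordinates increase by at most $n-1$ in total, so it takes at most $n$ distinct values; as $m=n(\alpha-1)+1$, some value is repeated at least $\alpha$ times, necessarily on a consecutive block $J=\{k_{0},\dots,k_{0}+\alpha-1\}$. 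Then $d_{2}=\sum_{j\in J}(\bm{1}_{A_{j}}-\bm{1}_{B_{j}})=\alpha d_{0}$ with $d_{0}=\bm{1}_{A_{k_{0}}}-\bm{1}_{B_{k_{0}}}\in\{-1,0,+1\}^{n}$, so $f(\alpha d_{0})\ge f(\bm{0})$ by the hypothesis, while the complementary part $d_{1}$ satisfies $\|d_{1}\|_{\infty}=m-\alpha=(n-1)(\alpha-1)\le n(\alpha-1)$, i.e., $d_{1}\in S$ and $f(d_{1})\ge\mu$. The parallelogram inequality then gives $f(\bm{0})+f(y)\ge f(d_{1})+f(\alpha d_{0})\ge\mu+f(\bm{0})$, hence $f(y)\ge\mu$, which is the wall inequality in the correct form; this pigeonhole count is also precisely where the bound $n(\alpha-1)$ enters.
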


To prove Theorem~\ref{THdirintprox}
we may assume $x^{\alpha} = \bm{0}$.
Define 
$S = 
  \{ x \in \ZZ^{n} \mid \| x \|_{\infty} \leq n(\alpha - 1) \}$
and
$ W =   \{ x \in \ZZ^{n} \mid \| x \|_{\infty} = n(\alpha - 1) +1 \}$,
and 
let $\mu$ be the minimum of $f(x)$ taken over $x \in S$
and $\hat x$ be a point in $S$ with $f(\hat x)=\mu$.
We shall show
\begin{equation} \label{prfT1al3mu4}
  f(y) \geq \mu 
\qquad
\mbox{for all $y \in W$} .
\end{equation}
Then Theorem~\ref{THintcnvbox} (box-barrier property) implies that 
$f(z) \geq \mu$ for all $z \in \ZZ^{n}$.

Fix $y =(y_{1}, \ldots y_{n}) \in W$
and let $m = \| y \|_{\infty}$, which is equal to $n (\alpha - 1)+1$.
With
\[
 A_{k} = \{ i \mid y_{i} \geq m+1-k \},
\quad
 B_{k} = \{ i \mid y_{i} \leq -k \} 
\qquad (k=1,\ldots,m), 
\]
we can represent $y$ as
\begin{equation} \label{yrepLovpmdimna}
y = \sum_{k=1}^{m} (\bm{1}_{A_{k}} - \bm{1}_{B_{k}}) .
\end{equation}
We have
$A_{1} \subseteq A_{2}  \subseteq \cdots \subseteq A_{m}$, \
$B_{1} \supseteq B_{2} \supseteq \cdots \supseteq B_{m}$, \
$A_{m} \cap B_{1} = \emptyset$,
and $A_{1} \cup B_{m} \not= \emptyset$.

\medskip

\begin{lemma} \label{LMyrepmultalpha}
There exists some $k_{0} \in \{ 1,2,\ldots, m - \alpha +1 \}$  
such that
$(A_{k_{0}}, B_{k_{0}}) =(A_{k_{0}+j}, B_{k_{0}+j})$ for 
$j=1,2,\ldots, \alpha -1$.
\end{lemma}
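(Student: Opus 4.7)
The plan is to obtain the desired constant run of pairs via a pigeonhole argument on the transition steps of the sequence $(A_k, B_k)_{k=1}^{m}$. First I would pinpoint when the pair changes: from the definitions in (\ref{DICdecAkBkdef}), we have $i \in A_k \setminus A_{k-1}$ iff $y_i = m+1-k$, and $i \in B_{k-1} \setminus B_k$ iff $y_i = 1-k$. Hence, for $k \in \{2,\ldots,m\}$, $(A_k,B_k) \neq (A_{k-1},B_{k-1})$ exactly when some coordinate $i$ satisfies $y_i = m+1-k$ or $y_i = 1-k$. Each coordinate $i$ contributes at most one such transition in $\{2,\ldots,m\}$: either an $A$-transition at $k = m+1-y_i$ (when $1 \leq y_i \leq m-1$), or a $B$-transition at $k = 1-y_i$ (when $-(m-1) \leq y_i \leq -1$); the value $y_i = 0$ contributes none, while the extreme values $y_i = \pm m$ push the corresponding transition to $k=1$ or $k=m+1$, both of which fall outside $\{2,\ldots,m\}$.

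Next I would exploit the hypothesis $\|y\|_\infty = m$: at least one coordinate has $|y_i|=m$, so at least one index is ``wasted'' on a boundary transition and contributes nothing in $\{2,\ldots,m\}$. Consequently, the number of distinct transition points in $\{2,\ldots,m\}$ is at most $n-1$, and the sequence $(A_1,B_1),\ldots,(A_m,B_m)$ decomposes into at most $n$ maximal runs of consecutive equal pairs.

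Finally I would apply pigeonhole. With $m = n(\alpha-1)+1$ pairs split into at most $n$ maximal runs, some run has length at least $\lceil m/n \rceil = \lceil \alpha - 1 + 1/n \rceil = \alpha$. Taking $k_0$ to be the starting index of such a run yields $(A_{k_0},B_{k_0}) = (A_{k_0+j},B_{k_0+j})$ for $j=1,\ldots,\alpha-1$, and since the run fits inside $\{1,\ldots,m\}$ we have $k_0 + \alpha - 1 \leq m$, i.e.\ $k_0 \in \{1,\ldots,m-\alpha+1\}$, as required.

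The only delicate step is the boundary bookkeeping: it is essential to account for the coordinate with $|y_i|=m$ (whose existence is forced by $\|y\|_\infty = m$) and verify that its transition genuinely falls outside $\{2,\ldots,m\}$, since otherwise the bound on transitions would degrade from $n-1$ to $n$, and $\lceil m/(n+1) \rceil$ would fall one short of $\alpha$. Once this observation is in hand, the argument is a clean counting/pigeonhole and no additional structural input is needed.
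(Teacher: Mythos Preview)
Your proof is correct and follows essentially the same pigeonhole strategy as the paper: both arguments show that the nondecreasing sequence $(A_k,B_k)_{k=1}^{m}$ takes at most $n$ distinct values, whence some value occurs in a run of length at least $\lceil m/n\rceil=\alpha$. The paper obtains the bound $n$ by mapping $(A_k,B_k)\mapsto(|A_k|,\,n-|B_k|)$ and bounding the length of a strictly increasing chain in $\ZZ^2$ (after reducing by symmetry to $A_1\neq\emptyset$), whereas you count transitions directly and use $\|y\|_\infty=m$ to see that at least one coordinate's transition falls outside $\{2,\ldots,m\}$; your route is slightly more elementary and avoids the WLOG reduction, but the two arguments are equivalent in substance.
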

\begin{proof}
We may assume $A_{1} \not= \emptyset$ by $A_{1} \cup B_{m} \not= \emptyset$
and Propositions \ref{PRdirintcnvinvarS} (3) and \ref{PRdirintcnvinvarW}.
Define $(a_{k}, b_{k}) = (|A_{k}|, n-|B_{k}|)$ 
for $k=1,2,\ldots, m$ and 
$s=| \{ i \mid y_{i} \geq 1 \}|$.
The sequence $\{ (a_{k}, b_{k}) \}_{k=1,2,\ldots, m}$ is
nondecreasing in $\ZZ^{2}$,
satisfying
$1 \leq a_{1} \leq a_{2} \leq \cdots \leq a_{m} \leq s$
and
$s \leq b_{1} \leq b_{2}\leq \cdots \leq  b_{m} \leq n$.
That is,
$(1,s) \leq (a_{1},b_{1}) \leq (a_{2},b_{2}) \leq \cdots \leq (a_{m}, b_{m}) \leq (s,n)$.
Since $m = n(\alpha - 1)+1$ and
the length of a strictly increasing chain 
connecting $(1,s)$ to $(s,n)$ 
in $\ZZ^{2}$ is bounded by $n$,
the sequence $\{ (a_{k}, b_{k}) \}_{k=1,2,\ldots, m}$
must contain,
by the pigeonhole principle, 
 a constant subsequence of length $\geq \alpha$.
Hence follows the claim.
\end{proof}

With reference to the index $k_{0}$ in Lemma \ref{LMyrepmultalpha}
we define a bipartition $(I,J)$ of $\{ 1,2,\ldots, m \}$
by 
$I = \{ 1, 2, \ldots, k_{0}-1  \} 
\cup \{ k_{0}+ \alpha, k_{0}+ \alpha +1, \ldots, m \} $
and $J = \{ k_{0}, k_{0}+1, \ldots, k_{0}+ \alpha -1 \} $.
By the parallelogram inequality (\ref{paraineqd1d2gen}) 
in Theorem \ref{THparallelineqgen},
where 
$d_{1} = \sum_{i \in I} (\bm{1}_{A_{i}} - \bm{1}_{B_{i}})$
and  
$ d_{2} = \sum_{j \in J} (\bm{1}_{A_{j}} - \bm{1}_{B_{j}})= \alpha d_{0}$
with
$d_{0} =  \bm{1}_{A_{k_{0}}} - \bm{1}_{B_{k_{0}}}$,
we obtain
\[ 
 f(\bm{0}) + f(y) \geq f(d_{1}) + f(\alpha d_{0}) .
\] 
Here we have
$f(\alpha d_{0}) \geq  f(\bm{0})$
by the $\alpha$-local minimality of $\bm{0}$. 
We also have $d_{1}\in S$ since 
\[
\| d_{1} \|_{\infty}
 = \| \sum_{i \in I} (\bm{1}_{A_{i}} - \bm{1}_{B_{i}}) \|_{\infty} 
 = |I| = m- \alpha  = (n-1)(\alpha - 1) \leq n(\alpha - 1),
\]
and hence
$f(d_{1}) \geq  \mu$
by the definition of $\mu$.
Therefore,
\[
f(y) \geq  f(d_{1})  + [f(\alpha d_{0}) -  f(\bm{0}) ]
\geq  \mu + 0 = \mu .
\]
This establishes (\ref{prfT1al3mu4}), 
completing the proof of Theorem~\ref{THdirintprox}.

Theorem~\ref{THdirintprox} is a generalization of
the proximity theorem for ${\rm L}^{\natural}$-convex functions below.
We have the same proximity bound $n (\alpha -1)$ in both theorems.

\begin{theorem}[{\cite{IS03,Mdcasiam}}] \label{THlfnproximity}
Let $f: \mathbb{Z}\sp{n} \to \mathbb{R} \cup \{ +\infty  \}$
be an ${\rm L}^{\natural}$-convex function,
$\alpha \in \mathbb{Z}_{++}$, and $x\sp{\alpha} \in \dom f$.
If $f(x^{\alpha}) \leq f(x^{\alpha} +  \alpha d)$ for all 
$d \in  \{ 0, 1 \}^{n} \cup \{ 0, -1 \}^{n}$,
then there exists a minimizer 
$x^{*}$ of $f$ with $\| x^{\alpha} - x^{*}  \|_{\infty} \leq n (\alpha -1)$.
\finbox
\end{theorem}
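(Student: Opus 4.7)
The plan is to parallel the proof of Theorem~\ref{THdirintprox} above while replacing the parallelogram inequality by translation-submodularity~(\ref{lnatftrsubm0}). Two ${\rm L}^{\natural}$-specific features make the argument go through with the weaker local-optimality hypothesis: first, scaling preserves ${\rm L}^{\natural}$-convexity, so $f^{\alpha}(x)=f(\alpha x)$ is again ${\rm L}^{\natural}$-convex; second, local minimality of an ${\rm L}^{\natural}$-convex function with respect to $\{0,1\}^{n}\cup\{0,-1\}^{n}$ implies global minimality, a standard consequence of translation-submodularity. Consequently, the hypothesis makes $x^{\alpha}$ an unrestricted global minimizer of $f^{\alpha}$.

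First I would translate so that $x^{\alpha}=\bm{0}$, then pick a minimizer $x^{*}$ of $f$ minimizing $\|x^{*}\|_{\infty}$ and assume for contradiction that $\|x^{*}\|_{\infty}\geq n(\alpha-1)+1$. Using the chain representation $x^{*}=\sum_{k=1}^{m}(\bm{1}_{A_{k}}-\bm{1}_{B_{k}})$ from~(\ref{DICdecAkBksum}), the pigeonhole argument of Lemma~\ref{LMyrepmultalpha} produces an index $k_{0}$ and a pair $(A,B)$ with $A\cap B=\emptyset$ satisfying $(A_{k_{0}+j},B_{k_{0}+j})=(A,B)$ for $j=0,1,\ldots,\alpha-1$. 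Set $d=\bm{1}_{A}-\bm{1}_{B}$; then $x^{*}-\alpha d$ lies in $\dom f$ (since it is reachable along the chain) and satisfies $\|x^{*}-\alpha d\|_{\infty}<\|x^{*}\|_{\infty}$.

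To derive the contradiction I would show $f(x^{*}-\alpha d)\leq f(x^{*})$, which makes $x^{*}-\alpha d$ a strictly closer minimizer. This is obtained by applying translation-submodularity twice: once with $y=\bm{0}$ and shift $\mu=\alpha$ to peel off the $\bm{1}_{A}$-direction, producing $f(x^{*})+f(\bm{0})\geq f((x^{*}-\alpha\bm{1})\vee\bm{0})+f(x^{*}\wedge\alpha\bm{1})$, and symmetrically (exchanging roles) to peel off the $\bm{1}_{B}$-direction. The constant $\alpha$-block in the chain guarantees that $x^{*}$ takes values $\geq\alpha$ on $A$ and $\leq-\alpha$ on $B$, so the $\vee$ and $\wedge$ operations yield exactly the point $x^{*}-\alpha d$ on one side and vectors of the form $\alpha\bm{1}_{A}$ or $-\alpha\bm{1}_{B}$ on the other. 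The latter are bounded below by $f(\bm{0})$ using the hypothesis on directions in $\{0,1\}^{n}\cup\{0,-1\}^{n}$.

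The main obstacle I foresee is precisely the lattice bookkeeping in the previous step: one must verify that the $\alpha$-long constant block interacts correctly with the $\mu=\alpha$ shift so that the cross terms cancel and the identified points are exactly $x^{*}-\alpha d$, $\alpha\bm{1}_{A}$, and $-\alpha\bm{1}_{B}$, rather than contaminated by the remaining non-constant parts of the chain. If this direct calculation turns out to be awkward, a backup route is to invoke Theorem~\ref{THdirintprox} (since every ${\rm L}^{\natural}$-convex function is globally discrete midpoint convex by Theorem~\ref{THdirintclass}(1)) and then bridge the gap between its $\{-1,0,+1\}^{n}$-hypothesis and the present $\{0,1\}^{n}\cup\{0,-1\}^{n}$-hypothesis by the standard fact that, for ${\rm L}^{\natural}$-convex functions, local minimality in the latter neighborhood already implies local minimality in the former.
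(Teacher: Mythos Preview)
The paper does not prove Theorem~\ref{THlfnproximity}; it is quoted from \cite{IS03,Mdcasiam} as a known result, so there is no ``paper's own proof'' to compare against.

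Your backup route is the cleanest way to obtain the statement from the material actually developed in this paper, and it is correct: an ${\rm L}^{\natural}$-convex function is globally discrete midpoint convex (Theorem~\ref{THdirintclass}(1)); the scaled function $g(d)=f(x^{\alpha}+\alpha d)$ is again ${\rm L}^{\natural}$-convex; the standard local characterization of minimizers for ${\rm L}^{\natural}$-convex functions says that $g(\bm{0})\le g(d)$ for all $d\in\{0,1\}^{n}\cup\{0,-1\}^{n}$ already forces $\bm{0}\in\argmin g$, hence in particular $g(\bm{0})\le g(d)$ for all $d\in\{-1,0,+1\}^{n}$; and then Theorem~\ref{THdirintprox} applies verbatim. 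This is essentially how the paper positions the two results (it calls Theorem~\ref{THdirintprox} a generalization of Theorem~\ref{THlfnproximity}).

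Your direct route, however, does not work as written. With $x=x^{*}$, $y=\bm{0}$, $\mu=\alpha$ in (\ref{lnatftrsubm0}) you get $f(x^{*})+f(\bm{0})\ge f\bigl((x^{*}-\alpha\bm{1})\vee\bm{0}\bigr)+f\bigl(x^{*}\wedge\alpha\bm{1}\bigr)$, but neither of the two points on the right is $x^{*}-\alpha d$ or $\alpha\bm{1}_{A}$ in general. From the chain structure you only know $x^{*}_{i}\ge m+1-k_{0}\ge\alpha$ for $i\in A$, but for indices $i\notin A$ with $x^{*}_{i}>0$ (i.e.\ $i\in A_{m}\setminus A_{k_{0}+\alpha-1}$) the value $x^{*}_{i}$ can be anywhere in $[1,m-k_{0}-\alpha+1]$, so $\min(x^{*}_{i},\alpha)$ and $\max(x^{*}_{i}-\alpha,0)$ are not determined. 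The same contamination occurs on the $B$-side. In short, a single (or double) application of translation-submodularity with $\mu=\alpha$ does not peel off exactly $\alpha\bm{1}_{A}$ and $-\alpha\bm{1}_{B}$; the ``remaining non-constant parts of the chain'' that you worry about do interfere. The classical proofs in \cite{IS03,Mdcasiam} proceed differently (via a more careful inductive/telescoping use of submodularity along the chain), so if you want a self-contained argument, either follow those or simply use your backup route.
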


The following example
demonstrates the tightness of the bound in Theorems \ref{THdirintprox} and \ref{THlfnproximity}.
This example also shows that the bound $n (\alpha -1)$ is tight
for a linear function defined on a simple polyhedron.

\begin{example}[\cite{MMTT16proxICissac,MMTT17proxIC,MT02proxRIMS}]\rm \label{EXproxtigntLnat}
Let $X = \{ x \in \ZZ^{n} \mid 
0 \leq x_{i} - x_{i+1} \leq \alpha -1 \ (i=1,\ldots,n-1), \ 
0 \leq x_{n} \leq \alpha -1  \}$.
The function $f$ defined by $f(x)=-x_{1}$ on $\dom f =X$ is
an ${\rm L}^{\natural}$-convex 
(and hence discrete midpoint convex) function
and  has a unique minimizer at 
$x^{*} =  (n (\alpha -1),(n-1)(\alpha -1),\ldots,2(\alpha -1),\alpha -1)$.
On the other hand, $x^{\alpha} = \veczero$ is  $\alpha$-local minimal,
since
$X \cap \{ -\alpha, 0,\alpha \}^{n} = \{ \bm{0} \}$.
We have $ \| x^{\alpha} - x^{*}\|_{\infty} = n (\alpha - 1)$,
which shows the tightness of the bound $n (\alpha - 1)$ 
given in Theorem~\ref{THlfnproximity}.
This example is given in \cite{MMTT16proxICissac,MMTT17proxIC}
as a reformulation of \cite[Remark~2.3]{MT02proxRIMS} 
for {\rm L}-convex functions to ${\rm L}^{\natural}$-convex functions.
\finbox
\end{example}

For (general) integrally convex functions, in contrast, 
the proximity bound $n (\alpha -1)$ is valid only when $n \leq 2$,
and a quadratic lower bound 
$(n-2)^{2}(\alpha -1)/4$
for the proximity distance is demonstrated in \cite{MMTT16proxICissac,MMTT17proxIC}
(no better lower bound is known).
The following proximity theorem,
with a superexponential proximity bound, is known for integrally convex functions.

\begin{theorem}[\cite{MMTT16proxICissac,MMTT17proxIC}]  \label{THproxintcnv}
Let $f: \mathbb{Z}^{n} \to \mathbb{R} \cup \{ +\infty  \}$
be an integrally convex function,
$\alpha \in \mathbb{Z}_{++}$, 
and $x^{\alpha} \in \dom f$.
If 
$ f(x^{\alpha}) \leq f(x^{\alpha}+ \alpha d)$
for all $ d \in  \{ -1,0, +1 \}^{n}$,
then there exists a minimizer 
$x^{*}$ of $f$ with 
$ \| x^{\alpha} - x^{*}\|_{\infty} \leq \beta_{n} (\alpha - 1)$,
where 
$\beta_{1}=1$, $\beta_{2}=2$, and
$ \beta_{n} \leq {(n+1)!}/{2^{n-1}}$
$(n=3,4,\ldots)$.
\finbox
\end{theorem}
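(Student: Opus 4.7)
My plan is to adapt the strategy that worked for Theorem \ref{THdirintprox}. Without loss of generality take $x^{\alpha} = \bm{0}$, set $B = \beta_{n}(\alpha - 1)$, $S = \{x \in \ZZ^{n} : \|x\|_{\infty} \leq B\}$, and let $\mu = \min_{x \in S \cap \dom f} f(x)$, attained at some $\hat x \in S$. It then suffices to prove $f(y) \geq \mu$ for every $y$ with $\|y\|_{\infty} = B + 1$: applying the box-barrier property (Theorem \ref{THintcnvbox}) with $p = -(B+1)\bm{1}$ and $q = (B+1)\bm{1}$ would yield $f(z) \geq \mu$ on all of $\ZZ^{n}$, so that $\hat x$ is a global minimizer within distance $B$ of $x^{\alpha}$.

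To establish the $f(y) \geq \mu$ step I would proceed by induction on $n$. The base cases $n=1$ and $n=2$ (giving $\beta_{1} = 1$, $\beta_{2} = 2$) follow fairly directly: on $\ZZ$ an integrally convex function is simply a one-dimensional convex function, while on $\ZZ^{2}$ the integer neighbourhood $N(u)$ of a midpoint $u = (x+y)/2$ contains only $\lceil u \rceil$ and $\lfloor u \rfloor$, so weak and ordinary discrete midpoint convexity coincide there and the argument of Theorem \ref{THdirintprox} transfers verbatim. For the inductive step, fix $y$ with $\|y\|_{\infty} = B + 1$ and form the decomposition $y = \sum_{k=1}^{m}(\bm{1}_{A_{k}} - \bm{1}_{B_{k}})$ of (\ref{DICdecAkBksum}); the pigeonhole argument of Lemma \ref{LMyrepmultalpha} still furnishes a constant block of length $\alpha$ in the chain $(A_{k}, B_{k})$, and hence a direction $d_{0} = \bm{1}_{A_{k_{0}}} - \bm{1}_{B_{k_{0}}}$ along which $\alpha$-local minimality gives $f(\alpha d_{0}) \geq f(\bm{0})$.

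The main obstacle — and the source of the much larger bound $\beta_{n} \leq (n+1)!/2^{n-1}$ — is that an integrally convex function does not satisfy the parallelogram inequality of Theorem \ref{THparallelineqgen}. Weak discrete midpoint convexity only supplies $2\tilde f((x+y)/2) \leq f(x) + f(y)$, a fractional relaxation that destroys the clean splitting $f(y) \geq f(d_{1}) + f(\alpha d_{0}) - f(\bm{0})$ used in Theorem \ref{THdirintprox}. The plan to cope with this is an inductive reduction to dimension $n-1$: freeze an extremal coordinate of $y$, restrict $f$ to the corresponding hyperplane (whose intersection with $\dom f$ remains integrally convex), and invoke the induction hypothesis to find a hyperplane-minimizer close to the projection of $\bm{0}$. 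A chain of such restrictions then connects $y$ back to the interior of $S$, but each step may allow the frozen coordinate to drift while remaining $\alpha$-locally optimal for the restricted function. Carefully bounding this drift is the crux; if it contributes a factor of order $(n+1)/2$ per dimension, the resulting recursion integrates to the claimed superexponential bound, and the existence of the quadratic lower-bound constructions mentioned after the theorem suggests that some such loss is unavoidable.
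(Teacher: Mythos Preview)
The paper does not prove this theorem at all: it is quoted verbatim from \cite{MMTT16proxICissac,MMTT17proxIC} and closed with a \finbox\ immediately after the statement. So there is no ``paper's own proof'' to compare against; the result is imported precisely to contrast the superexponential bound for general integrally convex functions with the linear bound $n(\alpha-1)$ just established for discrete midpoint convex functions in Theorem~\ref{THdirintprox}.

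As for the proposal itself, the opening moves (translate $x^{\alpha}$ to the origin, define $S$ and $\mu$, invoke the box-barrier property, handle $n=1,2$ by noting that in two dimensions weak and ordinary discrete midpoint convexity coincide at $\ell_\infty$-distance $2$) are sound and indeed match the structure of the argument in \cite{MMTT17proxIC}. But the inductive step is not a proof; it is a sketch of where one would like a proof to be. You correctly identify that the parallelogram inequality is unavailable, and then propose to ``freeze an extremal coordinate of $y$'' and recurse on a hyperplane section. Two concrete problems: (i) the $\alpha$-local minimality hypothesis is at the origin $\bm{0}$, which generally does not lie on the hyperplane $\{x_i = y_i\}$, so the induction hypothesis cannot be invoked there without further work; (ii) your own phrase ``carefully bounding this drift is the crux; if it contributes a factor of order $(n+1)/2$ per dimension'' shows that the key quantitative estimate --- the one that actually produces the recursion behind $(n+1)!/2^{n-1}$ --- has not been carried out. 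The actual proof in \cite{MMTT17proxIC} does proceed by an induction on $n$ combined with the box-barrier property, but the reduction step uses weak discrete midpoint convexity (\ref{intcnvconddist2}) together with a delicate accounting of the coefficients in the local convex extension $\tilde f$, not a simple hyperplane restriction. What you have written is a plausible outline, not yet a proof.
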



\section{Minimization Algorithms}
\label{SCminalg}

\subsection{2-neighborhood steepest descent algorithm}
\label{SCdescalg}

In this section we propose a variant of the descent algorithm
that is suited for discrete midpoint convex functions.
The algorithm repeats finding a local minimizer in the neighborhood of $\ell_{\infty}$-distance 2,
and is named the {\em 2-neighborhood steepest descent algorithm}.

Let
$f: \ZZ\sp{n} \to \RR \cup \{ +\infty \}$ 
be a (locally or globally) discrete midpoint convex function with
$\argmin f \neq \emptyset$.
Fix any 
\[
  x\sp{(0)} \in \dom f \setminus \argmin f 
\]
and denote the minimum $\ell_{\infty}$-distance to a minimizer of $f$ by  
\begin{equation}  \label{algLdef}
  L = L(x\sp{(0)}) =\min\{ \|x - x\sp{(0)} \|_{\infty} \mid x \in \argmin f\}.
\end{equation}
For $k = 2,3,\ldots,L$, define
\begin{equation}  \label{algSkdef}
S_{k} = S_{k}(x\sp{(0)}) = \{ x \in \mathbb{Z}^{n} \mid \| x - x\sp{(0)} \|_{\infty} \leq k \} .
\end{equation}

The idea of our algorithm is to generate a sequence of points that minimize
$f$ within $S_{k}$ for $k=2,3, \ldots, L$.
The following proposition states that the consecutive points 
can be chosen to be close to each other, at $\ell_{\infty}$-distance less than or equal to 2.

\begin{proposition}\label{PR2nbhmin}
Assume $3 \leq k \leq L$.
For any $x\sp{(k-1)} \in \argmin\{ f(x) \mid x \in S_{k-1} \}$,
there exists 
$x\sp{(k)} \in \argmin\{ f(x) \mid x \in S_{k} \}$
that satisfies
$\| x\sp{(k)} - x\sp{(k-1)} \|_{\infty} \leq 2$.
\end{proposition}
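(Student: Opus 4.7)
The plan is to construct $x^{(k)}$ as $x^{(k-1)} + d^{*}$ for a carefully chosen direction $d^{*}$ with $\|d^{*}\|_{\infty} \leq 2$, and then to certify its optimality on $S_k$ via the parallelogram inequality (Theorem~\ref{THparallelineqgen-var}). First I would dispose of the easy cases: take $x^{(k)} = x^{(k-1)}$ if $x^{(k-1)}$ already minimizes $f$ on $S_k$, and take $x^{(k)} = y$ for any $y \in \argmin\{f(x) \mid x \in S_k\}$ with $\|y - x^{(k-1)}\|_{\infty} \leq 2$. The genuine case is thus: fix any $y \in \argmin\{f(x) \mid x \in S_k\}$ with $d := \|y - x^{(k-1)}\|_{\infty} \geq 3$, and write $m_k := f(y)$.

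In this case, I would use the standard decomposition~(\ref{DICdecAkBksum}) $y - x^{(k-1)} = \sum_{j=1}^{d}(\bm{1}_{A_j} - \bm{1}_{B_j})$ and set
\[
  d^{*} = (\bm{1}_{A_1} - \bm{1}_{B_1}) + (\bm{1}_{A_d} - \bm{1}_{B_d}),
\]
i.e., the partial sum corresponding to $J = \{1, d\}$. Writing $v_i := y_i - x^{(k-1)}_i$, a direct inspection of the sets $A_j$ and $B_j$ shows that $d^{*}_i$ equals $+2, +1, 0, -1, -2$ according as $v_i$ equals $d$, lies in $[1, d-1]$, equals $0$, lies in $[-(d-1), -1]$, or equals $-d$; in particular $\|d^{*}\|_{\infty} = 2$ (since some coordinate attains $|v_i| = d$). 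Set $x^{(k)} := x^{(k-1)} + d^{*}$, and note that $x^{(k-1)} + d^{*}, y - d^{*} \in \dom f$ by the parallelogram property of discrete midpoint convex sets (Theorem~\ref{THdirintcnvSetdec}). The parallelogram inequality then reads
\[
  f(x^{(k-1)}) + f(y) \;\geq\; f(x^{(k-1)} + d^{*}) + f(y - d^{*}),
\]
and, provided one can establish the box memberships $x^{(k-1)} + d^{*} \in S_k$ and $y - d^{*} \in S_{k-1}$, $S_k$-optimality of $y$ gives $f(x^{(k-1)} + d^{*}) \geq m_k$ while $S_{k-1}$-optimality of $x^{(k-1)}$ gives $f(y - d^{*}) \geq f(x^{(k-1)})$. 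Combined with the displayed inequality these force equality and hence $f(x^{(k)}) = m_k$, as desired.

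The main technical obstacle is establishing the two box memberships coordinate by coordinate, which I would handle by a case split on the value of $v_i$. Writing $a_i := x^{(k-1)}_i - x^{(0)}_i$ (with $|a_i| \leq k-1$) and $b_i := y_i - x^{(0)}_i$ (with $|b_i| \leq k$), for $v_i = d$ one has $(x^{(k-1)} + d^{*})_i - x^{(0)}_i = a_i + 2 \leq b_i \leq k$ and $\geq -(k-3) \geq -k$, while $(y - d^{*})_i - x^{(0)}_i = b_i - 2 \leq k-2$ and $= a_i + d - 2 \geq a_i + 1 \geq -(k-2)$; this is precisely where the hypotheses $k \geq 3$ and $d \geq 3$ are used, the first to keep $x^{(k-1)} + d^{*}$ within $S_k$ and the second to pull $y - d^{*}$ down into the smaller box $S_{k-1}$. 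The case $v_i = -d$ is symmetric, and the intermediate cases $|v_i| < d$ use the corresponding $\pm 1$ shift and are analogous, which finishes the proof.
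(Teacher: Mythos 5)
Your proof is correct, but it takes a genuinely different route from the paper's. The paper argues by contradiction: it picks a minimizer $y$ of $f$ on $S_k$ at \emph{minimum} $\ell_\infty$-distance from $x^{(k-1)}$, assumes that distance is at least $3$, and applies the parallelogram inequality with the single unit step $d=\bm{1}_{A_2}-\bm{1}_{B_2}$, checking instead the memberships $x^{(k-1)}+d\in S_{k-1}$ and $y-d\in S_k$; the contradiction comes from combining $f(x^{(k-1)}+d)\ge f(x^{(k-1)})$ with the \emph{strict} inequality $f(y-d)>f(y)$, the latter forced by the extremal choice of $y$. You instead give a direct construction: for an arbitrary minimizer $y$ at distance $\ge 3$ you take the norm-two step $d^{*}=(\bm{1}_{A_1}-\bm{1}_{B_1})+(\bm{1}_{A_d}-\bm{1}_{B_d})$, verify the memberships with the roles of the boxes swapped ($x^{(k-1)}+d^{*}\in S_k$, $y-d^{*}\in S_{k-1}$), and squeeze the parallelogram inequality between the two optimality bounds to conclude that $x^{(k-1)}+d^{*}$ itself is the desired minimizer. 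I checked your coordinatewise description of $d^{*}$ and all four box-membership estimates (including the role of $k\ge 3$ and $\|y-x^{(k-1)}\|_\infty\ge 3$), and they are sound; the equality-forcing step at the end is also valid. What your approach buys is that it is constructive (it names $x^{(k)}$ explicitly) and avoids both the contradiction and the need to select $y$ of minimal distance; what the paper's approach buys is a slightly lighter displacement vector (a single $\{-1,0,1\}$-step) and correspondingly simpler membership checks. Either proof is acceptable.
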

\begin{proof}
Let $y$ be an element of
$\argmin\{ f(x) \mid x \in S_{k}\}$
with $\| y - x\sp{(k-1)} \|_{\infty}$ minimum,
and put $m = \| y - x\sp{(k-1)} \|_{\infty}$.
We can prove $m \leq 2$ (see below). Then we can take $x\sp{(k)} = y$.

To prove  $m \leq 2$ by contradiction, suppose that $m \geq 3$.
We consider the decomposition of
$y- x\sp{(k-1)}$ in the form of (\ref{DICdecAkBksum}) with 
$ A_{j} = \{ i \mid y_{i}- x\sp{(k-1)}_{i} \geq m+1-j \}$
and
$ B_{j} = \{ i \mid y_{i}- x\sp{(k-1)}_{i} \leq -j \} $
for $j = 1,2,\ldots,m$.
Let $d = \bm{1}_{A_{2}} - \bm{1}_{B_{2}}$.
By the parallelogram inequality (\ref{paraineqxdyd}) in Theorem~\ref{THparallelineqgen-var},
we obtain
\begin{align} \label{paraineqxk1dyd}
 f(x\sp{(k-1)}) + f(y) \geq f(x\sp{(k-1)}+d) + f(y-d).
\end{align}
Here we have
\[
x\sp{(k-1)}+d \in S_{k-1},
 \qquad  
y-d \in S_{k}, 
\]
which are shown in Claim below.
Since $x\sp{(k-1)}$ is a minimizer in $S_{k-1}$,
the former implies 
$f(x\sp{(k-1)}+d) \geq f(x\sp{(k-1)})$.
The latter implies the strict inequality
$f(y-d) > f(y)$ by 
$\| (y-d) - x\sp{(k-1)}\|_{\infty} < \| y - x\sp{(k-1)}\|_{\infty}$
and the definition of $y$.
The addition of these two inequalities yields a contradiction to (\ref{paraineqxk1dyd}).

\bigskip

\noindent
{\bf Claim.} 

(i) $(x\sp{(k-1)}_{i}+1) -  x\sp{(0)}_{i} \leq  k-1$ 
for $i \in A_2$.

(ii) $(x\sp{(k-1)}_{i} -1) - x\sp{(0)}_{i} \geq  - (k-1)$ 
for $i \in B_2$.

(iii) $(y_{i} -1) - x\sp{(0)}_{i} \geq  - k$ 
for  $i \in A_2$.

(iv) $(y_{i} +1) -  x\sp{(0)}_{i}  \leq k$  
for  $i \in B_2$.

\noindent
(Proof)
(i)
By $y \in S_{k}$, $i \in A_2$, and $m \geq 3$ we have
$ x\sp{(0)}_{i} + k \geq
 y_{i} \geq x\sp{(k-1)}_{i} + (m - 1) \geq x\sp{(k-1)}_{i} + 2$.
\\
(ii)
By $y \in S_{k}$ and $i \in B_2$ we have
$ x\sp{(0)}_{i} -k \leq  y_{i} \leq x\sp{(k-1)}_{i} - 2$.
\\
(iii)
By $x\sp{(k-1)} \in S_{k-1}$, $i \in A_2$, and $m \geq 3$ we have
$ x\sp{(0)}_{i} - (k-1) \leq  x\sp{(k-1)}_{i} \leq y_{i} - (m - 1) \leq y_{i} - 2$.
\\
(iv) 
By $x\sp{(k-1)} \in S_{k-1}$ and $i \in B_2$ we have
$ x\sp{(0)}_{i} + (k-1) \geq  x\sp{(k-1)}_{i} \geq  y_{i} + 2$.
\end{proof}

On the basis of Proposition \ref{PR2nbhmin}
we can devise an algorithm 
which generates a sequence of points 
$\{ x\sp{(k)} \}_{k}$ 
such that
$x\sp{(k)}$ is a minimizer of $f$ in $S_{k}(x\sp{(0)})$
and is at $\ell_{\infty}$-distance at most two from the preceding point $x\sp{(k-1)}$.
For $x \in \ZZ\sp{n}$ we define 
\[
 N_{2}(x) = \{ y  \in \ZZ\sp{n} \mid \|y - x\|_{\infty} \leq 2 \}
\]
and call it the {\em 2-neighborhood} of $x$.
We assume that an oracle is available which
finds, for any $x \in \dom f$, a minimizer of $f$
in $ N_{2}(x)$ intersected with a box.
We refer to such an oracle as a {\em 2-neighborhood minimization oracle}.
We also assume that an initial point $x\sp{(0)}$ in $\dom f$ can be found (or is given).

\begin{tabbing}     
\= {\bf The 2-neighborhood steepest descent algorithm}%
\\
\> \quad  D0: 
   \= Find  $x\sp{(0)} \in \dom f$, 
     find $x\sp{(2)}$  that minimizes $f(x)$ in $S_{2}(x\sp{(0)})$,
     and set $k:=3$.
\\
\> \quad  D1:
   \>  Find $x\sp{(k)}$  that minimizes $f(x)$ in $N_{2}(x\sp{(k-1)}) \cap S_{k}(x\sp{(0)})$.
\\
\> \quad  D2: 
    \> If $f(x\sp{(k)}) = f(x\sp{(k-1)})$, then 
        output $x\sp{(k-1)}$ and  stop.             
\\
\> \quad  D3: 
  \> Set  $k := k+1$, and go to D1.  
\end{tabbing}

The following theorem states that this algorithm finds a minimizer of $f$ in $L$ iterations.
It is emphasized that we do not have to know $L$ in advance, but 
we can obtain $L$ as an outcome of the algorithm.

\begin{theorem}\label{THalg2nbdSD}
For a (globally or locally) midpoint convex function $f$  with
$\argmin f \not=\emptyset$,
the 2-neighborhood steepest descent algorithm
finds a minimizer of $f$ in $L$ iterations.
\end{theorem}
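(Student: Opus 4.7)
The plan is to assemble the pieces already laid out in the discussion preceding the theorem into a verification of three claims: (i) each iterate $x^{(k)}$ produced in Step D1 is in fact a global minimizer of $f$ on the ball $S_k(x^{(0)})$; (ii) whenever Step D2 triggers, the point $x^{(k-1)}$ it outputs is a global minimizer of $f$; and (iii) the first index $k$ at which D2 triggers is exactly $k = L+1$, so that the algorithm performs $L$ iterations. Essentially all the technical work has already been done by Proposition~\ref{PR2nbhmin}; the remaining task is bookkeeping.

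For (i) I would argue by induction on $k\ge 2$. The base case $k=2$ holds by Step D0. Given $x^{(k-1)}\in\argmin\{f(x)\mid x\in S_{k-1}(x^{(0)})\}$, Proposition~\ref{PR2nbhmin} produces some $\tilde{x}\in\argmin\{f(x)\mid x\in S_k(x^{(0)})\}$ with $\|\tilde{x}-x^{(k-1)}\|_{\infty}\le 2$. Because $\tilde{x}\in N_2(x^{(k-1)})\cap S_k(x^{(0)})$, the minimizer $x^{(k)}$ selected in Step D1 must attain the same value as $\tilde{x}$, giving exactly the property~(\ref{2NSD-eq1}).

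For (ii), suppose Step D2 triggers, i.e., $f(x^{(k)})=f(x^{(k-1)})$. Since every $x^{(k-1)}+d$ with $d\in\{-1,0,+1\}^{n}$ lies in $S_k(x^{(0)})$, claim~(i) yields $f(x^{(k-1)})=f(x^{(k)})\le f(x^{(k-1)}+d)$ for all such $d$. A (globally or locally) discrete midpoint convex function is integrally convex by Theorem~\ref{THdirintclass}, so Theorem~\ref{THintcnvlocopt} forces $x^{(k-1)}\in\argmin f$.

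For (iii), by the definition~(\ref{algLdef}) of $L$, the ball $S_L(x^{(0)})$ meets $\argmin f$, and so does $S_{L+1}(x^{(0)})$. Combined with (i), both $x^{(L)}$ and $x^{(L+1)}$ are global minimizers of $f$, hence $f(x^{(L)})=f(x^{(L+1)})$ and Step D2 triggers at $k=L+1$, outputting $x^{(L)}$. Conversely, for $3\le k\le L$ the point $x^{(k-1)}$ satisfies $\|x^{(k-1)}-x^{(0)}\|_{\infty}\le k-1<L$, so by the definition of $L$ it cannot lie in $\argmin f$; thus by the contrapositive of (ii) we have $f(x^{(k)})<f(x^{(k-1)})$, so the algorithm does not terminate prematurely. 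The only conceptual obstacle worth flagging is the worry that restricting Step D1 to the $2$-neighborhood could miss the minimum of $f$ on $S_k(x^{(0)})$; Proposition~\ref{PR2nbhmin}, established from the parallelogram inequality, is precisely what rules this out.
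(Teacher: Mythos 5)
Your proposal is correct and follows essentially the same route as the paper, which proves the theorem in the discussion preceding its statement: establishing $x^{(k)}\in\argmin\{f(x)\mid x\in S_{k}(x^{(0)})\}$ via Proposition~\ref{PR2nbhmin}, deducing global optimality at termination from Theorems~\ref{THdirintclass} and \ref{THintcnvlocopt}, and counting iterations via the definition of $L$. Your write-up merely makes the induction and the non-premature-termination step slightly more explicit than the paper does.
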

\begin{proof}
By Proposition \ref{PR2nbhmin} the point 
$x\sp{(k)}$ generated by the algorithm is
in fact a minimizer of $f$ in $S_{k}(x\sp{(0)})$.
That is, we have
\begin{equation}\label{2NSD-eq1}
  x\sp{(k)} \in \argmin \{f(x) \mid x \in S_{k}(x\sp{(0)}) \} 
\qquad (k=2,3,\ldots).
\end{equation}
{\bf Claim.}
If $f(x\sp{(k)}) = f(x\sp{(k-1)})$ in Step D2,
then $x\sp{(k-1)}$ is a minimizer of $f$.
\\
(Proof)
For any $d \in  \{ -1, 0, +1 \}^{n}$,
$x\sp{(k-1)} + d$ belongs to $S_{k}(x\sp{(0)})$, and hence
$f(x\sp{(k)}) \leq f(x\sp{(k-1)} +  d)$ by (\ref{2NSD-eq1}).
Therefore, if $f(x\sp{(k)}) = f(x\sp{(k-1)})$,
then $f(x\sp{(k-1)}) \leq f(x\sp{(k-1)} +  d)$ for any $d$.
This implies $x\sp{(k-1)} \in \argmin f$
by Theorem \ref{THintcnvlocopt},
since $f$ is integrally convex by Theorem \ref{THdirintclass}.
(End of the proof of Claim)

\medskip

By the definition (\ref{algLdef}) of $L$,
$S_{k}(x\sp{(0)})$
contains a minimizer of $f$ if and only if $k \geq L$.
In particular, 
$x\sp{(L)}$ is a minimizer of $f$ and  $f(x\sp{(L)}) = f(x\sp{(L+1)})$,
whereas 
$f(x\sp{(k)}) \not= f(x\sp{(k-1)})$ for any $k \leq L$.
Therefore, the number of iterations of the above algorithm is equal to $L$.
\end{proof}

Let $F(n)$ denote the number of function evaluations 
needed by the 2-neighborhood minimization oracle.
Since $N_{2}(x)$ consists of $5\sp{n}$ points,
$F(n)$ is bounded by $5\sp{n}$.
However, it can be smaller for a subclass of discrete midpoint convex functions;
for example,
$F(n)$ is polynomial in $n$ for ${\rm L}^{\natural}$-convex functions.
It is also noted that $F(n)$ cannot be polynomial in $n$
for general discrete midpoint convex functions,
since every function on the unit cube $\{0, 1 \}^n$ is trivially discrete midpoint convex. 
By Theorem \ref{THalg2nbdSD}, 
the total number of function evaluations of the above algorithm 
is ${\rm O}(F(n) L)$, which is bounded by ${\rm O}(5^{n} L)$.

\begin{remark} \rm  \label{RMkolmshiomuro}
Theorem \ref{THalg2nbdSD} shows that the sequence of points
generated by the 2-neighborhood steepest descent algorithm
connects the initial point $x\sp{(0)}$ and a nearest minimizer of $f$ 
with $L-1$ intermediate points,
where $L$ is the minimum $\ell_{\infty}$-distance 
from the initial point $x\sp{(0)}$ to a minimizer of $f$.
Similar facts are pointed out for 
${\rm L}^{\natural}$-convex function minimization algorithms 
by Kolmogorov and Shioura \cite{KS09lnatmin}
and Murota and Shioura \cite{MS14exbndLmin}; 
see also Shioura \cite{Shi17L}.
\finbox 
\end{remark}

\subsection{Scaling algorithm}
\label{SCscalingalg}

The scaling property (Theorem \ref{THdirintsc}) and the proximity theorem (Theorem \ref{THdirintprox})
enable us to design a scaling algorithm for the minimization of 
(globally or locally) discrete midpoint convex functions
with bounded effective domains.
The proposed algorithm employs the standard proximity-scaling framework
while using the 2-neighborhood steepest descent algorithm of Section \ref{SCdescalg} as a subroutine.

For each scaling unit $\alpha$, we minimize the scaled function $f\sp{\alpha}(y) = f(x + \alpha y)$
with the origin at the current $x$,
find a minimizer $y$ of $f\sp{\alpha}(y)$ 
by the 2-neighborhood steepest descent algorithm of Section \ref{SCdescalg},
and update $x$ to $x+ \alpha y$. 
It is emphasized that the scaled function $f\sp{\alpha}$ is discrete midpoint convex 
by Theorem \ref{THdirintsc}
and it has a minimizer $y$ with the magnitude $\| y \|_{\infty}$ 
controlled by the proximity result in Theorem \ref{THdirintprox}.
Recall that $K_{\infty}$
denotes the $\ell_{\infty}$-size of the effective domain of $f$, i.e.,
$K_{\infty} = \max\{ \| x -y \|_{\infty} \mid x, y \in \dom f \}$,
where we assume $K_{\infty} >0$ to avoid triviality.

\begin{tabbing}     
\= {\bf Scaling algorithm for discrete midpoint convex functions}%
\\
\> \quad  S0: 
   \= Find a vector $x \in \dom f$ and set 
   $\alpha := 2\sp{\lceil \log_{2} (K_{\infty} +1) \rceil}$.
\\
\> \quad  S1:
   \>  Find a vector $y$  that minimizes  
      $f\sp{\alpha}(y) =   f(x + \alpha y)$
      subject to the constraint $\| y \|_{\infty} \leq n$
 \\ \> \>   (by the 2-neighborhood steepest descent algorithm),   
   and set $x:= x+ \alpha y$.  \\
\> \quad  S2: 
    \> If $\alpha = 1$, then stop \
       ($x$ is a minimizer of $f$).             
\\
\> \quad  S3: 
  \> Set  $\alpha:=\alpha/2$, and go to S1.  
\end{tabbing}

The correctness of the algorithm can be shown as follows.
Let $x\sp{2\alpha}$ denote 
the vector $x$ at the beginning of Step~S1 for $\alpha$.
The function 
$f\sp{\alpha}(y) =  f(x\sp{2\alpha} + \alpha y)$
is discrete midpoint convex by Theorem \ref{THdirintsc}.
Let $y\sp{\alpha}$ be the vector computed in Step~S1
and 
$x\sp{\alpha} = x\sp{2\alpha} + \alpha y\sp{\alpha}$.
The proximity theorem (Theorem \ref{THdirintprox}) applied to $f\sp{\alpha}$ guarantees that 
if $y=\bm{0}$ is 2-local minimal for $f\sp{\alpha}$, then $y\sp{\alpha}$ is 
a (global) minimizer of $f\sp{\alpha}$.
In other words,
if $x\sp{2\alpha}$ is $2\alpha$-local minimal for $f$, 
then $x\sp{\alpha}$ is $\alpha$-local minimal for $f$.
At the beginning of the algorithm we have
$\alpha = 2\sp{\lceil \log_{2} (K_{\infty} +1) \rceil}$,
and hence
the initial vector $x=x\sp{2\alpha}$ is obviously $2\alpha$-local minimal for $f$.
At the termination of the algorithm we have $\alpha = 1$, and 
the output of the algorithm is indeed a minimizer of $f$,
since $1$-local minimality for discrete midpoint convex functions 
is the same as the global minimality 
by Theorems \ref{THintcnvlocopt} and \ref{THdirintclass}.

The complexity of the algorithm can be analyzed as follows.
By Theorem \ref{THalg2nbdSD},
Step~S1 can be done with at most $n$ calls of the 2-neighborhood minimization oracle.
Since the number of scaling phases is $\log_{2} K_{\infty}$,
the total number of  oracle calls 
is ${\rm O}(n \log_{2} K_{\infty})$. 
As for Step~S0, we assume that 
an initial point $x$ and the size $K_{\infty}$ are available,
since we have no general efficient method for computing them.
With this understanding we obtain the following theorem.
Recall that $F(n)$ denotes the number of function evaluations 
needed by the 2-neighborhood minimization oracle.

\begin{theorem} \label{THscalgMPC}
For a (globally or locally) midpoint convex function $f$  with
a bounded effective domain $\dom f$,
the scaling algorithm finds a minimizer of $f$ with 
${\rm O}(n F(n) \log_{2} K_{\infty})$ function evaluations.
\finbox
\end{theorem}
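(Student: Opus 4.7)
The plan is to establish correctness by induction on the scaling phases and then bound the number of function evaluations. The central invariant I would maintain is that at the start of the phase with scaling unit $\alpha$, the current point $x^{2\alpha}$ is $2\alpha$-local minimal for $f$; equivalently, writing $f^{\alpha}(y) = f(x^{2\alpha} + \alpha y)$, the origin $\bm{0}$ is $2$-local minimal for $f^{\alpha}$.

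For the base case I would observe that initially $2\alpha = 2^{\lceil \log_{2} K_{\infty}\rceil + 1} > K_{\infty}$, so $x^{2\alpha} + 2\alpha d \notin \dom f$ for every nonzero $d \in \{-1,0,+1\}^{n}$, and the $2\alpha$-local minimality holds vacuously. For the inductive step, Theorem~\ref{THdirintsc} tells us that $f^{\alpha}$ is again (globally or locally) discrete midpoint convex, so Theorem~\ref{THdirintprox} applied to $f^{\alpha}$ at $\bm{0}$ with scaling unit $2$ yields a minimizer $y^{*}$ of $f^{\alpha}$ satisfying $\|y^{*}\|_{\infty} \leq n(2-1) = n$. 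This justifies the restriction $\|y\|_{\infty} \leq n$ in Step~S1, and invoking Theorem~\ref{THalg2nbdSD} on $f^{\alpha}$ then produces a genuine minimizer $y^{\alpha}$ of $f^{\alpha}$. Since $f^{\alpha}(y^{\alpha}) \leq f^{\alpha}(y^{\alpha} + d)$ for all $d \in \{-1,0,+1\}^{n}$, the updated point $x^{\alpha} := x^{2\alpha} + \alpha y^{\alpha}$ is $\alpha$-local minimal for $f$, closing the induction.

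At termination $\alpha = 1$, so $x^{1}$ is $1$-local minimal for $f$; by Theorem~\ref{THdirintclass} the function $f$ is integrally convex, and Theorem~\ref{THintcnvlocopt} then identifies $1$-local minimality with global minimality, establishing correctness. For the complexity, Theorem~\ref{THalg2nbdSD} guarantees that the call to the 2-neighborhood steepest descent in each Step~S1 terminates in exactly $L = \min\{\|y\|_{\infty} : y \in \argmin f^{\alpha}\} \leq n$ iterations, each invoking the 2-neighborhood minimization oracle at cost $F(n)$ function evaluations. Since the number of scaling phases is $\lceil \log_{2} K_{\infty}\rceil + 1 = {\rm O}(\log_{2} K_{\infty})$, the total cost is ${\rm O}(n F(n) \log_{2} K_{\infty})$.

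The main nontrivial point, in my view, is the careful bookkeeping of scaling units: one must notice that ``$2\alpha$-local minimality for $f$'' is exactly ``$2$-local minimality for $f^{\alpha}$'', so that Theorem~\ref{THdirintprox} applied to $f^{\alpha}$ delivers the proximity bound $n(2-1) = n$ rather than $n(\alpha-1)$, and hence the descent subroutine performs at most $n$ (not $n\alpha$) iterations per phase. Once this translation is in place, every remaining verification reduces to direct application of the scaling theorem, the proximity theorem, and the correctness/iteration-count guarantee for the 2-neighborhood steepest descent subroutine.
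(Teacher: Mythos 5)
Your proposal is correct and follows essentially the same route as the paper: the invariant that $2\alpha$-local minimality of $x^{2\alpha}$ for $f$ is exactly $2$-local minimality of $\bm{0}$ for $f^{\alpha}$, the application of Theorem~\ref{THdirintsc} and Theorem~\ref{THdirintprox} (with scaling unit $2$, giving the bound $n$) to justify Step~S1 and propagate the invariant, the identification of $1$-local with global minimality via Theorems~\ref{THdirintclass} and \ref{THintcnvlocopt}, and the count of at most $n$ oracle calls per phase over ${\rm O}(\log_{2}K_{\infty})$ phases. Your base case and the $L\leq n$ bookkeeping are just slightly more explicit versions of what the paper states.
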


The algorithm runs with
${\rm O}(C(n) \log_{2} K_{\infty})$ 
function evaluations for $C(n)=n F(n)$.
This means that, if the dimension $n$ is fixed,
the algorithm is polynomial in the problem size.
With the naive bound $F(n) \leq 5^{n}$,
we obtain $C(n) \leq  5^{n} n$
and the complexity of the scaling algorithm above is
${\rm O}(5^{n} n \log_{2} K_{\infty})$.
It is noted that no algorithm can be polynomial in $n$
for general discrete midpoint convex functions,
since every function on the unit cube $\{0, 1 \}^n$ 
is trivially discrete midpoint convex.

Our algorithm has much less complexity than 
the scaling algorithm for general integrally convex functions developed in \cite{MMTT17proxIC}.
The number of function evaluations of the algorithm of \cite{MMTT17proxIC}
is known to be bounded by
${\rm O}(C_{0}(n) \log_{2} K_{\infty})$ 
with $C_{0}(n) = (12(n+1)!/2^{n-1})^{n}$,
which is much larger than 
$C(n) \leq 5^{n} n$ of our algorithm for discrete midpoint convex functions.

\section{Quadratic Functions}
\label{SCquadfnDIC}


In this section we study 
discrete midpoint convexity of a quadratic function
$f(x) = x\sp{\top} Q x$,
where $Q$ is an $n \times n$ symmetric matrix
and $x \in \mathbb{Z}\sp{n}$.
We start with a general characterization of 
(locally and globally) discrete midpoint convex quadratic functions.

\begin{lemma}\label{LMzQz1Q1}
For $f(x) = x\sp{\top} Q x$,
the discrete midpoint convexity {\rm (\ref{discmptconvfn2})}
for $x, y \in \mathbb{Z}\sp{n}$
is equivalent to
\begin{equation}\label{midptQuad}
z\sp{\top} Q z \geq \bm{1}_J\sp{\top} Q \bm{1}_J
\end{equation}
for $z = x -y$,
where $J = \{i \mid z_i \mbox{\rm \ is odd }  \}$.
Hence, $f$ is locally (resp., globally) discrete midpoint convex
if and only if {\rm (\ref{midptQuad})} holds for all
$z \in \mathbb{Z}\sp{n}$ with
$\| z \|_{\infty} = 2$ (resp., $\| z \|_{\infty} \geq 2$).
\end{lemma}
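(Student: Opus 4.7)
The plan is to reduce the defining inequality to a simple quadratic identity by shifting the pair $(x,y)$ to $(z,0)$ and exploiting the decomposition of $z$ into the ceiling and floor halves.

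First I would set $z = x - y$, $p = \lceil z/2 \rceil$ and $q = \lfloor z/2 \rfloor$. The key combinatorial observation is that
$p + q = z$ and $p - q = \bm{1}_J$, where $J = \{i \mid z_i \text{ odd}\}$, since the ceiling and floor of $z_i/2$ agree when $z_i$ is even and differ by $1$ exactly on $J$. From $(x+y)/2 = y + z/2$ this immediately gives $\lceil (x+y)/2 \rceil = y + p$ and $\lfloor (x+y)/2 \rfloor = y + q$.

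Next I would compute the slack of the discrete midpoint convexity inequality for the quadratic form. Expanding $(y+p)^\top Q (y+p)$ and $(y+q)^\top Q (y+q)$, the cross terms involving $y$ combine as $-2y^\top Q(p+q) = -2y^\top Q z$, so the $y$-dependence collapses and one obtains
\begin{equation*}
f(x) + f(y) - f(y+p) - f(y+q) = z^\top Q z - p^\top Q p - q^\top Q q.
\end{equation*}
Then, substituting $p = (z+\bm{1}_J)/2$ and $q = (z-\bm{1}_J)/2$, the parallelogram identity for the quadratic form $v \mapsto v^\top Q v$ yields
\begin{equation*}
p^\top Q p + q^\top Q q = \tfrac{1}{2}\bigl(z^\top Q z + \bm{1}_J^\top Q \bm{1}_J\bigr),
\end{equation*}
so the slack equals $\tfrac{1}{2}\bigl(z^\top Q z - \bm{1}_J^\top Q \bm{1}_J\bigr)$. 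This is exactly the equivalence claimed in (\ref{midptQuad}).

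The final two characterizations then follow by specializing: $f$ is locally (respectively globally) discrete midpoint convex iff (\ref{midptQuad}) holds for every $z$ with $\|z\|_\infty = 2$ (respectively $\|z\|_\infty \geq 2$), because for each such $z$ we can take any $y \in \mathbb{Z}^n$ and set $x = y + z$. There is no genuine obstacle here; the only care needed is to verify the signs and the identity $p-q = \bm{1}_J$ correctly (including for negative odd entries of $z$, where $\lceil z_i/2 \rceil - \lfloor z_i/2 \rfloor = 1$ still holds), after which the rest is a two-line quadratic computation.
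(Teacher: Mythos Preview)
Your proof is correct and follows essentially the same approach as the paper: both reduce the inequality to a statement depending only on $z=x-y$, identify $\lceil (x+y)/2\rceil-\lfloor (x+y)/2\rfloor=\bm{1}_J$, and then apply a quadratic-form identity. The only cosmetic difference is that the paper applies the single identity $f(u)+f(v)-2f((u+v)/2)=\tfrac12(u-v)^\top Q(u-v)$ twice---once to $(x,y)$ and once to $(\lceil(x+y)/2\rceil,\lfloor(x+y)/2\rfloor)$, whose midpoint is again $(x+y)/2$---and subtracts, whereas you expand directly and invoke the parallelogram identity; these are the same computation in different packaging.
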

\begin{proof}
With the use of identities
\begin{align*}
& \frac{1}{2} \left( \left\lfloor \frac{x+y}{2} \right\rfloor
 + \left\lceil \frac{x+y}{2} \right\rceil \right)
= \frac{x+y}{2} \ ,
\\ &
 f(x) + f(y) - 2 f \left( \frac {x+y}{2} \right)
= \frac{1}{2}(x-y)\sp{\top} Q (x-y),
\end{align*}
we can rewrite
the discrete midpoint convexity (\ref{discmptconvfn2}) to
\[
(x-y)\sp{\top} Q (x-y) \geq \left( \left\lceil \frac{x+y}{2} \right\rceil - \left\lfloor \frac{x+y}{2} \right\rfloor \right)\sp{\top} Q
\left( \left\lceil \frac{x+y}{2} \right\rceil - \left\lfloor \frac{x+y}{2} \right\rfloor \right).
\]
The substitution of $x -y = z$ and
$
\left\lceil \frac{x+y}{2} \right\rceil
- \left\lfloor \frac{x+y}{2} \right\rfloor
= \textbf{1}_{J}
$
yields (\ref{midptQuad}).
\end{proof}

It follows from Lemma \ref{LMzQz1Q1} and Proposition \ref{PRmdptdistant} (3)
that the set of matrices $Q$ for which 
 $f(x) = x\sp{\top} Q x$ 
is (locally or globally) discrete midpoint convex is a polyhedral convex cone.

We next consider the relationship between 
the discrete midpoint convexity of $f(x) = x\sp{\top} Q x$ 
and the diagonal dominance%
\footnote{
To be precise, the condition (\ref{midptdiagdomdef}) says that 
$Q$ is a diagonally dominant matrix with nonnegative diagonals.
In this paper, however, we refer to (\ref{midptdiagdomdef}) simply as diagonal dominance.
}  
of $Q$: 
\begin{equation}\label{midptdiagdomdef}
q_{ii} \geq \sum_{j \not= i} |q_{ij}|
\qquad (i=1,\ldots,n).
\end{equation}

The following facts are known for integral convexity and ${\rm L}\sp{\natural}$-convexity.

\begin{proposition}[{\cite[Proposition 4.5, Remark 4.3]{FT90}}] \label{PRdiagdomIC}
If  $Q$ is diagonally dominant
{\rm (\ref{midptdiagdomdef})},
then $f(x) = x\sp{\top} Q x$ is integrally convex.
The converse is also true if $n \leq 2$.
\finbox
\end{proposition}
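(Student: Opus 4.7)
For the sufficient direction, by Theorem~\ref{THfavtarProp33}(b) it suffices to verify $f(x)+f(y)\geq 2\tilde f((x+y)/2)$ whenever $\|x-y\|_\infty=2$. Setting $z=x-y$, $m=(x+y)/2$, $J=\{i:z_i\text{ odd}\}$, $E=\{1,\dots,n\}\setminus J$, and $F(u):=u^\top Q u$, the identity $f(x)+f(y)=2F(m)+\tfrac12 z^\top Q z$ handles the left-hand side. The neighbourhood $N(m)$ consists of the $2^{|J|}$ points $m+\tfrac12\epsilon$ with $\epsilon\in\{-1,+1\}^J$, and the marginal constraint $\sum_\epsilon \lambda_\epsilon(m+\tfrac12\epsilon)=m$ becomes $\sum_\epsilon \lambda_\epsilon\epsilon=0$. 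Symmetrising $\lambda$ via $\lambda_\epsilon\mapsto(\lambda_\epsilon+\lambda_{-\epsilon})/2$ preserves the objective (since $\epsilon^\top Q\epsilon=(-\epsilon)^\top Q(-\epsilon)$) and automatically satisfies the marginal constraint, so the minimum is attained on a single antipodal pair, giving $\tilde f(m)=F(m)+\tfrac14\mu_J$ with $\mu_J:=\min_{\epsilon\in\{-1,+1\}^J}\epsilon^\top Q_{JJ}\epsilon$. The required inequality thus reduces to $z^\top Q z\geq\mu_J$.

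Since $z_J\in\{-1,+1\}^J$ is itself a feasible $\epsilon$, $\mu_J\leq z_J^\top Q_{JJ}z_J$, and it suffices to prove the residual $2\,z_J^\top Q_{JE}z_E+z_E^\top Q_{EE}z_E\geq 0$ for all $z_J\in\{-1,+1\}^J$ and $z_E\in\{-2,0,+2\}^E$. Writing $\eta_j=z_j/2\in\{-1,0,+1\}$ for $j\in E$, $S:=\mathrm{supp}(\eta_E)$, and minimising the cross term over $z_J$ reduces this to $\eta_E^\top Q_{EE}\eta_E\geq\sum_{i\in J}\bigl|\sum_{j\in S}q_{ij}\eta_j\bigr|$. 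Bounding the right-hand side by $\sum_{j\in S}\sum_{i\in J}|q_{ij}|$ and the left-hand side from below by $\sum_{j\in S}q_{jj}-\sum_{j,k\in S,\,j\neq k}|q_{jk}|$, the diagonal-dominance condition $q_{jj}\geq\sum_{i\in J}|q_{ij}|+\sum_{k\in E,\,k\neq j}|q_{jk}|$, summed over $j\in S$, absorbs both negative contributions and closes the argument.

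For the converse when $n\leq 2$: the case $n=1$ collapses to ordinary convexity, i.e., $q_{11}\geq 0$. For $n=2$, I will test Theorem~\ref{THfavtarProp33}(b) at the explicit pair $x=(-1,1)$, $y=(1,0)$: here $\|x-y\|_\infty=2$, $m=(0,1/2)$, and $N(m)=\{(0,0),(0,1)\}$ forces $\tilde f(m)=q_{22}/2$, so the inequality becomes $2q_{11}-2q_{12}+q_{22}\geq q_{22}$, i.e., $q_{11}\geq q_{12}$. The reflected pair $x=(1,1)$, $y=(-1,0)$ yields $q_{11}\geq -q_{12}$, and the coordinate-swapped analogues give $q_{22}\geq\pm q_{12}$, so integral convexity forces $q_{ii}\geq|q_{12}|$ for $i=1,2$, which is precisely diagonal dominance.

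The main obstacle will be the explicit identification $\tilde f(m)=F(m)+\tfrac14\mu_J$ via the anti-symmetric reduction of feasible convex combinations on $N(m)$, together with the careful bookkeeping of the diagonal-dominance condition across the partition of coordinates into $J$, $S=\mathrm{supp}(z_E)$, and $E\setminus S$ in the residual inequality.
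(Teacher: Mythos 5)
Your proof is correct. Note first that the paper does not actually prove Proposition~\ref{PRdiagdomIC}: it is imported from Favati--Tardella \cite{FT90} with only a citation and a \finbox, so there is no in-paper argument to compare against. Your argument supplies a complete, self-contained proof whose structure closely parallels the paper's own Lemma~\ref{LMzQz1Q1}: reducing the weak discrete midpoint inequality at $\ell_\infty$-distance $2$ to $z^{\top}Qz \geq \mu_J$ with $\mu_J=\min_{\epsilon\in\{-1,+1\}^J}\epsilon^{\top}Q_{JJ}\epsilon$ is the exact analogue, for $\tilde f$, of the paper's reduction of (\ref{discmptconvfn2}) to $z^{\top}Qz\geq \bm{1}_J^{\top}Q\bm{1}_J$, only with the minimum over sign patterns replacing the single pattern $\bm{1}_J$ (consistent with integral convexity being the weaker property). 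The key identification $\tilde f(m)=F(m)+\tfrac14\mu_J$ is sound: after eliminating the linear term via the barycentre constraint, the objective depends on $\epsilon$ only through the even function $\epsilon^{\top}Q\epsilon$, every symmetrized weight vector is automatically feasible, and symmetrization preserves the value, so the minimum concentrates on one antipodal pair. The residual estimate is also correct: diagonal dominance at $j\in S$ splits as $q_{jj}\geq\sum_{i\in J}|q_{ij}|+\sum_{k\in E,\,k\neq j}|q_{jk}|$, and summing over $j\in S$ dominates both the off-diagonal loss in $\eta_E^{\top}Q_{EE}\eta_E$ and the worst-case cross term; the chain closes because you prove $z^{\top}Qz\geq z_J^{\top}Q_{JJ}z_J\geq\mu_J$. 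The converse for $n\leq 2$ via the four test pairs is likewise correct and yields exactly $q_{ii}\geq|q_{12}|$. If you write this out in full, state explicitly that Theorem~\ref{THfavtarProp33} applies because $\dom f=\ZZ^{n}$ is an integrally convex set, and handle the degenerate case $J=\emptyset$ (where $\mu_J=0$ and the claim is just $z^{\top}Qz\geq 0$) as part of the same computation.
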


\begin{proposition}[{\cite[Section 7.3]{Mdcasiam}}] \label{PRdiagdomLnat}
$f(x) = x\sp{\top} Q x$ is ${\rm L}\sp{\natural}$-convex
if and only if
$Q$ is diagonally dominant
{\rm (\ref{midptdiagdomdef})},
and
$q_{ij} \leq 0$ for all $i \not= j$.
\finbox
\end{proposition}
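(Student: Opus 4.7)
The plan is to apply Theorem~\ref{THlnatcond}(d), which equates ${\rm L}^{\natural}$-convexity with the conjunction of submodularity and integral convexity, so that the two conditions on $Q$ can be handled separately. The key preliminary observation is that on the rectangular domain $\ZZ^{n}$, submodularity of $f$ is characterized by the local inequality $f(x+\bm{1}_{\{i\}}+\bm{1}_{\{j\}}) + f(x) \leq f(x+\bm{1}_{\{i\}}) + f(x+\bm{1}_{\{j\}})$ for distinct $i,j$ and all $x\in\ZZ^{n}$, and a direct expansion of the quadratic form yields its left minus right side as $2q_{ij}$. Hence submodularity of $f(x)=x^{\top}Qx$ is equivalent to $q_{ij}\leq 0$ for all $i\neq j$.

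For the sufficiency (``if'') direction, I would assume $Q$ is diagonally dominant~(\ref{midptdiagdomdef}) with nonpositive off-diagonals. Proposition~\ref{PRdiagdomIC} then directly yields integral convexity of $f$, the local computation above yields submodularity, and Theorem~\ref{THlnatcond}(d) delivers ${\rm L}^{\natural}$-convexity.

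For the necessity (``only if'') direction, assume $f$ is ${\rm L}^{\natural}$-convex; submodularity already yields $q_{ij}\leq 0$ for all $i\neq j$. To extract diagonal dominance, I would specialize the translation-submodularity inequality (\ref{lnatftrsubm0}) with $x=\bm{1}_{\{i\}}+\bm{1}$, $y=\bm{0}$, and $\mu=1$; one verifies $(x-\bm{1})\vee y = \bm{1}_{\{i\}}$ and $x\wedge(y+\bm{1})=\bm{1}$, so the inequality reduces to $f(\bm{1}_{\{i\}}+\bm{1})+f(\bm{0}) \geq f(\bm{1}_{\{i\}})+f(\bm{1})$. Expanding the quadratic form, the left minus right side equals exactly $2\,\bm{1}_{\{i\}}^{\top}Q\bm{1}=2\sum_{j}q_{ij}$, so the $i$-th row sum of $Q$ is nonnegative; combined with $q_{ij}\leq 0$ for $j\neq i$ this gives $q_{ii}\geq -\sum_{j\neq i}q_{ij}=\sum_{j\neq i}|q_{ij}|$, the required diagonal dominance. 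The principal choice in the argument is picking the right special triple $(x,y,\mu)$ that isolates row-sum information from translation-submodularity; once that triple is in hand, the rest is routine quadratic bookkeeping, and this seems noticeably cleaner than attempting to force row sums out of the discrete midpoint convexity (\ref{midptQuad}) via Lemma~\ref{LMzQz1Q1} with an ad hoc choice of $z$.
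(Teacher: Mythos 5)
The paper does not actually prove Proposition~\ref{PRdiagdomLnat}; it is quoted from \cite[Section 7.3]{Mdcasiam} with no argument supplied, so there is no in-paper proof to compare against. Your proof is correct and has the virtue of being self-contained modulo results the paper does state: the identity
$f(x+\bm{1}_{\{i\}}+\bm{1}_{\{j\}})+f(x)-f(x+\bm{1}_{\{i\}})-f(x+\bm{1}_{\{j\}})=2q_{ij}$
is right, and the local characterization of submodularity on a box that you invoke is explicitly asserted in Section~\ref{SCnewandoldDMC}, so submodularity $\Leftrightarrow$ $q_{ij}\leq 0$ ($i\neq j$) is clean. The sufficiency direction via Proposition~\ref{PRdiagdomIC} plus Theorem~\ref{THlnatcond}(d) is exactly the right gluing. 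For necessity, your choice $x=\bm{1}_{\{i\}}+\bm{1}$, $y=\bm{0}$, $\mu=1$ in (\ref{lnatftrsubm0}) does give $(x-\bm{1})\vee y=\bm{1}_{\{i\}}$ and $x\wedge(y+\bm{1})=\bm{1}$, and the resulting inequality reduces to $2\sum_{j}q_{ij}\geq 0$, which together with the nonpositive off-diagonals is precisely (\ref{midptdiagdomdef}); note that all terms are finite since $\dom f=\ZZ^{n}$, so no effective-domain caveats arise. (The alternative you dismiss also works: Lemma~\ref{LMzQz1Q1} with $z=2\bm{1}_{\{i\}}+\bm{1}_{N\setminus\{i\}}$ yields the same row-sum inequality $4\sum_{j}q_{ij}\geq 0$, but your route through translation-submodularity is indeed more direct.) No gaps.
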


In contrast, discrete midpoint convexity is not governed by diagonal dominance.
This is demonstrated in the following examples.

\begin{example} \rm \label{EXdmcNOTdiagdom}
Let $f(x) = x\sp{\top} Q x$ with 
$Q = {\small\footnotesize
\left[ \begin{array}{rrr}
1 & -1 & 1  \\
-1 & 2 & -1  \\
1 & -1 & 2  \\
\end{array}\right]}$
for $x  \in \ZZ\sp{3}$.
The matrix $Q$ is not diagonally dominant.
Nevertheless, the function $f(x)$ is globally (and hence locally) midpoint convex.
Indeed, it is possible 
to verify the inequality
(\ref{midptQuad}) in Lemma \ref{LMzQz1Q1}
for all $z \in \ZZ\sp{3}$ with $\| z \|_{\infty} \geq 2$.
\finbox
\end{example}

\begin{example} \rm \label{EXdiagdomNOTdmc}
Let $f(x) = x\sp{\top} Q x$ with 
$Q = {\small\footnotesize
\left[ \begin{array}{rrr}
1 & 1 & 0  \\
1 & 1 & 0  \\
0 & 0 & 0 \\
\end{array}\right]}$
for $x  \in \ZZ\sp{3}$.
The matrix $Q$ is diagonally dominant.
Nevertheless, the function $f(x)= (x_1 + x_2)^2$ is not locally midpoint convex
in $(x_1, x_2, x_3)$.
Indeed, 
for $x = (1,0,0)$ and $y = (0,1,2)$, we have
$z=\left\lceil \frac{x+y}{2} \right\rceil = (1,1,1)$,
$w = \left\lfloor \frac{x+y}{2} \right\rfloor = (0,0,1)$,
and
$f(x) + f(y) = 1 + 1  < f(z) + f(w) = 4 + 0$.
It is also noted that the function $(x_1 + x_2)^2$ is locally midpoint convex in $(x_1, x_2)$.
\finbox
\end{example}

We can still make the following statement.

\begin{proposition}\label{PRDiagdomLDCM}
If $f$ is locally discrete midpoint convex and 
$q_{ij} \leq  0$ for all $i \not= j$, 
then $Q$ is diagonally dominant.
\end{proposition}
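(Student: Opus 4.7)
The plan is to exploit Lemma \ref{LMzQz1Q1}, which reduces local discrete midpoint convexity of $f(x) = x^\top Q x$ to the inequality $z^\top Q z \geq \bm{1}_J^\top Q \bm{1}_J$ for all $z \in \ZZ^n$ with $\|z\|_\infty = 2$, where $J = \{i : z_i \text{ odd}\}$. The goal is to exhibit, for each index $i$, a single well-chosen test vector $z$ whose associated inequality collapses exactly to the diagonal dominance bound $q_{ii} \geq \sum_{j \neq i}|q_{ij}|$, which under the sign hypothesis $q_{ij} \leq 0$ $(j \neq i)$ becomes $q_{ii} + \sum_{j \neq i} q_{ij} \geq 0$.

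For a fixed row index $i$, I would choose $z = 2 e_i + \bm{1}_S$, where $S = \{1,\ldots,n\} \setminus \{i\}$. Then $\|z\|_\infty = 2$ and the set of odd coordinates is exactly $J = S$. A direct expansion gives
\[
z^\top Q z = 4 q_{ii} + 4\sum_{j \in S} q_{ij} + \sum_{j,k \in S} q_{jk},
\qquad
\bm{1}_J^\top Q \bm{1}_J = \sum_{j,k \in S} q_{jk},
\]
so the cross term $\sum_{j,k \in S} q_{jk}$ cancels when the inequality of Lemma \ref{LMzQz1Q1} is applied, yielding the clean bound
\[
4 q_{ii} + 4 \sum_{j \neq i} q_{ij} \geq 0.
\]
Dividing by $4$ and using $q_{ij} \leq 0$ for $j \neq i$ (so that $-q_{ij} = |q_{ij}|$) converts this into $q_{ii} \geq \sum_{j \neq i} |q_{ij}|$, which is the desired diagonal dominance condition (\ref{midptdiagdomdef}). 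Since $i$ was arbitrary, the proof is complete.

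There is no real obstacle here; the only thinking required is to find a test vector that kills the quadratic ``cross'' contribution $\sum_{j,k \in S} q_{jk}$. The asymmetric choice $z = 2e_i + \bm{1}_S$ is natural because doubling the $i$-th coordinate changes $z^\top Q z$ by an extra $3 q_{ii} + 2 \sum_{j \in S} q_{ij}$ relative to $\bm{1}_{S \cup \{i\}}^\top Q \bm{1}_{S\cup\{i\}}$, after which the inequality from Lemma \ref{LMzQz1Q1} isolates exactly the row-$i$ dominance expression. As a sanity check, note that Example \ref{EXdiagdomNOTdmc} shows the converse implication fails in general, so one cannot hope to drop the sign hypothesis on the off-diagonal entries.
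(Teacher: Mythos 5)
Your proof is correct, but it takes a genuinely different route from the paper's. The paper argues structurally: local discrete midpoint convexity implies integral convexity (Theorem~\ref{THdirintclass}), the sign condition $q_{ij}\leq 0$ $(i\neq j)$ makes $f$ submodular, submodularity plus integral convexity yields ${\rm L}^{\natural}$-convexity (Theorem~\ref{THlnatcond}), and Proposition~\ref{PRdiagdomLnat} then delivers diagonal dominance. You instead work directly from Lemma~\ref{LMzQz1Q1} with the single test vector $z = 2e_i + \bm{1}_S$, $S = \{1,\ldots,n\}\setminus\{i\}$, for which $\|z\|_\infty = 2$ and $J = S$, so that the cross term $\sum_{j,k\in S} q_{jk}$ cancels and the inequality (\ref{midptQuad}) collapses to $q_{ii} + \sum_{j\neq i} q_{ij} \geq 0$; your expansion and the final conversion to $q_{ii}\geq\sum_{j\neq i}|q_{ij}|$ via $q_{ij}\leq 0$ are both correct. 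Your computation is more elementary and self-contained, and it exposes something the paper's argument hides: local discrete midpoint convexity alone already forces the row-sum inequality $q_{ii} + \sum_{j\neq i} q_{ij}\geq 0$ for every $i$, the sign hypothesis being used only at the very end to reinterpret that inequality as diagonal dominance. What the paper's derivation buys in exchange is that it places the statement inside the hierarchy of convexity classes (locally DMC $\cap$ submodular $\subseteq$ ${\rm L}^{\natural}$-convex) with no new calculation, which is more in keeping with how Section~\ref{SCquadfnDIC} leans on Propositions~\ref{PRdiagdomIC} and~\ref{PRdiagdomLnat}.
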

\begin{proof}
Since $f$ is locally discrete midpoint convex,
it is integrally convex by Theorem \ref{THdirintclass}.
Furthermore it is submodular by $q_{ij} \leq 0$ ($i \not= j$), and hence
${\rm L}\sp{\natural}$-convex by Theorem \ref{THlnatcond}.
Finally 
Proposition \ref{PRdiagdomLnat}
shows the diagonal dominance of $Q$.
\end{proof}

For two-dimensional quadratic functions, 
local and global discrete midpoint convexities
can be characterized as follows.

\begin{proposition}\label{LMdiagDomQuad2dimL}
For $n =2$, $f(x) = x\sp{\top} Q x$ is locally discrete midpoint convex
if and only if
$ q_{11} \geq |q_{12}|$ and $ q_{22} \geq |q_{12}|$.
\end{proposition}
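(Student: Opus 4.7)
The plan is to invoke Lemma \ref{LMzQz1Q1}, which converts local discrete midpoint convexity of $f(x)=x^\top Qx$ on $\mathbb{Z}^2$ into the finite check that $z^\top Q z \geq \mathbf{1}_J^\top Q \mathbf{1}_J$ holds for every $z \in \mathbb{Z}^2$ with $\|z\|_\infty = 2$, where $J=\{i\mid z_i\text{ odd}\}$. Since both sides of this inequality are invariant under $z\mapsto -z$ (the set $J$ is unchanged), it suffices to examine eight representative vectors: $(2,0)$, $(0,2)$, $(2,2)$, $(2,-2)$, $(2,1)$, $(2,-1)$, $(1,2)$, $(1,-2)$.

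Next, I would tabulate $J$ and both sides of (\ref{midptQuad}) for each representative. For $z=(2,0)$ and $(0,2)$ one has $J=\emptyset$, giving $q_{11}\geq 0$ and $q_{22}\geq 0$. For $z=(2,\pm 2)$ one still has $J=\emptyset$, giving $q_{11}+q_{22}\pm 2q_{12}\geq 0$. For $z=(2,\pm 1)$ one has $J=\{2\}$ and $\mathbf{1}_J^\top Q\mathbf{1}_J=q_{22}$, so the condition reduces to $q_{11}\pm q_{12}\geq 0$. For $z=(1,2)$ and $(1,-2)$ one has $J=\{1\}$, giving $q_{22}\pm q_{12}\geq 0$. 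The pair $q_{11}\pm q_{12}\geq 0$ is equivalent to $q_{11}\geq |q_{12}|$, and similarly for $q_{22}$.

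Finally, I would verify that the compressed conditions $q_{11}\geq |q_{12}|$ and $q_{22}\geq |q_{12}|$ imply the remaining four inequalities from the table. Nonnegativity of $q_{11}$ and $q_{22}$ is immediate, and for the ``diagonal'' inequalities one estimates
\[
q_{11}+q_{22}\pm 2q_{12}\;\geq\; |q_{12}|+|q_{12}|\pm 2q_{12}\;=\;2(|q_{12}|\pm q_{12})\;\geq\;0.
\]
Conversely, $q_{11}\geq |q_{12}|$ and $q_{22}\geq |q_{12}|$ are themselves among the necessary conditions produced in the table, so the two sets of conditions are equivalent. Combined with Lemma \ref{LMzQz1Q1}, this proves both directions of the claim.

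No step presents a genuine obstacle: the argument is finite-case bookkeeping. The only point requiring a bit of care is exploiting the $z\mapsto -z$ symmetry correctly (so that, e.g., $(1,-2)$ is not discarded as equivalent to $(1,2)$), and then checking that the two ``dominance'' conditions absorb the ``diagonal'' conditions $q_{11}+q_{22}\pm 2q_{12}\geq 0$ via the short estimate above.
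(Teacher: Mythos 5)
Your proposal is correct and follows essentially the same route as the paper: both reduce the claim via Lemma \ref{LMzQz1Q1} to the finite check of (\ref{midptQuad}) for $\| z \|_{\infty}=2$, extract the necessary conditions $q_{11}\pm q_{12}\geq 0$ and $q_{22}\pm q_{12}\geq 0$ from $z=(2,\pm1)$ and $(\pm1,2)$, and then verify sufficiency for the remaining cases. Your write-up is in fact slightly more explicit than the paper's, which leaves the converse verification (including the inequalities $q_{11}+q_{22}\pm 2q_{12}\geq 0$ from $z=(2,\pm 2)$) to the reader.
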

\begin{proof}
Suppose that $f$ is locally discrete midpoint convex.
By inequality (\ref{midptQuad}) for $z =(2,1)$ and $z =(2,-1)$ 
we obtain
$q_{11}  + q_{12} \geq  0$ and $q_{11}  - q_{12} \geq  0$, i.e., 
$q_{11}  - |q_{12}| \geq  0$. 
By symmetry, we also have
$q_{22}  - |q_{21}| \geq  0$.
Conversely, these conditions imply (\ref{midptQuad}) for all $z$ with $\| z \|_{\infty} = 2$.
\end{proof}

\begin{proposition}\label{LMdiagDomQuad2dimG}
For $n =2$, $f(x) = x\sp{\top} Q x$
is globally discrete midpoint convex
if and only if
$ q_{11} \geq |q_{12}|$, $ q_{22} \geq |q_{12}|$,
and
$ q_{11} + q_{22}  \geq (5/2)q_{12}$.
\end{proposition}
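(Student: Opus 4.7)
The plan is to invoke Lemma \ref{LMzQz1Q1}, which reduces global discrete midpoint convexity of $f(x) = x\sp{\top} Q x$ to the quadratic inequality (\ref{midptQuad}) for all $z \in \ZZ\sp{2}$ with $\| z \|_{\infty} \geq 2$. Combined with Proposition \ref{PRmdptdistant}(3) and the characterization of local discrete midpoint convexity in Proposition \ref{LMdiagDomQuad2dimL}, the task splits into analysing (\ref{midptQuad}) at $\|z\|_{\infty}=2$ (which yields the first two conditions) and at $\|z\|_{\infty}=3$ (which must yield the third, and be implied by the three together).

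For necessity, Proposition \ref{LMdiagDomQuad2dimL} immediately supplies $q_{11} \geq |q_{12}|$ and $q_{22} \geq |q_{12}|$, since a globally discrete midpoint convex function is in particular locally discrete midpoint convex. To extract $q_{11} + q_{22} \geq (5/2)q_{12}$, I would specialize (\ref{midptQuad}) to $z = (3,-3)$: both coordinates are odd, so $J = \{1,2\}$ and $\bm{1}_{J} = (1,1)$, and the inequality $9 q_{11} - 18 q_{12} + 9 q_{22} \geq q_{11} + 2 q_{12} + q_{22}$ simplifies to exactly $2q_{11} - 5 q_{12} + 2 q_{22} \geq 0$.

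For sufficiency, the first two conditions yield local discrete midpoint convexity by Proposition \ref{LMdiagDomQuad2dimL}, so by Proposition \ref{PRmdptdistant}(3) it suffices to verify (\ref{discmptconvfn2}) at $\|x-y\|_\infty = 3$, equivalently (\ref{midptQuad}) for every $z \in \ZZ\sp{2}$ with $\| z \|_{\infty} = 3$. The inequality (\ref{midptQuad}) is invariant under $z \mapsto -z$, and the three hypotheses are symmetric under swapping $q_{11} \leftrightarrow q_{22}$ (which corresponds to swapping the two coordinates of $z$), so I would reduce to the seven representatives $z = (3,k)$ with $k \in \{-3,-2,-1,0,1,2,3\}$ and treat each by direct computation: the parity of $k$ determines $J$, and each resulting linear inequality in $(q_{11},q_{12},q_{22})$ is verified by a short case split on the sign of $q_{12}$ using $q_{11},q_{22} \geq |q_{12}|$.

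The verification is routine rather than conceptually hard. The only case that genuinely invokes the third hypothesis is $z = (3,-3)$, which is precisely why this hypothesis appears with the coefficient $5/2$. The mildly delicate cases are $z = (3,\pm 2)$, giving $2q_{11} \pm 3 q_{12} + q_{22} \geq 0$, and $z = (3,3)$, giving $q_{11} + 2 q_{12} + q_{22} \geq 0$; each is handled by treating $q_{12} \geq 0$ and $q_{12} < 0$ separately and applying $q_{11},q_{22} \geq |q_{12}|$. All remaining cases collapse to one of $q_{11} \geq \pm q_{12}$ or $q_{22} \geq 0$ and are immediate.
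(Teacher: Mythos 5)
Your proof is correct, and the necessity half coincides with the paper's (the pair $z=(3,-3)$ for the coefficient $5/2$, Proposition~\ref{LMdiagDomQuad2dimL} for the other two conditions). The sufficiency half, however, takes a genuinely different route. The paper does \emph{not} invoke Proposition~\ref{PRmdptdistant}(3) here: it verifies (\ref{midptQuad}) directly for \emph{every} $z=(u,v)$ with $\max(|u|,|v|)\geq 2$, first dispatching the case where some component is even via the square-completion identity
$q_{11}u^{2}+q_{22}v^{2}-2|q_{12}||uv| = |q_{12}|(|u|-|v|)^{2}+(q_{11}-|q_{12}|)u^{2}+(q_{22}-|q_{12}|)v^{2}$,
and then splitting the both-odd case into five subcases according to the magnitudes of $|u|,|v|$ and the sign of $q_{12}uv$. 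You instead use Proposition~\ref{PRmdptdistant}(3) to collapse the infinite family of inequalities to $\|z\|_{\infty}\in\{2,3\}$, absorb the distance-$2$ part into Proposition~\ref{LMdiagDomQuad2dimL}, and exploit the $z\mapsto -z$ and coordinate-swap symmetries to leave only the seven vectors $z=(3,k)$, $k=-3,\ldots,3$, each a one-line linear inequality in $(q_{11},q_{12},q_{22})$. Your reduction is shorter and entirely mechanical (a finite certificate that the cone is polyhedral, in the spirit of the remark following Lemma~\ref{LMzQz1Q1}), at the cost of leaning on the parallelogram-inequality machinery behind Proposition~\ref{PRmdptdistant}; the paper's argument is self-contained at the level of quadratic forms and reuses its square-completion trick in Cases 4 and 5. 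Both are valid, and your case computations (in particular $2q_{11}\pm 3q_{12}+q_{22}\geq 0$ for $z=(3,\pm 2)$ and $q_{11}+2q_{12}+q_{22}\geq 0$ for $z=(3,3)$) check out.
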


\begin{proof}
Suppose that $f$ is globally discrete midpoint convex.
By inequality (\ref{midptQuad}) for $z =(3,-3)$
we obtain
$2(q_{11}  + q_{22}) \geq  5 q_{12}$.
Using also 
Proposition~\ref{LMdiagDomQuad2dimL}
we obtain three inequalities
$ q_{11} \geq |q_{12}|$, $ q_{22} \geq |q_{12}|$,
and
$ q_{11} + q_{22}  \geq (5/2)q_{12}$.
Conversely, these conditions imply (\ref{midptQuad}) 
for all $z =(u,v)$ with $\max( |u|, |v| ) \geq 2$, as follows.

Inequality (\ref{midptQuad}) for $z =(u,v)$ is easy to see
if $u$ or $v$ is even.
For example,
if $u$ odd and $v$ even,
(\ref{midptQuad}) is equivalent to
$q_{11} u^{2}  + q_{22} v^{2} + 2 q_{12} uv \geq  q_{11}$,
which can be shown as follows:
\begin{align*}
q_{11} u^{2}  + q_{22} v^{2} + 2 q_{12} uv
&
\geq
q_{11} u^{2}  + q_{22} v^{2} - 2 |q_{12}| |uv|
\\ &
= |q_{12}| (|u|-|v|)^{2}
 + (q_{11} -|q_{12}| )u^{2}
 + (q_{22} -|q_{12}| )v^{2}
\\
&\geq |q_{12}| +  (q_{11} -|q_{12}| ) = q_{11},
\end{align*}
where the second inequality follows from
$|u|-|v| \not= 0$ and $u \not= 0$.

Assume that $u$ and $v$ are odd.  Then
\begin{align*}
\mbox{ (\ref{midptQuad})}
& \Leftrightarrow
q_{11} u^{2}  + q_{22} v^{2} + 2 q_{12} uv \geq  q_{11} + q_{22} + 2 q_{12}
\\ & \Leftrightarrow
F := q_{11} (u^{2}-1)  + q_{22} (v^{2}-1) + 2 q_{12} (uv-1) \geq 0 .
\end{align*}
We consider the following five cases separately:
\\
\qquad
\begin{tabular}{ll}
Case 1:  $|u| \geq 3$ and $|v| = 1$,
\\
Case 2:  $|v| \geq 3$ and $|u| = 1$,
\\
Case 3: $|u|\geq 3$, $|v| \geq 3$, and $q_{12} uv \geq 0$,
\\
Case 4: $|u|\geq 3$, $|v| \geq 3$,  $q_{12} \geq 0$, and $uv <0$.
\\
Case 5: $|u|\geq 3$, $|v| \geq 3$,  $q_{12} \leq 0$, and $uv > 0$.
\end{tabular}
\medskip

\noindent
Case 1 ($|u| \geq 3$ and $|v| = 1$): We have
\[
F
 \geq
q_{11} (u^{2}-1) - 2 |q_{12}| (|u|+1)
 \geq
q_{11} (u^{2}-1) - 2 q_{11} (|u|+1)
=
q_{11} ((|u|-1)^{2} - 4)  \geq 0.
\]
\noindent
Case 2 ($|v| \geq 3$ and $|u| = 1$): Similarly to Case 1.

\noindent
Case 3 ($|u|\geq 3$, $|v| \geq 3$, $q_{12} uv \geq 0$):
We have $q_{12} (uv-1) \geq 0$ in this case, and hence
\begin{align*}
F =
q_{11} (u^{2}-1)  + q_{22} (v^{2}-1) + 2 q_{12} (uv-1)
\geq
q_{11} (u^{2}-1)  + q_{22} (v^{2}-1) \geq 0.
\end{align*}

\noindent
Case 4 ($|u|\geq 3$, $|v| \geq 3$,  $q_{12} \geq 0$, $uv <0$):
We have
\begin{align*}
F  &=
q_{11} (u^{2}-1)  + q_{22} (v^{2}-1) - 2 q_{12} (|uv|+1)
\\
&= q_{12} (|u|-|v|)^{2}
 + (q_{11} -q_{12} )u^{2}
 + (q_{22} -q_{12} )v^{2}
  -  q_{11} - q_{22} - 2 q_{12}
\\
&\geq
  (q_{11} -q_{12} ) \times 9
 +  (q_{22} -q_{12} ) \times 9
  -  q_{11} - q_{22} - 2 q_{12}
\\ &=
 8(q_{11}  + q_{22}) - 20 q_{12} \geq 0.
\end{align*}
Case 5 ($|u|\geq 3$, $|v| \geq 3$,  $q_{12} \leq 0$, $uv > 0$):
We have
\begin{align*}
F
&=
q_{11} (u^{2}-1)  + q_{22} (v^{2}-1) - 2 |q_{12}| (uv-1)
\\
&=
 |q_{12}| (u-v)^{2}
 + (q_{11} -|q_{12}| )u^{2}
 + (q_{22} -|q_{12}| )v^{2}
  -  q_{11} - q_{22} + 2 |q_{12}|
\\
&\geq
  (q_{11} -|q_{12}| ) \times 9
 +  (q_{22} -|q_{12}| ) \times 9
  -  q_{11} - q_{22} + 2 |q_{12}|
\\ &=
 8 (q_{11} -|q_{12}| ) + 8 (q_{22} -|q_{12}| ) \geq 0.
\end{align*}
\vspace{-1.5\baselineskip}\\
\end{proof}

For the general $n$-dimensional case, we have the following sufficient condition 
for global discrete midpoint convexity of quadratic functions.
The minimum and maximum eigenvalues of $Q$ are denoted, respectively, by
$\lambda^{Q}_{\min}$ and $\lambda^Q_{\max}$.

\begin{proposition}\label{PReigenratio}
$f(x) = x\sp{\top} Q x$
is globally discrete midpoint convex if
\begin{equation}\label{eqQuadWeakDirIntConv3J}
 \lambda^Q_{\min} \geq \frac{n-1}{n+3} \  \lambda^{Q}_{\max}.
\end{equation}
\end{proposition}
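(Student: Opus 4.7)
The plan is to reduce via Lemma~\ref{LMzQz1Q1} to a combinatorial inequality and then bound both sides via the Rayleigh quotient. By Lemma~\ref{LMzQz1Q1}, global discrete midpoint convexity of $f(x) = x^\top Q x$ is equivalent to $z^\top Q z \geq \bm{1}_J^\top Q \bm{1}_J$ for every $z \in \mathbb{Z}^n$ with $\|z\|_\infty \geq 2$, where $J = \{i : z_i \text{ is odd}\}$. Using the Rayleigh bounds $z^\top Q z \geq \lambda^Q_{\min}\|z\|_2^2$ and $\bm{1}_J^\top Q \bm{1}_J \leq \lambda^Q_{\max}|J|$, it suffices to prove $\lambda^Q_{\min}\|z\|_2^2 \geq \lambda^Q_{\max}|J|$ for every such $z$.

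First I would observe that hypothesis (\ref{eqQuadWeakDirIntConv3J}) forces $Q \succeq 0$: if $\lambda^Q_{\max} < 0$ and $n \geq 2$, then $\tfrac{n-1}{n+3}\lambda^Q_{\max} > \lambda^Q_{\max} \geq \lambda^Q_{\min}$, contradicting (\ref{eqQuadWeakDirIntConv3J}); hence $\lambda^Q_{\max} \geq 0$ and therefore $\lambda^Q_{\min} \geq 0$. The subcase $\lambda^Q_{\max} = 0$ gives $Q = 0$ (claim vacuous), and $n = 1$ reduces (\ref{eqQuadWeakDirIntConv3J}) to $\lambda^Q_{\min} \geq 0$, so one-dimensional convexity handles it. Assuming henceforth $n \geq 2$ and $\lambda^Q_{\max} > 0$, the target inequality becomes the purely arithmetic bound
\[
\|z\|_2^2 \;\geq\; \frac{n+3}{n-1}\, k, \qquad \text{with } k := |J|.
\]

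For $k = 0$ this is trivial. For $k \geq 1$, I would split on the parity of a coordinate $i^\ast$ attaining $\|z\|_\infty$. If $z_{i^\ast}$ is even then $|z_{i^\ast}| \geq 2$, so $z_{i^\ast}^2 \geq 4$; combined with a contribution of at least $1$ from each of the $k$ odd coordinates this gives $\|z\|_2^2 \geq k + 4$, and the existence of an even coordinate forces $k \leq n-1$. The inequality $(k+4)(n-1) \geq k(n+3)$ reduces precisely to $k \leq n-1$. If instead $z_{i^\ast}$ is odd then $|z_{i^\ast}| \geq 3$, so $z_{i^\ast}^2 \geq 9$, yielding $\|z\|_2^2 \geq k + 8$; the inequality $(k+8)(n-1) \geq k(n+3)$ reduces to $k \leq 2(n-1)$, which holds for all $k \leq n$ when $n \geq 2$.

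Putting the pieces together, $\lambda^Q_{\min}\|z\|_2^2 \geq \tfrac{n-1}{n+3}\lambda^Q_{\max}\cdot \tfrac{n+3}{n-1}k = \lambda^Q_{\max} k \geq \bm{1}_J^\top Q \bm{1}_J$, and the Rayleigh lower bound closes the chain. The main obstacle is the combinatorial case analysis above; the tight configuration is $k = n-1$, with $\|z\|_\infty$ attained at the unique even coordinate (one entry $\pm 2$ and $n-1$ entries $\pm 1$), which achieves $\|z\|_2^2 = n + 3$ and pins down the constant $(n-1)/(n+3)$ that appears in the statement.
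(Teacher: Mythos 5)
Your proof is correct and follows essentially the same route as the paper's: reduce via Lemma~\ref{LMzQz1Q1} to the inequality $z^{\top}Qz \geq \bm{1}_{J}^{\top}Q\bm{1}_{J}$, bound both sides by Rayleigh quotients, and verify the arithmetic inequality $\|z\|_{2}^{2}\,(n-1) \geq |J|\,(n+3)$ by a case split on the coordinate attaining $\|z\|_{\infty}$ (the paper splits on $\|z\|_{\infty}=2$ versus $\geq 3$ and counts the support size $|I|$, while you split on the parity of the extremal entry and count $|J|$ directly --- the same constants $4$ and $9$ appear). Your version is marginally cleaner in that it bypasses the paper's detour through the principal submatrix $Q_{I}$ and its eigenvalues, and it makes explicit the preliminary observation that the hypothesis forces $Q$ to be positive semidefinite, which the paper uses silently.
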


\begin{proof}
We prove that (\ref{eqQuadWeakDirIntConv3J})
implies (\ref{midptQuad}).
We may assume $n \geq 2$.

For an $n \times n$ symmetric matrix $C$,
let $\lambda^C_{\min}$ and $\lambda^C_{\max}$ denote its minimum and maximum eigenvalues,
respectively. Furthermore,
for a set  $I \subseteq N = \{ 1,\ldots, n  \}$ of indices,
let $C_I$ denote the submatrix of $C$ with row and column indices in $I$.
Note that  (\ref{eqQuadWeakDirIntConv3J})
implies
\begin{equation}\label{eqQuadWeakDirIntConv3I}
\lambda^{Q_{I}}_{\min} \geq \frac{n-1}{n+3} \  \lambda^{Q_{I}}_{\max}
\end{equation}
for any
$I \subseteq N$,
since
$0 \leq \lambda^{Q}_{\min} \leq
\lambda^{Q_{I}}_{\min} \leq \lambda^{Q_{I}}_{\max} \leq \lambda^{Q}_{\max}$.

Take $z \in \mathbb{Z}\sp{n}$ with $\| z \|_{\infty} \geq 2$,
and let
$I = \{i \mid z_i \neq 0 \}$ and $m = |I|$,
whereas $J = \{i \mid z_i \mbox{ is odd }  \}$.
If $\| z \|_{\infty} = 2$, we have
$z\sp{\top} z \geq 2\sp{2} + (|I| -1) = m +3$
and $\bm{1}_{J}\sp{\top} \bm{1}_{J} = |J| \leq m-1$.
If $\| z \|_{\infty} \geq 3$, we have
$z\sp{\top} z \geq 3\sp{2} + (|I| -1) = m +8$
and $\bm{1}_{J}\sp{\top} \bm{1}_{J} = |J| \leq m$.
That is, by defining
\[
(\zeta,\eta)=
\left\{ \begin{array}{ll}  
(m+3,m-1)
 & (\mbox{if $\| z \|_{\infty} = 2$}), \\
 (m+8,m) & (\mbox{if $\| z \|_{\infty} \geq 3$}) ,
 \end{array}\right.
\]
we have
$z\sp{\top} z \geq \zeta$ and
$\bm{1}_{J}\sp{\top} \bm{1}_{J} \leq \eta$.
Then
(\ref{midptQuad})
can be shown as follows:
\begin{align*}
 z\sp{\top}  Q z
& \geq
 \zeta \  \frac{z\sp{\top}  Q z}{z\sp{\top}  z}
\geq \zeta \  \lambda^{Q_{I}}_{\min}
\geq \zeta \  \frac{n-1}{n+3} \  \lambda^{Q_{I}}_{\max}
\\ &
\geq \zeta \  \frac{n-1}{n+3} \
 \frac{ \bm{1}_{J}\sp{\top} Q \bm{1}_{J} }{ \bm{1}_{J}\sp{\top}  \bm{1}_{J} }
\geq \frac{\zeta }{\eta} \  \frac{n-1}{n+3} \
 \bm{1}_{J}\sp{\top} Q \bm{1}_{J}
\geq  \bm{1}_{J}\sp{\top} Q \bm{1}_{J} ,
\end{align*}
where the six inequalities follow, respectively, from:
$z\sp{\top} z \geq \zeta >0$,
the definition of $I$,
(\ref{eqQuadWeakDirIntConv3I}),
$I \supseteq J$,
$\bm{1}_{J}\sp{\top}  \bm{1}_{J} \leq \eta$,
and $\zeta(n-1) \geq \eta(n+3)$.
(Proof of $\zeta(n-1) \geq \eta(n+3)$:
if $\| z \|_{\infty} = 2$, we have
$\zeta(n-1) - \eta(n+3) = (m+3)(n-1) - (m-1)(n+3) =
4(n - m) \geq 0$; and
 if $\| z \|_{\infty} \geq 3$, we have
$\zeta(n-1) - \eta(n+3) = (m+8)(n-1) - m(n+3) = 4(2n - m -2)
\geq 4(n-2) \geq 0$.)
\end{proof}

The converse of Proposition~\ref{PReigenratio} is not true, which is shown by the following example.

\begin{example} \rm \label{EXdicdim3eigval}
The matrix $Q$ in Example \ref{EXdmcNOTdiagdom},
yielding a globally midpoint convex function,
does not satisfy the condition
(\ref{eqQuadWeakDirIntConv3J}) in Proposition~\ref{PReigenratio}.
The three eigenvalues of $Q$ are given by 
$2 - \sqrt{3} = \lambda^{Q}_{\min}$, $1$, and  
$2 + \sqrt{3} = \lambda^{Q}_{\max}$,
for which the left-hand side of (\ref{eqQuadWeakDirIntConv3J}) is equal to 
$2 - \sqrt{3} = 0.268 \cdots$
and the right-hand side to $(2 + \sqrt{3})/3 = 1.244  \cdots$.
\finbox
\end{example}

We note the following
as a corollary of  Proposition \ref{PReigenratio}.

\begin{proposition}\label{PRquadRowSumJ}
Assume $n \geq 2$.
$f(x) = x\sp{\top} Q x$ is globally discrete midpoint convex
if $Q$ is represented as $Q = \alpha (I + R)$
with $\alpha \geq 0$ and a positive semidefinite $R$ satisfying
\begin{equation}\label{eqquadRowSumJ}
\max_{1 \leq i \leq n} \sum_{j=1}\sp{n} |r_{ij}| \leq \frac{4}{n-1} .
\end{equation}
\end{proposition}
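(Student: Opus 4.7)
The plan is to reduce this to Proposition~\ref{PReigenratio} by computing (or bounding) the extreme eigenvalues of $Q$ in terms of those of $R$. Since $Q = \alpha(I + R)$, every eigenvalue of $Q$ has the form $\alpha(1 + \lambda)$ for some eigenvalue $\lambda$ of $R$. Thus $\lambda^Q_{\min} = \alpha(1 + \lambda^R_{\min})$ and $\lambda^Q_{\max} = \alpha(1 + \lambda^R_{\max})$. The case $\alpha = 0$ is trivial since then $f \equiv 0$, so assume $\alpha > 0$.

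First I would note that $R$ being positive semidefinite forces $\lambda^R_{\min} \geq 0$ and, in particular, $r_{ii} \geq 0$ for every $i$. I would then invoke Gershgorin's disc theorem to bound
\[
\lambda^R_{\max} \;\leq\; \max_{1 \leq i \leq n} \Bigl( r_{ii} + \sum_{j \neq i} |r_{ij}| \Bigr) \;=\; \max_{1 \leq i \leq n} \sum_{j=1}^{n} |r_{ij}| \;\leq\; \frac{4}{n-1},
\]
where the equality uses $r_{ii} = |r_{ii}|$ and the last inequality is the hypothesis~(\ref{eqquadRowSumJ}).

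Next I would verify the eigenvalue ratio condition~(\ref{eqQuadWeakDirIntConv3J}) of Proposition~\ref{PReigenratio}. Substituting the expressions above, that condition is equivalent to
\[
(n+3)\bigl(1 + \lambda^R_{\min}\bigr) \;\geq\; (n-1)\bigl(1 + \lambda^R_{\max}\bigr),
\]
i.e.\ $4 + (n+3)\lambda^R_{\min} \geq (n-1)\lambda^R_{\max}$. Using $\lambda^R_{\min} \geq 0$, it suffices to show $4 \geq (n-1)\lambda^R_{\max}$, which is precisely the Gershgorin bound established above. Therefore $Q$ satisfies the hypothesis of Proposition~\ref{PReigenratio}, and $f$ is globally discrete midpoint convex.

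There is no real obstacle here: the proof is essentially a one-line application of Proposition~\ref{PReigenratio} once the Gershgorin bound $\lambda^R_{\max} \leq \max_i \sum_j |r_{ij}|$ is noted, with the mild observation that positive semidefiniteness of $R$ lets one replace $|r_{ii}|$ by $r_{ii}$ and discard the $\lambda^R_{\min}$ term as nonnegative. The only point that deserves explicit mention is why $4/(n-1)$ is exactly the right threshold, and the computation above makes this transparent.
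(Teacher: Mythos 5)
Your proof is correct and follows essentially the same route as the paper's: reduce to Proposition~\ref{PReigenratio} via the Gershgorin bound $\lambda^{R}_{\max} \leq \max_i \sum_j |r_{ij}| \leq 4/(n-1)$ and the observation that positive semidefiniteness of $R$ gives $\lambda^{Q}_{\min} \geq \alpha$. Your explicit remark that $r_{ii} \geq 0$ is needed to identify the Gershgorin disc bound with the full row sum is a small point the paper leaves implicit, but the argument is identical in substance.
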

\begin{proof}
We have
$\lambda^{Q}_{\max} = \alpha (1 + \lambda^{R}_{\max})$ and
$\lambda^{R}_{\max} \leq
\max_{1 \leq i \leq n} \sum_{j=1}\sp{n} |r_{ij}|$,
where the latter is a consequence of the Gershgorin theorem.
Hence (\ref{eqquadRowSumJ}) implies
$\lambda^{Q}_{\max} \leq \alpha (n+3)/(n-1)$.
On the other hand,
we have $\lambda^{Q}_{\min} \geq \alpha$
since $R$ is positive semidefinite.
Hence follows (\ref{eqQuadWeakDirIntConv3J}).
\end{proof}

\section{Proofs about Discrete Midpoint Convex Sets}
\label{SCproofgen}

\subsection{Proof of Proposition \ref{PRdirintcnvSetIC} (2)}
\label{SCdicproofPRdirintcnvSetIC}

Let $S \subseteq  \ZZ\sp{n}$ be a discrete midpoint convex set.
We prove that $S$ is an integrally convex set.
That is, we are to show that 
$z \in \overline{S}$ implies $z \in \overline{S \cap N(z)}$. 
 Fix any $z \in \overline{S}$,
which can be represented as a convex combination
\begin{equation}\label{sdic-eq3}
 z =\sum_{i=1}\sp{r} \lambda_{i} y^i,
\quad 
\sum_{i=1}\sp{r} \lambda_{i} = 1,
\quad  \lambda_{i} > 0  \ \  (i=1,2,\ldots,r)
\end{equation}
of some $y\sp{1},y\sp{2},\ldots,y\sp{r} \in S$
with coefficients 
$\lambda_1, \lambda_2, \ldots, \lambda_r \in \RR$.

With reference to the $n$-th component, we repeat 
modifying the generators 
$\{y\sp{1},y\sp{2},\ldots,y\sp{r}\}$ so that an additional condition
\begin{equation}\label{sdic-eq4}
 |y^i_{n} - z_{n}| < 1 
\quad (i =1,2,\ldots,r) 
\end{equation}
is satisfied.
By applying the same procedure for all other components,
we arrive at a representation of $z$ as a convex combination 
of points in $S \cap N(z)$.

The modification procedure with reference to the $n$-th component
is now described.
Without loss of generality we may assume
\begin{equation}\label{sdic-eq5}
 |y\sp{1}_{n} - z_{n}| \geq |y\sp{2}_{n} - z_{n}| \geq \cdots \geq |y\sp{r}_{n} - z_{n}| .
\end{equation}
Suppose that
$|y\sp{1}_{n} - z_{n}| \geq 1$;
otherwise we are done.
We may assume $y\sp{1}_{n} - z_{n} \geq 1$,
since the other case $y\sp{1}_{n} - z_{n} \leq -1$ can be treated in a similar manner.

We claim that there exists
$\{y^{j_{1}},\ldots,y^{j_k}\} \subseteq \{y\sp{2},y^3,\ldots,y\sp{r}\}$
such that, for $i=1,\ldots, k$,
the $n$-th component of $y^{j_{i}}$ is less than that of $z$
(i.e., $y^{j_{i}}_{n} < z_{n}$)
and 
\begin{equation}\label{sdic-eq6}
 \sum_{i=1}^{k-1} \lambda_{j_{i}} < \lambda_{1} \leq \sum_{i=1}^k \lambda_{j_{i}} .
\end{equation}
This claim can be proved as follows.
Let $I = \{ i \mid y^{i}_{n} < z_{n} \}$.
By (\ref{sdic-eq3}), (\ref{sdic-eq5}), and $y\sp{1}_{n} - z_{n} \geq 1$ we obtain
\begin{align*}
 0 &= \sum_{i=1}^{r} \lambda_{i}(y^{i}_{n} - z_{n}) 
  \ = \sum_{i: \,  y^{i}_{n} > z_{n} } \lambda_{i}(y^{i}_{n} - z_{n}) 
        + \sum_{i: \,  y^{i}_{n} < z_{n}} \lambda_{i}(y^{i}_{n} - z_{n}) \\
   &\geq  \lambda_{1}(y^{1}_{n} - z_{n}) + \sum_{i\in I} \lambda_{i}(y^{i}_{n} - z_{n}) 
   \ = \lambda_{1}|y^{1}_{n} - z_{n}| - \sum_{i\in I} \lambda_{i}|y^{i}_{n} - z_{n}| \\ 
   & \geq \  \lambda_{1}|y^{1}_{n} - z_{n}| - \sum_{i\in I} \lambda_{i}|y^{1}_{n} - z_{n}| 
   \ = |y^{1}_{n} - z_{n}| \  \big(\lambda_{1} - \sum_{i\in I} \lambda_{i}\big),
\end{align*}
which shows
$\lambda_{1} \leq \sum_{i\in I} \lambda_{i}$.
Then there exists $k$ that satisfies (\ref{sdic-eq6}).

For $i=1,\ldots,k$ we have
$\|y\sp{1} - y^{j_{i}}\|_\infty \geq 2$,
since 
$y\sp{1}_{n} \geq  z_{n} + 1$
and 
$y^{j_{i}}_{n} < z_{n}$.
Define
\begin{equation}\label{sdic-eq7} 
  y^{+j_{i}} = \left\lceil \frac{y\sp{1}+y^{j_{i}}}{2} \right\rceil,\quad
  y^{-j_{i}} = \left\lfloor \frac{y\sp{1}+y^{j_{i}}}{2} \right\rfloor
\end{equation}
for $i=1,\ldots,k$. 
We have $y^{+j_{i}}, y^{-j_{i}} \in S$ for $i=1,\ldots,k$
by the definition (\ref{dirintcnvsetdef}) of a discrete midpoint convex set.
Among the terms in the convex combination (\ref{sdic-eq3})
we consider the terms for 
$y\sp{1}, y^{j_{1}},\ldots,y^{j_k}$.
Since $y\sp{1}+y^{j_{i}} = y^{+j_{i}} + y^{-j_{i}}$ 
for $i=1,\ldots,k-1$ and
$y\sp{1} =  y^{+j_k} + y^{-j_k} - y^{j_k}$,
we have 
\begin{align*}
& \lambda_{1} y\sp{1} + \sum_{i=1}^k \lambda_{j_{i}} y^{j_{i}}
\\
 &= \sum_{i=1}^{k-1} \lambda_{j_{i}} (y^{1}+y^{j_{i}})
 + \left(\lambda_{1}-\sum_{i=1}^{k-1} \lambda_{j_{i}} \right) y^{1}
 +  \lambda_{j_k} y^{j_k}
\\
& = \sum_{i=1}^{k-1} \lambda_{j_{i}} (y^{+j_{i}}+y^{-j_{i}})
 + \left(\lambda_{1}-\sum_{i=1}^{k-1} \lambda_{j_{i}} \right) (y^{+j_k}+y^{-j_k})
 + \left(\sum_{i=1}^{k} \lambda_{j_{i}} - \lambda_{1}\right)y^{j_k}.
\end{align*}
By (\ref{sdic-eq6}) all the coefficients 
in the last expression are nonnegative
and their sum is equal to the sum of the coefficients  
of the first expression, which is  
$\lambda_{1}+ \sum_{i=1}^k \lambda_{j_{i}}$.

We change the generators 
$\{y\sp{1},y\sp{2},\ldots,y\sp{r}\}$
in the representation (\ref{sdic-eq3})
by deleting
$y\sp{1}$ and $\{y^{j_{1}},\ldots,y^{j_{k-1}}\}$
(and also $y^{j_k}$ if $\sum_{i=1}^{k} \lambda_{j_{i}} - \lambda_{1}=0$)
and by adding 
$\{y^{+j_{1}},\ldots,y^{+j_k}\}$
and $\{y^{-j_{1}},\ldots,y^{-j_k}\}$.
Since $|y^{+j_{i}}_{n}-z_{n}| < |y\sp{1}_{n}-z_{n}|$ 
and 
$|y^{-j_{i}}_{n}-z_{n}| < |y\sp{1}_{n}-z_{n}|$
for $i=1,\ldots,k$,
the resulting set of generators is ``lexicographically smaller''
with respect to the discrepancy in the $n$-th components 
arranged as in (\ref{sdic-eq5}).
By finiteness, the above procedure eventually terminates 
with $|y\sp{1}_{n}-z_{n}| < 1$, which implies (\ref{sdic-eq4}).

We next apply the above procedure with reference to the $(n-1)$-st component.
What is crucial here is that 
the condition (\ref{sdic-eq4})
is maintained in the modification of the generators.
Indeed, 
for each $i$, the inequalities
$ |y^{+j_i}_n - z_n| < 1$ and
$ |y^{-j_i}_n - z_n| < 1$ 
follow from (\ref{sdic-eq4})
and (\ref{sdic-eq7}).
Therefore, we can obtain 
a representation of the form of 
(\ref{sdic-eq3}) with 
\begin{equation}\label{sdic-eq45} 
 |y^i_n - z_n| < 1, \quad 
 |y^i_{n-1} - z_{n-1}| < 1 
\qquad (i =1,2,\ldots,r) .
\end{equation}
By continuing in this way for other components,
we finally obtain a representation of the form of (\ref{sdic-eq3})
with  $ |y^i_{j} - z_{j}| < 1$
for $j =1,2,\ldots,n$ and $i =1,2,\ldots,r$.
This completes the proof of Proposition \ref{PRdirintcnvSetIC} (2).

\subsection{Proof of Theorem~\ref{THdirintcnvSetdec}}
\label{SCdicproofTHdirintcnvSetdec}

Let $S  \subseteq \mathbb{Z}\sp{n}$
be a discrete midpoint convex set and $x, y \in S$,
and consider the decomposition 
$y - x = \sum_{k=1}^{m} (\bm{1}_{A_{k}} - \bm{1}_{B_{k}})$
in (\ref{DICdecAkBksum}).
For the proof of Theorem \ref{THdirintcnvSetdec} 
it suffices to prove that
\begin{equation}\label{xJ1A1BinS}
x + \sum_{k \in J}  (\bm{1}_{A_{k}} - \bm{1}_{B_{k}}) \in S
\end{equation}
for any subset $J$ of $\{ 1,2,\ldots, m \}$.
Indeed, Theorem \ref{THdirintcnvSetdec} follows from 
(\ref{xJ1A1BinS}) for $J$ and  
$\{ 1,2,\ldots, m \} \setminus J$.

Our proof strategy is to relate the decomposition (\ref{DICdecAkBksum})
to discrete midpoint convexity 
by showing an alternative construction of 
the decomposition (\ref{DICdecAkBksum}) 
using operations of rounding-up  
$\lceil x \rceil$ and rounding-down $\lfloor x\rfloor$.
For any $x \in \mathbb{Z}\sp{n}$ we denote the positive support of $x$  
by $\suppp(x) = \{ i \mid x_{i} > 0 \}$ 
and the negative support of $x$ by $\suppm(x) = \{ i \mid x_{i} < 0 \}$. 
We are concerned with a decomposition of
an integer vector $x$ into a family $D(x)$ of vectors such that:
\begin{description}
\item[\rm (C1)] $d \in \{-1, 0, +1 \}\sp{n} \setminus \{ \veczero \}
 \qquad (\forall d \in D(x))$.

\item[\rm (C2)] $\sum\{ d \mid d \in D(x) \} = x$.

\item[\rm (C3)] $\suppp(d) \subseteq \suppp(x),\quad 
 \suppm(d) \subseteq \suppm(x) \qquad (\forall d \in D(x))$.

\item[\rm (C4)]  $\{ \suppp(d) \mid d \in D(x) \}$
and 
$\{ \suppm(d) \mid d \in D(x) \}$ 
each form a chain (nested family) with respect to set inclusion.

\item[\rm (C5)] $| D(x) | = \| x \|_{\infty}$.

\item[\rm (C6)]
The vectors in $D(x)$ form a chain 
(linearly ordered set) with respect to vector ordering.
\end{description}

We first introduce a recursive scheme 
for a decomposition of an integer vector $x$ into a family $D_{0}(x)$ of vectors,
which satisfies (C1) to (C4) above.
The family $D_{0}(x)$ is modified
to $D_{1}(x)$ to meet (C5),
and then to $D_{2}(x)$, which satisfies (C1) to (C6).
This decomposition scheme, when applied to $y-x$ for $x,y \in \ZZ\sp{n}$, yields
the decomposition (\ref{DICdecAkBksum}), that is, 
\begin{equation}\label{D2=1A1B}
 D_{2}(y-x) = \{ \bm{1}_{A_{k}} - \bm{1}_{B_{k}} \mid k=1,\ldots,m \} .
\end{equation}
Finally we show that, if $x, y \in S$, where $S$ is a discrete midpoint convex set,
then the property (\ref{xJ1A1BinS}) follows from the construction of $D_{2}(y-x)$.
Having explained our proof strategy, we now begin the technical arguments.
The proof consists of five steps.

\paragraph{Step 1:}
For any $x \in \ZZ\sp{n}$, 
we define a family (multiset) $D_{0}(x)$
of vectors  by the following recursive formula:
\begin{equation} \label{D0def}
 D_{0}(x) 
  = \left\{ \begin{array}{ll}  
  \emptyset & (x = \veczero) , \\
  \{x\} & (\|x\|_{\infty} = 1) , \\
   \{ \  \lceil x/2 \rceil, \  \lfloor x/2 \rfloor  \, \} 
    & (\|x\|_{\infty} = 2) , \\
  D_{0}(\lceil x/2 \rceil)\  \cup \  D_{0}(\lfloor x/2 \rfloor) 
    & (\|x\|_{\infty} \geq 3) .
 \end{array}\right.
\end{equation}

\begin{lemma}\label{LMtamProp1}
For any $x \in \ZZ\sp{n}$, $D_{0}(x)$ satisfies
{\rm (C1)},
{\rm (C2)},
{\rm (C3)}, and
{\rm (C4)}.
\end{lemma}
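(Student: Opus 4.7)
The plan is to prove the lemma by strong induction on $m = \| x \|_{\infty}$, following the recursive definition (\ref{D0def}). The base cases $m = 0, 1, 2$ are verified by direct inspection: for $m = 2$ one checks that $\lceil x/2 \rceil$ and $\lfloor x/2 \rfloor$ each have entries in $\{-1,0,+1\}$, are nonzero at the coordinate where $|x_{i}|=2$, sum to $x$, and have supports contained in $\suppp(x)$ and $\suppm(x)$. For $m \geq 3$ the recursion gives $D_{0}(x) = D_{0}(\lceil x/2 \rceil) \cup D_{0}(\lfloor x/2 \rfloor)$, and since $\| \lceil x/2 \rceil \|_{\infty}, \| \lfloor x/2 \rfloor \|_{\infty} \leq \lceil m/2 \rceil < m$, the inductive hypothesis applies to both halves.

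Granted the inductive hypothesis, properties (C1), (C2), (C3) transfer to $D_{0}(x)$ by routine checks: (C1) is inherited coordinatewise; (C2) uses $\lceil x/2 \rceil + \lfloor x/2 \rfloor = x$; and (C3) follows from the chains of inclusions $\suppp(d) \subseteq \suppp(\lceil x/2 \rceil) \subseteq \suppp(x)$ and $\suppp(d) \subseteq \suppp(\lfloor x/2 \rfloor) \subseteq \suppp(x)$, together with their negative analogues, using the fact that a rounded entry has the same sign as the original.

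The main obstacle is (C4), because the chain of positive supports of $D_{0}(\lceil x/2 \rceil)$ may interleave with that of $D_{0}(\lfloor x/2 \rfloor)$, and the bare inductive hypothesis does not force comparability across the two chains. To overcome this I will strengthen the inductive statement as follows: for every $d \in D_{0}(x)$, the positive support has the form $\suppp(d) = \{ i \mid x_{i} \geq s \}$ for some integer $s \geq 1$, and the negative support has the form $\suppm(d) = \{ i \mid x_{i} \leq -t \}$ for some integer $t \geq 1$. In other words, every $\suppp(d)$ is an upper level set of $x$ and every $\suppm(d)$ is a lower level set. Since upper (resp.\ lower) level sets of $x$ are totally ordered by inclusion, this strengthening immediately implies (C4).

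The strengthened statement is preserved by the recursion thanks to the identities
\[
 \{ i \mid \lceil x_{i}/2 \rceil \geq t \} = \{ i \mid x_{i} \geq 2t-1 \},
\qquad
 \{ i \mid \lfloor x_{i}/2 \rfloor \geq t \} = \{ i \mid x_{i} \geq 2t \},
\]
together with the negative analogues $\{ i \mid \lceil x_{i}/2 \rceil \leq -t \} = \{ i \mid x_{i} \leq -2t \}$ and $\{ i \mid \lfloor x_{i}/2 \rfloor \leq -t \} = \{ i \mid x_{i} \leq -(2t-1) \}$. Each of these converts a level set of $\lceil x/2 \rceil$ or $\lfloor x/2 \rfloor$ directly into a level set of $x$, and since the new thresholds $2t-1$ and $2t$ stay at least $1$ whenever $t \geq 1$, the strengthened hypothesis passes to $D_{0}(x)$, closing the induction and yielding (C4).
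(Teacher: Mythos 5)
Your proof is correct and follows essentially the same route as the paper: the paper also handles (C1)--(C3) by direct induction through the recursion and proves (C4) by observing that every $d \in D_{0}(x)$ respects the coordinate ordering of $x$ (i.e., $x_{i} \geq x_{j} \Rightarrow d_{i} \geq d_{j}$ and $x_{i} = x_{j} \Rightarrow d_{i} = d_{j}$), which is exactly the level-set invariant you make explicit. Your threshold identities $\{ i \mid \lceil x_{i}/2 \rceil \geq t \} = \{ i \mid x_{i} \geq 2t-1 \}$ and their analogues just spell out the inductive step that the paper leaves implicit.
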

\begin{proof}
(C1) and (C3) are obvious from the definition (\ref{D0def}).

(C2)
If
$\|x \|_{\infty} = 1$,
we have $D_{0}(x) = \{x\}$ 
and the claim is obviously true.
If
$\|x \|_{\infty} = 2$,
we have
$D_{0}(x) = \left\{\left\lceil x/2  \right\rceil, \left\lfloor x/2  \right\rfloor \right\}$,
and the claim is true since
$\left\lceil x/2  \right\rceil + \left\lfloor x/2  \right\rfloor = x$.
We prove the claim for 
$\| x \|_{\infty} \geq 3$
by induction on $\| x \|_{\infty}$.
If $\| x \|_{\infty} \geq 3$,
we have
$\left\| \, \left\lceil x/2  \right\rceil \, \right\|_{\infty} < \| x \|_{\infty}$
and 
$\left\| \, \left\lfloor x/2  \right\rfloor \, \right\|_{\infty} < \| x \|_{\infty}$.
Hence, the induction hypothesis implies
\[
\sum\left\{ d \mid d \in D_{0}\left(\left\lceil x/2  \right\rceil\right) \right\} 
= \left\lceil x/2  \right\rceil, 
\quad
\sum\left\{ d \mid d \in D_{0}\left(\left\lfloor x/2  \right\rfloor\right) 
\right\} = \left\lfloor x/2  \right\rfloor.
\]
Therefore, 
$\sum\left\{ d \mid d \in D_{0}(x) \right\} 
= \sum\left\{ d \mid d \in D_{0}\left(\left\lceil x/2  \right\rceil\right) \right\} 
+ \sum\left\{ d \mid d \in D_{0}\left(\left\lfloor x/2  \right\rfloor\right) 
\right\} 
= \left\lceil x/2  \right\rceil + \left\lfloor x/2  \right\rfloor
= x$.

(C4)
The definition (\ref{D0def}) implies:
\begin{align*}
x_{i} \geq x_{j} \Rightarrow d_{i} \geq d_{j} \qquad &
 (\forall i,j \in \{1,2,\ldots,n\},\; \forall d \in D_{0}(x)) ,
\\
x_{i} = x_{j} \Rightarrow d_{i} = d_{j} \qquad &
 (\forall i,j \in \{1,2,\ldots,n\},\; \forall d \in D_{0}(x)).
\end{align*}
Therefore, for any $d,d' \in D_{0}(x)$,
we have
$\suppp(d) \subseteq \suppp(d')$ 
or 
$\suppp(d) \supseteq \suppp(d')$,
which means that 
$\{ \suppp(d) \mid d \in D_{0}(x) \}$ forms a chain.
Similarly for 
$\{ \suppm(d) \mid d \in D_{0}(x) \}$.
\end{proof}

\begin{example} \rm \label{EXtamExample1}
For $x = (5,3,-3,-5)$, $D_{0}(x)$ is given as follows:
\begin{align*}
 &D_{0}((5,3,-3,-5))
\\ 
 &= D_{0}((3,2,-1,-2)) \cup D_{0}((2,1,-2,-3)) 
\\
 &= D_{0}((2,1,0,-1)) \cup D_{0}((1,1,-1,-1)) 
 \ \cup \   D_{0}((1,1,-1,-1)) \cup D_{0}((1,0,-1,-2)) 
\\
 &= D_{0}((1,1,0,0)) \cup D_{0}((1,0,0,-1)) \ \cup \  D_{0}((1,1,-1,-1)) 
   \ \cup \  D_{0}((1,1,-1,-1)) 
\\
  &\quad \ \cup \  D_{0}((1,0,0,-1)) \cup D_{0}((0,0,-1,-1)) 
\\
 &= \{ \ (1,1,0,0), (1,0,0,-1), (1,1,-1,-1), (1,1,-1,-1), (1,0,0,-1), (0,0,-1,-1) \ \}.
\end{align*}

\begin{figure}\begin{center}
\includegraphics[height=50mm]{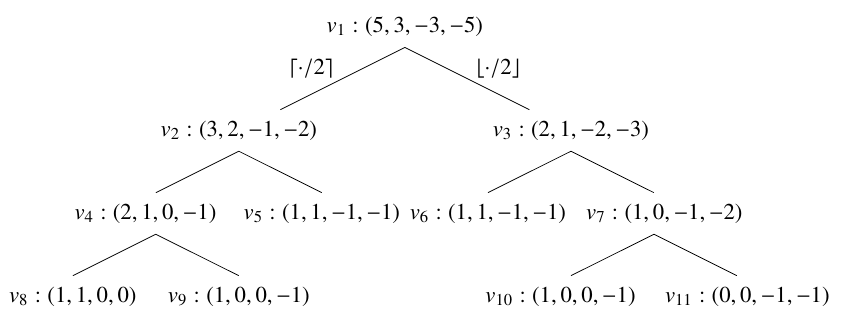}
\caption{Tree of $D_{0}(x)$ for $x = (5,3,-3,-5)$ in Example \ref{EXtamExample1}.}
\label{FGdtree}
\end{center}\end{figure}

The recursive process of the definition of $D_{0}(x)$ 
can be represented by the binary tree shown in Fig.~\ref{FGdtree}.
This tree has eleven vertices 
$\{ v_{1}, \ldots, v_{11} \}$, and each vertex is associated with 
a vector that appears in the recursive definition of $D_{0}(x)$ above.

Concerning (C4) we have
$\{ \suppp(d) \mid d \in D_{0}(x) \} = \{\{1,2\},\{1\}, \emptyset \}$
and  
$\{ \suppm(d) \mid d \in D_{0}(x) \} = \{\{3,4\},\{4\}, \emptyset \}$,
which are chains indeed.
$D_{0}(x)$ does not satisfy (C5),
since $| D_{0}(x) | =6$ and $\| x \|_{\infty} = 5$.
$D_{0}(x)$ does not satisfy (C6) either,
since
neither 
$(1,0,0,-1) \leq (1,1,-1,-1)$
nor
$(1,0,0,-1) \geq (1,1,-1,-1)$
is true.
\finbox
\end{example}

Just as in the above example, 
we can associate a binary tree with the recursive definition of $D_{0}(x)$.
Let us denote this tree by $T_{0}(x)$
and the vector associated with a vertex $v$ of this tree
by $\varphi(v)$.
Then 
$D_{0}(x)$ is given as 
$D_{0}(x) = \{ \varphi(v) \mid \mbox{$v$ is a leaf of $T_{0}(x)$} \}$.
We assume that the left child of a vertex $v$ corresponds to
$\left\lceil \varphi(v)/2  \right\rceil$
and the right child to 
$\left\lfloor \varphi(v)/2  \right\rfloor$;
see Fig.~\ref{FGdtree}.
For a vertex $v$ in $T_{0}(x)$, 
the tree $T_{0}(\varphi(v))$ can be regarded as a subtree of $T_{0}(x)$.

\paragraph{Step 2:}
We next modify the definition of $D_{0}(x)$
so as to meet (C5) in addition to (C1)--(C4).
Note that 
$|D_{0}(x)| \geq \|x\|_{\infty}$
by (C1) and (C2).
To state two lemmas we introduce notations 
\[
\| x \|_\infty\sp{+} = \max(0, x_{1}, x_{2}, \ldots, x_{n} ),
\qquad
\| x \|_\infty\sp{-} = \max(0, -x_{1}, -x_{2}, \ldots, -x_{n} ).
\]
We have
$\| x \|_{\infty} = \max (\| x \|_{\infty}\sp{+}, \| x \|_{\infty}\sp{-})$.
Let us say that an integer vector $x$ is {\em critical} if 
$\| x \|_{\infty} \geq 3$, $\| x \|_{\infty}$ is odd, and 
$\| x \|_{\infty}\sp{+} =  \| x \|_{\infty}\sp{-}$.
A vertex $v$ of the tree $T_{0}(x)$ is also called 
{\em critical} if the associated vector $\varphi(v)$ 
is critical.
We modify the definition (\ref{D0def}) of $D_{0}(x)$ for critical $x$.

\begin{lemma}\label{LMtamProp3}
For $x \in \ZZ\sp{n}$
with $\| x \|_{\infty} \geq 2$,  we have
\[
\left\| \, \left\lceil x/2  \right\rceil \, \right\|_{\infty} +
\left\| \, \left\lfloor x/2  \right\rfloor \, \right\|_{\infty} 
 = \left\{ \begin{array}{ll}
  \| x \|_{\infty} +1  & (\mbox{$x$: \rm critical}) , \\
  \| x \|_{\infty}   & (\mbox{$x$: \rm non-critical}) . \\
 \end{array}\right.
\]
\end{lemma}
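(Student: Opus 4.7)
The plan is to compute $\|\lceil x/2 \rceil\|_\infty$ and $\|\lfloor x/2 \rfloor\|_\infty$ explicitly in terms of $m^+ := \|x\|_\infty^+$ and $m^- := \|x\|_\infty^-$, and then perform a short case analysis on the parity of $m := \|x\|_\infty = \max(m^+,m^-)$ and on the relation between $m^+$ and $m^-$. Using the identities $\lceil -t\rceil = -\lfloor t\rfloor$ and $\lfloor -t\rfloor = -\lceil t\rceil$, I observe that for each coordinate $i$, $|\lceil x_i/2\rceil|$ equals $\lceil x_i/2\rceil$ when $x_i\geq 0$ and $\lfloor |x_i|/2\rfloor$ when $x_i<0$; symmetrically $|\lfloor x_i/2\rfloor|$ equals $\lfloor x_i/2\rfloor$ or $\lceil |x_i|/2\rceil$. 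Taking maxima over $i$ yields
\[
 \bigl\|\lceil x/2 \rceil\bigr\|_\infty = \max\bigl(\lceil m^+/2\rceil,\ \lfloor m^-/2\rfloor\bigr),
 \qquad
 \bigl\|\lfloor x/2 \rfloor\bigr\|_\infty = \max\bigl(\lfloor m^+/2\rfloor,\ \lceil m^-/2\rceil\bigr).
\]

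With these formulas the verification reduces to arithmetic on $m^+,m^-$. If $m$ is even, I may assume without loss of generality $m=m^+$; then $\lceil m^+/2\rceil = \lfloor m^+/2\rfloor = m/2$, and since $m^-\leq m$ both $\lceil m^-/2\rceil,\lfloor m^-/2\rfloor\leq m/2$, so each maximum equals $m/2$ and their sum is $m$. If $m$ is odd with $m^+\neq m^-$, say $m^+>m^-$, then $m^-\leq m-1$ gives $\lceil m^-/2\rceil,\lfloor m^-/2\rfloor \leq (m-1)/2$, so the two maxima are $(m+1)/2$ and $(m-1)/2$, again summing to $m$. The only remaining case is $m$ odd with $m^+=m^-=m$, which forces $m\geq 3$ (since $m\geq 2$ and $m$ is odd); this is precisely the critical case, and here both maxima equal $(m+1)/2$, giving the sum $m+1$.

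The main obstacle, such as it is, lies in correctly translating $|\lceil x_i/2\rceil|$ and $|\lfloor x_i/2\rfloor|$ into floor/ceiling operations on $|x_i|$ when $x_i<0$; once the two displayed formulas are established, the remainder is routine arithmetic bookkeeping that cleanly isolates the critical configuration as the unique situation in which both the ceiling-side and floor-side contributions are rounded up.
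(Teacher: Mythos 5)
Your proof is correct and follows essentially the same route as the paper's: a direct case analysis on the parity of $\|x\|_{\infty}$ and on whether $\|x\|_{\infty}^{+}$ and $\|x\|_{\infty}^{-}$ coincide, isolating the critical case as the only one where both rounded norms equal $(\|x\|_{\infty}+1)/2$. The only difference is that you make explicit the intermediate formulas $\|\lceil x/2\rceil\|_\infty=\max(\lceil m^+/2\rceil,\lfloor m^-/2\rfloor)$ and $\|\lfloor x/2\rfloor\|_\infty=\max(\lfloor m^+/2\rfloor,\lceil m^-/2\rceil)$, which the paper leaves implicit.
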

\begin{proof}
Let $\ell = \| x \|_{\infty}$.
If $x$ is critical, we have
$\left\| \, \left\lceil x/2 \right\rceil \, \right\|_{\infty} = 
\left\| \, \left\lfloor x/2 \right\rfloor \, \right\|_{\infty} 
= (\ell+1)/2$
and hence the claim holds.
Suppose that $x$ is not critical.
If $\ell$ is even, we have
$\left\| \, \left\lceil x/2 \right\rceil \, \right\|_{\infty} = 
\left\| \, \left\lfloor x/2 \right\rfloor \, \right\|_{\infty} 
=  \ell /2$
and hence the claim holds.
If $\ell$ is odd and
$\| x \|_{\infty}\sp{+} >  \| x \|_{\infty}\sp{-}$,
 we have
$\left\| \, \left\lceil x/2 \right\rceil \, \right\|_{\infty} = (\ell + 1)/2$
and
$\left\| \, \left\lfloor x/2 \right\rfloor \, \right\|_{\infty} = (\ell - 1)/2$,
and hence the claim holds.
If $\ell$ is odd and
$\| x \|_{\infty}\sp{+} <  \| x \|_{\infty}\sp{-}$,
 we have
$\left\| \, \left\lceil x/2 \right\rceil \, \right\|_{\infty} =  (\ell - 1)/2$
and
$\left\| \, \left\lfloor x/2 \right\rfloor \, \right\|_{\infty} = (\ell+1)/2$,
and hence the claim holds.
\end{proof}

\begin{lemma}\label{LMtamProp4}
Let $x \in \ZZ\sp{n}$.

\noindent
{\rm (1)}
If $\| x \|_{\infty}\sp{+} >  \| x \|_{\infty}\sp{-}$
and $\|x\|_{\infty} \geq 2$, then 
$y = \lceil x/2 \rceil$ satisfies
$\| y \|_{\infty}\sp{+} > \| y \|_{\infty}\sp{-}$.

\noindent
{\rm (2)}
If $\| x \|_{\infty}\sp{+} <  \| x \|_{\infty}\sp{-}$
and $\|x\|_{\infty} \geq 2$, then 
$z = \lfloor x/2 \rfloor$ satisfies
$\| z \|_{\infty}\sp{+} < \| z \|_{\infty}\sp{-}$.
\end{lemma}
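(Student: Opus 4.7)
My plan is to reduce both parts of the lemma to a single one-line arithmetic inequality involving ceilings and floors, after first collapsing (2) onto (1) by a symmetry observation. Specifically, I would begin by noting that (1) and (2) are equivalent under the substitution $x \mapsto -x$, since $\|-x\|_\infty^{+} = \|x\|_\infty^{-}$, $\|-x\|_\infty^{-} = \|x\|_\infty^{+}$, and $\lfloor -x/2 \rfloor = -\lceil x/2 \rceil$. So it suffices to prove (1).

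Set $p = \|x\|_\infty^{+}$ and $q = \|x\|_\infty^{-}$, so that the hypothesis reads $p > q$ and $p \geq 2$. For $y = \lceil x/2 \rceil$, the monotonicity of $\lceil \cdot \rceil$ gives $\max_i \lceil x_i/2 \rceil = \lceil p/2 \rceil$, which is $\geq 1$ since $p \geq 2$, so $\|y\|_\infty^{+} = \lceil p/2 \rceil$. For the negative part, I would use the identity $-\lceil t/2 \rceil = \lfloor -t/2 \rfloor$ to rewrite $\max_i(-y_i) = \max_i \lfloor -x_i/2 \rfloor = \lfloor q/2 \rfloor$, whence $\|y\|_\infty^{-} = \lfloor q/2 \rfloor$ (the outer $\max$ with $0$ in the definition of $\|\cdot\|_\infty^{\pm}$ is immaterial here).

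The claim then reduces to $\lceil p/2 \rceil > \lfloor q/2 \rfloor$ under $p \geq q+1$. This is immediate by parity: writing $q = 2k$ or $q = 2k+1$, one has $\lfloor q/2 \rfloor = k$ in either case, while $\lceil p/2 \rceil \geq \lceil (q+1)/2 \rceil = k+1$.

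The only minor subtlety I would handle explicitly is the edge case $q = 0$ (all coordinates of $x$ nonnegative), where $\|y\|_\infty^{-}$ could in principle be $0$ rather than given by the formula above; but then the desired inequality becomes $\lceil p/2 \rceil \geq 1 > 0$, which is trivial from $p \geq 2$. Beyond this bookkeeping, there is no conceptual obstacle: the lemma is essentially a one-step calculation, and the symmetry step in the first paragraph is what avoids duplicating the argument for (2).
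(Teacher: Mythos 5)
Your proof is correct and takes essentially the same route as the paper's: both compute $\|y\|_{\infty}^{+}=\lceil p/2\rceil$, bound $\|y\|_{\infty}^{-}\leq\lfloor q/2\rfloor$, and finish with a one-line parity check (the paper splits on the parity of $p=\|x\|_{\infty}^{+}$ where you split on that of $q$, and it dismisses part (2) with ``similarly'' where you make the sign-flip symmetry explicit). No gaps.
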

\begin{proof}
(1) 
If $\| x \|_{\infty}\sp{+} =2k$ (even), then
$\| y \|_{\infty}\sp{+} =  k$ and $\| y \|_{\infty}\sp{-} \leq k -1$.
If $\| x \|_{\infty}\sp{+} =2k+1$ (odd), then
$\| y \|_{\infty}\sp{+} =  k+1$ and $\| y \|_{\infty}\sp{-} \leq k$.
(2) This can be shown similarly.
\end{proof}

Let $v$ be a critical vertex of $T_{0}(x)$ and $y = \varphi(v)$.
Let  $v\sp{\oplus}$ and $v\sp{\ominus}$, respectively, 
be the leftmost and rightmost leaves of the subtree below $v$.
Define 
$d\/\sp{\oplus}(v) = \varphi(v\sp{\oplus})$
and 
$d\/\sp{\ominus}(v) = \varphi(v\sp{\ominus})$.
For each vertex $w$ on the path 
between the left child of $v$ and the leaf $v\sp{\oplus}$
(inclusive), we have
$\| \varphi(w) \|_{\infty}\sp{+} > \| \varphi(w) \|_{\infty}\sp{-}$
by Lemma \ref{LMtamProp4} (1).
In particular,
$\| d\sp{\oplus}(v) \|_{\infty}\sp{+} > \| d\sp{\oplus}(v) \|_{\infty}\sp{-}$,
which implies
$d\sp{\oplus}(v) \in \{ 0, +1\}\sp{n}$
since 
$d\sp{\oplus}(v) \in \{-1, 0, +1\}\sp{n}$.
Similarly, we have
$d\sp{\ominus}(v) \in \{ 0, -1\}\sp{n}$
by Lemma \ref{LMtamProp4} (2).

\begin{example}\rm \label{EXtamEXdoplus}
The vector $x=(5,3,-3,-5)$ 
in Example~\ref{EXtamExample1}
is critical, since
$\| x \|_{\infty}=5 \geq 3$ is odd 
and $\| x \|_{\infty}\sp{+} =  \| x \|_{\infty}\sp{-}=5$.
Accordingly, in Fig.~\ref{FGdtree}, the root vertex $v_{1}$ is critical.
We have 
$v_{1}\sp{\oplus}=v_{8}$,
$v_{1}\sp{\ominus}=v_{11}$,
$d\/\sp{\oplus}(v_{1})=(1,1,0,0)$, and
$d\/\sp{\ominus}(v_{1})=(0,0,-1,-1)$.
\finbox
\end{example}

\begin{lemma}\label{LMtamD1new}
For distinct critical vertices $u$ and $v$ in $T_0(x)$, the leaves
$u\sp{\oplus}$, $u\sp{\ominus}$, $v\sp{\oplus}$, and $v\sp{\ominus}$
are all distinct.
\end{lemma}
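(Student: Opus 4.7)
The plan is to split into two cases based on the tree-relationship between $u$ and $v$ in $T_0(x)$. First note that since $u$ and $v$ are critical, $\|\varphi(u)\|_\infty, \|\varphi(v)\|_\infty \geq 3$, so the left and right subtrees below each of $u$ and $v$ are nonempty and vertex-disjoint; in particular $u^\oplus \neq u^\ominus$ and $v^\oplus \neq v^\ominus$. It remains to rule out coincidences between a $u$-leaf and a $v$-leaf.

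Case 1: Neither of $u$, $v$ is an ancestor of the other. Then the subtrees of $T_0(x)$ rooted at $u$ and at $v$ are vertex-disjoint (they lie in different branches below their nearest common ancestor), so $\{u^\oplus, u^\ominus\} \cap \{v^\oplus, v^\ominus\} = \emptyset$ automatically.

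Case 2: One vertex is a proper ancestor of the other; by symmetry assume $u$ is an ancestor of $v$, and by the left-right symmetry between $\oplus$ and $\ominus$ (swap Lemma \ref{LMtamProp4}(1) for (2)) assume further that $v$ lies in the left subtree of $u$. Then $u^\ominus$ lies in the right subtree of $u$, so it differs from $v^\oplus$ and $v^\ominus$ which both lie in the left subtree. The remaining and crucial point is that $u^\oplus \notin \{v^\oplus, v^\ominus\}$; equivalently, that $u^\oplus$ is not a leaf of the subtree rooted at $v$; equivalently, that $v$ does not lie on the all-left path from $u$ down to $u^\oplus$.

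The main step is this last assertion, and it uses Lemma \ref{LMtamProp4}(1) together with criticality. Let $u'$ be the left child of $u$. Since $u$ is critical with $\|\varphi(u)\|_\infty = 2k+1$ odd and $\|\varphi(u)\|_\infty^+ = \|\varphi(u)\|_\infty^- = 2k+1$, a direct component-wise computation shows $\|\varphi(u')\|_\infty^+ = k+1 > k \geq \|\varphi(u')\|_\infty^-$. Applying Lemma \ref{LMtamProp4}(1) repeatedly along the all-left descending path starting at $u'$, every vertex $w$ on this path satisfies $\|\varphi(w)\|_\infty^+ > \|\varphi(w)\|_\infty^-$, which contradicts the critical condition $\|\varphi(w)\|_\infty^+ = \|\varphi(w)\|_\infty^-$. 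Hence $v$, being critical, cannot appear on this path, which completes Case 2. The main obstacle is precisely this propagation observation; once it is in hand, the rest of the argument is a routine combinatorial inspection of disjoint subtrees of the binary tree $T_0(x)$.
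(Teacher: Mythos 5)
Your proof is correct and follows essentially the same route as the paper: the key point in both is that, by Lemma \ref{LMtamProp4}, no critical vertex can occur on the all-left path from the left child of a critical vertex down to its $\oplus$-leaf (and symmetrically for the all-right path and the $\ominus$-leaf), which rules out the only possible coincidences. You merely spell out the ancestor/non-ancestor case analysis that the paper leaves implicit.
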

\begin{proof}
For a critical vertex $v$, there is no critical vertex on the path 
between the left child of $v$ and the leaf $v\sp{\oplus}$
by Lemma \ref{LMtamProp4} (1),
and there is no critical vertex on the path 
between the right child of $v$ and the leaf $v\sp{\ominus}$
by Lemma \ref{LMtamProp4} (2).
The claim follows from this fact.
\end{proof}

With the above preparations we are now able to define $D_{1}(x)$ as follows%
\footnote{
Recall that $D_{0}(x)$ is a multiset. 
The first term on the right-hand side of
 (\ref{D1xdef}) means that we decrease the multiplicities
of $d^{\oplus}(v)$ and $d^{\ominus}(v)$ for each critical vertex $v$.
}:  
\begin{align}
 D_{1}(x) =&  \big( D_0(x) \setminus 
 \{ d\/\sp{\oplus}(v), d\/\sp{\ominus}(v) \mid \mbox{$v \in T_0(x)$ is critical}  \} 
 \big)
 \nonumber\\
 & \cup
 \{ d\/\sp{\oplus}(v)+d\/\sp{\ominus}(v) \mid \mbox{$v \in T_0(x)$ is critical}  \}.
 \label{D1xdef}
\end{align}

\begin{example}\rm \label{EXtamExample2}
In Fig.~\ref{FGdtree} for $x=(5,3,-3,-5)$,
$v_{1}$ is the only critical vector with
$d\/\sp{\oplus}(v_{1})=(1,1,0,0)$ and
$d\/\sp{\ominus}(v_{1})=(0,0,-1,-1)$.
$D_{1}(x)$ is obtained from $D_{0}(x)$ in Example~\ref{EXtamExample1}
by deleting $(1,1,0,0)$ and $(0,0,-1,-1)$,
and adding their sum $(1,1,-1,-1)$.  Hence
$D_{1}((5,3,-3,-5)) 
\allowbreak 
= 
\allowbreak 
\{(1,0,0,-1), 
\allowbreak 
(1,1,-1,-1), 
\allowbreak 
(1,1,-1,-1), (1,0,0,-1), (1,1,-1,-1)\}$. 
\finbox
\end{example}

An alternative definition of $D_{1}(x)$,
which modifies the recursive definition of $D_{0}(x)$,
is also possible.
For a critical vector $y$ appearing in the recursive definition 
of $D_{0}(x)$,
we define 
$d\/\sp{\oplus}(y) = \varphi(v\sp{\oplus})$
and 
$d\/\sp{\ominus}(y) = \varphi(v\sp{\ominus})$
by choosing any vertex $v$ such that $y = \varphi(v)$.
This is well-defined since,
for two critical vertices $u$ and $v$ with 
$\varphi(u)= \varphi(v)$,
we have
$\varphi(u\sp{\oplus}) = \varphi(v\sp{\oplus})$
and
$\varphi(u\sp{\ominus}) = \varphi(v\sp{\ominus})$.
The recursive definition of $D_{1}(x)$ reads as follows:
\begin{equation} \label{D1def}
 D_{1}(x) = \left\{ \begin{array}{ll}
  \emptyset & (x = \veczero) , 
\\
  \{x\} & (\|x\|_{\infty} = 1), 
\\
   \{ \  \lceil x/2 \rceil, \  \lfloor x/2 \rfloor  \, \} 
    & (\|x\|_{\infty} = 2) ,
\\
D_{1}(\lceil x/2 \rceil) \cup D_{1}(\lfloor x/2 \rfloor) 
    & (\|x\|_{\infty} \geq 3, \  \mbox{$x$: non-critical}) ,
\\
 \multicolumn{2}{l}{
 (D_{1}(\lceil x/2 \rceil) \setminus \{d\/\sp{\oplus}(x)\}) 
  \cup
  (D_{1}(\lfloor x/2 \rfloor) \setminus \{d\/\sp{\ominus}(x)\}) 

  \cup \{d\/\sp{\oplus}(x) + d\/\sp{\ominus}(x) \} 
  }
\\
   & (\|x\|_{\infty} \geq 3, \  \mbox{$x$: critical}). 
 \end{array}\right.
\end{equation}
In the last case, where
$\|x\|_{\infty} \geq 3$ and $x$ is critical,
the vectors $d\/\sp{\oplus}(x)$ and
$d\/\sp{\ominus}(x)$ are deleted from 
$D_{1}(\lceil x/2 \rceil)$ and 
$D_{1}(\lfloor x/2 \rfloor)$,
respectively, and 
their sum
$d\/\sp{\oplus}(x) + d\/\sp{\ominus}(x)
\in  \{-1,0, +1 \}\sp{n}$
is added instead.

\begin{lemma}\label{LMtamProp5}
$D_{1}(x)$ satisfies {\rm (C5)} in addition to {\rm (C1)--(C4)}.
\end{lemma}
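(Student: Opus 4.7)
The plan is to prove all five properties jointly by induction on $\|x\|_\infty$, following the recursive definition (\ref{D1def}). The base cases $\|x\|_\infty \leq 2$ are immediate since $D_1(x)=D_0(x)$ in those cases: (C1)--(C4) are inherited from Lemma~\ref{LMtamProp1}, and (C5) is a direct count.

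For the inductive step, fix $x$ with $\|x\|_\infty\geq 3$, write $x^+=\lceil x/2\rceil$ and $x^-=\lfloor x/2\rfloor$, and assume (C1)--(C5) hold for $x^+$ and $x^-$. If $x$ is non-critical, then $D_1(x)=D_1(x^+)\cup D_1(x^-)$ as multisets, and the non-critical case of Lemma~\ref{LMtamProp3} gives $|D_1(x)|=\|x^+\|_\infty+\|x^-\|_\infty=\|x\|_\infty$; the remaining properties transfer term-by-term. If $x$ is critical, I first need to check that $d^{\oplus}(x)$ and $d^{\ominus}(x)$ are actually present in $D_1(x^+)$ and $D_1(x^-)$, so that the deletions in (\ref{D1def}) make sense. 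By construction $v^{\oplus}$ lies in the left subtree, and iterating Lemma~\ref{LMtamProp4}(1) from the left child of $v$ down to $v^{\oplus}$ shows that every vertex $w$ on this path satisfies $\|\varphi(w)\|_\infty^+>\|\varphi(w)\|_\infty^-$ and is therefore non-critical; consequently no critical-merge operation used in building $D_1(x^+)$ touches the leaf value $\varphi(v^{\oplus})=d^{\oplus}(x)$, so it survives. The symmetric argument via Lemma~\ref{LMtamProp4}(2) handles $d^{\ominus}(x)\in D_1(x^-)$. Combining with the critical case of Lemma~\ref{LMtamProp3}, $|D_1(x)|=|D_1(x^+)|+|D_1(x^-)|-2+1=\|x^+\|_\infty+\|x^-\|_\infty-1=\|x\|_\infty$, which is (C5).

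To propagate (C1)--(C4) through the critical step, the same non-criticality chain forces $d^{\oplus}(x)\in\{0,+1\}^n$ and $d^{\ominus}(x)\in\{0,-1\}^n$, so their sum lies in $\{-1,0,+1\}^n$; this sum is nonzero, for otherwise each coordinate on which both vectors are nonzero would simultaneously belong to $\suppp(x)$ and $\suppm(x)$, violating the inductive sign structure. This gives (C1). Property (C2) is preserved because we are replacing two vectors by their sum. For (C3), the sign separation yields $\suppp(d^{\oplus}(x)+d^{\ominus}(x))=\suppp(d^{\oplus}(x))\subseteq\suppp(x)$ and $\suppm(d^{\oplus}(x)+d^{\ominus}(x))=\suppm(d^{\ominus}(x))\subseteq\suppm(x)$. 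For (C4), note that $\suppp(d^{\ominus}(x))=\emptyset=\suppm(d^{\oplus}(x))$, so passing from the inductively built collections to $D_1(x)$ merely removes one empty set from each of the multisets $\{\suppp(d):d\in D_1(x)\}$ and $\{\suppm(d):d\in D_1(x)\}$ while leaving the other members intact; the nested-chain structure survives.

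The main obstacle is the bookkeeping at critical vertices: one must certify that $d^{\oplus}(x)$ and $d^{\ominus}(x)$ really survive inside $D_1(x^+)$ and $D_1(x^-)$ after all lower-level critical merges, and this relies essentially on Lemma~\ref{LMtamProp4} to rule out further critical vertices along the descent to the extremal leaves.
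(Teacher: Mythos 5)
Your proof is correct and follows essentially the same route as the paper: induction on $\|x\|_{\infty}$ through the recursive definition (\ref{D1def}), with Lemma~\ref{LMtamProp3} supplying the count in the critical and non-critical cases, and the sign separation $d^{\oplus}(x)\in\{0,+1\}^n$, $d^{\ominus}(x)\in\{0,-1\}^n$ (via Lemma~\ref{LMtamProp4}) giving (C1)--(C4). You merely spell out details the paper compresses into ``it is easy to see,'' including the survival of $d^{\oplus}(x)$ and $d^{\ominus}(x)$ under lower-level merges, which the paper handles via Lemma~\ref{LMtamD1new} by the same no-critical-vertex-on-the-extremal-path argument.
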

\begin{proof}
First note that 
$d\/\sp{\oplus}(x) + d\/\sp{\ominus}(x) 
\in \{-1, 0, +1 \}\sp{n} \setminus \{ \veczero \}$.
Then it is easy to see that 
 (C1)--(C4) for $D_{0}(x)$ imply (C1)--(C4) for $D_{1}(x)$.

The condition (C5) can be shown as follows.
If $\| x \|_{\infty} \leq 2$, (C5) is obviously true by the definition.
Let $\ell = \| x \|_{\infty} \geq 3$.
To prove (C5) by induction, assume that 
(C5) is satisfied by $D_{1}(z)$
for any $z \in \ZZ\sp{n}$
with $\| z \|_{\infty} < \| x \|_{\infty}$.
Since 
$\|\, \lceil x/2 \rceil \,\|_{\infty} < \| x \|_{\infty}$ 
and
$\|\, \lfloor x/2 \rfloor \,\|_{\infty} < \| x \|_{\infty}$
by $\| x \|_{\infty} \geq 3$,
the induction hypothesis shows
\[
| D_{1}(\lceil x/2 \rceil) | = 
\|\, \lceil x/2 \rceil \,\|_{\infty}, \quad
| D_{1}(\lfloor x/2 \rfloor) | = 
\|\, \lfloor x/2\rfloor \,\|_{\infty} .
\]
If $x$ is not critical, we have
$\|\, \lceil x/2 \rceil \,\|_{\infty} + 
\|\, \lfloor x/2 \rfloor \,\|_{\infty} = \| x \|_{\infty}$
by Lemma~\ref{LMtamProp3}, and 
\[
 |D_{1}(x)| 
 = |D_{1}(\lceil x/2 \rceil)| + |D_{1}(\lfloor x/2 \rfloor)|
 = \|\, \lceil x/2 \rceil \,\|_{\infty} + 
   \|\, \lfloor x/2 \rfloor \,\|_{\infty}  = \| x \|_{\infty}.
\]
If $x$ is critical, we have
$\|\, \lceil x/2 \rceil \,\|_{\infty} + 
\|\, \lfloor x/2 \rfloor \,\|_{\infty} = \| x \|_{\infty} + 1$
by Lemma~\ref{LMtamProp3}, and 
\begin{align*}
 |D_{1}(x)| 
 & = (|D_{1}(\lceil x/2 \rceil)|-1)+ (|D_{1}(\lfloor x/2 \rfloor)|-1) + 1 
\\ & = \|\, \lceil x/2 \rceil \,\|_{\infty} + 
   \|\, \lfloor x/2 \rfloor \,\|_{\infty} - 1 = \| x \|_{\infty}.
\end{align*}
In either case, $D_{1}(x)$ satisfies (C5).
\end{proof}

\begin{lemma} \label{LMdecom-norm2}
We have
$\| d\sp{\circ} + d\sp{\bullet}  \|_{\infty} = 2$
for any two vectors $d\sp{\circ},  d\sp{\bullet} \in D_{1}(x)$.
\end{lemma}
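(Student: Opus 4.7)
The upper bound $\|d^{\circ}+d^{\bullet}\|_{\infty}\le 2$ is immediate from (C1), so the real work lies in producing a coordinate $i$ with $d^{\circ}_{i}=d^{\bullet}_{i}\in\{\pm 1\}$. My first step is to apply the chain property (C4): among any two vectors of $D_{1}(x)$, either $\suppp(d^{\circ})\subseteq\suppp(d^{\bullet})$ or the reverse holds, and similarly for $\suppm$. Whenever both positive supports are nonempty the smaller one is contained in the larger, giving a coordinate $i$ where $d^{\circ}_{i}=d^{\bullet}_{i}=+1$, so $(d^{\circ}+d^{\bullet})_{i}=2$; a symmetric argument settles the case when both negative supports are nonempty. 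These observations reduce the lemma to excluding the ``cross case'' in which, say, $d^{\circ}\in\{-1,0\}^{n}\setminus\{\veczero\}$ is purely negative and $d^{\bullet}\in\{0,+1\}^{n}\setminus\{\veczero\}$ is purely positive, for such a pair would have $\|d^{\circ}+d^{\bullet}\|_{\infty}\le 1$.

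To exclude the cross case I would prove by strong induction on $\|x\|_{\infty}$ the structural statement that $D_{1}(x)$ never simultaneously contains a purely positive and a purely negative vector. The inductive hypothesis should be strengthened according to the dominance type of $x$: in the positive-dominant case $\|x\|_{\infty}^{+}>\|x\|_{\infty}^{-}$ I would show ``$D_{1}(x)$ has no purely negative vector'', in the negative-dominant case the symmetric statement, and in the balanced case both simultaneously. The base cases $\|x\|_{\infty}\le 2$ are handled by direct inspection of $\{\lceil x/2\rceil,\lfloor x/2\rfloor\}$, and in the non-critical recursive step $D_{1}(x)=D_{1}(\lceil x/2\rceil)\cup D_{1}(\lfloor x/2\rfloor)$ one checks via Lemma~\ref{LMtamProp4} (together with an elementary rounding computation) that both halves inherit a dominance type compatible with the claim for $x$, so that the inductive hypothesis propagates through the union.

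The main obstacle I expect is the critical step, in which $x$ is balanced with $\|x\|_{\infty}\ge 3$ odd. Here Lemma~\ref{LMtamProp4} forces $\lceil x/2\rceil$ to be positive-dominant and $\lfloor x/2\rfloor$ to be negative-dominant, so the induction only rules out purely negatives in $D_{1}(\lceil x/2\rceil)$ and purely positives in $D_{1}(\lfloor x/2\rfloor)$; their naive union can therefore contain both a purely positive and a purely negative vector. The balanced invariant for $x$ survives only because (\ref{D1def}) removes exactly one copy of $d^{\oplus}(x)$ and one of $d^{\ominus}(x)$ and replaces them by the mixed sum $d^{\oplus}(x)+d^{\ominus}(x)$, so closing the induction requires showing that these removals really do eliminate every purely positive multiset element of $D_{1}(\lceil x/2\rceil)$ and every purely negative multiset element of $D_{1}(\lfloor x/2\rfloor)$. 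I expect this to force a refined inductive hypothesis counting multiplicities, of the form: the number of purely positive elements of $D_{1}(y)$ (with multiplicity) equals $\max(0,\|y\|_{\infty}^{+}-\|y\|_{\infty}^{-})$. For a half $y=\lceil x/2\rceil$ of a critical $x$ this difference equals exactly $1$, precisely matching the single copy of $d^{\oplus}(x)$ that gets removed and leaving no purely positive element behind; symmetrically on the other side, so (C4) then finishes off the cross case and hence the lemma.
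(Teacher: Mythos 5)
Your argument is correct, but it takes a genuinely different and much heavier route than the paper. The paper's entire proof is a counting argument using only (C1), (C2), and (C5): if $\| d\sp{\circ} + d\sp{\bullet} \|_{\infty} \leq 1$, then replacing the pair $\{d\sp{\circ}, d\sp{\bullet}\}$ by the single vector $d\sp{\circ} + d\sp{\bullet}$ would express $x$ as a sum of $|D_{1}(x)| - 1 = \| x \|_{\infty} - 1$ vectors each of $\ell_{\infty}$-norm at most $1$, contradicting the triangle inequality $\| x \|_{\infty} \leq \sum_{d} \| d \|_{\infty}$. You instead use (C4) to reduce to excluding the ``cross case'' and then run a structural induction on the recursive definition (\ref{D1def}), with the refined multiplicity invariant that $D_{1}(y)$ contains exactly $\max(0, \| y \|_{\infty}\sp{+} - \| y \|_{\infty}\sp{-})$ purely positive elements (and symmetrically for purely negative ones). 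That invariant is true and does close the critical step, since $d\sp{\oplus}(x)$ is the unique purely positive element of $D_{1}(\lceil x/2 \rceil)$ and gets removed; so your plan is sound, though the non-critical step still needs the four-way parity check on $(\| x \|_{\infty}\sp{+}, \| x \|_{\infty}\sp{-})$ that you only gesture at. The real cost of your route is loss of generality: the paper explicitly notes that Lemma~\ref{LMdecom-norm2} must be ``adapted to any collection of vectors satisfying (C1)--(C5),'' because it is invoked later for $D_{1}'(x), D_{1}''(x), \ldots$ in the proofs of Lemmas~\ref{LMdecom-twist1} and~\ref{LMdecom-twist2}; the counting proof transfers verbatim to those families, whereas your induction is tied to the recursive construction of $D_{1}$ and would have to be redone (or replaced by the counting argument anyway) for the modified families.
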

\begin{proof}
If $\| d\sp{\circ} + d\sp{\bullet}  \|_{\infty} \leq 1$,
$x$ would be represented as a sum of 
$|D_{1}(x)|-1 = \| x \|_{\infty} -1$ 
vectors with $\ell_{\infty}$-norm $\leq 1$,
which is a contradiction.
\end{proof}
We note that Lemma \ref{LMdecom-norm2} can be adapted to 
any collection of vectors satisfying (C1)--(C5).

\paragraph{Step 3:}
Finally we modify $D_{1}(x)$
so as to meet (C6) in addition to (C1)--(C5).
Suppose that there are two vectors
$d\sp{\circ}, d\sp{\bullet} \in D_{1}(x)$ 
that are not comparable with each other, i.e.,
$d\sp{\circ} \not\leq d\sp{\bullet}$ and
$d\sp{\bullet} \not\leq d\sp{\circ}$.
Letting
\begin{equation}\label{vectwistdef}
 d\sp{\uparrow} = \left\lceil \frac{d\sp{\circ} + d\sp{\bullet}}{2} \right\rceil,\quad
 d\sp{\downarrow} = \left\lfloor \frac{d\sp{\circ} + d\sp{\bullet}}{2} \right\rfloor ,
\end{equation}
we modify $D_{1}(x)$ to 
\begin{equation}\label{D1twistdef}
 D_{1}'(x) = \big(  D_{1}(x) \setminus \{d\sp{\circ}, d\sp{\bullet}\} \big)
 \cup \{d\sp{\uparrow}, d\sp{\downarrow}\} ,
\end{equation}
where the incomparable pair 
$\{ d\sp{\circ}, d\sp{\bullet} \}$ 
is replaced by a comparable pair 
$\{ d\sp{\uparrow},d\sp{\downarrow} \}$
with
$d\sp{\uparrow} \geq  d\sp{\downarrow}$.

\begin{lemma} \label{LMdecom-twist1}
 $D_{1}'(x)$ satisfies {\rm (C1)}--{\rm (C5)}.
\end{lemma}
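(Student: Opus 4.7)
The plan is to verify each of (C1)--(C5) for $D_{1}'(x)$ using the corresponding properties of $D_{1}(x)$ together with the identity $\| d\sp{\circ} + d\sp{\bullet}\|_{\infty}=2$ from Lemma~\ref{LMdecom-norm2} and a careful analysis of how $\supp\sp{+}$ and $\supp\sp{-}$ transform under $\lceil\cdot/2\rceil$ and $\lfloor\cdot/2\rfloor$.

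First I would dispose of (C2) and (C5): the identity $d\sp{\uparrow}+d\sp{\downarrow}=d\sp{\circ}+d\sp{\bullet}$ immediately gives (C2), and since exactly two vectors are removed and two are inserted, $|D_{1}'(x)|=|D_{1}(x)|=\|x\|_{\infty}$, so (C5) holds. For (C1), observe that each component of $d\sp{\circ}+d\sp{\bullet}$ lies in $\{-2,-1,0,1,2\}$, so $d\sp{\uparrow},d\sp{\downarrow}\in\{-1,0,+1\}\sp{n}$; non-zeroness of both follows from Lemma~\ref{LMdecom-norm2}, since $d\sp{\uparrow}=\bm 0$ (resp.\ $d\sp{\downarrow}=\bm 0$) would force every component of $d\sp{\circ}+d\sp{\bullet}$ to lie in $\{-1,0\}$ (resp.\ $\{0,1\}$), contradicting $\|d\sp{\circ}+d\sp{\bullet}\|_{\infty}=2$. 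For (C3), I would use the componentwise description
\[
\suppp(d\sp{\uparrow})=\suppp(d\sp{\circ})\cup\suppp(d\sp{\bullet}),\quad
\suppp(d\sp{\downarrow})=\suppp(d\sp{\circ})\cap\suppp(d\sp{\bullet}),
\]
and the symmetric identities for $\suppm$, which make the inclusions in $\suppp(x)$ and $\suppm(x)$ automatic from (C3) for $d\sp{\circ},d\sp{\bullet}$.

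The main obstacle, and the step I would develop most carefully, is (C4). The key observation is that the incomparability of $d\sp{\circ}$ and $d\sp{\bullet}$ as vectors, combined with the fact that $\suppp(d\sp{\circ}),\suppp(d\sp{\bullet})$ are comparable and $\suppm(d\sp{\circ}),\suppm(d\sp{\bullet})$ are comparable, rules out the ``aligned'' cases: for instance, $\suppp(d\sp{\circ})\subseteq\suppp(d\sp{\bullet})$ together with $\suppm(d\sp{\circ})\supseteq\suppm(d\sp{\bullet})$ would force $d\sp{\circ}\le d\sp{\bullet}$ componentwise, contradicting incomparability. Hence the two containments must point in the same direction; without loss of generality $\suppp(d\sp{\circ})\subseteq\suppp(d\sp{\bullet})$ and $\suppm(d\sp{\circ})\subseteq\suppm(d\sp{\bullet})$. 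Substituting into the identities above yields
\[
\{\suppp(d\sp{\uparrow}),\suppp(d\sp{\downarrow})\}=\{\suppp(d\sp{\circ}),\suppp(d\sp{\bullet})\},\qquad
\{\suppm(d\sp{\uparrow}),\suppm(d\sp{\downarrow})\}=\{\suppm(d\sp{\circ}),\suppm(d\sp{\bullet})\},
\]
so that, as multisets of subsets, $\{\suppp(d)\mid d\in D_{1}'(x)\}$ and $\{\suppm(d)\mid d\in D_{1}'(x)\}$ coincide with their counterparts for $D_{1}(x)$ and therefore remain chains. This establishes (C4) and completes the proof.
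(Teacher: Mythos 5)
Your proof is correct and takes essentially the same route as the paper's, which spells out only (C1) (via the identical nonzeroness argument from Lemma~\ref{LMdecom-norm2}) and dismisses (C2)--(C5) as easy to see; your detailed verification of (C4) via the comparability of the supports is the right way to fill that gap. One small point worth making explicit: the union/intersection formulas for $\suppp$ and $\suppm$ under rounding hold only because (C3) for $D_{1}(x)$ guarantees that no coordinate of $d\sp{\circ}$ and $d\sp{\bullet}$ carries opposite signs, so the case $d\sp{\circ}_{i}=1$, $d\sp{\bullet}_{i}=-1$ (which would falsify $\suppp(d\sp{\uparrow})=\suppp(d\sp{\circ})\cup\suppp(d\sp{\bullet})$) cannot occur.
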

\begin{proof}
(C2) to (C5) for $D_{1}'(x)$ are easy to see.
As for (C1),
$d\sp{\uparrow}$ and $d\sp{\downarrow}$ are obviously
$\{-1, 0, +1 \}$-vectors.
They are nonzero since 
$\| d\sp{\circ} + d\sp{\bullet}  \|_{\infty} = 2$
by Lemma \ref{LMdecom-norm2} adapted to $D_{1}'(x)$.
\end{proof}

Repeated application of the modification (\ref{D1twistdef})
generates a sequence 
$D_{1}'(x), D_{1}''(x), \ldots$,
which ends up with 
$D_{2}(x) = \{d\sp{1},d\sp{2},\ldots,d\sp{\ell}\}$
satisfying the chain condition
$ d\sp{1} \leq d\sp{2} \leq \cdots \leq d\sp{\ell}$,
where $\ell = \|x\|_{\infty}$.
The recursive application of Lemma~\ref{LMdecom-twist1} shows that 
$D_{1}'(x), D_{1}''(x), \ldots, D_{2}(x)$ each satisfy (C1)--(C5).

\begin{example}\rm \label{EXtamExample3}
In Example~\ref{EXtamExample2},
$D_{1}((5,3,-3,-5))$ contains an incomparable pair of vectors
$d\sp{\circ} = (1,1,-1,-1)$ and
$d\sp{\bullet} = (1,0,0,-1)$. 
We have
$d\sp{\uparrow} = (1,1,0,-1)$ and $d\sp{\downarrow} = (1,0,-1,-1)$ 
in (\ref{vectwistdef}), and 
$D_{2}((5,3,-3,-5)) 
=  \{(1,0,-1,-1), (1,0,-1,-1), (1,1,-1,-1), (1,1,0,-1)$, $(1,1,0,-1)\}$.
\finbox
\end{example}

\paragraph{Step 4:}
For $x,y \in \ZZ\sp{n}$, consider
$D_{2}(y-x) = \{ d\sp{1},d\sp{2},\ldots,d\sp{m}\}$
with $d\sp{1} \leq d\sp{2} \leq \cdots \leq d\sp{m}$,
where $m = \| y - x \|_{\infty}$.
As is announced in (\ref{D2=1A1B}), we have the following relation
to the decomposition in (\ref{DICdecAkBksum}).

\begin{lemma}\label{LMtamD2=1A1B}
\quad $D_{2}(y-x) = \{ \bm{1}_{A_{k}} - \bm{1}_{B_{k}} \mid k=1,\ldots,m \}$.
\end{lemma}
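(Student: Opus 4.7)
The plan is to derive Lemma \ref{LMtamD2=1A1B} from a uniqueness principle: for any nonzero $z \in \ZZ^n$, any family of vectors satisfying all six properties (C1)--(C6) is uniquely determined by $z$. Since $D_2(y-x)$ already satisfies (C1)--(C5) by Lemmas \ref{LMtamProp5} and \ref{LMdecom-twist1}, and also (C6) by the terminating condition of the modification in (\ref{D1twistdef}), it then suffices to verify that the family $\{\bm{1}_{A_k} - \bm{1}_{B_k} \mid k=1,\ldots,m\}$ also satisfies (C1)--(C6), where $m = \|y-x\|_\infty$.

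First I would check the six properties for $d_k := \bm{1}_{A_k} - \bm{1}_{B_k}$ directly, writing $z = y-x$. The disjointness $A_k \cap B_k = \emptyset$ (forced by $m+1-k > -k$) gives $d_k \in \{-1,0,+1\}^n$, and since any coordinate $i$ with $|z_i| = m$ lies in $A_k \cup B_k$ for every $k \in \{1,\ldots,m\}$, each $d_k$ is nonzero; this yields (C1). Property (C2), $\sum_{k=1}^m d_k = z$, is immediate from the definitions of $A_k$ and $B_k$. The inclusions $\suppp(d_k) = A_k \subseteq \suppp(z)$ and $\suppm(d_k) = B_k \subseteq \suppm(z)$ give (C3), and the nested chains $A_1 \subseteq \cdots \subseteq A_m$ and $B_1 \supseteq \cdots \supseteq B_m$ yield (C4); (C5) holds by construction. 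For (C6), a coordinate-wise check using the nested structure yields $d_1 \leq d_2 \leq \cdots \leq d_m$.

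For the uniqueness step I would argue one coordinate at a time. Fix $i$ and consider the sequence $\sigma_i = ((d_1)_i,\ldots,(d_m)_i)$ of any family $\{d_k\}_{k=1}^m$ satisfying (C1)--(C6). By (C6), $\sigma_i$ is non-decreasing; by (C1), it takes values in $\{-1,0,+1\}$; by (C3), it is restricted to $\{0,+1\}$ if $z_i > 0$, to $\{-1,0\}$ if $z_i < 0$, and to $\{0\}$ if $z_i = 0$; by (C2), it sums to $z_i$; and by (C5), its length is exactly $m$. These constraints force $\sigma_i$ to consist of $\max(-z_i,0)$ copies of $-1$, followed by $m - |z_i|$ copies of $0$, followed by $\max(z_i,0)$ copies of $+1$. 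Consequently $(d_k)_i = 1$ if and only if $k \geq m+1-z_i$, equivalently $i \in A_k$, and $(d_k)_i = -1$ if and only if $k \leq -z_i$, equivalently $i \in B_k$. Hence $d_k = \bm{1}_{A_k} - \bm{1}_{B_k}$ for every $k$, yielding the desired identity.

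The main obstacle is the uniqueness argument; within it, the subtle point is that (C5) is essential. Without fixing the cardinality at $m$, one could insert cancelling pairs of $\pm 1$ vectors (for example, when $z = (1,-1)$ the chain $(0,-1) \leq (1,0)$ of length $2 \ne m = 1$ would also satisfy (C1)--(C4) and (C6)), so all six axioms must be used together to pin the decomposition down.
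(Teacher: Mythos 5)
Your proposal is correct and follows essentially the same route as the paper: the paper's proof also shows that (C1), (C3), (C6) force the restrictions of the $d^k$ to the positive and negative supports of $y-x$ to form monotone $\{0,1\}$- and $\{-1,0\}$-chains, and then invokes (C2) and (C5) to pin them down as $\bm{1}_{A_k}$ and $-\bm{1}_{B_k}$, which is exactly your coordinate-wise uniqueness argument in a slightly terser form. Your explicit verification that $\{\bm{1}_{A_k}-\bm{1}_{B_k}\}$ itself satisfies (C1)--(C6), and your remark on the indispensability of (C5), are sound additions but do not change the substance.
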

\begin{proof}
First recall that 
$y - x = \sum_{k=1}\sp{m} (\bm{1}_{A_{k}} - \bm{1}_{B_{k}})$, \ 
$A_{1} \subseteq A_{2}  \subseteq \cdots \subseteq A_{m} \subseteq \suppp(x-y)$, \ 
$B_{m} \subseteq B_{m-1}  \subseteq \cdots \subseteq B_{1} \subseteq \suppm(x-y)$,
and
$\bm{1}_{A_{1}} - \bm{1}_{B_{1}} \leq \bm{1}_{A_{2}} - \bm{1}_{B_{2}} \leq
 \cdots \leq \bm{1}_{A_{m}} - \bm{1}_{B_{m}}$
in the decomposition in (\ref{DICdecAkBksum}).
For each vector $d^{k}$ in $D_{2}(y-x)$,
let
$d^{k}_{+}$, $d^{k}_{-}$, and $d^{k}_{0}$
denote its restriction  to (subvector on) 
to $\suppp(y-x)$, $\suppm(y-x)$, and 
$\{1,\ldots,n\} \setminus (\suppp(y-x) \cup \suppm(y-x))$,
respectively; see Fig.~\ref{FGvecdecAB}.
The properties (C1), (C3), and (C6) of $D_{2}(y-x)$ imply  
\[
 \veczero \leq d^{1}_{+} \leq d^{2}_{+} \leq \cdots \leq d^{m}_{+} \leq \vecone,
\quad
 -\vecone \leq d^{1}_{-} \leq d^{2}_{-} \leq \cdots \leq d^{m}_{-} \leq \veczero,
\quad
 d^{1}_{0} = d^{2}_{0} = \cdots = d^{m}_{0} = \veczero.
\]
Then we must have
$d^{k}_{+} = \vecone_{A_{k}}$ and $d^{k}_{-} = -\vecone_{B_{k}}$ for $k=1,\ldots,m$
by the properties (C2) and (C5). 
\end{proof}

\paragraph{Step 5:}
In Step 5, $S$
is assumed to be a discrete midpoint convex set in $\mathbb{Z}\sp{n}$
and $x, y \in S$.
For any subset $E$ of $D_{i}(y-x)$ with $i=0,1$, or $2$,
we consider the condition 
\begin{equation}\label{xdEinS}
 x + \sum\{d \mid d \in E\} \in S .
\end{equation}
Note that (\ref{xdEinS}) holds for $E=\emptyset$ by $x \in S$,
and for $E=D_{i}(y-x)$ by (C2) and $y \in S$.
It should  be clear that (\ref{xdEinS}) corresponds to (\ref{xJ1A1BinS}).

\begin{lemma}\label{LMtamProp6}
\quad

\noindent
{\rm (1)}
{\rm (\ref{xdEinS})} holds for any $E \subseteq D_{0}(y-x)$.

\noindent
{\rm (2)}
{\rm (\ref{xdEinS})} holds for any $E \subseteq D_{1}(y-x)$.
\end{lemma}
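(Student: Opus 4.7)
The plan is to prove both parts by induction on $m = \|y-x\|_{\infty}$, exploiting the recursive definitions of $D_{0}$ and $D_{1}$ together with the defining property (\ref{dirintcnvsetdef}) of discrete midpoint convex sets. For $m \leq 2$ the conclusion is immediate: when $m \in \{0,1\}$, $D_{0}(y-x) = D_{1}(y-x)$ has at most one element and $x, y \in S$ exhaust the possibilities; when $m = 2$, both families equal $\{u,v\}$ with $u = \lceil (y-x)/2 \rceil$ and $v = \lfloor (y-x)/2 \rfloor$, and (\ref{dirintcnvsetdef}) places $x+u$ and $x+v$ in $S$, covering all four subsets $E$.

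For the inductive step of part (1) with $m \geq 3$, I would split $E = E^{+} \cup E^{-}$ along the recursive definition (\ref{D0def}), with $E^{+} \subseteq D_{0}(u)$ and $E^{-} \subseteq D_{0}(v)$, and set $p = \sum_{d \in E^{+}} d$, $q = \sum_{d \in E^{-}} d$. Since $\|y-x\|_{\infty} \geq 2$, discrete midpoint convexity places $y^{+} := x+u$ and $y^{-} := x+v$ in $S$, and $\|u\|_{\infty}, \|v\|_{\infty} \leq \lceil m/2 \rceil \leq m-1$ makes the induction hypothesis available on smaller $\ell_{\infty}$-distances. I would then chain three applications of the induction hypothesis: first, applied to $(x, y^{+})$ with subset $E^{+} \subseteq D_{0}(y^{+}-x) = D_{0}(u)$ to get $x + p \in S$; next, applied to $(y^{-}, y)$ with the same $E^{+}$ (noting $y - y^{-} = u$) to get $x + v + p \in S$; and finally, applied to the pair $(x+p,\; x+p+v)$ with subset $E^{-} \subseteq D_{0}(v)$, which is legitimate because the first two steps have placed both endpoints in $S$, yielding $x + p + q \in S$, as required.

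For part (2), the same induction-on-$m$ strategy works. If $y-x$ is non-critical, then $D_{1}(y-x) = D_{1}(u) \cup D_{1}(v)$ by (\ref{D1def}) and the three-step chaining from part (1) applies verbatim with $D_{0}$ replaced by $D_{1}$. The main obstacle is the critical case, where $D_{1}(y-x)$ is obtained from $D_{1}(u) \cup D_{1}(v)$ by removing one copy each of $d\sp{\oplus} := d\sp{\oplus}(y-x)$ and $d\sp{\ominus} := d\sp{\ominus}(y-x)$ and adding their sum $d\sp{\oplus}+d\sp{\ominus}$. A key preliminary observation is that $d\sp{\oplus} \in D_{1}(u)$: this holds because $d\sp{\oplus}$ is the value at the leftmost leaf of $T_{0}(u)$, and by Lemma~\ref{LMtamProp4} no vertex on the leftmost path of $T_{0}(u)$ is critical, so that leaf is untouched by every recursive modification defining $D_{1}(u)$. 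Symmetrically $d\sp{\ominus} \in D_{1}(v)$. If $d\sp{\oplus} + d\sp{\ominus} \notin E$, then $E \subseteq D_{1}(u) \cup D_{1}(v)$ and the non-critical chaining applies directly. Otherwise, I would pass to $E' := (E \setminus \{d\sp{\oplus} + d\sp{\ominus}\}) \cup \{d\sp{\oplus}, d\sp{\ominus}\}$, which has the same sum and now satisfies $E' \subseteq D_{1}(u) \cup D_{1}(v)$ with $d\sp{\oplus}$ in the $D_{1}(u)$-part and $d\sp{\ominus}$ in the $D_{1}(v)$-part, so the same chaining concludes the proof.
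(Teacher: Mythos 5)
Your proof is correct. Part (1) is essentially the paper's own argument: the paper also applies the induction hypothesis three times, first to the pairs $(x,\lceil (x+y)/2\rceil)$ and $(\lfloor (x+y)/2\rfloor, y)$ with the sub(multi)set $E\cap D_{0}(\lceil (y-x)/2\rceil)$, and then to the two resulting points (whose difference is $\lfloor (y-x)/2\rfloor$) with the remaining part of $E$; your $E^{+},E^{-},p,q$ bookkeeping is just a relabelling of that. Part (2) is where you genuinely diverge. The paper disposes of it in one line: every element of $D_{1}(y-x)$ is either an element of $D_{0}(y-x)$ or a sum $d\sp{\oplus}(v)+d\sp{\ominus}(v)$ of two elements of $D_{0}(y-x)$, and since these pairs are pairwise disjoint (Lemma~\ref{LMtamD1new}) any $E\subseteq D_{1}(y-x)$ expands to a sub-multiset $E_{0}\subseteq D_{0}(y-x)$ with the same sum, so (1) applies directly. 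You instead rerun the induction along the recursive definition (\ref{D1def}), which forces you to treat the critical case separately and to verify that $d\sp{\oplus}(y-x)$ actually survives into $D_{1}(\lceil (y-x)/2\rceil)$ (via Lemma~\ref{LMtamProp4} and the non-criticality of the leftmost path). Your route is longer but does have one side benefit: it makes explicit the fact that the set-differences in (\ref{D1def}) remove genuinely present elements, which the paper's recursive definition tacitly assumes. The paper's reduction is the more economical argument and avoids re-examining the tree structure altogether.
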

\begin{proof}
(1) This is trivially true if $\| y-x \|_{\infty} \leq 1$.
If $\| y-x \|_{\infty} = 2$, we have
$D_{0}(y-x) = \{\lceil (y-x)/2 \rceil, \lfloor (y-x)/2 \rfloor \}$.
Since $S$ is a discrete midpoint convex set,  we have
\begin{align*}
  x + \lceil (y-x)/2 \rceil &= \lceil (x+y)/2 \rceil \in S, 
\\ 
  x + \lfloor (y-x)/2 \rfloor &= \lfloor (x+y)/2 \rfloor \in S, 
\end{align*}
which show (\ref{xdEinS}) for $E$ with
 $\emptyset \not= E \not= D_{0}(y-x)$.

Let $\| y-x \|_{\infty} \geq 3$.
To prove the claim by induction, assume that (\ref{xdEinS}) is true 
for all $x', y' \in S$ with $\| y' - x' \|_{\infty} < \| y-x \|_{\infty}$
and for all $E \subseteq D_{0}(y'-x')$.
We now fix $E \subseteq D_{0}(y-x)$,
and define $x'' = \lfloor (x+y)/2 \rfloor $ and
$y'' = \lceil (x+y)/2 \rceil$, where 
$x'', y'' \in S$ by (\ref{SCdirintcnvset}).
Since
\begin{align*}
 \| y''- x \|_{\infty} &=  \|\,\lceil (y-x)/2 \rceil\,\|_{\infty} < \|y-x\|_{\infty},
\\
 \| y- x'' \|_{\infty} &=  \|\,\lceil (y-x)/2 \rceil\,\|_{\infty} < \|y-x\|_{\infty}
\end{align*}
by $\| y-x \|_{\infty} \geq 3$,
the induction hypothesis for $(x,y'')$ and $(x'',y)$ shows
\begin{align}
 & u = x + \sum\{ d \mid d \in E \cap D_{0}(\lceil (y-x)/2 \rceil) \} \in S, 
\label{xprimdec}
\\
 & v = x''
+ \sum\{ d \mid d \in E \cap D_{0}(\lceil (y-x)/2 \rceil) \} \in S,
\end{align}
where
$D_{0}(y''-x)=D_{0}(y - x'') = D_{0}(\lceil (y-x)/2 \rceil)$
is used.
Since 
\[
 \| v - u \|_{\infty}  =  \| x'' - x \|_{\infty}  = 
  \|\,\lfloor (y-x)/2 \rfloor\,\|_{\infty} < \|y-x\|_{\infty},
\]
we can also use the induction hypothesis for $(u, v)$ to obtain
\begin{equation} \label{udEinterDinS}
 w =  u + \sum\{ d \mid 
  d \in E \setminus D_{0}(\lceil (y-x)/2 \rceil)\} \in S ,
\end{equation}
where
$D_{0}(v-u) = D_{0}(\lfloor (y-x)/2 \rfloor)
\supseteq E \setminus D_{0}(\lceil (y-x)/2 \rceil)$ 
by
$D_{0}(\lceil (y-x)/2 \rceil) \cup D_{0}(\lfloor (y-x)/2 \rfloor)
 = D_{0}(y-x) \supseteq E$.
Substituting (\ref{xprimdec}) into (\ref{udEinterDinS})
we obtain
\[
  w = x + \sum\{d \mid d \in E\}  \in S ,
\]
which is nothing but (\ref{xdEinS}).

(2)
This follows from (1), since
each vector of $D_{1}(y-x)$ is a sum of some vectors of $D_{0}(y-x)$.
\end{proof}

Recall from Step 3 that 
$D_{2}(y-x)$ is constructed from $D_{1}(y-x)$ 
by repeated modification in (\ref{D1twistdef}),
which changes
an incomparable pair 
$\{ d\sp{\circ}, d\sp{\bullet} \}$ 
to a comparable pair 
$\{ d\sp{\uparrow},d\sp{\downarrow} \}$
defined by (\ref{vectwistdef}).

\begin{lemma}\label{LMdecom-twist2}
Let $D_{1}'$ be obtained from $D_{1}(y-x)$ as in {\rm (\ref{D1twistdef})}.
Then {\rm (\ref{xdEinS})} holds for any $E \subseteq D_{1}'$.
\end{lemma}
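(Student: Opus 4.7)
My plan is to split any $E \subseteq D_1'$ into its ``old'' part $E_0 := E \setminus \{d^\uparrow, d^\downarrow\} \subseteq D_1(y-x)$ and a contribution from $d^\uparrow$ and/or $d^\downarrow$, and then reduce everything to Lemma \ref{LMtamProp6}(2) together with a single application of the defining property (\ref{dirintcnvsetdef}) of a discrete midpoint convex set. Set $p := x + \sum_{d \in E_0} d$. Applying Lemma \ref{LMtamProp6}(2) to $E_0$ gives $p \in S$, and applied to $E_0 \cup \{d^\circ, d^\bullet\} \subseteq D_1(y-x)$ it also gives $p + d^\circ + d^\bullet \in S$.

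I then case-split on which of $d^\uparrow, d^\downarrow$ occur in $E$. If neither occurs, $x + \sum_{d \in E} d = p \in S$. If both occur, the identity $d^\uparrow + d^\downarrow = d^\circ + d^\bullet$ (valid for any integer vector decomposition via $\lceil \cdot / 2 \rceil + \lfloor \cdot / 2 \rfloor$) gives $x + \sum_{d \in E} d = p + d^\circ + d^\bullet \in S$.

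The only case requiring genuine work is when exactly one of $d^\uparrow, d^\downarrow$ belongs to $E$, say only $d^\uparrow$ (the other case is symmetric). Here the key input is Lemma \ref{LMdecom-norm2}, which tells me that $\|d^\circ + d^\bullet\|_\infty = 2$. So $p$ and $p + d^\circ + d^\bullet$ form a pair of points in $S$ at $\ell_\infty$-distance exactly $2$, and the discrete midpoint convexity of $S$ in (\ref{dirintcnvsetdef}) yields
\[
 p + \left\lceil \frac{d^\circ + d^\bullet}{2} \right\rceil = p + d^\uparrow \in S,
 \qquad
 p + \left\lfloor \frac{d^\circ + d^\bullet}{2} \right\rfloor = p + d^\downarrow \in S,
\]
and therefore $x + \sum_{d \in E} d = p + d^\uparrow \in S$, as required.

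The only (minor) obstacle is to recognise that midpoint convexity must be applied to the diagonal pair $(p, p + d^\circ + d^\bullet)$ rather than to $(p + d^\circ, p + d^\bullet)$: because $d^\circ$ and $d^\bullet$ are only incomparable $\{-1,0,+1\}$-vectors, the latter pair may have $\ell_\infty$-distance only $1$ (as with $d^\circ = (1,-1,0)$, $d^\bullet = (1,0,-1)$), so (\ref{dirintcnvsetdef}) is not directly applicable to it; the diagonal pair, on the other hand, is always at $\ell_\infty$-distance exactly $2$ by Lemma \ref{LMdecom-norm2}, which is precisely what makes the single midpoint step go through.
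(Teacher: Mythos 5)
Your proposal is correct and follows essentially the same route as the paper: the paper's proof also splits into the same four cases, handles the one-of-two case by applying the discrete midpoint convexity of $S$ to the pair $z = x + \sum\{d \mid d \in E\setminus\{d^{\uparrow}\}\}$ and $y = z + d^{\circ} + d^{\bullet}$ (your $p$ and $p + d^{\circ}+d^{\bullet}$), and invokes Lemma \ref{LMdecom-norm2} for the distance-$2$ condition exactly as you do.
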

\begin{proof}
In accordance with (\ref{D1twistdef}),
we have
$ D_{1}' = \big(  D_{1}(y-x) \setminus \{d\sp{\circ}, d\sp{\bullet}\} \big)
 \cup \{d\sp{\uparrow}, d\sp{\downarrow}\}$.
We have four cases to consider.
\begin{enumerate}
\item
If 
$d\sp{\uparrow} \not\in E$ and $d\sp{\downarrow} \not\in E$,
we have
$E \subseteq D_{1}(y-x)$, and 
therefore
(\ref{xdEinS}) holds by Lemma~\ref{LMtamProp6}~(2).

\item
If
$d\sp{\uparrow}\in E$ and $d\sp{\downarrow} \in E$,
we have
$d\sp{\uparrow} + d\sp{\downarrow} = d\sp{\circ} + d\sp{\bullet}$
and  $(E\setminus \{d\sp{\uparrow}, d\sp{\downarrow}\})
 \cup\{d\sp{\circ}, d\sp{\bullet}\} \subseteq D_{1}(y-x)$.
Therefore,
\[
 x + \sum\{d \mid d \in E\} = x + \sum\{d \mid d \in E\setminus \{d\sp{\uparrow}, d\sp{\downarrow}\}\}
  + d\sp{\circ} + d\sp{\bullet} \in S
\]
by Lemma~\ref{LMtamProp6} (2).

\item
If
$d\sp{\uparrow} \in E$ and $d\sp{\downarrow} \not\in E$,
both points
\begin{align*}
 y &= 
x + \sum\{d \mid d \in E \setminus \{d\sp{\uparrow}\}\} + d\sp{\circ} + d\sp{\bullet},
 \\ z &= 
x + \sum\{d \mid d \in E \setminus \{d\sp{\uparrow}\}\}
\end{align*}
belong to $S$ by Lemma~\ref{LMtamProp6} (2).
We have $\lceil ( y + z)/2 \rceil \in S$
since $\| y - z  \|_{\infty} = \| d\sp{\circ} + d\sp{\bullet}  \|_{\infty} = 2$
(cf.~Lemma \ref{LMdecom-norm2})
and $S$ is a discrete midpoint convex set.
Therefore,
\[
 \lceil (y + z)/2 \rceil 
= x + \sum\{d \mid d \in E \setminus \{d\sp{\uparrow}\}\}
  + d\sp{\uparrow} \in S,
\]
which shows (\ref{xdEinS}).

\item
If $d\sp{\uparrow} \not\in E$ and $d\sp{\downarrow} \in E$,
we can show (\ref{xdEinS}) in a similar manner.
\end{enumerate}
\end{proof}

\begin{lemma}\label{LMdecom-twist2Muro} 
{\rm (\ref{xdEinS})} holds for any $E \subseteq D_{2}(y-x)$.
\end{lemma}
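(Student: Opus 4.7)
The plan is to iterate Lemma~\ref{LMdecom-twist2} along the sequence of modifications that produces $D_{2}(y-x)$ from $D_{1}(y-x)$. Recall from Step~3 that $D_{2}(y-x)$ arises by repeatedly replacing an incomparable pair $\{d^{\circ},d^{\bullet}\}$ in the current collection by the comparable pair $\{d^{\uparrow},d^{\downarrow}\}$ defined in (\ref{vectwistdef}). Denote the resulting sequence of collections by
\[
 D_{1}(y-x) = D^{(0)}, \ D^{(1)}, \ D^{(2)}, \ \ldots, \ D^{(N)} = D_{2}(y-x).
\]
By Lemma~\ref{LMdecom-twist1} (applied along this sequence), each $D^{(k)}$ satisfies properties (C1)--(C5).

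I would prove by induction on $k$ that (\ref{xdEinS}) holds for every $E \subseteq D^{(k)}$. The base case $k=0$ is exactly Lemma~\ref{LMtamProp6}(2). For the inductive step, assume the conclusion for $D^{(k)}$, and consider the passage from $D^{(k)}$ to $D^{(k+1)}$ which replaces an incomparable pair $\{d^{\circ},d^{\bullet}\}$ by $\{d^{\uparrow},d^{\downarrow}\}$. Take any $E \subseteq D^{(k+1)}$ and argue exactly as in the proof of Lemma~\ref{LMdecom-twist2}, splitting into the four cases according to which of $d^{\uparrow}$ and $d^{\downarrow}$ lie in $E$, but using the inductive hypothesis for $D^{(k)}$ in place of Lemma~\ref{LMtamProp6}(2). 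Cases 1 and 2 are immediate because the relevant subset sits inside $D^{(k)}$ once we substitute $d^{\circ}+d^{\bullet}$ for $d^{\uparrow}+d^{\downarrow}$. Cases 3 and 4 use the discrete midpoint convexity of $S$ applied to the two points
\[
 y = x + \sum\{d \mid d \in E\setminus\{d^{\uparrow}\}\} + d^{\circ} + d^{\bullet},
 \qquad
 z = x + \sum\{d \mid d \in E\setminus\{d^{\uparrow}\}\},
\]
which both lie in $S$ by the inductive hypothesis applied to $D^{(k)}$.

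The only nontrivial ingredient that needs to be checked in the inductive step, beyond what already appears in the proof of Lemma~\ref{LMdecom-twist2}, is that $\|d^{\circ}+d^{\bullet}\|_{\infty}=2$ so that the midpoint convexity axiom (\ref{dirintcnvsetdef}) may indeed be invoked for the pair $(y,z)$. This is precisely the content of Lemma~\ref{LMdecom-norm2}, whose proof uses only (C1)--(C5); since $D^{(k)}$ satisfies (C1)--(C5), the same conclusion holds for any two elements of $D^{(k)}$, in particular for $d^{\circ}$ and $d^{\bullet}$. Hence the induction goes through, and taking $k=N$ yields (\ref{xdEinS}) for every $E \subseteq D_{2}(y-x)$, which together with Lemma~\ref{LMtamD2=1A1B} establishes (\ref{xJ1A1BinS}) and thereby completes the proof of Theorem~\ref{THdirintcnvSetdec}.

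The main obstacle I anticipate is purely bookkeeping: making sure that at each intermediate stage $D^{(k)}$ the properties (C1)--(C5) remain valid, so that Lemma~\ref{LMdecom-norm2} can be reapplied to justify the norm-$2$ step. This is already handled by the recursive application of Lemma~\ref{LMdecom-twist1} noted after (\ref{D1twistdef}), so no new idea is needed—Lemma~\ref{LMdecom-twist2Muro} follows from Lemma~\ref{LMdecom-twist2} simply by iteration.
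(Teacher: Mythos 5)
Your proof is correct and takes essentially the same route as the paper's, which likewise obtains the lemma by recursively applying Lemma~\ref{LMdecom-twist2} along the sequence $D_{1}', D_{1}'', \ldots, D_{2}(y-x)$. Your extra bookkeeping---the persistence of (C1)--(C5) via Lemma~\ref{LMdecom-twist1} so that Lemma~\ref{LMdecom-norm2} remains applicable at each stage---merely makes explicit what the paper leaves implicit.
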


\begin{proof}
Repeated application of the modification (\ref{D1twistdef})
generates a sequence
$D_{1}', D_{1}'', \ldots$ that ends with $D_{2}(x-y)$.
The recursive application of Lemma~\ref{LMdecom-twist2} shows that 
{\rm (\ref{xdEinS})} holds for any 
$E \subseteq D_{1}'$, $E \subseteq D_{1}'', \ldots, E \subseteq D_{2}(y-x)$.
\end{proof}

Lemma~\ref{LMdecom-twist2Muro} with Lemma~\ref{LMtamD2=1A1B} 
establishes Theorem~\ref{THdirintcnvSetdec}.

\subsection*{Acknowledgements}
The authors thank Hiroshi Hirai for discussion
about {\rm L}-extendable functions
and Kazuya Tsurumi for Remark \ref{RMparaineqlocopt}.
This research was initiated at
the Trimester Program ``Combinatorial Optimization''
at Hausdorff Institute of Mathematics, 2015.
This work was supported by The Mitsubishi Foundation, 
CREST, JST, Grant Number JPMJCR14D2, Japan, and 
JSPS KAKENHI Grant Numbers JP26350430, JP26280004, JP24300003, JP16K00023, JP17K00037.


\appendix
\section{Equivalence of Integral Convexity and Global Weak DMC}
\label{SCwdmcIC}

We prove that the global weak discrete midpoint convexity 
of a function implies the integral convexity 
even without assuming the integral convexity of the effective domain.

\begin{theorem} \label{THicGlobWeakMPC}
For a function  $f: \mathbb{Z}^{n} \to \mathbb{R} \cup \{ +\infty  \}$,
the following properties are equivalent:

{\rm (a)}
$f$ is integrally convex.

{\rm (b)}
For every $x, y \in \dom f$ with $\| x - y \|_{\infty} \geq 2$ 
we have \ 
\begin{equation}  \label{intcnvconddist234}
 f(x) + f(y) \geq 2 \tilde{f}\, \bigg(\frac{x + y}{2} \bigg).
\end{equation}
\end{theorem}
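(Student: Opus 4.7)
The plan is to show $(a) \Leftrightarrow (b)$ by proving each implication. Direction $(a) \Rightarrow (b)$ is routine: if $f$ is integrally convex then $\tilde f$ is convex on $\RR^n$, and for $x, y \in \dom f$ one has $\tilde f(x) = f(x)$ and $\tilde f(y) = f(y)$ (via the trivial convex combinations that place all weight on the integer point), so midpoint convexity of $\tilde f$ gives $f(x) + f(y) = \tilde f(x) + \tilde f(y) \geq 2\tilde f((x+y)/2)$. For the reverse direction, the strategy is to reduce to Theorem~\ref{THfavtarProp33}: its condition~(b) (the weak midpoint inequality at $\ell_\infty$-distance exactly $2$) is an immediate consequence of our hypothesis~(b), so it suffices to prove that $S := \dom f$ is an integrally convex set under (b) alone.

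To establish integral convexity of $S$, I would adapt the proof of Proposition~\ref{PRdirintcnvSetIC}(2) from Section~\ref{SCdicproofPRdirintcnvSetIC} almost verbatim, modifying only its single use of set-theoretic midpoint convexity. Given $z \in \overline{S}$ with convex representation $z = \sum_i \lambda_i y^i$ ($y^i \in S$), sorted so that $|y^1_n - z_n| \geq |y^2_n - z_n| \geq \cdots$ and WLOG $y^1_n - z_n \geq 1$, that proof selects $y^{j_1}, \ldots, y^{j_k}$ with $y^{j_i}_n < z_n$ (so $\|y^1 - y^{j_i}\|_\infty \geq 2$) and eliminates $y^1$ by invoking discrete midpoint convexity of $S$ to obtain integer midpoints $\lceil(y^1+y^{j_i})/2\rceil, \lfloor(y^1+y^{j_i})/2\rfloor \in S$. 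In our setting, hypothesis~(b) applied to each pair $(y^1, y^{j_i})$ gives $\tilde f((y^1+y^{j_i})/2) < \infty$, hence a representation
\[
(y^1 + y^{j_i})/2 = \sum_p \mu_{i,p}\, c^{i,p}, \qquad c^{i,p} \in S \cap N((y^1+y^{j_i})/2),
\]
with $\mu_{i,p} \geq 0$, $\sum_p \mu_{i,p} = 1$. Substituting $y^1 + y^{j_i} = 2\sum_p \mu_{i,p} c^{i,p}$ into the same algebraic rearrangement of $\lambda_1 y^1 + \sum_i \lambda_{j_i} y^{j_i}$ used in Section~\ref{SCdicproofPRdirintcnvSetIC} yields a new nonnegative combination, with identical total weight, of the $c^{i,p}$'s and a residual piece of $y^{j_k}$, thereby producing a new convex representation of $z$ that no longer involves $y^1$.

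Two integer-arithmetic verifications complete the adaptation. First, each $c^{i,p}_n$ is an integer in the open interval $((y^1_n+y^{j_i}_n)/2 - 1,\ (y^1_n+y^{j_i}_n)/2 + 1)$; since $y^1_n, y^{j_i}_n \in \ZZ$ with $y^1_n - y^{j_i}_n \geq 2$, this forces $y^{j_i}_n + 1 \leq c^{i,p}_n \leq y^1_n - 1$, and a direct estimate using $y^1_n - z_n \geq 1$ together with $y^{j_i}_n - z_n \in [-|y^1_n - z_n|,\, 0)$ gives $|c^{i,p}_n - z_n| < |y^1_n - z_n|$, so the lexicographic-decrease termination argument of the original proof applies unchanged. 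Second, the invariant $|y^i_m - z_m| < 1$ for already-processed coordinates $m$ is preserved: since $y^1_m, y^{j_i}_m \in \{\lfloor z_m\rfloor, \lceil z_m\rceil\}$ (the only integers within distance $1$ of $z_m$) and $c^{i,p}_m$ is an integer within distance $1$ of $(y^1_m + y^{j_i}_m)/2$, case analysis on whether $(y^1_m+y^{j_i}_m)/2$ equals $\lfloor z_m\rfloor$, $\lceil z_m\rceil$, or the half-integer $(\lfloor z_m\rfloor + \lceil z_m\rceil)/2$ forces $c^{i,p}_m \in \{\lfloor z_m\rfloor, \lceil z_m\rceil\}$ as well. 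The main obstacle is precisely this bookkeeping; once these estimates are in hand, the proof from Section~\ref{SCdicproofPRdirintcnvSetIC} translates directly, and Theorem~\ref{THfavtarProp33} finishes the job.
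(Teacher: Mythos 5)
Your proof is correct, but it takes a genuinely different route from the paper's. You factor the implication (b) $\Rightarrow$ (a) into two steps: first show that $S=\dom f$ is an integrally convex set by adapting the generator-replacement argument of Section~\ref{SCdicproofPRdirintcnvSetIC}, replacing its use of the two integer midpoints $\lceil(y^1+y^{j_i})/2\rceil,\lfloor(y^1+y^{j_i})/2\rfloor\in S$ by a convex combination of points of $S\cap N((y^1+y^{j_i})/2)$, whose existence follows from the finiteness of $\tilde f$ at the midpoint under (b); then invoke the local characterization of Theorem~\ref{THfavtarProp33}. The two integer-arithmetic verifications you single out (the strict decrease $|c^{i,p}_n-z_n|<|y^1_n-z_n|$, which follows from $y^{j_i}_n+1\leq c^{i,p}_n\leq y^1_n-1$, and the preservation of $|y^i_m-z_m|<1$ for already-processed coordinates) are exactly the right points to check, and both go through, so the lexicographic termination argument applies as in the original. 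The paper instead proves (b) $\Rightarrow$ (a) in a single pass: it starts from a convex combination $x=\sum_i\lambda_i y^i$ witnessing $\tilde f(x)>\sum_i\lambda_i f(y^i)$ and performs the same coordinate-by-coordinate generator replacement directly on this \emph{value-carrying} representation, using (\ref{intcnvconddist234}) for an extremal pair of generators to shift part of their weight onto points of $N((y^1+y^2)/2)\cap\dom f$ without increasing $\sum_i\lambda_i f(y^i)$; once every coordinate is processed the generators all lie in $N(x)$, contradicting the definition of $\tilde f(x)$. Both arguments share the same replacement-and-termination machinery; yours buys a shorter write-up and a clean separation of the domain statement from the function statement, at the cost of relying on Theorem~\ref{THfavtarProp33} (proved in \cite{MMTT17proxIC}), while the paper's version is self-contained and handles the effective domain and the function values simultaneously.
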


\begin{proof}
Obviously, (a) implies (b). 
To prove ``(b) $\Rightarrow$ (a)'' by contradiction,
assume that there exist
$x \in \overline{\dom f}$
and $y\sp{1},\ldots, y\sp{m} \in \dom f$ such that
\begin{equation} \label{xyifxfyi-2}
 x =  \sum_{i=1}\sp{m} \lambda_{i} y\sp{i} ,  
\qquad
\tilde{f}(x) > \sum_{i=1}\sp{m} \lambda_{i} f(y\sp{i}) ,
\end{equation}
where
$\sum_{i=1}\sp{m}  \lambda_{i} = 1$ and 
$\lambda_{i} > 0 \ (i=1,\ldots, m)$.
For each $j=n,n-1,\ldots, 1$, we look at the 
$j$-th component of the generating points $y\sp{i}$.

Let $j=n$ and define
\begin{equation} \label{alphabetaIminImax}
\alpha_{n} = \min_{i} y\sp{i}_{n},
\quad
\beta_{n} = \max_{i} y\sp{i}_{n},
\quad
I_{\min} = \{ i \mid y\sp{i}_{n} = \alpha_{n} \},
\quad
I_{\max} = \{ i \mid y\sp{i}_{n} = \beta_{n} \} .
\end{equation}
If $\beta_{n} - \alpha_{n} \leq 1$, we are done with $j=n$.
Suppose that 
$\beta_{n} - \alpha_{n} \geq 2$.
By translation and reversal of the $n$-th coordinate,
we may assume $0 \leq  x_{n} \leq  1$,
$\alpha_{n} \leq 0$, and
$\beta_{n} \geq 2$.
By renumbering the generators we may assume
$1 \in  I_{\min}$
and
$2 \in  I_{\max}$,
i.e., $y\sp{1}_{n} = \alpha_{n}$ and $y\sp{2}_{n} = \beta_{n}$.
We have
$\| y\sp{1} - y\sp{2} \|_{\infty} \geq 2$. 

By (\ref{intcnvconddist234}) for $(y\sp{1}, y\sp{2})$ 
and the definition of $\tilde{f}$ we have
\begin{equation} \label{fy1y2zk-2}
 f(y\sp{1}) + f(y\sp{2})
 \geq 2 \tilde{f}\, \bigg(\frac{y\sp{1} + y\sp{2}}{2} \bigg) 
 = 2 \sum_{k=1}\sp{l} \mu_{k} f(z\sp{k}) ,
\end{equation}
where
\begin{equation} \label{y1y2zk-2}
\frac{y\sp{1} + y\sp{2}}{2} =  \sum_{k=1}\sp{l}  \mu_{k} z\sp{k},
\qquad
z\sp{k} \in N \bigg( \frac{y\sp{1} + y\sp{2}}{2} \bigg) \cap \dom f
\quad
(k=1,\ldots, l)
\end{equation}
with 
$\mu_{k} > 0$ \  $(k=1,\ldots, l)$ and
$\sum_{k=1}\sp{l}  \mu_{k} = 1$.
The inequality (\ref{fy1y2zk-2}) implies,
with notation $\lambda = \min(\lambda_{1}, \lambda_{2})$,
that 
\[
 \lambda_{1} f(y\sp{1}) + \lambda_{2} f(y\sp{2}) 
\geq
 (\lambda_{1} - \lambda ) f(y\sp{1}) + (\lambda_{2}-\lambda) f(y\sp{2})
 + 2 \lambda \sum_{k=1}\sp{l} \mu_{k} f(z\sp{k}) .
\]
Hence
\begin{align*}
 \sum_{i=1}\sp{m} \lambda_{i} f(y\sp{i}) 
 &=
 \left( \lambda_{1} f(y\sp{1}) + \lambda_{2} f(y\sp{2}) \right)
+  \sum_{i=3}\sp{m} \lambda_{i} f(y\sp{i}) 
\\
 &\geq
 (\lambda_{1} - \lambda ) f(y\sp{1}) + (\lambda_{2}-\lambda) f(y\sp{2})
 + 2 \lambda \sum_{k=1}\sp{l} \mu_{k} f(z\sp{k}) 
+  \sum_{i=3}\sp{m} \lambda_{i} f(y\sp{i}) .
\end{align*}
Since  
\[
 x =  (\lambda_{1} - \lambda ) y\sp{1} + (\lambda_{2}-\lambda) y\sp{2}
 + 2 \lambda \sum_{k=1}\sp{l} \mu_{k} z\sp{k}
+  \sum_{i=3}\sp{m} \lambda_{i} y\sp{i} ,
\]
we have obtained another representation of the form 
(\ref{xyifxfyi-2}).

With reference to this new representation
we define
$\hat \alpha_{n}$, $\hat \beta_{n}$,
$\hat{I}_{\min}$, and 
$\hat{I}_{\max}$, 
as in (\ref{alphabetaIminImax}).
Since $\beta_{n} - \alpha_{n} \geq 2$, we have 
\[
 \alpha_{n} + 1 \leq (y_{n}\sp{1} + y_{n}\sp{2})/2  \leq \beta_{n} - 1  ,
\]
which implies $\alpha_{n} + 1 \leq z_{n}\sp{k} \leq \beta_{n} - 1$ for all $k$.
Hence, 
$\alpha_{n} \leq \hat \alpha_{n}$ and
$\hat \beta_{n} \leq \beta_{n}$.
Moreover, if 
$(\hat \alpha_{n}, \hat \beta_{n})=(\alpha_{n},\beta_{n})$,
then 
$|\hat{I}_{\min}| +  |\hat{I}_{\max}|  \leq |I_{\min}| +  |I_{\max}| - 1$.
Therefore, by repeating the above process with $j=n$, 
 we eventually arrive at 
a representation of the form of (\ref{xyifxfyi-2}) 
with $\beta_{n} - \alpha_{n} \leq 1$.

We next apply the above procedure for the $(n-1)$-st component.
What is crucial here is that 
the condition $\beta_{n} - \alpha_{n} \leq 1$
is maintained in the modification of the generators
by (\ref{fy1y2zk-2}) for the $(n-1)$-st component.
Indeed, 
for each $k$, the inequality
$\alpha_{n} \leq z\sp{k}_{n} \leq \beta_{n}$ 
follows from 
$\alpha_{n} \leq (y\sp{1}_{n} +y\sp{2}_{n})/2 \leq \beta_{n}$
and
$z\sp{k} \in N( (y\sp{1} + y\sp{2})/2 )$.
Therefore, we can obtain 
a representation of the form of (\ref{xyifxfyi-2}) 
with 
$\beta_{n} - \alpha_{n} \leq 1$ and $\beta_{n-1} - \alpha_{n-1} \leq 1$,
where
$\alpha_{n-1} = \min_{i} y\sp{i}_{n-1}$ and $\beta_{n-1} = \max_{i} y\sp{i}_{n-1}$.

Then we continue the above process for $j=n-2,n-3, \ldots,1$,
to finally obtain a representation of the form of (\ref{xyifxfyi-2})
with $|y\sp{i}_{j} - y\sp{i'}_{j}| \leq 1$
for all $i, i'$ and $j=1,2,\dots,n$.
This contradicts the definition of $\tilde{f}$.
\end{proof}

\end{document}